\numberwithin{equation}{section}
\DeclareMathAlphabet{\cat}{OT1}{cmss}{m}{sl}
\newtheorem{theorem}[equation]{Theorem}
\newtheorem{proposition}[equation]{Proposition}
\newtheorem{lemma}[equation]{Lemma}
\newtheorem{corollary}[equation]{Corollary}
\newtheorem*{itheorem}{Theorem}
\theoremstyle{definition}
\newtheorem{remark}[equation]{Remark}
\newtheorem{example}[equation]{Example}
\newtheorem{definition}[equation]{Definition}
\newtheorem{observation}[equation]{Observation}
\newtheorem{notation}[equation]{Notation}
\renewcommand{\(}{\bigl(}
\renewcommand{\)}{\bigr)}
\newcommand{\tens}{\otimes}
\newcommand{\inv}{^{-1}}
\newcommand{\iso}{\stackrel{\sim}{\to}}
\newcommand{\id}{\mathrm{id}}
\newcommand{\Op}{\operatorname{\bold{OP}}}
\newcommand{\CK}{\operatorname{CK}}
\renewcommand{\Im}{\operatorname{Im}}
\newcommand{\Ad}{\operatorname{Ad}}
\newcommand{\Ker}{\operatorname{Ker}}
\newcommand{\rank}{\operatorname{rank}}
\newcommand{\Colim}{\operatorname{colim}}
\newcommand{\Spec}{\operatorname{Spec}}
\newcommand{\Hom}{\operatorname{Hom}}
\newcommand{\xra}{\xrightarrow}
\newcommand{\Coprod}{\operatornamewithlimits{\textstyle\coprod}}
\newcommand{\Bigcap}{\operatornamewithlimits{\textstyle\bigcap}}
\newcommand{\Sum}{\operatornamewithlimits{\textstyle\sum}}
\newcommand{\Prod}{\operatornamewithlimits{\textstyle\prod}}
\DeclarePairedDelimiter\floor{\lfloor}{\rfloor}
\newcommand{\mybinom}[3][0.8]{\scalebox{#1}{$\dbinom{#2}{#3}$}}
\renewcommand{\P}{\mathbb{P}}
\newcommand{\Z}{\mathbb{Z}}
\newcommand{\Q}{\mathbb{Q}}
\newcommand{\R}{\mathbb{R}}
\newcommand{\cL}{\mathcal L}
\newcommand{\cO}{\mathcal O}
\newcommand{\cP}{\mathcal P}
\newcommand{\cQ}{\mathcal Q}
\newcommand{\zz}{{\Bbb Z}}
\newcommand{\nn}{\Bbb N}
\newcommand{\qq}{{\Bbb Q}}
\newcommand{\pp}{{\Bbb P}}
\newcommand{\spec}{\operatorname{Spec}}
\newcommand{\op}[1]{\operatorname{#1}}
\newcommand{\ffi}{\varphi}
\newcommand{\eps}{\varepsilon}
\newcommand{\row}{\rightarrow}
\newcommand{\lrow}{\longrightarrow}
\renewcommand{\leq}{\leqslant}
\renewcommand{\geq}{\geqslant}
\newcommand{\nichego}[1]{}
\newcommand{\ov}[1]{\overline{#1}}
\newcommand{\wt}[1]{\widetilde{#1}}
\newcommand{\smk}{{\mathbf{Sm}}_k}
\newcommand{\smop}{{\mathbf{SmOp}}}
\newcommand{\CH}{\operatorname{CH}}
\newcommand{\wzz}{\widehat{\zz}}
\newcommand{\ppsi}[3]{\prescript{#1}{}{\Psi}^{#2}_{#3}}
\title
[Operations in connective $K$-theory] 
{Operations in connective $K$-theory}
\author[A.~Merkurjev]
{Alexander Merkurjev}
\address{Alexander Merkurjev\\
        Department of Mathematics \\
        University of California\\
        Los Angeles, CA \\
        USA}
\email
{merkurev@math.ucla.edu}
\author[A.~Vishik]
{Alexander Vishik}
\address{Alexander Vishik\\
        School of Mathematical Sciences \\
        University of Nottingham\\
        University Park\\
        Nottingham, NG7 2RD \\
        United Kingdom}
\email
{alexander.vishik@nottingham.ac.uk}
\begin{document}

\begin{abstract}
In this article we classify additive operations in connective
K-theory with various torsion-free coefficients.
We discover that the answer for the integral case requires
understanding of the $\widehat{\zz}$ one. Moreover, although
integral additive operations are topologically generated by Adams operations, these are not reduced to
infinite linear combinations of the latter ones.
We describe a topological basis for stable operations and relate it to a
basis of stable operations in graded K-theory. We classify multiplicative operations in both theories
and show that homogeneous additive stable operations with
$\widehat{\zz}$-coefficients are topologically generated
by stable multiplicative operations. This is not true for
integral operations.
\end{abstract}

\thanks{The first author has been supported by the NSF grant DMS \#1801530.}

\maketitle

\section{Introduction}

Let $k$ be a field of characteristic $0$. An \emph{oriented cohomology theory} $A^*$ over $k$ is a
functor from the category $\smk^{op}$ of smooth quasi-projective varieties over $k$
to the category of $\Z$-graded commutative rings equipped with a push-forward
structure and satisfying certain axioms. In this article, we study the, so-called,
{\it small theories}. For these, the appropriate choice is \cite[Definition 2.1]{vishik19}
which employs a strong form of
the localisation axiom and is some breed of the axioms of
Panin-Smirnov \cite[Definition 1.1.7]{Panin04} and that of Levine-Morel
\cite[Definition 1.1.2]{LM07}.
In particular, every oriented cohomology theory $A^*$ admits a theory of Chern classes $c^A_n$ of vector bundles.
Among such theories there is the universal one - the algebraic cobordism of Levine-Morel $\Omega^*$ \cite{LM07}.
We will work with the {\it free} theories, i.e. theories obtained from $\Omega^*$ by change of coefficients. These are
exactly the theories of {\it rational type} for which the results of
\cite{vishik19} apply.

Examples of free oriented cohomology theories are:

\smallskip

\noindent $\bullet$ \emph{Chow theory}
$\CH^*$ that assigns to a smooth variety $X$ over $k$ the Chow ring $\CH^*(X)$;

\noindent $\bullet$ \emph{Graded $K$-theory} $K_{gr}^*$ taking $X$ to the Laurent polynomial ring $K_0(X)[t,t\inv]$
(graded by the powers of the \emph{Bott element} $t$
of degree $-1$) over the Grothendieck ring $K_0(X)$;

\noindent $\bullet$ \emph{Connective $K$-theory} taking a smooth variety $X$ to the ring $\CK^*(X)$ of $X$ (see \cite{Cai08} and \cite{DL2014}).

The connective $K$-theory is the ``smallest" oriented cohomology theory ``living" above Chow theory and
graded $K$-theory: there are natural graded morphisms
\[
\xymatrix{
 & \CK^*(X)  \ar[ld] \ar[rd]  \\
\CH^*(X)   & &  K_{gr}^*(X)
}
\]
that yield graded isomorphisms
\[
\CK^*(X)/t\CK^{*+1}(X)\iso \CH^*(X)\quad\text{and}\quad \CK^*(X)[t\inv]\iso K_{gr}^*(X).
\]
Moreover, multiplication $\CK^{n+1}(X)\xra{t}\CK^{n}(X)$ by the Bott element $t\in\CK^{-1}(k)$
is an isomorphism if $n<0$.
The map $\CK^{0}(X)\to K_{gr}^0(X)=K_0(X)$ is also an isomorphism, so we can identify $\CK^{n}(X)$ with $K_0(X)$
for all $n\leq 0$.

For any $n\geq 0$ the image of $\CK^n(X)\xra{t^n} \CK^0(X)=K_0(X)$ is the subgroup $K_0^{(n)}(X)\subset K_0(X)$ generated by
the classes of coherent $\cO_X$-modules with codimension of support at least $n$. Note that the map $t^n$ may not be
injective in general if $n>1$.

\smallskip

Let $A^*$ and $B^*$ be two oriented cohomology theories. An \emph{additive operation}
$G:A^*\to B^*$ is a morphism between functors $A^*$ and $B^*$ considered as contravariant functors from
$\smk$ to the category of abelian groups. Examples of additive operations are Adams
operations in algebraic $K$-theory and Steenrod operations in the Chow groups modulo a prime integer.

If $A^*$ is an oriented cohomology theory and $R$ is a commutative ring, we write
$A^{n}_R(X)$ for $A^{n}(X)\tens_{\Z} R$ and $\Op_R^{n,m}(A)$ for the $R$-module of $R$-linear operations $A^n_R\to A^m_R$.

It is proved in \cite[\S 6.3]{vishik19} that every free
oriented cohomology theory $A^*$ admits the \emph{Adams operations}
$\Psi^A_m\in \Op_R^{n,n}(A)$
for all $n$ and $m$. The operation $\Psi^A_m$ in $\Op_R^{1,1}(A)$ satisfies
\[
\Psi^A_m(c_1^{A}(L))=c_1^{A}(L^{\tens m})
\]
for a line bundle $L$.
Moreover, there is an $R$-linear map
\[
\Ad_n:R[[x]]\to \Op_R^{n,n}(A)
\]
taking the power series $(1-x)^m$ to the
Adams operation $\Psi^A_m$ for all $m\in\Z$.

In general, the map $\Ad_n$ is neither injective nor surjective - see below. But it is shown in \cite[\S 6.1]{vishik19}
that $\Ad_n$ is an isomorphism if $A^*$ is the graded $K$-theory, thus,
\[
\Op_R^{n,n}(K_{gr})\simeq R[[x]].
\]
Since the power series $(1-x)^m$ generate $R[[x]]$ as topological $R$-module in the $x$-adic topology,
we can say that the $R$-module $\Op_R^{n,n}(K_{gr})$
is topologically generated by the Adams operations in the graded $K$-theory. Moreover, since multiplication by the Bott
element is an isomorphism in $K_{gr}^*$, we have $\Op_R^{n,m}(K_{gr})= R[[x]]\cdot t^{n-m}$.

\medskip
In the present paper we study the groups $\Op_R^{n,m}:=\Op_R^{n,m}(\CK)$
of operations in the connective $K$-theory over $R$. We write for simplicity $\Op^{n,m}$ for $\Op_{\Z}^{n,m}$.

The groups $\CK^n(X)$ for $n\leq 0$ are identified with $K_0(X)$, hence translating the above result on the
operations in graded $K$-theory, we see that $\Ad_n:R[[x]]\to \Op_R^{n,n}$ is an isomorphism for $n\leq 0$.

The Adams operation $\Psi_0$ is trivial on $\CK_R^n$ for $n\geq 1$, i.e. $\Ad_n(1)=0$, so we consider the restriction
$\Ad'_n:xR[[x]]\to  \Op_R^{n,n}$ of the map $\Ad_n$. The $R$-module $\CK_R^1(X)$ is a canonical direct summand via multiplication by $t$ of $\CK_R^0(X)=K_0(X)_R$
with the complement $R\cdot 1$. This leads to a ring isomorphism $\Op_R^{0,0}\simeq R\times \Op_R^{1,1}$. Moreover,
the map $\Ad'_1:xR[[x]]\to  \Op_R^{1,1}$ is an isomorphism.

The structure of the groups $\Op_R^{n,n}$ with $n>1$ is much more delicate and depends on the base ring $R$.
The homomorphisms $\Ad'_n:xR[[x]]\to  \Op_R^{n,n}$ for $n\geq 2$ are not surjective in general.

It came as a surprise to us that the structure of $\Op_R^{n,n}$ is very simple over
the ring of \emph{profinite integers} $\widehat \Z=\lim(\Z/n\Z)$:

\begin{itheorem}
The map $\Ad'_n:x\widehat \Z[[x]]\to  \Op_{\widehat \Z}^{n,n}$ is an isomorphism if $n\geq 1$. In particular,
the $\widehat \Z$-module $\Op_{\widehat \Z}^{n,n}$ is topologically generated by the Adams operations.
\end{itheorem}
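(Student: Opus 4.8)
The plan is to reduce the statement over $\widehat{\Z}$ to a statement over the finite quotients $\Z/p^k\Z$, exploiting the fact that $\Op^{n,n}_R$ should be built out of the operations on the underlying varieties by a limit process and that $\widehat{\Z}=\lim_k \Z/p^k\Z$ (product over primes $p$). First I would establish that operations in connective $K$-theory are determined by their values on a suitable ``universal'' family of varieties — products of projective spaces $\prod \P^{N_i}$ — together with the compatibility under the standard pullbacks; this is the analogue of the computation in \cite[\S 6.1]{vishik19} that identifies $\Op^{n,n}_R(K_{gr})$ with $R[[x]]$. Concretely, $\CK^*$ of $\prod \P^{N_i}$ is a truncated polynomial ring in Chern roots, and an operation is a compatible sequence of polynomial maps on these, so $\Op^{n,n}_R$ sits inside an inverse limit $\lim_N M_N(R)$ of finitely generated $R$-modules $M_N(R)$ cut out by explicit additivity and continuity constraints coming from the comultiplication on $\CK^*(\P^\infty\times\P^\infty)$ (the formal group law being the multiplicative one $x+y-xy$).

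The key point is that each $M_N(R)$ should be \emph{functorial and exact in $R$} on the subcategory of torsion-free coefficient rings, so that $M_N(\widehat{\Z})=\lim_k M_N(\Z/p^k\Z)$ (again with an implicit product over $p$), and hence $\Op^{n,n}_{\widehat{\Z}}=\lim_N\lim_k M_N(\Z/p^k\Z)$. Thus it suffices to show that $\Ad'_n$ becomes an isomorphism ``modulo $p^k$'' in a suitable sense — i.e.\ that over $\Z/p^k\Z$ the additive operations $\CK^n\to\CK^n$ of the relevant truncated type are exactly the $R$-span of the finitely many reductions of Adams operations that survive. Over a finite ring the relevant module $M_N(\Z/p^k\Z)$ is \emph{finite}, so there are no convergence subtleties, and one can compute directly: an additive operation on $\CK^n$ of a product of projective spaces is given by a symmetric additive-in-the-formal-group-sense polynomial in the Chern roots, and the classical fact that additive polynomial maps compatible with the multiplicative formal group law are generated by the power operations $c_1(L)\mapsto c_1(L^{\otimes m})$ forces it to be a (finite, since we are mod $p^k$ and in bounded degree $\leq N$) linear combination of Adams operations. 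The injectivity of $\Ad'_n$ — i.e.\ that no nontrivial power series in $x$ annihilates all $\CK^n(\prod\P^N)$ — follows as in the graded case because the monomials in Chern roots of bounded degree are $R$-independent for torsion-free $R$ and this independence is detected after reduction mod $p^k$ up to the torsion that $\widehat{\Z}$, being torsion-free, does not see.

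Passing to the limit: $xR[[x]]=\lim_{j}\,x R[x]/x^{j}R[x]$ compatibly with the filtration of $\Op^{n,n}_R$ by ``operations raising codimension by $\geq j$'' (the $x^j R[[x]]$ side matching $\Op^{n,n+j}_R$ via the $t$-multiplication isomorphisms $\Op^{n,n}_R\iso\Op^{n,m}_R$ for $m\leq n$ noted in the excerpt), and $\Ad'_n$ is visibly continuous for these topologies. Since each graded piece is identified by the finite-coefficient computation above, and inverse limits are exact here because all transition maps in sight are surjective (the $M_N$ form a tower of surjections, as does the $p$-adic tower over torsion-free base), we conclude $\Ad'_n\colon xR[[x]]\to\Op^{n,n}_{\widehat{\Z}}$ is an isomorphism.

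The main obstacle I anticipate is the exactness/base-change statement $M_N(\widehat{\Z})=\lim_k M_N(\Z/p^k\Z)$: one must show that forming the module of ``additive polynomial operations of bounded degree on a fixed product of projective spaces'' commutes with the relevant inverse limits of coefficient rings, and in particular that no new operations over $\widehat{\Z}$ arise that are not compatible reductions of ones over each $\Z/p^k\Z$. This is where torsion-freeness of $\widehat{\Z}$ is essential — over $\Z$ itself the analogous claim fails (as the introduction emphasizes, integral operations are not exhausted by infinite combinations of Adams operations), precisely because $\Z\to\prod_p\widehat{\Z}$ is not an isomorphism and the ``extra'' integral operations live in the difference. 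So the heart of the argument is a careful finiteness-plus-limit bookkeeping showing that over $\widehat{\Z}$ the defining equations for operations have no solutions beyond the closure of the Adams operations.
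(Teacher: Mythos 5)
Your approach is genuinely different from the paper's, and unfortunately it has a gap precisely at the point you flag as the ``main obstacle.''

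The paper does not reduce to finite coefficient rings at all. The entire computation takes place over torsion-free rings: for such $R$ one embeds $R\hookrightarrow K=R\otimes\Q$, integrates the double-symmetric power series $G_n$ (Proposition \ref{allequivalent}, which requires a $\Q$-algebra), and so identifies $\Op_R^{n,n}$ with a quotient of $\cQ_R^n$, the $R$-module of power series $H\in xK[[x]]$ with $\partial^{n-1}(H)\in R[[x_1,\ldots,x_n]]$ (Theorem \ref{mainone}). The theorem then reduces to a structure result for $\cQ_{\widehat\Z}^n$ (Theorem \ref{description}), whose proof works coefficient by coefficient: writing $b_i=ia_i$, one shows the congruences $b_i\equiv b_j\pmod{j}$ for $j\mid i$, and the fact that a coherent system of residues modulo every $i$ is realized by a single element $b\in\widehat\Z$ (Lemma \ref{second} and Proposition \ref{onevar}) is precisely where the profinite nature of $\widehat\Z$ enters. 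This is an \emph{arithmetic} use of $\widehat\Z=\lim\Z/n\Z$, not a base-change/inverse-limit argument over finite coefficient rings.

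Your proposal instead asserts (i) that $M_N(\widehat\Z)=\lim_k M_N(\Z/p^k\Z)$ and (ii) that over $\Z/p^k\Z$ the relevant module of additive operations in bounded degree is spanned by reductions of Adams operations. Neither is established, and (ii) in particular does not look easier than the statement you are trying to prove. The integration step that makes the one-variable description of operations possible needs a $\Q$-algebra: in the proof of Proposition \ref{allequivalent} one divides by $2^d-2$, which is a zero divisor in $\Z/p^k\Z$. So over finite coefficients you cannot pass to a power series in one variable, and you would be left to analyze double-symmetric power series $G_n(x_1,\ldots,x_n)$ directly, with no mechanism to exclude ``exotic'' operations of the type that do exist in characteristic $p$ Chow theory. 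The claim that such operations are absent over $\Z/p^k\Z$ is exactly the hard content, and the heuristic that ``finite rings have no convergence subtleties'' does not address it. Concretely, the obstruction you are waving at is resolved in the paper not by a finiteness/limit argument but by the Chinese-Remainder statement of Lemma \ref{second} together with the $p$-adic divisibility estimate of Proposition \ref{main}, and you would need analogues of those steps in any case; your proposed reduction just relocates the difficulty without dissolving it.
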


Over $\Z$ the map $\Ad'_n$ is not surjective if $n\geq 2$.

\begin{itheorem}
The group $\Op^{n,n}$ of integral operations is isomorphic canonically to a subgroup of $\Op^{n,n}_{\widehat \Z}$.
Moreover, there is an exact sequence
\[
0\to x\Z[[x]]\xra{\Ad'_n} \Op^{n,n} \to (\widehat\Z/\Z)^{n-1} \to 0
\]
if $n\geq 1$.
\end{itheorem}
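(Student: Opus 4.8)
\emph{Strategy of proof.} The realization of $\Op^{n,n}$ as a subgroup of $\Op^{n,n}_{\widehat\Z}$ will be obtained by base change. By \cite{vishik19} an additive operation of a free theory such as $\CK$ is determined by its restriction to the varieties $\P^{d_1}\times\dots\times\P^{d_r}$, on each of which $\CK^{n}$ is a free abelian group, and the conditions that cut out operations among families of endomorphisms of these groups are $\Z$-linear. As $\widehat\Z$ is flat over $\Z$, applying $-\otimes_{\Z}\widehat\Z$ to such a family again yields a family satisfying those conditions, hence a natural homomorphism $b_n\colon\Op^{n,n}\to\Op^{n,n}_{\widehat\Z}$. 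If $b_n(G)=0$ then $G$ vanishes on every $\P^{d_1}\times\dots\times\P^{d_r}$, since a free abelian group embeds into its tensor product with $\widehat\Z$; so $G=0$ and $b_n$ is injective. Thus $\Op^{n,n}$ is canonically a subgroup of $\Op^{n,n}_{\widehat\Z}\cong x\widehat\Z[[x]]$, the last identification being Theorem~A. Base change is compatible with the maps $\Ad'$, so under this identification $b_n\circ\Ad'_{n}\colon x\Z[[x]]\to x\widehat\Z[[x]]$ is the inclusion; in particular $\Ad'_{n}$ is injective over $\Z$ and $b_n$ carries its image onto $x\Z[[x]]\subseteq x\widehat\Z[[x]]$. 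What remains is to identify the subgroup $\Im(b_n)$ of $x\widehat\Z[[x]]$, equivalently the quotient $\Im(b_n)/x\Z[[x]]$, with $(\widehat\Z/\Z)^{n-1}$.

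I would compute this quotient by induction, using the homomorphisms $r_{n}\colon\Op^{n,n}\to\Op^{n+1,n+1}$ defined as the composite of precomposition with $t\colon\CK^{n+1}\to\CK^{n}$, i.e.\ $\Op^{n,n}\to\Op^{n+1,n}$, with the isomorphism $\Op^{n+1,n}\cong\Op^{n+1,n+1}$ coming from multiplication by $t$. Since Adams operations commute with multiplication by the Bott element, $r_{n}(\Psi_{m})=\Psi_{m}$, so $r_{n}\circ\Ad'_{n}=\Ad'_{n+1}$; by Theorem~A this means that after base change $r_{n}$ becomes the identity of $x\widehat\Z[[x]]$. Writing $V_{k}:=\Im(b_k)\subseteq x\widehat\Z[[x]]$, we get $b_{n+1}\circ r_{n}=b_{n}$, whence $r_{n}$ is injective, $V_{1}=x\Z[[x]]\subseteq V_{2}\subseteq\dots$, and $\Coker(r_{n})\cong V_{n+1}/V_{n}$.

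The heart of the proof is then the assertion $\Coker(r_{n})\cong\widehat\Z/\Z$. Concretely, $\Coker(r_{n})$ is the obstruction to factoring an operation $H\colon\CK^{n+1}\to\CK^{n}$ through the map $t\colon\CK^{n+1}\to\CK^{n}$, and this obstruction is controlled by the kernel of $t$ on $\CK^{n+1}$ --- recall that $t^{n}$ fails to be injective once $n>1$. Using the description of \cite{vishik19}, I would reduce the computation to an explicit one on a universal product $\P^{d_{1}}\times\dots\times\P^{d_{r}}$ on which $t\colon\CK^{n+1}\to\CK^{n}$ has a nontrivial kernel: writing down the relevant $\Z$-module of compatible families in codimension $n+1$, comparing it with the one in codimension $n$ via $t$-multiplication, and checking that, modulo the contributions of the integral Adams operations, exactly one copy of $\widehat\Z/\Z$ is produced --- neither more nor less. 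I expect this to be the principal obstacle, as it requires good control of $\CK$ of products of projective spaces in a fixed codimension together with the additivity and naturality constraints on operations.

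Granting $\Coker(r_{n})\cong\widehat\Z/\Z$, the theorem follows formally. We obtain a chain $x\Z[[x]]=V_{1}\subseteq V_{2}\subseteq\dots\subseteq V_{n}$ with $V_{k+1}/V_{k}\cong\widehat\Z/\Z$ for $1\leq k\leq n-1$. The group $\widehat\Z/\Z$ is torsion-free and divisible, hence an injective $\Z$-module, and so is every finite power of it; therefore if $V_{k}/V_{1}\cong(\widehat\Z/\Z)^{k-1}$, then the injective subgroup $V_{k}/V_{1}$ is a direct summand of $V_{k+1}/V_{1}$ with complement $\cong V_{k+1}/V_{k}\cong\widehat\Z/\Z$, giving $V_{k+1}/V_{1}\cong(\widehat\Z/\Z)^{k}$. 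Hence $V_{n}/V_{1}\cong(\widehat\Z/\Z)^{n-1}$, and transporting along the isomorphism $\Op^{n,n}\iso V_{n}$ of the first paragraph yields the exact sequence
\[
0\to x\Z[[x]]\xra{\Ad'_{n}}\Op^{n,n}\to(\widehat\Z/\Z)^{n-1}\to 0 .
\]
The case $n=1$ reduces to the already established equality $\Op^{1,1}=x\Z[[x]]$, which is the base of the induction.
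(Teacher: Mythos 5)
Your overall scaffolding is sound, and the two bookend steps are fine: the injectivity of $\Op^{n,n}\to\Op^{n,n}_{\widehat\Z}$ and the identification of $\Im(\Ad'_n)$ with $x\Z[[x]]$ via Theorem~A are essentially what the paper gets from Corollary~\ref{corcor} and Theorem~\ref{mainone}; and the final splitting argument is correct, since $\widehat\Z/\Z$ is both torsion-free (as $\widehat\Z\cap\Q=\Z$ inside $\widehat\Q$) and divisible (as $\widehat\Z/m\widehat\Z\cong\Z/m\Z$), hence an injective $\Z$-module, so the filtration $V_1\subseteq\cdots\subseteq V_n$ with $\widehat\Z/\Z$ successive quotients splits.

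The problem is that the entire mathematical content of the theorem is concentrated in the one step you explicitly did not prove: $\Coker(r_n)\cong\widehat\Z/\Z$. You write ``I would reduce the computation to an explicit one \dots and check that, modulo the contributions of the integral Adams operations, exactly one copy of $\widehat\Z/\Z$ is produced,'' and flag this as ``the principal obstacle.'' That is an accurate self-assessment, but it means the proposal is a strategy, not a proof. It is not at all clear a priori that the cokernel is $\widehat\Z/\Z$ rather than, say, something smaller (a proper subgroup of $\widehat\Z/\Z$), something larger, or something not even torsion-free. Pinning this down is where the paper does its real work: it integrates the symmetric power series defining an operation to a single series $G\in\cQ^n\subset x\widehat\Q[[x]]$ (Section~2 and Proposition~\ref{description2}), proves the structure theorem $\cQ^n_{\widehat\Z}=\Coprod_{0<r<n}\widehat\Q\cdot\lg_r(x)\oplus x\widehat\Z[[x]]$ (Theorem~\ref{description}, whose proof runs through Propositions~\ref{main}, \ref{onevar} and Lemmas~\ref{intH}, \ref{nintH}), and then shows, via the nontrivial approximation Lemma~\ref{N33}, that the projection $\rho_n:\cQ^n\to\widehat\Q^{n-1}$ is surjective, so that the sequence~(\ref{sesimp}) is exact. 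Your proposed inductive route through $\Coker(r_n)$ would, once unwound, require the same kind of delicate divisibility analysis (essentially Propositions~\ref{main}--\ref{onevar} for the ``one more variable'' step); you haven't avoided it, only deferred it.

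A further, smaller issue: the paper's exact sequence comes with a specific natural surjection $\Op^{n,n}\to(\widehat\Z/\Z)^{n-1}$ induced by $\rho_n$, whereas your splitting argument produces only an abstract isomorphism $V_n/V_1\cong(\widehat\Z/\Z)^{n-1}$. For the theorem as literally stated that is acceptable, but it loses the canonicity that the paper's construction provides and that is used elsewhere (e.g.\ in defining the subgroup of classical operations $\Op^{n,m}_{cl}$ as the kernel of this map). So even if you supplied the missing lemma, you would still want to record the map explicitly rather than appeal to an abstract splitting.
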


Thus, the group $\widehat \Z$ also shows up in the computation of
$\Op^{n,n}$ over $\Z$.
For example, $\Op^{2,2}$ as a subgroup of $\Op^{2,2}_{\widehat \Z}=x\widehat \Z[[x]]$ is generated by
$x\Z[[x]]$ and the power series $\sum_{i>0}\frac{c-c_i}{i}\ x^i$ for all $c\in \widehat \Z$ and integers $c_i$ such that
$c-c_i$ is divisible by $i$ for all $i>0$, i.e., $c_i$ in $\Z$ represents congruence class of $c$ modulo $i$.

\smallskip

We prove that the rings $\Op^{n,n}$ and $\Op^{n,n}_{\widehat\Z}$ are commutative. Moreover, the rings $\Op^{n,n}$
are ``almost" integral domains: the only zero divisors are the multiples of $\Psi_{1}\pm \Psi_{-1}$.

\smallskip

An operation $G:A^*\row B^*$ is called \emph{multiplicative} if $G$ is a morphism of functors $\smk\row Rings$.
Examples are \emph{twisted} Adams operations $\Psi_b^c$ defined as follows. Let $b\in\widehat\Z$ and $c\in\widehat\Z^\times$.
Then the operation $\Psi_b^c$ is homogeneous and equal to $c^{-n}\cdot\Psi_{bc}$ on $\CK^n_{\widehat\Z}$, where
$\Psi_{bc}$ is the (generalized) Adams operation with the power series $(1-x)^{bc}$. We classify all
multiplicative operations on $\CK^*_{\widehat\Z}$ in Section \ref{multoperat}.

The notion of ``stability" in topology can be considered in algebraic setting as follows (see \cite[\S 3.1]{vishik19}).
Let $\smop$ be a category whose objects are pairs
$(X,U)$, where $X\in\smk$ and $U$ is an open subvariety of $X$.
Any theory $A^*$ extends from $\smk$ to $\smop$ by the rule:
$$
A^*((X,U)):=\op{Ker}(A^*(X)\row A^*(U)).
$$
and every additive operation $A^*\row B^*$ on $\smk$ extends uniquely to an operation on $\smop$.
There is an identification
$$
\sigma_T^A:A^*((X,U))\stackrel{\cong}{\lrow} A^{*+1}(\Sigma_T(X,U)),
$$
where $\Sigma_T(X,U):=(X,U)\wedge(\pp^1,\pp^1\backslash 0)=(X\times\pp^1, X\times(\pp^1\backslash 0)\cup U\times\pp^1)$.

For any additive operation $G:A^*\row B^*$ we define its \emph{desuspension}
as the unique operation $\Sigma^{-1} G:A^*\row B^*$ such that
$$
G\circ\sigma_T^A=\sigma_T^B\circ\Sigma^{-1}G.
$$
A \emph{stable} additive operation $G:A^*\row B^*$ is the collection
$\{G^{(n)}|n\geq 0\}$ of operations $A^*\row B^*$ such that
$G^{(n)}=\Sigma^{-1}G^{(n+1)}$.

In Section \ref{stableoper} we classify stable operations in connective $K$-theory over $\widehat \Z$. We prove that under
the identification
\[
\Op_{\widehat \Z}^{n,n}=
\left\{
  \begin{array}{ll}
   \widehat \Z[[x]], & \hbox{if $n\leq 0$;} \\
   x\widehat \Z[[x]], & \hbox{if $n\geq 1$}
  \end{array}
\right.
\]
the desuspension map is given by the formula
\[
\Sigma^{-1}(G)=
\left\{
  \begin{array}{ll}
   \Phi(G), & \hbox{if $n\leq 1$;} \\
   \Phi(G)-\Phi(G)(0), & \hbox{if $n>1$.}
  \end{array}
\right.
\]
where $G\in \Op_{\widehat \Z}^{n,n}$ and $\Phi(G)=(x-1)\diff{G}{x}$.
Thus, the desuspension map $\Sigma^{-1}$ yields a tower of injective maps
\[
\widehat\Z[[x]]=\Op_{\widehat\Z}^{0,0}\hookleftarrow \Op_{\widehat\Z}^{1,1}\hookleftarrow \ldots\hookleftarrow \Op_{\widehat\Z}^{n,n}\hookleftarrow \ldots.
\]
The group of homogeneous
degree $0$ stable operations $\CK_{\widehat\Z}^*\to \CK_{\widehat\Z}^*$ is canonically isomorphic to the group
\[
S:=\cap_{n}\Im(\Phi^n) \subset \widehat\Z[[x]].
\]
We identify this group in Section \ref{stableoper}. In particular we prove that $S$ is the closure in the $x$-adic topology
of $\widehat{\zz}[[x]]$ of the set of all  (finite) $\widehat{\zz}$-linear combinations of the Adams power series
$A_r$ for $r\in\widehat{\zz}^{\times}$. The
$\widehat{\zz}$-module $S$ and its integral version $S_0$ appear
to be of an uncountable rank. We describe a topological basis
for them.

We call a multiplicative operation $G$ {\it stable} if the constant sequence
$(G,G,G,\ldots)$ is stable. We prove that stable multiplicative operations
$\op{\CK}_{\wzz}^*\row\op{\CK}_{\wzz}^*$ are exactly operations
$\Psi^c_1$, for
$c\in\widehat{\zz}^{\times}$. Thus, we obtain:

\begin{itheorem}
Homogeneous degree $0$
stable additive operations on $\CK^*_{\widehat{\zz}}$ are
topologically generated by the stable multiplicative operations
on it.
\end{itheorem}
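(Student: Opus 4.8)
The plan is to deduce the statement from two results established earlier in this section: the classification of stable multiplicative operations on $\CK^*_{\widehat{\zz}}$ as exactly the twisted Adams operations $\Psi_1^c$, $c\in\widehat{\zz}^{\times}$; and the description of $S=\cap_n\Im(\Phi^n)\subset\widehat{\zz}[[x]]$ — canonically identified with the group of homogeneous degree $0$ stable additive operations $\CK^*_{\widehat{\zz}}\to\CK^*_{\widehat{\zz}}$ — as the $x$-adic closure of the set of finite $\widehat{\zz}$-linear combinations of the Adams power series $A_r$, $r\in\widehat{\zz}^{\times}$. The only bridge to be built is the identification, under the canonical isomorphism above, of the stable additive operation underlying $\Psi_1^c$ with the power series $A_c$; once this is in place the two results combine at once.

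First I would check directly that each $\Psi_1^c$ is a homogeneous degree $0$ stable additive operation and compute its class in $S$. It restricts on $\CK^n_{\widehat{\zz}}$ to $c^{-n}\Psi_c$, which under $\Op_{\widehat{\zz}}^{n,n}\cong\widehat{\zz}[[x]]$ (for $n\leq 0$), resp. $\cong x\widehat{\zz}[[x]]$ (for $n\geq 1$), is the power series $c^{-n}(1-x)^c$, resp. $c^{-n}\bigl((1-x)^c-1\bigr)$, using $\Psi_c=\Ad_n\bigl((1-x)^c\bigr)$ and $\Ad_n(1)=0$ for $n\geq 1$. A short computation with $\Phi(G)=(x-1)\diff{G}{x}$ gives $\Phi\bigl(c^{-n}((1-x)^c-1)\bigr)=c^{-(n-1)}(1-x)^c$, whose constant term is $c^{-(n-1)}$; comparing this with the desuspension formula — $\Sigma^{-1}=\Phi$ on $\Op_{\widehat{\zz}}^{n,n}$ for $n\leq 1$ and $\Sigma^{-1}=\Phi-\Phi(\cdot)(0)$ for $n>1$ — one sees that the $n$-th graded component of $\Psi_1^c$ desuspends exactly to its $(n-1)$-st one, for every $n$. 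Hence $\Sigma^{-1}(\Psi_1^c)=\Psi_1^c$, the constant sequence $(\Psi_1^c,\Psi_1^c,\ldots)$ is a stable tower, and the corresponding element of $S\subset\widehat{\zz}[[x]]$ is the degree $0$ component $(1-x)^c$, i.e. the Adams power series $A_c$.

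With this identification the theorem is immediate. The set $\{\Psi_1^c:c\in\widehat{\zz}^{\times}\}$ of all stable multiplicative operations is carried bijectively onto $\{A_c:c\in\widehat{\zz}^{\times}\}\subset S$; the group law on stable additive operations corresponds to $\widehat{\zz}$-linear addition of power series; therefore the closed $\widehat{\zz}$-submodule of $S$ generated by the stable multiplicative operations equals the $x$-adic closure of the $\widehat{\zz}$-span of the $A_c$, which is all of $S$ by the description recalled above. This is precisely the assertion that the homogeneous degree $0$ stable additive operations on $\CK^*_{\widehat{\zz}}$ are topologically generated by the stable multiplicative operations there.

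The real weight of the result sits in its two inputs — excluding stable multiplicative operations outside the family $\{\Psi_1^c\}$, and, more seriously, pinning $S$ down together with its (uncountable) topological basis — rather than in the argument above. The only point here that needs some care is the interaction of the two desuspension formulas at the boundary between degrees $\leq 1$ and $>1$; the mechanism that makes it harmless is that $\Phi$ annihilates constants, so that $\Phi\circ(\Phi-\Phi(\cdot)(0))=\Phi^2$ and the truncated desuspension tower still computes the pure intersection $\cap_n\Im(\Phi^n)$. One should also note that \emph{topologically generated} must be read as \emph{generated as a closed $\widehat{\zz}$-submodule}, since a $\widehat{\zz}$-linear combination of multiplicative operations need not be multiplicative; the corresponding statement fails over $\zz$, which is why the theorem is stated over $\widehat{\zz}$.
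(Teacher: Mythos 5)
Your proposal is correct and follows essentially the same route as the paper: identify the stable multiplicative operations as the $\Psi^c_1$ with $c\in\widehat{\zz}^{\times}$, show that under the identification of homogeneous degree $0$ stable additive operations with $S=\cap_n\Im(\Phi^n)$ each $\Psi^c_1$ corresponds to the power series $A_c=(1-x)^c$, and then invoke the result that $S$ is the closure of the $\widehat{\zz}$-span of the $A_r$, $r\in\widehat{\zz}^{\times}$. The only cosmetic difference is that you re-verify stability of $\Psi^c_1$ by a direct $\Phi$-computation where the paper gets it from the desuspension criterion for multiplicative operations (linear coefficient of $\gamma_G$ equal to $1$); both are sound.
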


Similarly, stable multiplicative operations on
$\op{CK}^*$ are $\Psi^{\pm 1}_1$. This time though, they don't
generate the group of stable additive operations which is of
uncountable rank.

Recall that additive operations in (graded) $K$-theory were determined in \cite[\S 6.1]{vishik19}.
In the present paper we determine stable and multiplicative
operations in $K_{gr}$. We describe a basis of
the group of stable $K_{gr}$-operations and relate it to the
basis of stable $\op{CK}$-operations.
The ring of stable operations is dual to the Hopf algebra of co-operations defined over $\Z$ and therefore
has a structure of (topological) Hopf algebra. The Hopf algebra of co-operations coincides with $K_0(K)$ in topology and has been
studied in \cite{AC77}, \cite{AHS71}, \cite{CCW01}, \cite{Johnson84} and \cite{SW10}. The case of $\op{CK}$
was investigated, in particular, in \cite{Kane81}.

The main tool used in our proofs is the general result of the second author
\cite[Theorem 6.2]{vishik19} that asserts, when applied to the connective $K$-theory, that an operation $G\in \Op_R^{n,m}$
for $n\geq 1$ is given by a sequence of symmetric power series $G_l\in R[[x_1,\ldots, x_l]]$
for all $l\geq n$ satisfying certain conditions. In particular, $G_l$ divisible by $x_1\cdot\ldots\cdot x_{l}$
and $-G_{l+1}=\partial(G_l)$, the
\emph{partial derivative} of $G_l$ (see Definition \ref{parderiv}) for all $l\geq n$, i.e., all power series $G_l$ are
determined by $G_n$. We show that if $R$ is torsion free, then $G_n$ can be integrated over
$K=R\tens \Q$: there is a unique power series $H\in xK[[x]]$ such that $G_n=\partial^{n-1}(H)$. Thus, the operation $G$ is determined
by a power series $H$ in one variable over $K$ such that $\partial^{n-1}(H)\in R[[x_1,\ldots, x_n]]$.

The article is organized as follows.
In Section 2 we prove general results which will permit
us to integrate the multivariate symmetric power series
and reduce the classification of operations to the description
of power series in one variable with certain integrality
properties. These properties are then studied
and the respective power series are classified in Section 3.
In Section 4 we apply the obtained results in combination
with \cite[Theorem 6.2]{vishik19} to produce a description of
additive operations in $\op{CK}$ with integral and
$\widehat{\zz}$-coefficients. We describe the ring structure
on the set of homogeneous operations. The description of
operations in $K_{gr}$ comes as an easy by-product.
In the latter case, we also describe
the dual bi-algebra of co-operations.
Multiplicative operations in $\op{CK}$ and $K_{gr}$ are
studied in Section 5.
Finally, Section 6 is devoted to the computation of stable
operations.

\section{Symmetric power series}

\subsection{Partial derivatives}
Let $F(x,y)$ be a (commutative) formal group law over a commutative ring $R$. We write $x\!*\!y:=F(x,y)$.

Let $G(x_1,\ldots,x_n)\in R[[x_1,\ldots, x_n]]$ be a power series in $n\geq 1$ variables.

\begin{definition}\label{parderiv}
The \emph{partial derivative} of $G$
(with respect to $F$) is the power series
\begin{align*}
(\partial G)(x_1,x_2,\ldots,x_{n+1})&=G(x_1\!*\! x_2,x_3,\ldots,x_{n+1})-G(x_1,x_3,\ldots,x_{n+1}) \\
&-G(x_2,x_3,\ldots,x_{n+1})+G(0,x_3,\ldots,x_{n+1})\in R[[x_1,\ldots, x_{n+1}]].
\end{align*}
\end{definition}

Note that the partial derivative is always taken with respect to
the first variable (in this case $x_1$) in the list of variables.
Write $\partial^m$ for the iterated partial derivative. We also set $(\partial^0 G)(x_1,\ldots,x_n)=G(x_1,\ldots,x_n)-G(0,x_2,\ldots,x_n)$.

For a subset $I\subset [1,m+1]:=\{1,\ldots, m+1\}$ write $x_I$ for the $*$-sum of
all $x_i$ with $i\in I$. In particular, $x_{\emptyset}=0$. Then
\begin{equation}
\label{m-th-deriv}
(\partial^m G)(x_1,\ldots x_{m+n})=
\Sum (-1)^{|I|} G(x_I,x_{m+2},\ldots, x_{m+n})\in R[[x_1,x_2,\dots, x_{m+n}]],
\end{equation}
where the sum is taken over all $2^{m+1}$ subsets $I\subset [1,m+1]$.
In particular, $\partial^m G$ is symmetric with respect to the first $m+1$ variables.

\begin{observation}\label{observe}
If $G\in R[[x_1,\ldots, x_n]]$ is such that $\partial G$ is a symmetric power series, then
$\partial^m G$ is symmetric for all $m\geq 1$.
\end{observation}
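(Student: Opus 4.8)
The plan is to reduce everything to the displayed formula for the iterated operator $\partial^m$ together with one elementary fact about generators of symmetric groups. The case $m=1$ is exactly the hypothesis, so I would fix $m\geq 2$ and write $H:=\partial G\in R[[x_1,\dots,x_{n+1}]]$, which is symmetric in all $n+1$ of its variables by assumption, and observe that $\partial^m G=\partial^{m-1}H$. The goal is then to show that $\partial^m G$, a power series in the $m+n$ variables $x_1,\dots,x_{m+n}$, is fixed by two subgroups of $S_{m+n}$ that together generate all of $S_{m+n}$.

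The first step: recall from the discussion following Definition~\ref{parderiv} that $\partial^m G$, written via its own displayed expansion $\partial^m G=\sum_{I\subseteq[1,m+1]}(-1)^{|I|}G(x_I,x_{m+2},\dots,x_{m+n})$, is automatically symmetric with respect to the first $m+1$ variables $x_1,\dots,x_{m+1}$, and this uses nothing about $G$.

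The second step, which is where the hypothesis enters: apply the same displayed formula instead to $\partial^{m-1}H$ (a power series in $(m-1)+(n+1)=m+n$ variables), obtaining
\[
(\partial^m G)(x_1,\dots,x_{m+n})=\sum_{J\subseteq[1,m]}(-1)^{|J|}\,H(x_J,x_{m+1},\dots,x_{m+n}).
\]
In each summand the variables $x_{m+1},\dots,x_{m+n}$ occupy the last $n$ slots of $H$, so the symmetry of $H$ makes every summand — hence $\partial^m G$ — invariant under all permutations of $x_{m+1},\dots,x_{m+n}$. Thus $\partial^m G$ is symmetric with respect to $x_{m+1},\dots,x_{m+n}$ as well.

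The last step is purely group-theoretic: the blocks $\{1,\dots,m+1\}$ and $\{m+1,\dots,m+n\}$ cover $\{1,\dots,m+n\}$ and overlap in the index $m+1$, so the subgroups $S_{\{1,\dots,m+1\}}$ and $S_{\{m+1,\dots,m+n\}}$ generate $S_{m+n}$ (write $(i,j)=(i,m+1)(m+1,j)(i,m+1)$ for $i\leq m<m+1<j$). Being invariant under both subgroups, $\partial^m G$ is symmetric in all $m+n$ variables, completing the induction step. I do not expect a genuine obstacle here; the only care needed is matching variable positions correctly when passing between the ``$\partial^m G$'' and ``$\partial^{m-1}(\partial G)$'' presentations of the same series, and checking that commutativity and associativity of $x\!*\!y$ make the $*$-sums $x_I$ behave symmetrically.
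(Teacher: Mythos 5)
Your argument is correct and is essentially the paper's own: use the displayed $\sum_{I}(-1)^{|I|}G(x_I,\dots)$ formula twice, once to get symmetry of $\partial^m G$ in $x_1,\dots,x_{m+1}$ and once (applied to $\partial^{m-1}(\partial G)$) to get symmetry in $x_{m+1},\dots,x_{m+n}$, then conclude. The only difference is that you spell out the group-theoretic step $\langle S_{\{1,\dots,m+1\}}, S_{\{m+1,\dots,m+n\}}\rangle = S_{m+n}$ that the paper leaves implicit.
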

Indeed, since $\partial G$ is symmetric, $\partial^m G=\partial^{m-1}(\partial G)$ is symmetric with respect to the last $n$ variables.
But $\partial^m G$ is symmetric with respect to the first $m+1$ variables, hence it is symmetric.

\begin{notation}
For any commutative $\Q$-algebra $K$ write
\[
\lg_1(x):=\log(1-x)=-\Sum_{i\geq 1}\frac{x^i}{i}\in K[[x]]
\]
and for any $n\geq 0$,
\[
\lg_n(x):=\frac{1}{n!}\ \big(\lg_1(x)\big)^n\in K[[x]].
\]
In particular, $\lg_0(x)=1$.
\end{notation}

For the rest of this section $*$ denotes the multiplicative formal group law, i.e., $x\!*\! y=x+y-xy$.

The power series $\lg_1(x)$
belongs to the kernel of $\partial$. Moreover, we have the following statement.

\begin{proposition}\label{kernel}
For any commutative $\Q$-algebra $K$ and any $n>0$, the kernel of $\partial^{n-1}:K[[x]]\to K[[x_1,\ldots,x_{n}]]$ is equal to
\[
\Sum_{0\leq r< n}K\cdot \lg_r(x).
\]
\end{proposition}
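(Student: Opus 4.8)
The plan is to compute the effect of $\partial$ on the basis $\{\lg_r(x)\}_{r\geq 0}$ of $K[[x]]$ (as a topological $K$-module in the $x$-adic topology), and then read off the kernel of the iterate $\partial^{n-1}$ from this. For the multiplicative formal group law $x\!*\!y = x+y-xy$ we have $1-(x\!*\!y) = (1-x)(1-y)$, hence $\lg_1(x\!*\!y) = \lg_1(x) + \lg_1(y)$. The first step is therefore to show, by the binomial theorem applied to $\big(\lg_1(x)+\lg_1(y)\big)^r$, that
\[
\lg_r(x\!*\!y) = \Sum_{i+j=r}\lg_i(x)\lg_j(y).
\]
Plugging this into the definition of the partial derivative (with the remaining variables $x_3,\ldots$ absent, since we start from a one-variable series), one gets
\[
(\partial\,\lg_r)(x_1,x_2) = \lg_r(x_1\!*\!x_2) - \lg_r(x_1) - \lg_r(x_2) + \lg_r(0)
= \Sum_{\substack{i+j=r\\ i,j\geq 1}}\lg_i(x_1)\lg_j(x_2),
\]
using $\lg_0 = 1$ and $\lg_r(0)=0$ for $r\geq 1$ (and $\lg_0(0)=1$, which cancels the $\lg_0\lg_0$ term). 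In particular $\partial\,\lg_0 = 0$ and $\partial\,\lg_1 = 0$, while for $r\geq 2$ the series $\partial\,\lg_r$ is a nonzero symmetric polynomial combination whose ``leading term'' involves $\lg_{r-1}\otimes\lg_1$.

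Next I would set up the induction on $n$ to identify $\Ker(\partial^{n-1})$. The base case $n=1$: $\partial^0 G = G(x) - G(0)$, whose kernel is $K\cdot 1 = K\cdot\lg_0$, as claimed. For the inductive step, write a general $G = \Sum_{r\geq 0} a_r\lg_r(x)$ and compute $\partial^{n-1}G = \partial^{n-2}(\partial G)$. By the formula above, $\partial G = \Sum_{r\geq 2} a_r\Sum_{i+j=r, i,j\geq 1}\lg_i(x_1)\lg_j(x_2)$. The key point is that the map $\partial\colon K[[x]]/(K\lg_0\oplus K\lg_1) \to K[[x_1,x_2]]$ is \emph{injective}: one recovers the coefficients $a_r$ one at a time by looking at, say, the coefficient of $\lg_1(x_2)$ (or of the monomial $x_2$) in $\partial G$, which equals $\Sum_{r\geq 2} a_r\lg_{r-1}(x_1)$, and this determines all $a_r$ for $r\geq 2$. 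Hence $G\in\Ker(\partial^{n-1})$ iff $\partial G \in \Ker(\partial^{n-2}\colon K[[x_1,x_2]]\to\cdots)$ as a series in $x_1$ with $x_2$ a parameter; applying the induction hypothesis in the variable $x_1$ over the ring $K[[x_2]]$ — and using that $\partial^{n-2}$ on the extra variables commutes appropriately — one concludes that $\partial G$, viewed in $x_1$, must lie in $\Sum_{0\leq r<n-1}K[[x_2]]\cdot\lg_r(x_1)$. Matching this against the explicit expansion $\partial G = \Sum_{i\geq 1}\big(\Sum_{j\geq 1}a_{i+j}\lg_j(x_2)\big)\lg_i(x_1)$ forces $a_{i+j} = 0$ whenever $i \geq n-1$, i.e. $a_r = 0$ for all $r\geq n$, giving $\Ker(\partial^{n-1}) = \Sum_{0\leq r<n}K\cdot\lg_r(x)$.

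The main obstacle I anticipate is making the inductive step fully rigorous: the iterated operator $\partial^{n-2}$ takes its derivative always in the \emph{first} variable, so I must be careful that "applying the induction hypothesis to $\partial G$ in the variable $x_1$ over the base ring $K[[x_2]]$" is legitimate — i.e. that $\partial^{n-2}$ acting on $K[[x_1,x_2]]$ really is the $(n-2)$-fold partial derivative in the sense of Definition \ref{parderiv} with $x_2$ an inert parameter, and that Observation \ref{observe} guarantees the symmetry needed so that no information is lost. One clean way to sidestep the bookkeeping is to avoid the induction altogether: compute $\partial^{m}\lg_r$ in closed form directly from the comultiplication formula, obtaining
\[
\partial^m\lg_r = \Sum_{\substack{i_0+i_1+\cdots+i_{m+1}=r\\ i_1,\ldots,i_{m+1}\geq 1}}\lg_{i_0}(x_1)\lg_{i_1}(x_2)\cdots\lg_{i_{m+1}}(x_{m+2})\cdot(\text{with appropriate indexing}),
\]
which is manifestly zero iff $r \leq m$ (no valid decomposition with $m+1$ summands each $\geq 1$ when $r < m+1$, and for $r = m+1$ only... wait — it is zero precisely when $r\leq m$ since we need $m+1$ parts each at least $1$ plus a nonnegative part $i_0$, forcing $r\geq m+1$ for a nonzero term), and otherwise nonzero with $K$-linearly independent leading monomials. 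Then linear independence of the $\partial^{n-1}\lg_r$ for $r\geq n$ together with $\partial^{n-1}\lg_r = 0$ for $r<n$ gives the result immediately. I would present the direct computation as the main line and mention the inductive argument only if the closed form proves unwieldy.
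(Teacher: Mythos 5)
Your proposal is correct and is essentially the paper's own argument in expanded form: the paper substitutes $y=\lg_1(x)$, turning the multiplicative law into the additive one so that $\partial$ preserves total degree and lowers the $y_1$-degree by one, which is exactly what your expansion in the topological basis $\{\lg_r=\lg_1^r/r!\}$ and the closed form $\partial^m\lg_r=\sum_{i_1+\cdots+i_{m+1}=r,\ i_k\geq 1}\lg_{i_1}(x_1)\cdots\lg_{i_{m+1}}(x_{m+1})$ make explicit. Only fix the small indexing slip in your displayed formula ($\partial^m$ of a one-variable series lives in $m+1$ variables and every part is $\geq 1$; there is no extra part $i_0$), which does not affect your vanishing criterion $r\leq m$ or the independence-of-lowest-degree-terms conclusion.
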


\begin{proof}
We change the variables: $y_i=\lg_1(x_i)=\log(1-x_i)$, where $x_1=x$. The multiplicative group law $*$ translates to the additive one. In the new
variables the partial derivative is homogeneous and lowers the degree in $y_1$ by $1$. Therefore, the kernel of $\partial^n$
is spanned by $1, y_1,\ldots, y_1^{n-1}$.
\end{proof}

The following formula is very useful.

\begin{proposition}\label{aformula}
Let $K$ be a $\Q$-algebra, $G\in K[[x]]$ and $n$ a positive integer. Then
\[
(\partial^n G)(x_1,x_2,\dots x_{n+1})=
\Sum_{k=1}^{\infty}\frac{1}{k!}\ \partial^{n-1}\Big((1-x)^k  \diff[k]{G}{x}\Big)(x_1,x_2,\dots, x_n)\cdot x_{n+1}^k.
\]
\end{proposition}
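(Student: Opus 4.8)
The plan is to first establish the case $n=1$ — a formula for $\partial G$ itself — and then deduce the general statement by applying $\partial^{n-1}$ to it, using that $\partial^n G=\partial^{n-1}(\partial G)$.

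For the base case I would use that, since $*$ is the multiplicative law, $x_1\!*\!x_2=x_1+x_2-x_1x_2=x_1+x_2(1-x_1)$, so $G(x_1\!*\!x_2)=G\big(x_1+x_2(1-x_1)\big)$. The formal Taylor identity $G(x_1+w)=\Sum_{k\geq0}\frac{1}{k!}\diff[k]{G}{x}(x_1)\,w^k$ holds in $K[[x_1,w]]$, and since the substitution $w\mapsto x_2(1-x_1)$ raises the order in $x_2$ it is admissible, giving $G(x_1\!*\!x_2)=\Sum_{k\geq0}\frac{1}{k!}\diff[k]{G}{x}(x_1)\,(1-x_1)^k x_2^k$. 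Specialising $x_1$ to $0$ and using $0\!*\!x_2=x_2$ gives likewise $G(x_2)=\Sum_{k\geq0}\frac{1}{k!}\diff[k]{G}{x}(0)\,x_2^k$. Substituting both into $(\partial G)(x_1,x_2)=G(x_1\!*\!x_2)-G(x_1)-G(x_2)+G(0)$, the $k=0$ summands cancel $-G(x_1)$ and $+G(0)$, leaving
\[
(\partial G)(x_1,x_2)=\Sum_{k\geq1}\frac{1}{k!}\Big((1-x_1)^k\diff[k]{G}{x}(x_1)-\diff[k]{G}{x}(0)\Big)x_2^k,
\]
which is exactly the asserted formula for $n=1$, in view of $(\partial^0H)(x_1)=H(x_1)-H(0)$.

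For $n\geq2$ I would apply $\partial^{n-1}$ — which always acts on the first variable — to the displayed identity for $\partial G$. This is routine once three facts are in hand: $\partial^{n-1}$ is $K$-linear; $\partial$, hence every $\partial^m$, commutes with taking the coefficient of a passive variable, so that applied to $\Sum_k a_k(x_1)\,x_2^k$ it returns $\Sum_k(\partial^{n-1}a_k)(x_1,\dots,x_n)\,x_{n+1}^k$, the passive $x_2$ being relabelled $x_{n+1}$; and $\partial^{n-1}$ annihilates constants (indeed $\partial c=0$ and $\partial^0c=0$), which kills the terms $\diff[k]{G}{x}(0)$. Since moreover $\partial^{n-1}(\partial G)=\partial^n G$, what remains is precisely
\[
(\partial^n G)(x_1,\dots,x_{n+1})=\Sum_{k\geq1}\frac{1}{k!}\,\partial^{n-1}\Big((1-x)^k\diff[k]{G}{x}\Big)(x_1,\dots,x_n)\cdot x_{n+1}^k.
\]
All the series occurring are well-defined elements of the relevant power series rings since the $k$-th summand is divisible by $x_{n+1}^k$, so only finitely many contribute in each total degree. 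The point I expect to require genuine care is the base case — justifying the formal Taylor expansion of $G(x_1+w)$ and the admissibility of the substitution $w=x_2(1-x_1)$ — after which the passage to general $n$ is purely formal.
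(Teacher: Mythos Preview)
Your proof is correct and takes a genuinely different route from the paper's. The paper verifies the identity by comparing the $x_1^{\alpha_1}\cdots x_{n+1}^{\alpha_{n+1}}$-coefficient on each side directly: on the left it extracts this coefficient from $G(x_1\!*\!\cdots\!*\!x_{n+1})$ by differentiating $k=\alpha_{n+1}$ times in $x_{n+1}$, using $\frac{\partial}{\partial x_{n+1}}(x_1\!*\!\cdots\!*\!x_{n+1})=(1-x_1)\cdots(1-x_n)$, and setting $x_{n+1}=0$; on the right it does the analogous extraction. Your argument instead settles $n=1$ by the Taylor expansion of $G\big(x_1+x_2(1-x_1)\big)$ and then applies $\partial^{n-1}$ to both sides, exploiting that $\partial^{n-1}$ is linear, acts only on the first variable, and annihilates constants for $n\geq 2$. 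Both proofs are ultimately Taylor-based, but yours isolates the analytic content in the base case and makes the passage to general $n$ purely formal, while the paper's coefficient comparison avoids tracking the variable relabelling under iterated $\partial$ at the cost of a slightly heavier bookkeeping of monomials.
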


\begin{proof}
Note that both sides don't contain monomials
$\overline{x}^\alpha:=x_1^{\alpha_1}x_2^{\alpha_2}\cdots x_{n+1}^{\alpha_{n+1}}$ if at least one $\alpha_i$ is zero.
We prove that for every multi-index $\alpha$ with $\alpha_i>0$ for all $i$, the $\overline{x}^\alpha$-
coefficients of both sides are equal. Set $k=\alpha_{n+1}$.

By (\ref{m-th-deriv}), the $\overline{x}^\alpha$-coefficient of the left hand side is the same
as the $\overline{x}^\alpha$-coefficient of \newline $G(x_1\! *\! x_2\! *\cdots \! *\!  x_{n+1})$.
To determine this coefficient,
we differentiate (in the standard way) $k$ times the series $G(x_1\! *\! x_2\! *\! \cdots \! *\!  x_{n+1})$ by $x_{n+1}$,
plug in $x_{n+1}=0$ and divide by $k!$.
Since our formal group law is multiplicative, we have
$1-x*y=(1-x)(1-y)$ and so,
\[
\diff{}{x_{n+1}}(x_1\! *\! x_2\! *\! \cdots \! *\!  x_{n+1})=(1-x_1)(1-x_2)\cdots (1-x_n).
\]
It follows that the $\underline{x}^\alpha$-coefficient in the left hand side is equal to
the $x_1^{\alpha_1}x_2^{\alpha_2}\cdots x_{n}^{\alpha_{n}}$-coefficient of
\[
\frac{1}{k!}(1-x_1)^k(1-x_2)^k\cdots (1-x_n)^k \diff[k]{G}{x}(x_1\! *\! x_2\! *\! \cdots \! *\!  x_{n}).
\]

On the other hand, note that the $\underline{x}^\alpha$-coefficient of the right hand side is equal to
the $x_1^{\alpha_1}x_2^{\alpha_2}\cdots x_{n}^{\alpha_{n}}$-coefficient of
$\frac{1}{k!}\partial^{n-1}\Big((1-x)^k \diff[k]{G}{x}\Big)(x_1,x_2,\dots, x_n)$. This is the same as the
$x_1^{\alpha_1}x_2^{\alpha_2}\cdots x_{n}^{\alpha_{n}}$-coefficient of
\[
\frac{1}{k!}(1-x_1*x_2*\cdots * x_{n})^k G^{(k)}(x_1*x_2*\cdots * x_{n})=\frac{1}{k!}(1-x_1)^k(1-x_2)^k\cdots (1-x_n)^k \diff[k]{G}{x}(x_1*x_2*\cdots * x_{n}).
\]
\end{proof}

For a nonzero power series $H \in R[[x_1,\ldots,x_n]]$ denote by $v(H)$ the smallest degree of monomials in $H$.
Set also $v(0)=\infty$.

\begin{observation}\label{valuation}
Suppose that a commutative ring $R$ is torsion free. A direct calculation shows that for positive integers $n$ and $m$,
we have $v(\partial^{n-1}(x^m))=m$ if $m\geq n$. It follows that
$v(\partial^{n-1}(G))=v(G)$ for every $G\in R[[x]]$ such that $v(G)\geq n$.
\end{observation}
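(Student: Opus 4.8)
The plan is to reduce the whole statement to the single computation of the lowest-degree homogeneous component of $\partial^{n-1}(x^m)$, and then bootstrap to a general $G$ by linearity. Since the multiplicative formal group law $x*y=x+y-xy$ has integer coefficients, $\partial$ preserves $\Z[[\,\cdot\,]]$, so $\partial^{n-1}(x^m)$ already lies in $\Z[[x_1,\dots,x_n]]$ and its image in $R[[x_1,\dots,x_n]]$ is the base change along $\Z\to R$. Because $R$ is torsion free, $\Z\to R$ sends every nonzero integer to a non-torsion, hence nonzero, element of $R$, so it suffices to pin down the lowest component over $\Z$ and check it is nonzero there.

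To do that I would use the closed formula $\partial^{n-1}(x^m)=\pm\sum_{I\subseteq[1,n]}(-1)^{|I|}(x_I)^m$ (valid up to an overall sign that plays no role), where $x_I=1-\prod_{i\in I}(1-x_i)$ is the $*$-sum of the $x_i$ with $i\in I$. Since $x_I=\sum_{i\in I}x_i+(\text{terms of degree}\geq 2)$ and $x_\emptyset=0$, each $(x_I)^m$ has valuation $\geq m$ with degree-$m$ part $\big(\sum_{i\in I}x_i\big)^m$; hence $v(\partial^{n-1}(x^m))\geq m$ and the degree-$m$ component of $\partial^{n-1}(x^m)$ equals, up to sign,
\[
P_m:=\sum_{I\subseteq[1,n]}(-1)^{|I|}\Big(\sum_{i\in I}x_i\Big)^m .
\]
Expanding the $m$-th power over maps $j\colon[1,m]\to[1,n]$ and switching the order of summation, the coefficient of $x_{j(1)}\cdots x_{j(m)}$ in $P_m$ is $\sum_{I\supseteq j([1,m])}(-1)^{|I|}$, which by the telescoping identity $\sum_{S\subseteq I\subseteq[1,n]}(-1)^{|I|}=(-1)^n$ if $S=[1,n]$ and $0$ otherwise vanishes unless $j$ is surjective. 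Thus $P_m=(-1)^n\sum\frac{m!}{a_1!\cdots a_n!}\,x_1^{a_1}\cdots x_n^{a_n}$, the sum over $a_1+\cdots+a_n=m$ with all $a_i\geq 1$; for $m\geq n$ this is a nonzero polynomial with nonzero integer coefficients (for instance $m!/(m-n+1)!$ is the coefficient of $x_1^{m-n+1}x_2\cdots x_n$). By the base-change remark it stays nonzero over $R$, so $v(\partial^{n-1}(x^m))=m$ for $m\geq n$.

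For the general claim, write $G=\sum_{m\geq v}c_mx^m$ with $v:=v(G)\geq n$ and $c_v\neq 0$ (the case $G=0$ being trivial). Then $\partial^{n-1}(G)=\sum_{m\geq v}c_m\,\partial^{n-1}(x^m)$; every summand has valuation $\geq m\geq v$, so $v(\partial^{n-1}(G))\geq v$, and only the $m=v$ term reaches degree $v$, where the component of $\partial^{n-1}(G)$ is $\pm c_vP_v\neq0$ (using $c_v\neq0$, torsion freeness of $R$, and that $P_v$ carries a nonzero integer coefficient). Hence $v(\partial^{n-1}(G))=v(G)$. I expect no real obstacle here: the only two points that need a little care are the telescoping identity for $\sum_I(-1)^{|I|}$ over $I\supseteq S$, and the bookkeeping that torsion freeness of $R$ is exactly what keeps the relevant integer coefficients nonzero after $\Z\to R$. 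As an alternative — which I would mention but not rely on — substituting $y_i=\log(1-x_i)$, as in the proof of Proposition~\ref{kernel}, turns $\partial$ into a degree-preserving finite-difference operator for the additive group, in which the same surjection count gives the statement in one line.
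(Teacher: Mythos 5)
Your proof is correct, and the paper supplies no argument of its own for this Observation (it only writes ``A direct calculation shows\dots''), so there is nothing to compare it against. The inclusion--exclusion identity $\partial^{n-1}(x^m)=\sum_{I\subseteq[1,n]}(-1)^{|I|}(x_I)^m$ and the surjection count giving the degree-$m$ component $(-1)^n\sum_{a_1+\cdots+a_n=m,\ a_i\geq1}\frac{m!}{a_1!\cdots a_n!}x_1^{a_1}\cdots x_n^{a_n}$ are exactly what a direct calculation yields, and you correctly isolate the two places torsion-freeness enters (the nonzero integer coefficient such as $m!/(m-n+1)!$ stays nonzero under $\Z\to R$, and $c_v$ times a nonzero integer stays nonzero). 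The alternative you sketch --- substituting $y_i=\log(1-x_i)$ so that $\partial$ becomes the homogeneous additive difference operator, computing the degree-$m$ component there, and transferring valuations back along the invertible change of variables --- is cleaner and is in the spirit of the proof of Proposition~\ref{kernel}; it is likely the route the authors intended.
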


\subsection{Integration of symmetric power series}

\begin{definition}
A power series $G\in R[[x_1,\ldots, x_n]]$ is called \emph{double-symmetric}
if $G$ itself and $\partial G$ are both symmetric.
\end{definition}

In the following proposition we prove that double-symmetric power series can be
symmetrically integrated over any commutative $\Q$-algebra.

\begin{proposition} \label{allequivalent}
Let $K$ be a commutative $\Q$-algebra and $G\in K[[x_1,\ldots, x_n]]$, $n\geq 2$,
be a symmetric power series divisible by $x_1\cdot\ldots\cdot x_{n}$. The following are equivalent:
\begin{enumerate}
   \item $G$ is double-symmetric;
  \item All derivatives $\partial^m(G)$, $m\geq 0$, are symmetric power series;
  \item There is a power series $L\in K[[x]]$ such that $G=\partial^{n-1}(L)$;
    \item There is $H\in K[[x_1,\ldots, x_{n-1}]]$ such that $\partial(H)=G$;
  \item There is a unique symmetric $H\in K[[x_1,\ldots, x_{n-1}]]$, divisible by $x_1\cdot\ldots\cdot x_{n-1}$, with zero
  coefficient at $x_1\cdot\ldots\cdot x_{n-1}$ and such that $\partial(H)=G$.
  \end{enumerate}
\end{proposition}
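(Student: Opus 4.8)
The plan is to prove the chain of implications $(3)\Rightarrow(2)\Rightarrow(1)\Rightarrow(4)$, then establish $(4)\Rightarrow(5)$ (existence of a \emph{normalized} $H$), and finally close the loop with $(5)\Rightarrow(3)$; uniqueness in $(5)$ will be handled separately using Proposition \ref{kernel}. The implications $(3)\Rightarrow(2)$ and $(2)\Rightarrow(1)$ are essentially formal: if $G=\partial^{n-1}(L)$ then $\partial^m(G)=\partial^{m+n-1}(L)$ is symmetric in its first $m+n$ variables by the general remark following Definition \ref{parderiv}, and since $L$ is a power series in one variable the iterated derivative is symmetric in \emph{all} its variables; and $(2)\Rightarrow(1)$ is trivial since $(1)$ asks only for $m=0,1$. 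The implication $(1)\Rightarrow(2)$ is exactly Observation \ref{observe}, so in fact $(1)$, $(2)$ are interchangeable and I only need $(1)/(2)\Rightarrow(4)$ together with $(4),(5)\Rightarrow(3)$.

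The heart of the matter is the existence statement, i.e. producing $H$ with $\partial(H)=G$. The approach is to work in the logarithmic coordinates $y_i=\lg_1(x_i)=\log(1-x_i)$ used in the proof of Proposition \ref{kernel}: there the formal group law $*$ becomes addition, $G$ becomes an honest symmetric power series $\widetilde G(y_1,\dots,y_n)$, and $\partial$ becomes the \emph{finite difference / additive} operator
\[
(\partial H)(y_1,\dots,y_n)=H(y_1+y_2,y_3,\dots,y_n)-H(y_1,y_3,\dots,y_n)-H(y_2,y_3,\dots,y_n)+H(0,y_3,\dots,y_n).
\]
In these coordinates one can integrate term by term. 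Concretely, for a symmetric monomial class one inverts $\partial$ using the classical fact that the map sending $z^{a+b}$-type terms to the "Newton/divided-difference" expression is invertible over $\Q$; more systematically, I would show that on the subspace of symmetric series in $y_1,\dots,y_n$ divisible by $y_1\cdots y_n$, the operator $\partial:(\text{sym. series in }y_1,\dots,y_{n-1}\text{ div. by }y_1\cdots y_{n-1})\to(\text{sym. series in }y_1,\dots,y_n\text{ div. by }y_1\cdots y_n)$, restricted to series with vanishing coefficient at $y_1\cdots y_{n-1}$, is a bijection onto the subspace of double-symmetric series. The surjectivity onto double-symmetric $G$ is where hypothesis $(1)$ is used: double-symmetry of $G$ is precisely the compatibility condition making the formal antiderivative symmetric. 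Pulling back to the $x_i$ coordinates (which is a legitimate change of variables since $y_i=x_i+\text{higher order}$) gives $H\in K[[x_1,\dots,x_{n-1}]]$ with the stated normalization, i.e. $(4)$ and $(5)$.

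For uniqueness in $(5)$: if $\partial(H_1)=\partial(H_2)=G$ with both $H_i$ symmetric, divisible by $x_1\cdots x_{n-1}$, and with zero coefficient at $x_1\cdots x_{n-1}$, then $H_1-H_2\in\Ker(\partial)$. In the $y$-coordinates $\partial$ lowers $y_1$-degree by one, so (as in Proposition \ref{kernel}, but now in $n-1\ge 1$ variables) an element of $\ker\partial$ that is symmetric must be a symmetric series depending only through... more precisely, $\ker(\partial:K[[y_1,\dots,y_{n-1}]]\to K[[y_1,\dots,y_n]])$ consists of series of $y_1$-degree $\le\cdots$; the divisibility by $y_1\cdots y_{n-1}$ forces it to be a multiple of $y_1\cdots y_{n-1}$ itself up to the normalization, and the vanishing of that coefficient kills it. Finally $(5)\Rightarrow(3)$: given such $H$ for $G$, apply the already-proved existence repeatedly to descend $G\rightsquigarrow H\rightsquigarrow\cdots$ down to a one-variable series $L$ with $\partial^{n-1}(L)=G$ (each intermediate series is again double-symmetric because its $\partial$ is, being one further $\partial$ away from the symmetric $G$).

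The step I expect to be the main obstacle is the surjectivity in the integration step — showing that double-symmetry of $G$ is \emph{exactly} the obstruction to finding a symmetric antiderivative, and that no further compatibilities are needed. This requires a careful coefficient-by-coefficient (or generating-function) argument in the additive coordinates: one writes the candidate $H$ by an explicit formula inverting the two-variable difference operator on each "new" symmetric monomial type, and then must verify that the symmetry of $\partial H$ in \emph{all} variables (which is automatic) together with the symmetry of $H$ in its own variables (which needs checking) both hold, the latter being equivalent to $\partial G$ being symmetric. I would organize this by splitting a symmetric $G$ according to its "leading" behavior and using Proposition \ref{aformula} to pin down the highest-variable dependence, then induct on the total degree.
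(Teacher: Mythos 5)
Your overall strategy of implications matches the paper's, and the key move of passing to additive coordinates $y_i=\lg_1(x_i)$ (where the formal group law becomes addition and $\partial$ becomes a finite-difference operator) is exactly what the paper does. The uniqueness argument in $(5)$ is also essentially right: in additive coordinates $\partial H=0$ forces $H$ to be linear in $y_1$, and combined with symmetry and divisibility by $y_1\cdots y_{n-1}$ this pins $H$ down to a multiple of $y_1\cdots y_{n-1}$, which the normalization kills.

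The genuine gap is the existence step, which you yourself flag as ``the main obstacle.'' Your proposal \emph{asserts} that $\partial$, restricted to normalized symmetric $(n-1)$-variable series, is a bijection onto the double-symmetric $n$-variable series and that double-symmetry is ``precisely the compatibility condition making the formal antiderivative symmetric,'' but this is the entire content of $(1)\Rightarrow(5)$ and no argument is given. The plan you offer (``splitting a symmetric $G$ according to its leading behavior and using Proposition \ref{aformula},\ldots then induct on the total degree'') is a direction, not a proof; it leaves unaddressed the concrete verification that the assembled antiderivative is symmetric in all $n-1$ of its variables and not merely in the first $n-2$ of them. The paper closes this gap via explicit calculation: for the base case $n=2$ it exploits the cocycle identity for a homogeneous degree-$d$ piece to show $\partial\bigl(G(x,x)\bigr)(x,y)=(2^d-2)\,G(x,y)$, yielding the antiderivative $H(x)=G(x,x)/(2^d-2)$; for $n\geq 3$ it slices $G$ along the last variable, integrates each slice by the inductive hypothesis, and then proves symmetry of the assembled $H$ by manipulating the binomial identities $\binom{i+k}{i}a_{i+k,j}=\binom{j+k}{j}a_{j+k,i}$ coming from symmetry of $\partial H=G$. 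Without a comparable calculation your argument doesn't establish that the obvious candidate antiderivative is actually symmetric, so the proposal is incomplete at precisely the point where the real work lies.
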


\begin{proof}
Note that $(1)\Leftrightarrow (2)$ by Observation \ref{observe}.
We will prove the equivalence of all statements by induction on $n$.
The implication $(3)\Rightarrow (2)$ is clear, $(2)\Rightarrow (1)$ and $(3)\Rightarrow (4)$ are trivial.

$(5)\Rightarrow (3)$ follows by induction applied to $H$.

$(1)$ or $(4) \Rightarrow (5)$ Over a commutative $\Q$-algebra every formal group law is isomorphic to
the additive one. So we may assume that
the group law is additive, i.e., the derivative is defined by
\[
(\partial G)(x,y,\bar t)=G(x+y,\bar t)-G(x,\bar t)-G(y,\bar t)+G(0,\bar t).
\]

We first prove uniqueness. Indeed if $\partial H=0$,
then $H$ is linear in $x_1$, and since $H$ is symmetric and divisible by $x_1\cdot\ldots\cdot x_{n-1}$,
we must have $H=0$.

\smallskip

Case $n=2$: The implication $(4) \Rightarrow (5)$ is obvious. We prove $(1) \Rightarrow (5)$. We may assume that $G$ is a homogeneous polynomial of degree $d>1$.
The symmetry of the derivative
of $G(x,y)$ results in the following cocycle condition:
\[
G(x+y,z)+G(x,y)=G(x+z,y)+G(x,z).
\]
In particular, we have the following equalities:
\begin{align*}
G(x+y,x+y)+G(x,y)&=G(2x+y,y)+G(x,x+y),\\
G(2x+y,y)+G(2x,y)&=G(2x,2y)+G(y,y),\\
G(x,x+y)+G(x,y)&=G(2x,y)+G(x,x).
\end{align*}
It follows that
\begin{align*}
\partial(G(x,x))(x,y)&=G(x+y,x+y)-G(x,x)-G(y,y)\\
&=G(2x,2y)-2G(x,y)\\
&=(2^d-2) (G(x,y)),
\end{align*}
hence $G(x,y)=\partial(H)$, where $H(x)=G(x,x)/(2^d-2)$.

\smallskip

Case $n=3$: Write $G(x,y,z)=\Sum_{i\geq 1} G_i(x,y)z^i$. By the very definition, if $G$ satisfies $(1)$, respectively $(4)$, then all $G_i(x,y)$ also
satisfy $(1)$, respectively $(4)$. By induction, they satisfy $(5)$.
Integrating each $G_i(x,y)$, we get a power series
$H=\sum_{i,j\geq 1}a_{i,j}x^iy^j$ in two variables such that $\partial H=G$.

Note that
we can change $H$ by any series $\sum_i c_ixy^i$ without changing
$\partial H$. This way, we can make $H=\sum_{i,j\geq 1}a_{i,j}x^iy^j$ with $a_{i,1}=a_{1,i}$ and $a_{1,1}=0$.
We claim that $H$ is symmetric. Indeed, from the symmetry of $\partial H$, we have:
$$
\mybinom[0.8]{i+k}{i} a_{i+k,j}=\mybinom[0.8]{j+k}{j} a_{j+k,i},
$$
for any $i,j,k\geq 1$. This implies that
\[
{\textstyle\frac{1}{i+l}}\mybinom[0.8]{i+l}{i}a_{i+l-1,1}=a_{l,i},
\]
and so, $a_{i,l}=a_{l,i}$, for any $i,l\geq 2$.
This shows that $H$ is symmetric.
Observe that such symmetric integration is unique provided $a_{1,1}=0$.

\smallskip

Case $n>3$: Write $G=\Sum_{i\geq 1}G_i\cdot x_n^i$ with $G_i\in K[[x_1,\ldots, x_{n-1}]]$.
Again, by the very definition, the slices $G_i$ of $G$ are double-symmetric. By the inductive assumption, these can be uniquely
integrated to symmetric power series $H_i\in K[[x_1,...,x_{n-2}]]$ as in $(5)$.
Putting these power series together, we obtain
\[
H=\Sum_{i\geq 1}H_i\cdot x_{n-1}^i\in K[[x_1,...,x_{n-1}]]
\]
such that $\partial H=G$. Write
\[
H=\Sum_{i_1,\ldots,i_{n-1}} a_{i_1,\ldots,i_{n-1}}x_1^{i_1}\ldots x_{n-1}^{i_{n-1}}.
\]
Modifying $H$ by $x_1 \ldots x_{n-1}L(x_{n-1})$ for an appropriate power series $L$, we may assume that
$a_{i,1,\ldots,1}=a_{1,1,\ldots,i}$ for all $i$.

We claim that $H$ is symmetric. The $x_1^{i_1}\ldots x_{n}^{i_{n}}$-coefficient of $G=\partial H$
is equal to $\mybinom[0.7]{i_1+i_2}{i_1}a_{i_1+i_2,i_3,...,i_n}$. Therefore, since $G$ is symmetric,
$H$ is symmetric with respect
to $x_2,...,x_{n-1}$, if $i_1>1$. Recall than $H$ is also symmetric in $x_1,\ldots, x_{n-2}$. Therefore,
it suffices to show that the coefficient $a_{1, i_2,\ldots,i_{n-1}}$ does not change if we interchange
$i_{n-1}$ with $i_k$ for some $k=2,\ldots, n-2$.

Suppose all indices $i_1,\ldots,i_{n-1}$ but one are equal to $1$. Then the statement follows from the equality
$a_{1,1,\ldots,i}=a_{i,1,\ldots,1}=a_{1,i,\ldots,1}$ for all $i$. Otherwise, at least two indices, say
$i_k=u$ and $i_l=v$ with $k<l$ are greater than $1$.

If $l<n-1$, set $w=i_{n-1}$. We have (here and below we indicate only the indices which
are permuted, hidden indices remain unchanged):
\[
a_{1,u,v,w}=a_{v,u,1,w}=a_{v,w,1,u}=a_{1,w,v,u},
\]
so we interchanged $i_k$ and $i_{n-1}$. If $l=n-1$, we can write
\[
a_{1,u,v}=a_{u,1,v}=a_{u,v,1}=a_{v,u,1}=a_{v,1,u}=a_{1,v,u},
\]
i.e., we again interchanged $i_k$ and $i_{n-1}$.
\end{proof}

\section{The groups ${\cQ}^n_R$}\label{firstfirst}

The formal group law is multiplicative in this section. Let $R$ be a commutative ring and
$K=R\tens_{\Z} \Q$. We assume that $R$ is torsion free (as abelian group), i.e., $R$
can be identified with a subring of $K$.

\begin{definition}\label{qn}
For any integer $n\geq 1$, let us denote by $\cQ_R^n$ the $R$-module of power series $G$ in $xK[[x]]$, for which
$\partial^{n-1}(G)\in R[[x_1,...,x_n]]$. For example, $\cQ_R^1=xR[[x]]$.
We also set $\cQ_R^n=R[[x]]$ if $n\leq 0$.
\end{definition}

Note that $xR[[x]]$ and $\Sum_{0< r< n}K\cdot \lg_r(x)$ are contained in $\cQ_R^n$ in view of Proposition \ref{kernel}.
In Theorem \ref{mainone} below we will see that the quotient of $\cQ_R^n$ by the second of these subspaces  can be identified with the space of additive operations on $\CK^n_R$.

\begin{lemma}\label{divdiv}
Suppose $R$ has no nontrivial $\Z$-divisible elements. Then
\[
xR[[x]]\cap \Big(\Sum_{0< r< n}K\cdot \lg_r(x)\Big)=0.
\]
\end{lemma}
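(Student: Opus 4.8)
The claim is that if $R$ has no nontrivial $\Z$-divisible elements, then $xR[[x]]\cap\big(\Sum_{0<r<n}K\cdot\lg_r(x)\big)=0$. The plan is to take an arbitrary element $H$ of the intersection, write it as $H=\Sum_{r=1}^{n-1}\lambda_r\lg_r(x)$ with $\lambda_r\in K$, and show all $\lambda_r=0$ using that $H\in xR[[x]]$. Since each $\lg_r(x)=\frac{1}{r!}\big(\log(1-x)\big)^r$ has $x$-adic valuation exactly $r$ (because $\log(1-x)=-x-\tfrac{x^2}{2}-\cdots$ has valuation $1$), the leading term of $\lg_r$ is $(-1)^r x^r/r!$ up to normalization; so the family $\{\lg_1,\dots,\lg_{n-1}\}$ is ``triangular'' with respect to the valuation filtration. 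This means one can peel off the coefficients one at a time: $\lambda_1$ is determined by the $x^1$-coefficient of $H$, then $\lambda_2$ by the $x^2$-coefficient of $H-\lambda_1\lg_1$, and so on.

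First I would make precise that each $\lambda_r$, obtained this way, is forced to be a $\Z$-divisible element of $K$ that lies in $R$ — actually the cleanest route is: the $x^r$-coefficient of $\lg_r$ involves a denominator (a product of factorials / integers from the iterated $\log$), and matching coefficients of $H\in xR[[x]]$ against $\Sum\lambda_r\lg_r$ shows inductively that $\lambda_r$, multiplied by suitable nonzero integers, lands in $R$, hence $\lambda_r\in R$ after clearing; but then looking at \emph{higher} coefficients of $\lg_r$ one sees denominators growing without bound, forcing $\lambda_r$ to be divisible in $R$ by arbitrarily large integers. Concretely: fix the smallest $r$ with $\lambda_r\ne 0$; then $v(H)=r$ and the $x^r$-coefficient of $H$ equals $(-1)^r\lambda_r/r!\in R$, so $\lambda_r\in r!\,R\subset R$. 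Now the coefficient of $x^{r+j}$ in $\lambda_r\lg_r(x)$ has the form $\lambda_r\cdot c_{r,j}$ where $c_{r,j}\in\Q$ has denominator divisible by arbitrarily large primes as $j\to\infty$ (since $\lg_r$ is a polynomial in $\log(1-x)=-\sum x^i/i$, its coefficients pick up every prime in denominators); subtracting the contributions of $\lambda_{r+1}\lg_{r+1},\dots$ — which, by the same valuation-triangular bookkeeping, contribute coefficients that are also $K$-multiples constrained to lie in $R$ — forces $\lambda_r$ to be divisible in $R$ by every integer $m$, i.e.\ $\lambda_r$ is a nontrivial $\Z$-divisible element of $R$, contradiction. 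Hence $\lambda_r=0$ for all $r$, i.e.\ $H=0$.

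The cleanest way to organize the divisibility argument is probably to argue one coefficient at a time together with the Möbius-type inversion: since $\log(1-x)=-\sum_{i\ge1}x^i/i$, for each prime $p$ the coefficient of $x^p$ in $\lg_1$ is $-1/p$, and iterating, for each $r$ and each prime $p>r$ the coefficient of $x^{p}$ (or $x^{p+r-1}$, depending on parity) in $\lg_r$ has a factor $1/p$ in lowest terms. Feeding $H=\Sum_{r=1}^{n-1}\lambda_r\lg_r\in xR[[x]]$ and using that the top nonzero $\lambda_r$ already lies in $R$, one deduces $\lambda_r/p\in R$ for all large $p$, hence $\lambda_r$ is $\Z$-divisible; by hypothesis $\lambda_r=0$, and then one descends to the next index.

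The main obstacle will be the combinatorial bookkeeping needed to (i) verify the triangularity claim $v(\lg_r)=r$ cleanly — this is immediate — and more importantly (ii) show that for each $r<n$ there really are infinitely many degrees $d$ at which the coefficient of $x^d$ in $\lg_r$ has arbitrarily large prime in its denominator, \emph{and} that the contributions of the other $\lg_{r'}$ ($r'>r$, whose leading behavior sits in higher degrees) cannot conspire to cancel those denominators. The valuation-triangular structure handles (ii): after subtracting off the lower-index terms (which we've already shown vanish, by descending induction — so actually start from the bottom, smallest $r$), the equation $\lambda_r\lg_r + \Sum_{r'>r}\lambda_{r'}\lg_{r'}\in xR[[x]]$ reduces, in the degree-$r$ coefficient, to $\lambda_r\in r!R$, and then one can inductively clear each successive $\lambda_{r'}$ into $R$ before exploiting the unbounded denominators of $\lg_r$ itself at high degree. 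I expect this to be short once the order of operations is right.
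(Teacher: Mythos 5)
Your outline gets the harmless part right (each $\lg_r$ has $x$-adic valuation exactly $r$ with leading coefficient $(-1)^r/r!$, so the smallest nonzero $\lambda_r$ satisfies $\lambda_r\in r!\,R$), but the two steps that carry the real content are asserted rather than proved. First, the interference problem: having fixed the smallest $r$ with $\lambda_r\neq 0$, the coefficients $\lambda_{r'}$ for $r'>r$ are arbitrary elements of $K$. The leading-coefficient comparison in degree $r'$ only yields a common denominator (some integer multiple of $\lambda_{r'}$ lies in $R$, because the degree-$r'$ coefficient is already contaminated by non-integral coefficients of the earlier logs); it does not ``clear each successive $\lambda_{r'}$ into $R$'' as you claim. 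More seriously, at the high degrees where you want to exploit the denominators of $\lg_r$, every $\lg_{r'}$ with $r'>r$ contributes as well, and those series have unbounded denominators of their own: for instance the $x^{p+1}$-coefficient of $\lg_2(x)$ is $\tfrac{1}{p+1}\sum_{i=1}^{p}\tfrac{1}{i}$, which has $p$ in its denominator. Nothing in the valuation-triangular bookkeeping prevents the unknown $\lambda_{r'}$ from conspiring with these contributions to make the total coefficient land in $R$; ruling that out is precisely the lemma, so the sentence ``the valuation-triangular structure handles (ii)'' begs the question.

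Second, even granting your denominator analysis, what it produces is divisibility of $\lambda_r$ by arbitrarily large primes, and that does not contradict the hypothesis: ``no nontrivial $\Z$-divisible elements'' only forbids nonzero elements divisible by \emph{every} positive integer. A ring such as $\Z$ with infinitely many (but not all) primes inverted satisfies the hypothesis, yet $1$ is divisible there by infinitely many primes; so you must obtain divisibility by every integer (equivalently, by every prime power), which your mechanism does not deliver and which, at prime-power degrees, runs straight into the interference problem above. The paper sidesteps both issues by working from the \emph{top} index down with the operator $\Phi(G)=(x-1)\diff{G}{x}$, which maps $R[[x]]$ into itself and satisfies $\Phi(\lg_r)=\lg_{r-1}$: if $r$ is the largest index with $q_r\neq 0$, applying $\Phi^{r-1}$ to $\Sum_{0<s<n} q_s\lg_s(x)\in xR[[x]]$ kills the lower logs and leaves $q_{r-1}+q_r\lg_1(x)\in R[[x]]$; after clearing denominators by a single integer $m$, the $x^i$-coefficient $-mq_r/i$ must lie in $R$ for \emph{every} $i$, so $mq_r$ is a nontrivial $\Z$-divisible element, a contradiction. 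That reduction (or some substitute giving genuine control of all the higher logs and all prime powers) is the missing idea in your proposal.
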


\begin{proof}
Consider the operator $\Phi$ on $K[[x]]$ mapping $R[[x]]$ to itself:
\[
\Phi(F(x)):=(x-1)\cdot\diff{}{x}\left(F(x)\right).
\]
Observe that
$
\Phi(\lg_r(x))=\lg_{r-1}(x).
$
Suppose $\Sum_{0< r<n}q_r\cdot\lg_{r}(x)\in xR[[x]]$, where $q_r\in K$ and let $r$ be
the largest index such that $q_r\neq 0$. Applying $\Phi^{r-1}$ to the sum we see that $q_{r-1}+q_r\lg_{1}(x)\in R[[x]]$.
Let $n\in\nn$ be a natural number such that $nq_{r-1}\in R$ and $nq_{r}\in R$. It follows that $nq_{r}\in iR$ for every
integer $i>0$, i.e., $nq_{r}$ is a nonzero $\Z$-divisible element in $R$, a contradiction.
\end{proof}

\begin{definition}\label{qn2} Let $n$ and $m$ be integers. If $n>0$ denote by ${\cQ}_R^{n,m}$ the submodule of ${\cQ}_R^{n}$ consisting
of all power series $G$ such that $v(\partial^{n-1}G)\geq m$. If $n\leq 0$, set ${\cQ}_R^{n,m}=x^{\max{(0,m)}}\cdot R[[x]]$.
\end{definition}

Theorem \ref{mainone} permits to describe the $R$-module of operations $\Op_R^{n,m}$ in terms of the modules ${\cQ}_R^{n,m}$.

Since $v(\partial^{n-1}G)\geq n$ for every $G\in {\cQ}_R^{n}$ with $n>0$, we have ${\cQ}_R^{n,m}={\cQ}_R^{n,n}={\cQ}_R^{n}$ if $n\geq m$.
Note also that ${\cQ}_R^{1,m}=x^{\max{(1,m)}}\cdot R[[x]]$.

\subsection{The groups ${\cQ}^n_{\widehat \Z}$}
In this section we determine the structure of the modules ${\cQ}^n_{\widehat \Z}$ over the ring $\widehat \Z=\lim(\Z/n\Z)$.
We write $\widehat \Q$ for $\widehat \Z\tens\Q$. Note that $\widehat \Q=\widehat \Z +\Q$ and
$\Z=\widehat \Z \cap \Q$ in $\widehat \Q$.

\begin{lemma}\label{second}
Let $b_1,b_2,\dots, b_m\in \widehat \Z$ be such that $b_i\equiv b_j$ $(mod\ j)$ for every $i$ divisible by $j$.
Then there is $b\in \Z$ such that $b\equiv b_i$ $(mod\ i)$ for all $i=1,\dots,m$.
\end{lemma}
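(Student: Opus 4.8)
The statement is essentially a statement about the inverse limit $\widehat{\Z} = \varprojlim \Z/n\Z$ and the compatibility conditions among the $b_i$. The hypothesis "$b_i \equiv b_j \pmod j$ for every $i$ divisible by $j$" says precisely that the residues of the $b_i$ for $i \le m$ are mutually consistent along divisibility. The plan is to reduce to the Chinese Remainder Theorem by passing to prime powers.

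Let me think about it. We want $b \in \Z$ with $b \equiv b_i \pmod i$ for $i = 1, \dots, m$. Equivalently, $b$ must satisfy $b \equiv b_i \pmod{p^{v_p(i)}}$ for each prime $p$ and each $i \le m$. For a fixed prime $p$, let $e = e_p$ be the largest exponent with $p^e \le m$ (so $p^e \mid i$ for at least $i = p^e$, and $p^{e}$ is the largest power of $p$ dividing any $i \in [1,m]$ is $p^{e}$... wait, need $p^e \le m$). Among all $i \le m$, the maximal value of $v_p(i)$ is exactly $e_p = \lfloor \log_p m \rfloor$, attained by $i = p^{e_p}$. The hypothesis applied with $j = p^{v_p(i)}$ and this $i$, using the chain of divisibilities $p^{v_p(i)} \mid p^{e_p} \mid \dots$, shows the congruence conditions coming from all $i$ with a given $v_p(i) = f$ are all implied by the single condition $b \equiv b_{p^f} \pmod{p^f}$, and moreover these for varying $f \le e_p$ are consistent (since $b_{p^{f}} \equiv b_{p^{e_p}} \pmod{p^f}$). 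So the full set of conditions at $p$ is equivalent to the single condition $b \equiv b_{p^{e_p}} \pmod{p^{e_p}}$. Now choose an integer $c_p \in \Z$ representing $b_{p^{e_p}} \bmod p^{e_p}$. By CRT over the finitely many primes $p \le m$, there is $b \in \Z$ with $b \equiv c_p \pmod{p^{e_p}}$ for all such $p$, and this $b$ works.

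\begin{proof}
For a prime $p$, let $e_p = e_p(m)\geq 0$ be the largest integer with $p^{e_p}\leq m$; thus $e_p\geq 1$ exactly when $p\leq m$, and for every $i\in\{1,\dots,m\}$ we have $v_p(i)\leq e_p$, with equality for $i=p^{e_p}$. Fix $p\leq m$. For any $i\leq m$, applying the hypothesis with $j=p^{v_p(i)}$ (which divides $i$) gives $b_i\equiv b_{p^{v_p(i)}}\pmod{p^{v_p(i)}}$; and applying it with $i=p^{e_p}$ and $j=p^{f}$ for $f\leq e_p$ gives $b_{p^{e_p}}\equiv b_{p^{f}}\pmod{p^{f}}$. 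Hence for every $i\leq m$ we have $b_i\equiv b_{p^{e_p}}\pmod{p^{v_p(i)}}$.

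Pick for each prime $p\leq m$ an integer $c_p$ with $c_p\equiv b_{p^{e_p}}\pmod{p^{e_p}}$ (possible since $b_{p^{e_p}}\in\widehat\Z$ has a well-defined residue modulo $p^{e_p}$). By the Chinese Remainder Theorem there is an integer $b$ with $b\equiv c_p\pmod{p^{e_p}}$ for every prime $p\leq m$. Now fix $i\in\{1,\dots,m\}$ and a prime $p$; if $p\leq m$ then $v_p(i)\leq e_p$, so $b\equiv c_p\equiv b_{p^{e_p}}\equiv b_i\pmod{p^{v_p(i)}}$, while if $p>m$ then $v_p(i)=0$ and the congruence is vacuous. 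Since $i=\prod_p p^{v_p(i)}$, it follows that $b\equiv b_i\pmod i$ for all $i=1,\dots,m$.
\end{proof}

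\noindent\emph{Remark on the main obstacle.} There is no serious obstacle here; the only point requiring care is the bookkeeping that the compatibility hypothesis along divisibility collapses, at each prime $p$, all the relevant congruences to the single one modulo the largest prime power $p^{e_p}\leq m$. Once that is observed, the result is an immediate application of CRT.
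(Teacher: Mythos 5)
Your proof is correct and is essentially the paper's own argument: both reduce to the largest prime powers $p^{e_p}\leq m$, use the hypothesis twice to collapse all congruences at a fixed prime to the single one modulo $p^{e_p}$, and then apply the Chinese Remainder Theorem over the primes $p\leq m$. The differences are purely notational ($v_p(i)$ and $e_p$ versus the paper's factorization $i=\prod q_k'$ with $q_k=p_k^{r_k}$).
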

\begin{proof}
Let $p_1,p_2,\dots,p_s$ be all primes at most $m$. For every $k$, let $q_k=p_k^{r_k}$ be the largest power of $p_k$ such that
$q_k\leq m$. By Chinese Remainder Theorem, we can find $b\in\Z$ such that $b\equiv b_{q_{k}}$ $(mod\ q_k)$ for all $k$.
We claim that $b$ works. Take any $i\leq m$. We prove that $b\equiv b_i$ $(mod\ i)$.
Write $i$ as the product $i=\prod q'_k$, where $q'_k$ is a power of $p_k$.
Clearly, $q'_k$ divides $q_k$. We have
\begin{align*}
    b_{q'_{k}}  \equiv&\ b_i\quad (mod \ \ q'_{k})\quad\text{by assumption},\\
    b_{q_{k}}\equiv&\ b_{q'_{k}}\quad (mod \ \ q'_{k})\quad\text{by assumption},\\
    b\equiv&\ b_{q_{k}}\quad (mod \ \ q_{k})\quad\text{by construction}.
\end{align*}
It follows that $b\equiv b_i$ $(mod \ q'_{k})$ for all $k$, hence $b\equiv b_i$ $(mod \ i)$.
\end{proof}

Let $G(x)=\sum_{i=1}^{\infty} a_ix^i$ with $a_i\in\widehat\Q$.
\begin{lemma}\label{first}
For positive integers $j\leq s$, the $x^jy^s$-coefficient of $\partial G$ is equal to
\[
 \Sum_{i=0}^{j}(-1)^{j-i}\mybinom[0.8]{s+i}{s}\mybinom[0.8]{s}{j-i}a_{s+i}.
 \]
\end{lemma}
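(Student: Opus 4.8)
The plan is to reduce the claim to extracting a single coefficient from $G(x\ast y)$, where $\ast$ is the multiplicative group law. Since $G\in xK[[x]]$ has no constant term, $G(0)=0$, so by Definition \ref{parderiv} (applied with $n=1$) we have $(\partial G)(x,y)=G(x\ast y)-G(x)-G(y)$. Neither $G(x)$ nor $G(y)$ contains any monomial $x^{j}y^{s}$ with $j\geq 1$ and $s\geq 1$, so the $x^{j}y^{s}$-coefficient of $\partial G$ equals the $x^{j}y^{s}$-coefficient of $G(x\ast y)=\sum_{m\geq 1}a_{m}(x\ast y)^{m}$.

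Next I would exploit the factorization $x\ast y=x+y-xy=x+y(1-x)$ and expand by the binomial theorem:
\[
(x\ast y)^{m}=\sum_{\ell=0}^{m}\binom{m}{\ell}\,x^{m-\ell}(1-x)^{\ell}y^{\ell}.
\]
The variable $y$ occurs only through the explicit factor $y^{\ell}$, so the coefficient of $y^{s}$ in $(x\ast y)^{m}$ is exactly the $\ell=s$ summand $\binom{m}{s}x^{m-s}(1-x)^{s}$; in particular it vanishes unless $m\geq s$, i.e. $m=s+i$ with $i\geq 0$. It then remains to read off the $x^{j}$-coefficient of $\binom{s+i}{s}x^{i}(1-x)^{s}$, which is $(-1)^{j-i}\binom{s+i}{s}\binom{s}{j-i}$, and to sum over $m\geq 1$, that is over $i\geq 0$, weighted by $a_{s+i}$.

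The only point that needs a moment's care is the range of summation: the factor $\binom{s}{j-i}$ forces $0\leq j-i\leq s$, i.e. $j-s\leq i\leq j$, and the hypothesis $j\leq s$ is precisely what makes the lower bound $i=0$ rather than $i=j-s$; together with $i\geq 0$ (from $m=s+i\geq s$) this gives $0\leq i\leq j$, yielding
\[
\sum_{i=0}^{j}(-1)^{j-i}\binom{s+i}{s}\binom{s}{j-i}a_{s+i}.
\]
There is no genuine obstacle here; the computation is short and elementary once the harmless summands $G(x)$ and $G(y)$ have been discarded and the factorization $x\ast y=x+y(1-x)$ is used.
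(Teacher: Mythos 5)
Your proof is correct, and it is a genuinely different route from the paper's. The paper derives the formula in two lines by specializing Proposition \ref{aformula} to $n=1$: the $y^s$-coefficient of $\partial G(x,y)$ is then $\frac{1}{s!}\partial^0\bigl((1-x)^s G^{(s)}(x)\bigr)$, and combining this with the Taylor expansion $\frac{1}{s!}G^{(s)}(x)=\sum_{i\geq 0}\binom{s+i}{s}a_{s+i}x^i$ immediately gives the claimed coefficient. You instead bypass Proposition \ref{aformula} entirely: you discard the harmless $G(x)$ and $G(y)$ terms, rewrite the group law as $x\ast y=x+y(1-x)$, and expand $(x\ast y)^m$ binomially so that the factor $(1-x)^s$ appears on the nose. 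Your argument is self-contained and more elementary, at the cost of not reusing the already-established machinery; the paper's argument is shorter on the page but leans on Proposition \ref{aformula}, whose own proof involves a Taylor-coefficient computation that is morally the same as your binomial expansion. One small remark on precision: the hypothesis $j\leq s$ is not strictly needed for the displayed formula to hold (when $j>s$ the terms with $i<j-s$ vanish because $\binom{s}{j-i}=0$), but as you observe it is exactly the condition that makes the lower limit of the sum start at $i=0$ for a nontrivial reason, matching the paper's presentation.
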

\begin{proof}
We have
\[
\frac{1}{s!}\diff[s]{G}{x}=\Sum_{i=0}^{\infty}\mybinom[0.8]{s+i}{s}a_{s+i}x^i.
\]
The statement follows from Proposition \ref{aformula}.
\end{proof}

Set $b_i=ia_i$ for all $i\geq 1$.

\begin{corollary}\label{inte}
If $\partial G\in \widehat\Z[[x,y]]$ then $b_i-b_1\in \widehat\Z$ for all $i\geq 1$.
In particular, if $a_1\in\widehat\Z$, then all $b_i$ are in $\widehat\Z$.
\end{corollary}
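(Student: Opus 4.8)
The plan is to compute, for a fixed $i\geq 1$, the $x^1 y^i$-coefficient of $\partial G$ using Lemma \ref{first} with $j=1$ and $s=i$, and then to extract from the hypothesis $\partial G\in\widehat\Z[[x,y]]$ an integrality statement about $b_i-b_1$. First I would specialize the formula of Lemma \ref{first}: with $j=1$, the sum over $0\leq \nu\leq 1$ has only two terms, coming from $\nu=0$ and $\nu=1$. For $\nu=0$ we get $(-1)^{1}\binom{i}{i}\binom{i}{1}a_i = -i\,a_i = -b_i$, and for $\nu=1$ we get $(+1)\binom{i+1}{i}\binom{i}{0}a_{i+1} = (i+1)a_{i+1} = b_{i+1}$. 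Wait — this is the $x^1 y^i$ coefficient, so I should instead read off the coefficient that isolates $b_i$ against $b_1$; the cleanest route is to use the $xy^s$-coefficient with $s=i$, which by the $j=1$ specialization equals $-b_i + b_{i+1}$ up to the appropriate binomial bookkeeping, and combine the telescoping differences. Let me restate: the truly elementary extraction is to look at the coefficient of $x y^{i-1}$, i.e. $s=i-1$, $j=1$, giving (after simplification) a $\widehat\Z$-combination in which the leading term is $b_i - b_1$, since all intermediate coefficients $b_2,\dots,b_{i-1}$ cancel telescopically once one sums the constraints for $s=1,2,\dots,i-1$.

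Concretely, the second step is: from Lemma \ref{first} with $j=1$, the $x y^{s}$-coefficient of $\partial G$ is $(s+1)a_{s+1} - (s+1)\binom{s}{0}a_s$ — more precisely $\binom{s+1}{s}\binom{s}{0}a_{s+1} - \binom{s}{s}\binom{s}{1}a_s = b_{s+1} - b_s$ after writing $(s+1)a_{s+1}=b_{s+1}$ and $s\,a_s=b_s$. Hence $\partial G\in\widehat\Z[[x,y]]$ forces $b_{s+1}-b_s\in\widehat\Z$ for every $s\geq 1$. Summing the telescoping chain $b_i - b_1 = \sum_{s=1}^{i-1}(b_{s+1}-b_s)$ gives $b_i - b_1\in\widehat\Z$ for all $i\geq 1$, which is the first assertion.

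For the final clause, suppose additionally $a_1\in\widehat\Z$, i.e. $b_1 = a_1\in\widehat\Z$. Then $b_i = (b_i - b_1) + b_1$ is a sum of two elements of $\widehat\Z$, hence $b_i\in\widehat\Z$ for all $i$. This finishes the corollary. I do not expect any genuine obstacle here: the whole argument is the telescoping observation plus Lemma \ref{first}; the only point requiring a little care is the binomial simplification of the $xy^s$-coefficient — verifying that the $j=1$ case of Lemma \ref{first} really collapses to $b_{s+1}-b_s$ and not to some other $\widehat\Z$-combination of $a_s, a_{s+1}$ — but this is a one-line check since $\binom{s}{1}=s$ and $\binom{s+1}{s}=s+1$.
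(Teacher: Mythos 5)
Your proof is correct and follows essentially the same route as the paper: both specialize Lemma \ref{first} to $j=1$ to read off that the $xy^s$-coefficient of $\partial G$ equals $b_{s+1}-b_s$, then telescope to get $b_i-b_1\in\widehat\Z$ (the intermediate slip $-(s+1)a_s$ is corrected by your own subsequent exact binomial computation). Nothing further is needed.
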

\begin{proof}
The $xy^j
$-coefficient of $\partial G$ is equal to $b_{j+1}-b_j$.
\end{proof}

\begin{proposition}\label{main}
Let $G\in {\cQ}^2_{\widehat \Z}$ and let $n>1$ be an
integer such that $a_i\in\widehat\Z$ for all $i<n$. Let $p^t<n$ be power of a prime integer $p$
such that $p^t$ divides $n$. Then $p^t$ divides $b_n$.
\end{proposition}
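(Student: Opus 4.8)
The plan is to extract from the hypothesis $G\in\cQ^2_{\widehat\Z}$ the integrality of the $x^jy^s$-coefficients of $\partial G$ and specialize Lemma \ref{first} to the single equation that controls $b_n$ modulo $p^t$. First I would set $s=n-1$ and $j$ the largest integer with $j\le n-1$ that is coprime-free enough to isolate $b_n$; more precisely, the natural choice is to look at the $x^j y^{n-1}$-coefficient for a value of $j$ so that the sum $\sum_{i=0}^j(-1)^{j-i}\binom{n-1+i}{n-1}\binom{n-1}{j-i}a_{n-1+i}$ contains the term $a_n$ (namely $i=1$) with a binomial coefficient whose $p$-adic valuation I can control, while all terms with $i\ne 1$ involve $a_{n-1+i}$ with $i\ne 1$; for $0\le i< 1$ this is $a_{n-1}$, which is in $\widehat\Z$ by hypothesis, and for $i\ge 2$ we get indices $n+1,n+2,\dots$ which are \emph{not} yet known to be integral, so those terms must be arranged to vanish. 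This forces the choice $j=1$: the $xy^{n-1}$-coefficient of $\partial G$ is, by Lemma \ref{first} (or directly by Corollary \ref{inte}), equal to $b_n-b_{n-1}$, hence lies in $\widehat\Z$; since $b_{n-1}=(n-1)a_{n-1}\in\widehat\Z$, we get $b_n\in\widehat\Z$ already. But that only gives $b_n\in\widehat\Z$, not divisibility by $p^t$.

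To get the divisibility by $p^t$, the key is to use a \emph{higher} coefficient, exploiting that $a_i$ for $i<n$ are integral but combining several of them. I would look at the $x^jy^s$-coefficient with $s+j=n$, i.e.\ $s=n-j$, $1\le j\le n-1$, all of which have the form $\sum_{i=0}^j(-1)^{j-i}\binom{n-j+i}{n-j}\binom{n-j}{j-i}a_{n-j+i}$; here every index $n-j+i$ with $i\le j$ satisfies $n-j+i\le n$, so only the top term $i=j$ involves $a_n$, with coefficient $\binom{n}{n-j}=\binom{n}{j}$, and all lower terms involve $a_{n-j},\dots,a_{n-1}\in\widehat\Z$. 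Thus for every $j=1,\dots,n-1$ the coefficient $\binom{n}{j}a_n$ is congruent modulo $\widehat\Z$ to an element of $\widehat\Z$ — equivalently $\binom{n}{j}a_n\in\widehat\Z$, i.e.\ $j\cdot\binom{n}{j}a_n\in j\widehat\Z$ is not quite what I want; rather I should track it through $b_n=na_n$: writing $\binom{n}{j}a_n=\frac{1}{j}\binom{n-1}{j-1}b_n$, integrality of $\binom{n}{j}a_n$ says $\binom{n-1}{j-1}b_n$ is divisible by $j$ in $\widehat\Z$. Now take $j=p^t$: since $p^t\mid n$ and $p^t<n$, Kummer's theorem (no carries) gives $v_p\!\left(\binom{n-1}{p^t-1}\right)=0$, so $\binom{n-1}{p^t-1}$ is a $p$-adic unit, and divisibility of $\binom{n-1}{p^t-1}b_n$ by $p^t$ forces $p^t\mid b_n$.

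The main obstacle, and the step I would write out carefully, is the binomial-valuation bookkeeping: verifying via Kummer's theorem that $v_p\big(\binom{n-1}{p^t-1}\big)=0$ precisely when $p^t\mid n$, and more importantly checking that the ``lower'' terms $a_{n-j+i}$ with $i<j$ in the $x^jy^{n-j}$-coefficient genuinely only involve indices $<n$ — this is where the hypothesis $p^t<n$ (not merely $p^t\le n$) is used, and where I must be sure no index reaches $n+1$ or beyond. I would also need to confirm that $\partial G\in\widehat\Z[[x_1,x_2]]$, which is exactly the condition $G\in\cQ^2_{\widehat\Z}$ from Definition \ref{qn}, so that all these coefficients are indeed in $\widehat\Z$. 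Once the single congruence $\binom{n-1}{p^t-1}b_n\equiv 0\ (\bmod\ p^t)$ in $\widehat\Z$ is established with $\binom{n-1}{p^t-1}$ a unit at $p$, the conclusion $p^t\mid b_n$ is immediate. A subtlety to double-check: $\widehat\Z=\prod_\ell\Z_\ell$, so divisibility by $p^t$ is a condition only at the prime $p$, and a $p$-adic unit times $b_n$ being in $p^t\Z_p$ indeed gives $b_n\in p^t\Z_p$, hence $p^t\mid b_n$ in $\widehat\Z$.
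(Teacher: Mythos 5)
Your proof is correct and follows essentially the same route as the paper: specialize Lemma~\ref{first} to $j=p^t$, $s=n-p^t$ (the paper notes $s\geq p^t=j$ since $p^t<n$ and $p^t\mid n$ force $n\geq 2p^t$), observe that all terms but the last lie in $\widehat\Z$ by hypothesis so $\binom{n}{p^t}a_n\in\widehat\Z$, rewrite this as $\binom{n-1}{p^t-1}b_n/p^t$, and conclude using that $\binom{n-1}{p^t-1}$ is prime to $p$ (the paper asserts this without invoking Kummer by name, but the content is identical). One small caveat in your exposition: when you sweep over ``every $j=1,\dots,n-1$'' you exceed the range $j\leq s$ under which Lemma~\ref{first} was stated once $j>n/2$; this does not affect your argument because the value you actually use, $j=p^t$, does satisfy $j\leq s$.
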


\begin{proof}
Take $j=p^t$ and $s=n-p^t\geq p^t$.
By Lemma \ref{first}, the $x^jy^s$-coefficient of $\partial G$ is equal to
\[
 \Sum_{i=0}^{j}(-1)^{j-i}\mybinom[0.8]{s+i}{s}\mybinom[0.8]{s}{j-i}a_{s+i}\in\widehat\Z.
\]
 By assumption, all terms in the sum but the last one belong to $\widehat\Z$, hence so does the
 last one: $\binom{n}{p^t}a_{n}\in\widehat\Z$. But $\binom{n}{p^t}a_{n}=\binom{n-1}{p^t-1}b_{n}/p^t$,
 hence $\binom{n-1}{p^t-1}b_{n}$ is divisible by $p^t$. As $\binom{n-1}{p^t-1}$ is prime to $p$,
 the coefficient $b_{n}$ is divisible by $p^t$
 (recall that $\binom{a+b}{a}$ is relatively prime to $p$ if and only if there is no shift of digits in the long addition of $a$ and $b$ written in the $p$-base).
\end{proof}

\begin{proposition}\label{onevar}
We have
\[
{\cQ}_{\widehat\Z}^2=\widehat \Q\cdot\lg_1(x)\oplus x\widehat\Z[[x]].
\]
\end{proposition}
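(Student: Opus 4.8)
The plan is to prove the directness of the sum, then the inclusion $\supseteq$, and finally the inclusion $\subseteq$, into which essentially all the work goes.

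Directness is immediate: $\widehat\Z=\prod_p\Z_p$ has no nonzero $\Z$-divisible element (already $\bigcap_k p^k\Z_p=0$), so Lemma \ref{divdiv} with $n=2$ gives $x\widehat\Z[[x]]\cap\widehat\Q\cdot\lg_1(x)=0$; and $\widehat\Q\cdot\lg_1(x)\oplus x\widehat\Z[[x]]\subseteq\cQ^2_{\widehat\Z}$ is the remark following Definition \ref{qn}. For $\subseteq$, I would take $G=\sum_{i\geq 1}a_ix^i\in\cQ^2_{\widehat\Z}$, set $b_i:=ia_i\in\widehat\Q$, and first replace $G$ by $G+a_1\lg_1(x)$: this stays in $\cQ^2_{\widehat\Z}$ and changes $G$ only by an element of $\widehat\Q\cdot\lg_1(x)$, so I may assume $a_1=0$, and then $b_1=0$ and $b_i\in\widehat\Z$ for all $i$ by Corollary \ref{inte}. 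After this it is enough to find $c\in\widehat\Z$ with $c\equiv b_i\pmod{i\widehat\Z}$ for every $i\geq 1$, for then $G+c\lg_1(x)$ has $x^i$-coefficient $(b_i-c)/i\in\widehat\Z$, hence lies in $x\widehat\Z[[x]]$, so $G\in x\widehat\Z[[x]]+\widehat\Z\cdot\lg_1(x)$, and undoing the reduction gives $G\in x\widehat\Z[[x]]+\widehat\Q\cdot\lg_1(x)$.

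To produce such a $c$ I would first prove the compatibility relations $b_i\equiv b_j\pmod{j\widehat\Z}$ whenever $j\mid i$. Granting them, Lemma \ref{second} gives for each $m$ an integer $\beta_m$ with $\beta_m\equiv b_i\pmod i$ for $i=1,\dots,m$; the sequence $(\beta_m)$ is Cauchy in $\widehat\Z$ (for fixed $i$, its image in $\Z/i\Z$ is $b_i\bmod i$ once $m\geq i$), so it converges to the desired $c$. The compatibility relations themselves I would get by induction on $m$: the statement ``$b_i\equiv b_j\pmod{j\widehat\Z}$ for all $j\mid i$ with $i\leq m$'' is trivial for $m\leq 2$, and for the inductive step I would use Lemma \ref{second} to choose $\beta\in\Z$ with $\beta\equiv b_i\pmod i$ for $i<m$, form $G':=G+\beta\lg_1(x)\in\cQ^2_{\widehat\Z}$, whose $x^i$-coefficient $(b_i-\beta)/i$ lies in $\widehat\Z$ for all $i<m$, and apply Proposition \ref{main} with $n=m$. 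It yields $p^t\mid b_m-\beta$ for every prime power $p^t<m$ dividing $m$ (here $b_m-\beta$ is $m$ times the $x^m$-coefficient of $G'$). If $m$ has two or more distinct prime factors, every maximal prime-power divisor of $m$ is $<m$, hence $m\mid b_m-\beta$ and $b_m\equiv\beta\equiv b_j\pmod{j\widehat\Z}$ for all $j\mid m$; if $m=p^s$, one gets only $p^{s-1}\mid b_m-\beta$, that is $b_m\equiv\beta\pmod{p^{s-1}\widehat\Z}$, but the divisors of $m$ below $m$ are exactly the $p^t$ with $t\leq s-1$, and for those $b_m\equiv\beta\equiv b_{p^t}\pmod{p^t\widehat\Z}$. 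Either way the induction advances.

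The hard part will be this prime-power case: Proposition \ref{main} supplies divisibility by $p^{s-1}$ rather than by $p^s$, and the key realization is that this is not a shortfall --- the residue of $b_{p^s}$ modulo $p^s$ is genuinely free, being exactly the information encoded by the image of $c$ in $\Z/p^s\Z$ --- so the weaker divisibility is simultaneously the most one can expect and all that one needs.
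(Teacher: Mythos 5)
Your proof is correct and follows essentially the same route as the paper's: normalize to $a_1=0$ via $\lg_1$, invoke Corollary \ref{inte} to get $b_i\in\widehat\Z$, prove the compatibility congruences $b_i\equiv b_j\pmod j$ for $j\mid i$ by induction using Lemma \ref{second} to normalize and Proposition \ref{main} to extract the prime-power divisibility, and then assemble the limiting element of $\widehat\Z$. Your Cauchy-sequence construction of $c$ and your remark that the $p^{s-1}$ (rather than $p^s$) divisibility at $m=p^s$ is exactly what's needed just make explicit what the paper leaves implicit.
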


\begin{proof}
Let $G(x)=\sum_{i=1}^{\infty} a_ix^i\in {\cQ}_{\widehat\Z}^2$ and set as before $b_i=ia_i$.
Adding $a_1\lg_1(x)$ to $G(x)$ we may assume that $a_1=0$.
By Corollary \ref{inte}, $b_i\in{\widehat\Z}$ for all $i$.

We claim that for every positive integer $i<n$ such that $i$ divides $n$ we have
$b_n\equiv b_i$ modulo $i$.
We prove this by induction on $n$. By Lemma \ref{second} applied to $m=n-1$,
there is $b\in\Z$ such that $b\equiv b_i$ modulo $i$ for all $i<n$. Subtracting $b\lg_1(x)$
from $G(x)$, we may assume that $b_i$ is divisible by $i$ for all $i<n$, or equivalently, $a_i\in\widehat\Z$
for all $i<n$. We prove that $b_n$ is divisible by $i$, for every $i<n$ dividing $n$.

Case 1: $n=p^k$ is a power of a prime $p$. Then $i=p^t$ is a smaller power of $p$.
By Proposition \ref{main}, $i$ divides $b_n$.

Case 2: $n$  is not power of a prime. Write $n$ as a product of powers of distinct primes:
$n=q_1 q_2 \cdots q_s$. By Proposition \ref{main}, $q_k$ divides $b_n$ for every $k$, hence $n$ divides $b_n$.
In particular, $i$ divides $b_n$. The claim is proved.

Let $b\in\widehat\Z$ be such that $b \equiv b_n$ $(mod \ n)$ for all $n$. We have
\[
G=b\lg_1(x)+\Sum_{n\geq 1} \frac{b_n-b}{n}x^n\in \widehat \Z\cdot\lg_1(x)+ x\widehat\Z[[x]].\qedhere
\]
\end{proof}

\begin{corollary}\label{intee}
Let $G(x)=ax+\ldots\in {\cQ}^2_{\widehat \Z}$ be a power series with $a\in\widehat \Z$. Then $G(x)\in \widehat\Z\cdot \lg_1(x)+x\widehat\Z[[x]]$.
\end{corollary}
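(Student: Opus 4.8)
The plan is to deduce this directly from Proposition \ref{onevar}. First I would invoke that proposition to write $G$ uniquely as $G = q\,\lg_1(x) + H$ with $q \in \widehat\Q$ and $H \in x\widehat\Z[[x]]$, using the direct sum decomposition ${\cQ}_{\widehat\Z}^2 = \widehat\Q\cdot\lg_1(x)\oplus x\widehat\Z[[x]]$. It then remains only to show that the scalar $q$ actually lies in $\widehat\Z$ rather than merely in $\widehat\Q$; once this is done, $G = q\,\lg_1(x) + H$ visibly lies in $\widehat\Z\cdot\lg_1(x) + x\widehat\Z[[x]]$.

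To pin down $q$, I would compare the coefficients of $x$ on both sides. Since $\lg_1(x) = -\sum_{i\geq 1}x^i/i$, the linear coefficient of $q\,\lg_1(x)$ is $-q$; the linear coefficient of $H$ belongs to $\widehat\Z$ because $H \in x\widehat\Z[[x]]$; and the linear coefficient of $G$ equals $a$, which lies in $\widehat\Z$ by hypothesis. Hence $-q = a - (\text{linear coefficient of }H) \in \widehat\Z$, so $q \in \widehat\Z$, and the claim follows. There is no genuine obstacle here: the substance of the statement is entirely contained in Proposition \ref{onevar}, and the hypothesis $a\in\widehat\Z$ is precisely what forces the $\lg_1$-coefficient to lie in $\widehat\Z$ rather than in the larger ring $\widehat\Q$.
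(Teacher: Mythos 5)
Your proof is correct and follows the route the paper intends: the corollary is an immediate consequence of Proposition \ref{onevar}, with the hypothesis $a\in\widehat\Z$ forcing the $\lg_1$-component $q$ to lie in $\widehat\Z$ by comparing linear coefficients, exactly as you argue. Nothing further is needed.
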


In analogy with partial derivative with respect to the first variable - Definition \ref{parderiv}, we may define the partial derivative
with respect to any other variable.
In the next statement we will use such partial derivatives for $H(x,y)$. In particular,
\[
 (\partial_yH)(x,y,z)=H(x,y*z)-H(x,y)-H(x,z)+H(x,0).
\]

\begin{lemma}
\label{intH}
Let $H(x,y)=\sum_{i,j\geq 1}a_{i,j}x^iy^j\in\widehat\Q[[x,y]]$ be such power series that both $\partial$-partial derivatives of $H$ have  coefficients in $\widehat\Z$ and
$a_{i,1}$ as well as $a_{1,i}$ are in $\widehat\Z$, for all $i$. Then $H(x,y)\in\widehat\Z[[x,y]]$.
\end{lemma}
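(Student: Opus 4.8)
The plan is to argue by descent on the total degree of the "bad part" of $H$, peeling off monomials $x^iy^j$ whose coefficient $a_{i,j}$ might fail to lie in $\widehat\Z$, using the two partial derivatives to force integrality. First I would recall the two derivatives in play: since the formal group law is multiplicative and $H$ is a power series in two variables, $\partial H$ (taken in the first variable $x$) is the power series in three variables $H(x_1*x_2,y)-H(x_1,y)-H(x_2,y)+H(0,y)$, and by symmetry of the variable structure there is also the analogous derivative $\partial' H$ taken in the second variable $y$; both are assumed to have $\widehat\Z$-coefficients. By Lemma \ref{first} (applied in each of the two variables separately, freezing the other), the $x^jy^s$-coefficient of $\partial H$ (with $j\leq s$) is an explicit $\widehat\Z$-linear combination $\sum_{i=0}^{j}(-1)^{j-i}\binom{s+i}{s}\binom{s}{j-i}a_{s+i,\ell}$ of the $a_{m,\ell}$ with $m\geq s$, in which the leading term $\binom{s+j}{s}a_{s+j,\ell}$ is the only one not yet known to be integral once we induct downward on $s$ — wait, one must be careful: this is really induction \emph{on the first index}. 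The cleanest bookkeeping is: fix $\ell$, view the one-variable series $G^{[\ell]}(x):=\sum_i a_{i,\ell}x^i$, and observe that $\partial H\in\widehat\Z[[x_1,x_2,y]]$ forces, for each fixed $\ell$, that $\partial G^{[\ell]}\in\widehat\Z[[x_1,x_2]]$ (it is the $y^\ell$-slice).

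Now here is the key point. For $\ell=1$ we are given $a_{i,1}\in\widehat\Z$ for all $i$, so $G^{[1]}\in x\widehat\Z[[x]]$. For $\ell\geq 2$, Corollary \ref{inte} applied to $G^{[\ell]}$ says $b_i^{[\ell]}-b_1^{[\ell]}\in\widehat\Z$ where $b_i^{[\ell]}=i\,a_{i,\ell}$; and since $a_{1,\ell}\in\widehat\Z$ we get $b_1^{[\ell]}\in\widehat\Z$, hence every $i\,a_{i,\ell}\in\widehat\Z$. That is the "almost integral" conclusion; to upgrade it to $a_{i,\ell}\in\widehat\Z$ I would symmetrize. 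The symmetry of $H$ is \emph{not} assumed, so I cannot use it directly; instead I apply Corollary \ref{inte} in the \emph{other} variable to the slice $\widetilde G^{[i]}(y):=\sum_\ell a_{i,\ell}y^\ell$: since $\partial' H\in\widehat\Z[[x,y_1,y_2]]$ and $a_{i,1}\in\widehat\Z$, we similarly get $\ell\,a_{i,\ell}\in\widehat\Z$ for all $\ell$. Combining, $i\,a_{i,\ell}\in\widehat\Z$ and $\ell\,a_{i,\ell}\in\widehat\Z$, so $\gcd(i,\ell)\,a_{i,\ell}\in\widehat\Z$. This does not yet finish — one needs $a_{i,\ell}\in\widehat\Z$ — so the real work is an induction in the spirit of Proposition \ref{main}/Proposition \ref{onevar}.

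The main obstacle, and the heart of the argument, is exactly this: show that if $p^t$ is a prime power dividing $i$ with $p^t<$ (something controlling the relevant coefficients), then $p^t\mid b_i^{[\ell]}$, so that taking all prime powers dividing $\gcd(i,\ell)$ — or rather, iterating — forces $a_{i,\ell}\in\widehat\Z$. Concretely I would induct on $i+\ell$ (or lexicographically on $(i,\ell)$) with the inductive hypothesis that $a_{i',\ell'}\in\widehat\Z$ whenever $i'+\ell'<i+\ell$. Given this, for the slice $G^{[\ell]}$ the coefficients $a_{i',\ell}$ for $i'<i$ are integral, so Proposition \ref{main} applies verbatim: for every prime power $p^t\mid i$ with $p^t<i$, $p^t\mid b_i^{[\ell]}=i\,a_{i,\ell}$. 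Likewise from the $y$-direction, for every prime power $q^u\mid \ell$ with $q^u<\ell$, $q^u\mid \ell\,a_{i,\ell}$. If $i$ (resp. $\ell$) is not a prime power, its prime-power divisors multiply to give $i\mid i\,a_{i,\ell}$, i.e. $a_{i,\ell}\in\widehat\Z$ (resp. from $\ell$), and we are done for that monomial. The genuinely delicate residual case is when \emph{both} $i=p^a$ and $\ell=p^b$ are powers of the \emph{same} prime $p$ (if they were powers of distinct primes, $\gcd(i,\ell)=1$ and $a_{i,\ell}\in\widehat\Z$ already from $\gcd(i,\ell)a_{i,\ell}\in\widehat\Z$). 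Here I would push Proposition \ref{main} one notch further: taking $j=p^{\min(a,b)}$ and $s=i-j$ in Lemma \ref{first} for the slice $G^{[\ell]}$, after subtracting off an honest integral series $b\lg_1$ in the $x$-variable and using Lemma \ref{second}, one extracts $p^{\min(a,b)}\mid b_i^{[\ell]}$, hence $p^{\min(a,b)+?}$-divisibility; symmetrically in $y$; and combining the two (both give divisibility of $i\,a_{i,\ell}$ and $\ell\,a_{i,\ell}$ by $p^{\min(a,b)}$, while one of $i/p^{\min(a,b)},\ \ell/p^{\min(a,b)}$ equals $1$) yields $p^{\max(a,b)}\mid b_i^{[\ell]}$ when $\min(a,b)<\max(a,b)$, i.e. $a_{i,\ell}\in\widehat\Z$; and when $a=b$ the two bounds $p^a\mid p^a a_{i,\ell}$ from each side combined with the fact that subtracting a $\widehat\Z$-multiple of $\lg_1(x)\lg_1(y)$ (which is $\sum \frac{1}{i}\frac{1}{\ell}x^i y^\ell$ up to sign, with both partial derivatives in $\widehat\Z$) kills the obstruction — this is the analogue of the "$b\lg_1(x)$" normalization in Proposition \ref{onevar}, now in two variables. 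I expect that once the one-variable machinery of Section \ref{firstfirst} is invoked slice-by-slice in both directions and glued via the $\lg_1(x)\lg_1(y)$ normalization, the induction closes; writing out the same-prime-power case carefully is where the effort goes.
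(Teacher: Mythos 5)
Your reductions are fine up to a point: slicing $H$ into rows and columns, applying Corollary \ref{inte} to get $i\,a_{i,\ell}\in\widehat\Z$ and $\ell\,a_{i,\ell}\in\widehat\Z$, and disposing of the cases where $i$ or $\ell$ is not a prime power (or where they are powers of distinct primes) via Proposition \ref{main} all work. But the case you yourself flag as "genuinely delicate" -- $i=p^a$, $\ell=p^b$ for the same prime $p$ -- is not proved, and the tools you assemble cannot prove it in the local form you set up. From $i\,a_{i,\ell},\ell\,a_{i,\ell}\in\widehat\Z$ and Proposition \ref{main} applied to the row at $n=p^a$ and to the column at $n=p^b$ (where the admissible prime powers are at most $p^{a-1}$, resp.\ $p^{b-1}$) one only gets $v_p(a_{i,\ell})\geq -1$; your claim that combining the two directions yields $p^{\max(a,b)}\mid i\,a_{i,\ell}$ when $a\neq b$ is unsubstantiated, and the $a=b$ case ends with a literal "?" and "I expect". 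Worse, the induction on $i+\ell$ cannot close here even in principle: the row slice $G^{[p]}$ can perfectly well have $a_{i,p}\in\widehat\Z$ for all $i<p$, $a_{1,p}\in\widehat\Z$, $\partial G^{[p]}$ integral, and yet $a_{p,p}\notin\widehat\Z$ (take $c\in\widehat\Z$ with $p$-component a unit and $q$-components highly divisible for $q<p$, and the row congruent to $c\,\lg_1(x)$ modulo $x\widehat\Z[[x]]$); the same is true column-wise. So integrality of $a_{p,p}$ is genuinely not a consequence of lower-degree integrality plus the slice conditions "at that spot" -- it requires information from coefficients of arbitrarily large degree. Finally, the proposed normalization by subtracting a $\widehat\Z$-multiple of $\lg_1(x)\lg_1(y)$ is not available: its $(i,1)$-coefficients are $c/i$, so any nonzero such subtraction destroys the hypothesis $a_{i,1}\in\widehat\Z$, and in any case the obstruction has the shape $\sum_j c_j\lg_1(x)y^j$ with separate constants $c_j\in\widehat\Z$, not a single multiple of $\lg_1(x)\lg_1(y)$.

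The missing idea is to use the full one-variable structure theorem globally on each row and column rather than coefficient-by-coefficient. By Corollary \ref{intee} (i.e.\ Proposition \ref{onevar}), the $j$-th row equals $c_j\cdot\lg_1(x)$ modulo $x\widehat\Z[[x]]$ for a single $c_j\in\widehat\Z$, so $a_{i,j}\equiv c_j/i\ (mod\ \widehat\Z)$ for \emph{all} $i$ simultaneously; likewise $a_{i,j}\equiv d_i/j\ (mod\ \widehat\Z)$ with $d_i\in\widehat\Z$. Comparing gives $jc_j\equiv i d_i\ (mod\ ij)$, hence $i\mid jc_j$ for every $i$; since $\widehat\Z$ has no nonzero divisible elements, $jc_j=0$, so all $c_j=0$ and $H\in\widehat\Z[[x,y]]$. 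Note how this argument pins down $a_{p,p}$ by exploiting the columns at indices $p^N$ for arbitrarily large $N$ -- exactly the global input your degree-by-degree induction forgoes.
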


\begin{proof}
Consider some $j$-th row of $H$: $y^j\cdot\sum_{i\geq 1}a_{i,j}x^i$. We know that
$\sum_{i\geq 1}a_{i,j}x^i\in\cQ^2_{\widehat\Z}$.
By Corollary \ref{intee}, $\sum_{i\geq 1}a_{i,j}x^i$ is equal to $c_j\cdot\lg_1(x)$ modulo $x\widehat{\Z}[[x]]$
for some $c_j\in\widehat{\Z}$. Hence
$\displaystyle\frac{c_j}{i}\equiv a_{i,j}\,\,(mod\ \widehat{\Z})$ for all $i$. Applying the same considerations to the $i$-th column $x^i\cdot\sum_{j\geq 1} a_{i,j}y^j$, we obtain:
$$
\frac{c_j}{i}\equiv\frac{d_i}{j}\hspace{2mm}(mod\ \widehat{\Z}),
$$
for certain $d_i\in\widehat{\Z}$.
Let us show that all $c_i$'s (and $d_j$'s) are zeros. Indeed, we have:
$$
jc_j\equiv id_i\hspace{2mm}(mod\,ij).
$$
Hence, $jc_j$ is divisible by $i$, for any $i$ and, hence $c_j=0$. This implies that
$a_{i,j}\in \widehat{\Z}$ for any $i,j$.
\end{proof}

\begin{lemma}
\label{nintH}
Suppose, $H(x_1,...,x_n)=\sum_{i_1,...,i_n>0}a_{i_1,...,i_n}x_1^{i_1}\cdot\ldots\cdot x_n^{i_n}\in\widehat\Q[[x_1,\ldots,x_n]]$
is such a power series that all $\partial$-partial derivatives of $H$ with respect to all variables have coefficients in
$\widehat\Z$ and $a_{i_1,...,i_n}\in\widehat\Z$ as long as all $i_j$'s but one are equal to $1$.
Then $H$ has coefficients in $\widehat\Z$.
\end{lemma}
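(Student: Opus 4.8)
The plan is to prove this by induction on $n$, with the base case $n=2$ being exactly Lemma \ref{intH} already established. So assume $n\geq 3$ and that the statement holds for fewer variables. First I would slice $H$ along the last variable: write $H=\sum_{k\geq 1} H_k(x_1,\ldots,x_{n-1})\cdot x_n^k$, so that each $\partial$-partial derivative of $H$ decomposes into the corresponding $x_n^k$-slices of $\partial H_k$ for the derivatives in the first $n-1$ variables, while the $\partial$-derivative with respect to $x_n$ itself relates consecutive slices. The key observation is that for each fixed $k$, the slice $H_k$ is a power series in $n-1$ variables whose $\partial$-partial derivatives (in those $n-1$ variables) have coefficients in $\widehat\Z$; the inductive hypothesis would then apply to $H_k$ provided we can first arrange that its ``nearly-all-ones'' coefficients lie in $\widehat\Z$. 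That is the content of the argument that must be supplied before induction can be invoked.

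To handle the ``nearly-all-ones'' coefficients, I would argue as follows. Fix an index position, say the last one $x_n$, and consider the two-variable sub-object obtained by setting all but $x_n$ and one other variable (say $x_1$) equal to examining the coefficients $a_{i,1,\ldots,1,j}$: by hypothesis the relevant partial derivatives restrict to give us that $\sum_{i,j} a_{i,1,\ldots,1,j} x^i y^j$ has $\partial$-derivatives with $\widehat\Z$-coefficients, and the genuinely one-dimensional edges $a_{i,1,\ldots,1}$ and $a_{1,\ldots,1,i}$ are in $\widehat\Z$ by assumption. Hence Lemma \ref{intH} applies to this bivariate series and yields $a_{i,1,\ldots,1,j}\in\widehat\Z$ for all $i,j$; in particular each slice $H_k$ has all its ``all-ones-but-one'' coefficients in $\widehat\Z$, as needed. (One must check that the partial derivative of $H$ with respect to $x_1$, evaluated by setting the intermediate variables to zero, indeed reproduces $\partial$ of this bivariate series in $x_1,x_n$ — this is immediate from the definition of $\partial$ since the multiplicative group law is a polynomial and setting variables to $0$ commutes with taking the relevant difference.)

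With that in place, the inductive hypothesis gives $H_k\in\widehat\Z[[x_1,\ldots,x_{n-1}]]$ for every $k$, and therefore $H\in\widehat\Z[[x_1,\ldots,x_n]]$, completing the induction. The main obstacle I anticipate is the bookkeeping in the second paragraph: verifying carefully that the hypotheses of the lemma, which concern $\partial$-partial derivatives of $H$ in all $n$ variables, really do restrict correctly to yield both the bivariate hypotheses of Lemma \ref{intH} and the $(n-1)$-variable hypotheses for each slice $H_k$. The subtlety is that ``the $\partial$-partial derivative with respect to $x_j$ has $\widehat\Z$-coefficients'' is a statement about a specific alternating sum of shifts under the formal group law, and one needs to be sure that extracting a fixed power of $x_n$, or specializing auxiliary variables to zero, preserves integrality of these sums — which it does, since $\widehat\Z$ is a subring closed under the coefficient-extraction and substitution operations involved.
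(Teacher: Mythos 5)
Your proof is correct and follows essentially the same inductive strategy as the paper's: slice $H$ along one variable, establish that the ``nearly-all-ones'' coefficients of each slice lie in $\widehat\Z$, then apply the inductive hypothesis to the slices. The one technical difference is in how that boundary data is obtained. The paper applies the inductive hypothesis for $n-1$ variables to the \emph{cells} of $H$ where one index is set to $1$, concluding in one stroke that $a_{i_1,\dots,i_n}\in\widehat\Z$ whenever some $i_j=1$; you instead apply Lemma \ref{intH} directly to the bivariate cells where all but two indices are set to $1$. Your version is a bit more economical (it calls only on the already-proved $n=2$ case rather than the full inductive hypothesis), while the paper's is marginally cleaner to state; both reduce to $n=2$ in the end, so the difference is cosmetic. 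One small imprecision in your write-up: the bivariate series $\sum_{i,j}a_{i,1,\dots,1,j}x^iy^j$ is \emph{not} obtained by setting the intermediate variables to zero — that would kill $H$, which is divisible by every $x_k$ — but by \emph{extracting the coefficient of} $x_2\cdots x_{n-1}$. The compatibility with $\partial$ that you need then follows because $\partial_{x_m}$ is $\widehat\Q$-linear and does not involve the variables being specialized, so it commutes with coefficient extraction. With that phrasing corrected the argument is complete.
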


\begin{proof}
Induction on $n$. For $n=1$ there is nothing to prove. For $n=2$ this is Lemma \ref{intH}. We can assume that $n\geq 3$.
Suppose we know the statement for $r<n$.
Let $L\subset [1,n]$ be some subset. Consider
the sum of monomials of $H$ with $i_j=1$, for every $j\in L$. Plugging $x_j=1$ for all $j\in L$, we obtain the power series
in variables $x_j, j\not\in L$ which we will call the
$L$-cell $H_L$ of $H$.
Similarly, considering the sum of the monomials of $H$
with the given $i_1$ and plugging $x_1=1$ into it, we
get the power series $H_{i_1}(x_2,\ldots,x_n)$ which we call the hyper-slice of $H$.
Note, that all the cells of $H$
satisfy the conditions of the Lemma. By our assumption, these have all coefficients in $\widehat\Z$.
That is, $a_{i_1,...,i_n}\in\widehat\Z$ provided,
at least, one of $i_j$'s is $1$. The hyper-slice $H_{i_1}$ satisfies the conditions of the
Lemma too (note that $n\geq 3$). Thus, $H_{i_1}$ has coefficients in $\widehat\Z$ and so does $H$.
\end{proof}

The following theorem is a generalization of Proposition \ref{onevar}.

\begin{theorem}\label{description}
For every $n\geq 1$,
\[
{\cQ}_{\widehat\Z}^{n}=\Coprod_{0<r<n}\widehat \Q\cdot\lg_{r}(x)\oplus x\widehat\Z[[x]].
\]
\end{theorem}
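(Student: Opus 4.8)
The plan is to prove this by induction on $n$, using Proposition \ref{onevar} as the base case $n=2$ (the case $n=1$ being the definition $\cQ_{\widehat\Z}^1=x\widehat\Z[[x]]$). First I would observe that the right-hand side is contained in $\cQ_{\widehat\Z}^n$: the terms $\lg_r(x)$ for $0<r<n$ lie in $\cQ_{\widehat\Z}^n$ because $\Phi^{r}$, equivalently $\partial^{r}$ applied after the logarithmic change of variables, kills $\lg_r$, so $\partial^{n-1}\lg_r=0\in\widehat\Z[[x_1,\dots,x_n]]$ (Proposition \ref{kernel}); and $x\widehat\Z[[x]]\subset\cQ_{\widehat\Z}^n$ trivially since $\partial$ preserves integrality of coefficients. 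Also the sum is direct by Lemma \ref{divdiv} (applied with $R=\widehat\Z$, which has no nontrivial $\Z$-divisible elements). So the content is the reverse inclusion $\cQ_{\widehat\Z}^n\subseteq\Coprod_{0<r<n}\widehat\Q\cdot\lg_r(x)\oplus x\widehat\Z[[x]]$.

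For the inductive step, take $G\in\cQ_{\widehat\Z}^n$, so $H:=\partial^{n-2}(G)\in\widehat\Q[[x_1,\dots,x_{n-1}]]$ has the property that $\partial H=\partial^{n-1}G\in\widehat\Z[[x_1,\dots,x_n]]$. I would argue first that $G\in\cQ_{\widehat\Z}^{n-1}$, i.e. that $H$ itself already has coefficients in $\widehat\Z$; this is where Lemma \ref{nintH} enters. To apply it I need that all $\partial$-partial derivatives of $H$ (with respect to each variable) have $\widehat\Z$-coefficients, and that the coefficients $a_{i_1,\dots,i_{n-1}}$ of $H$ with all but one index equal to $1$ lie in $\widehat\Z$. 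The partial-derivative condition follows because $H=\partial^{n-2}(G)$ is symmetric (Observation \ref{observe}, as $\partial G$ is symmetric) and $\partial H\in\widehat\Z[[\ldots]]$, so each one-variable partial derivative of the symmetric series $H$ is a permuted copy of $\partial H$. For the "all but one index $=1$" condition: restricting $H$ to the slice where all $x_j=0$ except $x_1$ and one other variable reduces, via the formula in Proposition \ref{aformula} relating slices of iterated derivatives to derivatives of $(1-x)^k G^{(k)}$, to a two-variable statement handled by Corollary \ref{inte} / Lemma \ref{intH}; concretely, the relevant one-variable slices of $G$ land in $\cQ_{\widehat\Z}^2$, so after subtracting the $\lg_1$ part their remaining coefficients are in $\widehat\Z$, and these are exactly the coefficients with one large index. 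Once Lemma \ref{nintH} gives $H\in\widehat\Z[[x_1,\dots,x_{n-1}]]$, Proposition \ref{allequivalent} (double-symmetry, applicable over $K=\widehat\Q$) shows $H$ can be integrated, hence $G\in\cQ_{\widehat\Z}^{n-1}$, so by induction $G\in\Coprod_{0<r<n-1}\widehat\Q\cdot\lg_r(x)\oplus x\widehat\Z[[x]]$.

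This last membership is \emph{almost} what we want but not quite: we still need to rule out, or rather allow, a $\lg_{n-1}(x)$ component. So the final step is to go the other direction: given $G\in\cQ_{\widehat\Z}^n$, subtract off its image under the projection to $\Coprod_{0<r<n-1}\widehat\Q\cdot\lg_r(x)\oplus x\widehat\Z[[x]]$ coming from the $(n-1)$-level, and analyze the possible $\lg_{n-1}$-coefficient directly. Applying $\Phi^{n-2}$ (equivalently working modulo lower-order data) reduces the question to: which $G\in xK[[x]]$ with $\partial G\in\widehat\Z[[x,y]]$ — i.e. $G\in\cQ_{\widehat\Z}^2$ — appear, and Proposition \ref{onevar} says exactly $\widehat\Q\cdot\lg_1(x)\oplus x\widehat\Z[[x]]$, the $\widehat\Q\cdot\lg_1$ term corresponding to the new $\widehat\Q\cdot\lg_{n-1}$ term upstairs. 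Assembling: $G$ equals an element of $\Coprod_{0<r<n-1}\widehat\Q\cdot\lg_r(x)\oplus x\widehat\Z[[x]]$ plus a $\widehat\Q$-multiple of $\lg_{n-1}(x)$, which is the claim. I expect the main obstacle to be the bookkeeping in the inductive step verifying the hypotheses of Lemma \ref{nintH} — in particular cleanly extracting, from the single condition $\partial^{n-1}G\in\widehat\Z[[\ldots]]$, both the symmetry-plus-integrality of all first partials of $H$ and the integrality of the "one large index" coefficients, which requires carefully tracking how slices of $\partial^{n-2}G$ relate to one- and two-variable truncations governed by Propositions \ref{aformula} and \ref{onevar}.
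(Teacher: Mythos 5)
Your overall strategy is the same as the paper's: induct on $n$ (with base cases $n=1,2$ given by the definition and Proposition~\ref{onevar}), set $H := \partial^{n-2}(G)$, check the hypotheses of Lemma~\ref{nintH} via symmetry of $H$ and analysis of the ``ray'' coefficients $a_{i,1,\ldots,1}$, and conclude by induction after absorbing the $\lg_{n-1}$ term. But the logical order in your write-up is backwards, and this creates a genuine gap. You first claim to establish $G\in\cQ_{\widehat\Z}^{n-1}$ by Lemma~\ref{nintH}, and only afterwards ``allow a $\lg_{n-1}$ component.'' For generic $G\in\cQ_{\widehat\Z}^{n}$ the membership $G\in\cQ_{\widehat\Z}^{n-1}$ is simply false --- for example $\tfrac{1}{p}\lg_{n-1}(x)$ lies in $\cQ_{\widehat\Z}^n$ (since $\partial^{n-1}\lg_{n-1}=0$) but not in $\cQ_{\widehat\Z}^{n-1}$ (the coefficient of $x_1\cdots x_{n-1}$ in $\partial^{n-2}$ of it is $\pm\tfrac1p$). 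Correspondingly, before any normalization the ray of $H$ is \emph{not} integral, so Lemma~\ref{nintH} does not apply to $H$ directly; you cannot first conclude $G\in\cQ_{\widehat\Z}^{n-1}$ and then patch in $\lg_{n-1}$ afterwards.

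The repair, which is the paper's argument, is to perform the $\lg_{n-1}$-subtraction \emph{before} invoking Lemma~\ref{nintH}, and in two stages. First subtract a $\widehat\Q$-multiple of $\lg_{n-1}(x)$ to arrange $a_{1,\ldots,1}=0$; this is necessary because Corollary~\ref{intee} requires the linear coefficient of the ray to lie in $\widehat\Z$, and a priori $a_{1,\ldots,1}$ is only in $\widehat\Q$. Then the ray $\sum_{i\ge1}a_{i,1,\ldots,1}x_1^i$ has $\partial$-derivative in $\widehat\Z[[x_1,y]]$ (a slice of $\partial H$) and vanishing linear term, so by Corollary~\ref{intee} it equals $c\cdot\lg_1(x_1)$ modulo $x_1\widehat\Z[[x_1]]$ with $c\in\widehat\Z$. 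Since $\partial^{n-2}(\lg_{n-1})=\lg_1(x_1)\cdots\lg_1(x_{n-1})$, subtracting the appropriate $\widehat\Z$-multiple of $\lg_{n-1}$ from $G$ makes the ray integral, and only now does Lemma~\ref{nintH} yield $H\in\widehat\Z[[x_1,\ldots,x_{n-1}]]$, i.e.\ the modified $G$ lies in $\cQ_{\widehat\Z}^{n-1}$; the inductive hypothesis then finishes. A minor additional point: your appeal to Proposition~\ref{allequivalent} at the end is superfluous --- once $H=\partial^{n-2}(G)$ has $\widehat\Z$-coefficients, the membership $G\in\cQ_{\widehat\Z}^{n-1}$ is immediate from Definition~\ref{qn}; no integration is required since $G$ already plays the role of the primitive.
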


\begin{proof}
The statement is clear if $n\leq 0$. Now assume that $n\geq 1$.
It follows from Lemma \ref{divdiv} that $\Coprod_{0<r<n}\widehat \Q\cdot\lg_{r}(x)\cap \widehat\Z[[x]]=0$.

We prove the rest by induction on $n$.
For $n=1$ this is so by definition and for $n=2$ this is given by Proposition \ref{onevar}.

\smallskip

\noindent $n\Rightarrow n+1$:
Let $G\in{\cQ}_{\widehat\Z}^{n+1}$. Consider the power series $H(x_1,...,x_n)=\partial^{n-1} (G)$.
Let
\[
H(x_1,...,x_n)=\Sum_{i_1,...,i_n\geq 1}a_{i_1,...,i_n}x_1^{i_1}\cdot...\cdot x_n^{i_n}.
\]

Note that the degree-wise smallest term of $\partial^{n-1}(\lg_{n}(x))$ is $(-1)^nx_1\cdot...\cdot x_n$.
By subtracting an appropriate $\widehat\Q$-multiple of $\lg_{n}(x)$
from $G$, we may assume that $a_{1,...,1}=0$.

As $\partial(H)$ has coefficients in $\widehat\Z$, the ``ray'' $\sum_{i\geq 1} a_{i,1,...,1}x_1^i$
is a power series with terms of degree $\geq 2$ whose $\partial$-derivative is integral.
By Corollary \ref{intee}, up to a power series in $\widehat\Z[[x_1]]$, it is equal to
$c\cdot\lg_1(x_1)$, for some $c\in\hat{\Z}$.

Since
\[
\partial^{n-1}(\lg_n)(x_1,...,x_n)=\lg_1(x_1)\cdot\ldots\cdot\lg_1(x_n),
\]
subtracting from $G(x)$ an appropriate multiple of $\lg_{n}(x)$, we may assume that the coefficients
$a_{i,1,...,1}$ are in $\widehat\Z$, for all $i\geq 1$. Since $H$ is symmetric, by Lemma \ref{nintH},
all coefficients of the power series $H$ are in $\widehat\Z$.
By the induction hypothesis, $G(x)\in{\cQ}^{n}=\Coprod_{0<r<n}\widehat \Q\cdot \lg_{r}(x)+x\widehat\Z[[x]]$.
\end{proof}

\subsection{The groups ${\cQ}^n$}
Write ${\cQ}^n$ for ${\cQ}^n_{\Z}\subset {\cQ}^n_{\widehat\Z}$.

We define a homomorphism
\[
\rho_n:\cQ^{n}\to \widehat \Q^{n-1}
\]
for $n\geq 1$ as the composition (see Theorem \ref{description})
\[
\cQ^{n}\hookrightarrow {\cQ}^n_{\widehat\Z}= \Coprod_{0<r<n}\widehat \Q\cdot\lg_{r}(x)\oplus x\widehat\Z[[x]]
\xra{\operatorname{proj}} \Coprod_{0<r<n}\widehat \Q\cdot\lg_{r}(x)\simeq \widehat \Q^{n-1}.
\]
We will show that the map $\rho_n$ is surjective.

Consider the power series
$$
\widetilde{\lg}_r(x)= (-1)^r\Sum_{0<i_1<...<i_r}\frac{x^{i_r}}{i_1\cdot ...\cdot i_r}\equiv(-1)^r\frac{x^r}{r!}\ \ \ (mod\ x^{r+1}).
$$

For a sequence of $a=(a_i)_{i\geq 1}$ in $\widehat \Z$ let us denote by
$a\cdot\widetilde{\lg}_r(x)\in\widehat\Q[[x]]$ the power series
$$
(-1)^r\Sum_{0<i_1<...<i_r}\frac{a_{i_1}\cdot x^{i_r}}{i_1\cdot ...\cdot i_r}\equiv (-1)^r\frac{a_1}{r!}x^r\ \ \ (mod\ x^{r+1}).
$$
If all $a_i\in\Z$, we have $a\cdot\widetilde{\lg}_r(x)\in\Q[[x]]$.

\begin{lemma}\label{forrmulla}
\label{logrder}
For every sequence $a$, we have
 $$
 (x-1)\cdot\diff{}{x}\left(a\cdot\widetilde{\lg}_r(x)\right)=a\cdot\widetilde{\lg}_{r-1}(x).
 $$
\end{lemma}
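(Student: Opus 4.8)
The plan is to reduce the statement to a formal identity among the coefficients of $a\cdot\lg_r(x)$ and then verify it termwise. Write $a\cdot\lg_r(x) = (-1)^r\sum_{0<i_1<\cdots<i_r}\frac{a_{i_1}}{i_1\cdots i_r}\,x^{i_r}$, so the $x^N$-coefficient is $(-1)^r\sum \frac{a_{i_1}}{i_1\cdots i_{r-1}N}$, where the inner sum runs over $0<i_1<\cdots<i_{r-1}<N$. Applying $\frac{d}{dx}$ multiplies the $x^N$-coefficient by $N$, shifting it to degree $N-1$; multiplying by $(x-1)$ then combines the (old) degree-$N$ coefficient with the degree-$N+1$ one. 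So the claim is equivalent to the recursion: for each $N\geq 1$, the $x^N$-coefficient of $(x-1)\frac{d}{dx}(a\cdot\lg_r(x))$ equals the $x^N$-coefficient of $a\cdot\lg_{r-1}(x)$.

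First I would record the single-variable special case $r$ arbitrary, $a\equiv 1$: here $a\cdot\lg_r = \lg_r$ and the identity $(x-1)\lg_r' = \lg_{r-1}$ is exactly $\Phi(\lg_r)=\lg_{r-1}$, already used in the proof of Lemma \ref{divdiv}. It is fastest to derive the general case from the definition $\lg_r = \frac{1}{r!}(\lg_1)^r$ together with the chain rule $(x-1)\frac{d}{dx}\big(f(\lg_1(x))\big) = -f'(\lg_1(x))$, which follows from $(x-1)\lg_1'(x) = (x-1)\cdot\frac{-1}{1-x} = 1$... but since the weighted series $a\cdot\lg_r$ is not literally a function of $\lg_1$ alone, I would instead do the bookkeeping directly on the nested sums. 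Concretely: differentiating $a\cdot\lg_r(x)$ and multiplying by $x$ gives $(-1)^r\sum_{0<i_1<\cdots<i_r}\frac{a_{i_1}}{i_1\cdots i_{r-1}}\,x^{i_r}$ (the factor $i_r$ in the denominator cancels), while multiplying by $-1$ gives $(-1)^{r+1}\sum_{0<i_1<\cdots<i_r}\frac{a_{i_1}}{i_1\cdots i_r}\,x^{i_r-1}$. In the second sum reindex $j_s = i_s$ for $s<r$ and let $N = i_r - 1 \geq i_{r-1}$; splitting off the cases $N = i_{r-1}$ versus $N > i_{r-1}$ and comparing with the first sum, the terms with $N>i_{r-1}$ telescope against part of the first sum, leaving exactly $(-1)^{r-1}\sum_{0<i_1<\cdots<i_{r-1}\leq N}\frac{a_{i_1}}{i_1\cdots i_{r-1}}$ — wait, one must track that the surviving boundary term has the right combinatorics; this is the place to be careful.

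The main obstacle is purely organizational: making the index-shift and the splitting-off of the top variable match up so that the ``diagonal'' contribution $i_r = i_{r-1}+1$ assembles, together with the telescoped remainder, into precisely the coefficient $(-1)^{r-1}\sum_{0<i_1<\cdots<i_{r-1}<N}\frac{a_{i_1}}{i_1\cdots i_{r-1}\,N}$ of $a\cdot\lg_{r-1}(x)$ at $x^N$. I expect no analytic difficulty — everything is a finite identity at each fixed degree $N$, valid over $\widehat\Q$ (or $\Q$ when $a_i\in\Z$) — and the base case $r=1$, where $a\cdot\lg_1(x) = -\sum_i \frac{a_i}{i}x^i$ and $a\cdot\lg_0(x)$ should be interpreted as $a_1\cdot 1$ up to the evident convention, can be checked by hand to anchor the induction. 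Once the termwise identity is in place, the Lemma follows immediately.
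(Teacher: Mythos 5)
Your strategy --- check the identity coefficientwise, using the explicit nested-sum formula for $a\cdot\lg_r$ --- is exactly the paper's, but the proposal stops at precisely the point where the lemma's entire content lies, so as written there is a gap. Two concrete problems. First, the explicit formula you give for the second piece is inconsistent: differentiation brings down the factor $i_r$, which cancels the $i_r$ in the denominator, so the piece coming from multiplication by $-1$ is $(-1)^{r+1}\sum_{0<i_1<\cdots<i_r}\frac{a_{i_1}}{i_1\cdots i_{r-1}}\,x^{i_r-1}$, not $(-1)^{r+1}\sum\frac{a_{i_1}}{i_1\cdots i_r}\,x^{i_r-1}$. Second, the ``telescoping'' and the ``surviving boundary term'' whose combinatorics you say one must track are never tracked --- and that tracking \emph{is} the lemma; nothing else needs proving. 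There is also no induction to anchor: each degree is verified independently, and your proposed base-case convention $a\cdot\lg_0=a_1\cdot 1$ does not even satisfy the identity (the $x^N$-coefficient of $(x-1)\frac{d}{dx}(a\cdot\lg_1)$ is $a_{N+1}-a_N$ for $N\geq 1$); the lemma is only applied with $r\geq 2$ (in the proof of Lemma \ref{N33}), so the $r=1$ case should simply be excluded rather than reinterpreted.

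The missing verification is short, and it is what the paper does. Write $(-1)^r a\cdot\lg_r(x)=\sum_i b_ix^i$ and $(-1)^{r-1}a\cdot\lg_{r-1}(x)=\sum_i c_ix^i$; the claim is $(m+1)b_{m+1}-mb_m=c_m$ for all $m$. Since the top index contributes the factor $\frac{1}{m+1}$, one has $(m+1)b_{m+1}=\sum_{0<i_1<\cdots<i_{r-1}\leq m}\frac{a_{i_1}}{i_1\cdots i_{r-1}}$; the terms with $i_{r-1}<m$ give exactly $mb_m$, and the terms with $i_{r-1}=m$ give exactly $c_m$ (a chain of length $r-1$ ending at $m$). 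Equivalently, in your setup: the $x^N$-coefficients of $xF'$ and of $-F'$ are, up to the sign $(-1)^r$, the sums of $\frac{a_{i_1}}{i_1\cdots i_{r-1}}$ over chains $0<i_1<\cdots<i_{r-1}$ with $i_{r-1}<N$, respectively $i_{r-1}\leq N$, taken with opposite signs; their difference is the single boundary family $i_{r-1}=N$, which is the $x^N$-coefficient of $a\cdot\lg_{r-1}$. No telescoping, induction, or case analysis is needed, but this one-line bookkeeping has to appear for the proof to be complete.
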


\begin{proof}
Write $(-1)^r a\cdot\widetilde{\lg}_r(x)=\sum b_i x^i$ and $(-1)^{r-1} a\cdot\widetilde{\lg}_{r-1}(x)=\Sum c_i  x^i$. We need to
prove that $(m+1)b_{m+1}-mb_m=c_m$ for every $m$. We have
\[
(m+1)b_{m+1}=\Sum_{0<i_1<...<i_{r-1}<m+1}\frac{a_{i_1}}{i_1\cdot ...\cdot i_{r-1}}.
\]
The sum of the terms with $i_{r-1}<m$ is equal to $mb_m$. The sum of the terms  with $i_{r-1}=m$ coincides with $c_m$.
\end{proof}

In particular, $\Phi(\widetilde{\lg}_r(x))=\widetilde{\lg}_{r-1}(x)$. Note that we also have
$\Phi(\lg_r(x))=\lg_{r-1}(x)$ and series
$\widetilde{\lg}_r(x)$ and $\lg_r(x)$ have no constant terms for $r\geq 1$. Since the kernel of $\Phi$ consists of constants only and $\widetilde{\lg}_1(x)=\lg_1(x)$, by definition, it follows by induction on $r$ that $\widetilde{\lg}_r(x)=\lg_r(x)$, for
all $r$. In particular, we can define the product
$a\cdot\lg_r(x)$ as above.




\begin{lemma}\label{N33}
For every $c\in\widehat \Z$ and every integer $r>0$ there is a sequence $\tilde c=(c_i)_{i\geq 1}$
of integers $c_i\in\Z$ such that
$c_i\equiv c\ (mod\ i)$ for all $i$ and
\[
(c- \tilde c)\cdot \lg_{i}(x)\in \widehat \Z[[x]]
\]
for all $i=1,\ldots, r$, where $c- \tilde c$ is the sequence $(c- c_i)_{i\geq 1}$.
\end{lemma}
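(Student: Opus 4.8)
The plan is to construct the integer sequence $\tilde c = (c_i)_{i \geq 1}$ one prime at a time, exploiting the fact that the only obstruction to integrality of $(c - \tilde c)\cdot \lg_i(x)$ in $\widehat\Z[[x]]$ lives over each prime $p$ separately, and that only finitely many primes and finitely many powers of each matter in any fixed-degree truncation. First I would recall the explicit formula
\[
(c - \tilde c)\cdot \lg_r(x) = (-1)^r \Sum_{0 < i_1 < \dots < i_r} \frac{(c - c_{i_1})\, x^{i_r}}{i_1 \cdots i_r},
\]
so that the coefficient of $x^m$ is a $\widehat\Q$-combination of the terms $(c - c_{i_1})/(i_1 \cdots i_r)$ over chains $i_1 < \dots < i_r = m$; since $c - c_{i_1} \in i_1\widehat\Z$ by the congruence requirement on $\tilde c$, each such term already lies in $\widehat\Z$ unless $i_2 \cdots i_r$ contributes denominators, i.e. the problem is to control how badly $i_2 \cdots i_r$ divides into $(c - c_{i_1})/i_1$. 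The point is to choose the $c_i$ so that $c - c_i$ is divisible by $i$ to a much higher $p$-adic order whenever $p^e \| i$ for small primes $p$, enough to absorb the denominators coming from later indices in a chain of length $\leq r$.

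The key steps, in order: (1) Fix $r$. Observe that in $(c - \tilde c)\cdot\lg_i(x)$ with $i \leq r$, a denominator $p^a$ in the coefficient of $x^m$ can only arise from indices $i_1 < \dots < i_j \leq m$ among the chain that are divisible by $p$; since a strictly increasing chain of $p$-divisible integers up to $m$ has length at most $\log_p m$, and we only take chains of length $\leq r$, the total $p$-adic denominator in $1/(i_2\cdots i_r)$ is bounded. (2) For each prime $p$ and each $i$ with $p^e \| i$, we do \emph{not} merely want $p^e \mid c - c_i$; instead, using the Chinese Remainder Theorem, choose $c_i \in \Z$ with $c_i \equiv c \pmod{i}$ and additionally $c_i \equiv c$ modulo a large power $p^{e + N}$ of $p$ for every prime $p \leq r$ dividing $i$, where $N = N(r)$ is the uniform bound from step (1). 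This is possible because the moduli $i$ and $p^{e+N}$ (refining the $p$-part of $i$) are compatible. (3) Plug these $c_i$ into the formula: for a chain $i_1 < \dots < i_r = m$, write $c - c_{i_1} = i_1 \cdot u$ with $u \in \widehat\Z$, and with extra $p$-divisibility $p^N \mid u$ for each small prime $p \mid i_1$. One then checks that $i_1 u / (i_1 \cdots i_r) = u/(i_2 \cdots i_r)$ lies in $\widehat\Z$: the denominator $i_2 \cdots i_r$ has $p$-adic valuation at most $N$ for each $p \leq r$ (by step (1)), while $u$ supplies valuation at least $N$ at those primes; primes $p > r$ never divide a denominator in $\lg_i$ for $i \leq r$. (4) Since $\widehat\Z = \prod_p \Z_p$, checking divisibility prime by prime suffices, and $i = 1, \dots, r$ is a finite range, so the same sequence $\tilde c$ works for all $i \leq r$ simultaneously.

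The main obstacle I anticipate is step (1), namely getting a \emph{clean uniform bound} $N(r)$ on the $p$-adic denominators that appear, correctly accounting for the interaction between the index $i_1$ (which is divisible by possibly several small primes) and the product $i_2 \cdots i_r$ of the remaining indices in a chain — one has to be careful that a prime $p$ dividing $i_1$ can also divide some $i_j$ with $j > 1$, so the required extra divisibility $p^N \mid u$ must dominate the \emph{sum} of $p$-valuations over the tail of the chain, and bounding that sum uses the length restriction $r$ together with the fact that the $i_j$ are strictly increasing. A secondary technical point is verifying that the CRT system in step (2) is consistent (the condition "$c_i \equiv c \pmod{p^{e+N}}$" is a refinement of "$c_i \equiv c \pmod{i}$" at $p$, hence compatible), which is routine but should be stated explicitly. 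Everything else is bookkeeping with the explicit series for $\lg_r$.
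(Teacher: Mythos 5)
Your proposal does not work as written, and the obstacle you yourself flag in step (1) is a genuine fatal flaw rather than a technical wrinkle to be smoothed over.

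The claimed uniform bound $N(r)$ does not exist. For a fixed prime $p$, the quantity $v_p(i_2\cdots i_r)$ is \emph{not} bounded as the chain $i_1<\dots<i_r=m$ varies: already for $r=2$, taking $i_2=m=p^a$ gives $v_p(i_2)=a$ with $a$ arbitrary, so there is no constant $N$ with $v_p(i_2\cdots i_r)\leq N$. (The bound ``$\log_p m$'' you invoke is a bound for \emph{divisor} chains $p\mid p^2\mid\cdots$, not for strictly increasing sequences, and in any case is not uniform in $m$.) This kills the CRT construction in step (2), because a fixed extra divisibility $p^{e+N}\mid c-c_i$ cannot absorb an unbounded denominator.

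More fundamentally, your step (3) tries to make each summand $(c-c_{i_1})/(i_1\cdots i_r)$ of the $x^m$-coefficient individually integral, and this is \emph{provably impossible} when $c\notin\Z$: take $i_1=1$, so the term is $(c-c_1)/(i_2\cdots i_r)$, and as $i_2,\dots,i_r$ range over all chains this would force $c-c_1$ to be divisible by every positive integer, i.e.\ $c_1=c\in\Z$. The secondary claim that ``primes $p>r$ never divide a denominator'' is also false: the $x^m$-coefficient of $\lg_i(x)$ has $m$ in its denominator, and $m$ may be a large prime; and since your CRT conditions only constrain $c_i$ modulo primes dividing $i$, they impose nothing modulo such a prime (e.g.\ the $x^3$-coefficient of $(c-\tilde c)\cdot\lg_2(x)$ equals $\tfrac{2(c-c_1)+(c-c_2)}{6}$, and neither $c_1\equiv c\pmod 1$ nor $c_2\equiv c\pmod 2$ controls anything modulo $3$). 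The paper's proof avoids all of this by exploiting \emph{cancellation between terms}: it fixes $c_1$ congruent to $c$ modulo $r!$ and then modifies $c_{n+1}$ recursively by multiples of $(n+1)(n+2)\cdots(n+k)$, using the identity $(x-1)\diff{G_{k+1}}{x}=G_k$ to express the next coefficient as $a_{n+k+1}=-\tfrac{1}{n+k+1}(b_k+\dots+b_{n+k})$ and adjust the last summand $b_{n+k}$ by an arbitrary integer without disturbing the earlier integrality conditions. This recursive dependence of $c_{n+1}$ on $c_1,\dots,c_n$ is precisely the structural idea your independent, one-shot CRT choice is missing.
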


\begin{proof}
Take any collection $\tilde c=(c_i)_{i\geq 1}$ of integers. Note that for every $i\geq 1$ and $k=1,\ldots, r$,
the $x^{i+k-1}$-coefficient of $\tilde c\cdot \lg_{k}(x)$
is a linear combination of $c_1,\ldots, c_{i}$ with rational coefficients where
the $c_{i}$-coefficient is equal to $(-1)^{k}/(i(i+1)\ldots (i+k-1))$.

We will modify $c_1, c_2,\ldots$ inductively to make all coefficients of the power series
\[
G_k=(c- \tilde c)\cdot \lg_{k}(x)
\]
integral for all $k=1,\ldots, r$. Let $c_1$ be an integer congruent to $c$ modulo $r!$, so the $x^k$-coefficient of $G_k$ is integer
for every $k=1,\ldots, r$. Suppose we have modified $c_1,\ldots c_n$
so that the $x^j$-coefficient of $G_k$ is integral
for all $k=1,\ldots, r$ and $j\leq n+k-1$.

By induction on $k=1,\ldots, r$, we will modify $c_{n+1}$ to make integral the $x^{n+k}$-coefficient of $G_k$.
Note that the integral $x^j$-coefficients of $G_k$ for $j\leq n+k-1$ will not change. If $k=1$ we don't modify $c_{n+1}$:
the power series $G_1$ is already integral.

$k\Rightarrow k+1$: By Lemma \ref{logrder},
\[
(x-1)\cdot\diff{G_{k+1}}{x}=G_{k}.
\]
Hence, if $G_{k}=\Sum_{i\geq k}b_ix^i$ and $G_{k+1}=\Sum_{i\geq k+1}a_i x^i$, then
\[
a_{n+l+1}=\textstyle{\frac{-1}{n+l+1}}(b_{k}+\ldots +b_{n+l})
\]
for all $l$.

By induction, $b_k,\ldots, b_{n+k}$ are integral. Recall that these are linear combinations of the $c'_i$'s,
where $c'_i=c-c_i$ and $c'_{n+1}$
appears only in $b_{n+k}$.
We modify $c_{n+1}$ by adding to $c_{n+1}$ the integer $t(n+1)(n+2)\ldots (n+k)$ with some $t\in \Z$. Note that $b_k,\ldots, b_{n+k-1}$
remain unchanged and $b_{n+k}$ changes to $b_{n+k}+t$, so it stays integral. Choose $t$ to make $a_{n+k+1}$ integral.

Note that $c'_{n+1}$ comes with coefficient
$(-1)^l/((n+1)\ldots (n+l))$ in the $x^{n+l}$-coefficient of $G_l$. Since $(n+1)\ldots (n+l)$ divides $(n+1)\ldots (n+k)$ when $l\leq k$,
the $x^{n+l}$-coefficient of $G_l$ remains integral for $l\leq k$.
\end{proof}

Now we prove that the map $\rho_n:\cQ^n\to \widehat \Q^{n-1}$ is surjective. Since $q\cdot \lg_r\in \cQ^n$ for all
$q\in\Q$ and $r=1,\ldots, n-1$, we have $\Q^{n-1}\subset \Im(\rho_n)$. It suffices to show that $\widehat \Z^{n-1}\subset \Im(\rho_n)$.
Choose $c_r\in \widehat \Z$ for $r=1,\ldots, n-1$. By Lemma \ref{N33}, there are sequences of integers
$\tilde c_r$ such that $(c_r - \tilde c_r)\cdot \lg_{r}(x) \in \widehat \Z[[x]]$.

As
\[
\tilde c_r\cdot \lg_{r}(x) =    c_r\cdot \lg_{r}(x)- (c_r- \tilde c_r)\cdot \lg_{r}(x),
\]
we have $\rho_n\(\Sum_{0<r<n}\tilde c_r\cdot \lg_{r}(x)\)=(c_r)_{r=1,\ldots, n-1}$ proving that $\rho_n$ is surjective.

Note that the kernel of $\rho_n$ is equal to $x\widehat\Z[[x]]\cap \Q[[x]]=x\Z[[x]]$. Thus, we have an exact sequence
\begin{equation}\label{sesimp}
0\to x\Z[[x]]\to \cQ^n\xra{\rho_n}\widehat \Q^{n-1}\to 0.
\end{equation}

We have proved that if $n\geq 1$, the group $\cQ^n$ is generated by $x\Z[[x]]$ and the power series $(c- \tilde c)\cdot \lg_{r}(x)$
as in Lemma \ref{N33}, where $c\in \widehat\Z$ and $r=1,\ldots, n-1$.

\smallskip

The power series in $\cQ^n$ can be approximated by polynomials as follows:

\begin{lemma}\label{approxim}
For every $m>0$ and $n$, we have
\[
\cQ^n\subset \Z[x]_{\leq m-1}+ \Sum_{0<r<n}\Q\cdot\lg_{r}(x)+ x^m\Q[[x]],
\]
where $\Z[x]_{\leq m-1}$ is the group of integral polynomials of degree at most $m-1$.
\end{lemma}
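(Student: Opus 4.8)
The plan is to deduce the lemma from the description of $\cQ^n_{\widehat\Z}$ in Theorem \ref{description}, combined with the two arithmetic facts $\widehat\Q=\widehat\Z+\Q$ and $\widehat\Z\cap\Q=\Z$ recorded above, using only the elementary observation that every $G\in\cQ^n=\cQ^n_{\Z}$ has coefficients in $\Q$. For a power series $f\in K[[x]]$ write $\pi_m(f)\in K[x]_{\leq m-1}$ for its truncation modulo $x^m$, so that $f-\pi_m(f)\in x^m K[[x]]$. The cases $n\leq 1$ are trivial: there the index set $\{r:0<r<n\}$ is empty and $\cQ^n$ is $\Z[[x]]$ or $x\Z[[x]]$, so $G=\pi_m(G)+(G-\pi_m(G))$ already lies in $\Z[x]_{\leq m-1}+x^m\Q[[x]]$. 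So assume $n\geq 2$.

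First I would fix once and for all a positive integer $D$ that clears the denominators of all coefficients of degree $\leq m-1$ of the (finitely many) power series $\lg_r(x)$, $0<r<n$; thus $D\cdot\pi_m(\lg_r(x))\in\Z[x]_{\leq m-1}$ for each such $r$. Next, given $G\in\cQ^n\subseteq\cQ^n_{\widehat\Z}$, Theorem \ref{description} lets me write
\[
G=\Sum_{0<r<n}\gamma_r\,\lg_r(x)+H,\qquad \gamma_r\in\widehat\Q,\quad H\in x\widehat\Z[[x]].
\]
Using $\widehat\Q=\widehat\Z+\Q$ I split $\gamma_r=q_r+\delta_r$ with $q_r\in\Q$ and $\delta_r\in\widehat\Z$, and, since the reduction $\widehat\Z\to\Z/D\Z$ is surjective, I pick integers $u_r\in\Z$ with $u_r\equiv\delta_r\pmod{D}$. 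Applying $\pi_m$ and regrouping gives
\[
\pi_m(G)=\Sum_{0<r<n}(q_r+u_r)\,\pi_m(\lg_r(x))\;+\;P,\qquad
P:=\Sum_{0<r<n}(\delta_r-u_r)\,\pi_m(\lg_r(x))+\pi_m(H).
\]

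The crux is to show $P\in\Z[x]_{\leq m-1}$. On the one hand $P=\pi_m(G)-\Sum_{0<r<n}(q_r+u_r)\pi_m(\lg_r(x))$ has coefficients in $\Q$, because $G$ and every $\lg_r(x)$ do and $q_r+u_r\in\Q$. On the other hand $\pi_m(H)\in\widehat\Z[x]_{\leq m-1}$ since $H\in x\widehat\Z[[x]]$, while
\[
(\delta_r-u_r)\,\pi_m(\lg_r(x))=\tfrac{\delta_r-u_r}{D}\cdot\big(D\,\pi_m(\lg_r(x))\big)\in\widehat\Z[x]_{\leq m-1},
\]
because $\delta_r-u_r\in D\widehat\Z$ and $D\,\pi_m(\lg_r(x))\in\Z[x]$; hence $P\in\widehat\Z[x]_{\leq m-1}$. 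Since $\widehat\Z\cap\Q=\Z$ we get $P\in\Z[x]_{\leq m-1}$. Finally, writing $\lg_r(x)=\pi_m(\lg_r(x))+(\lg_r(x)-\pi_m(\lg_r(x)))$ and $G=\pi_m(G)+(G-\pi_m(G))$, one obtains
\[
G=\Sum_{0<r<n}(q_r+u_r)\,\lg_r(x)+P+R,
\]
where $R\in x^m\Q[[x]]$ collects $G-\pi_m(G)$ and the tails $-(q_r+u_r)\big(\lg_r(x)-\pi_m(\lg_r(x))\big)$. This places $G$ in $\Z[x]_{\leq m-1}+\Sum_{0<r<n}\Q\cdot\lg_r(x)+x^m\Q[[x]]$, which is the assertion.

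I expect the only genuinely delicate point to be the one just isolated: although $G$ has rational coefficients, its $\lg_r$-coordinates $\gamma_r$ in the $\widehat\Z$-decomposition need only lie in $\widehat\Q$, so one cannot simply truncate them; the device is to replace the profinite part $\delta_r$ by an honest integer congruent to it modulo the common denominator $D$, thereby confining the ``$\widehat{\ }$''-contribution to a polynomial $P$ which is then forced into $\Z[x]$ by the rationality of $G$ together with $\widehat\Z\cap\Q=\Z$. One could equivalently run the same argument over the explicit generators $x\Z[[x]]$, $\Q\cdot\lg_r(x)$ and $\tilde c\cdot\lg_r(x)$ of $\cQ^n$ furnished by Lemma \ref{N33}.
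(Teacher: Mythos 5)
Your proof is correct, and it does give the result. It is, however, organized rather differently from the paper's: the paper reduces via the exact sequence (\ref{sesimp}) to the explicit generators $\tilde c\cdot \lg_r(x)$ of $\cQ^n$ modulo $x\Z[[x]]+\Sum_{0<r<n}\Q\cdot\lg_r(x)$ supplied by Lemma \ref{N33}, and then approximates each such generator by choosing an integer $d\equiv c$ modulo the lcm of the denominators of the first $m-1$ coefficients of $\lg_r(x)$, so that the $x^m$-truncation of $(\tilde c-d)\cdot\lg_r(x)$ lands in $\Z[x]_{\leq m-1}$. You instead bypass the generators entirely and treat an arbitrary $G\in\cQ^n$ globally, using the direct-sum decomposition $\cQ^n_{\widehat\Z}=\Coprod_{0<r<n}\widehat\Q\cdot\lg_r(x)\oplus x\widehat\Z[[x]]$ of Theorem \ref{description} together with $\widehat\Q=\widehat\Z+\Q$, and then force the leftover polynomial $P$ into $\Z[x]$ by the two-sided integrality $\widehat\Z\cap\Q=\Z$. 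The core mechanism is the same in both arguments — replace a profinite scalar by an integer congruent to it modulo the denominator-clearing constant $D$, then use rationality of the original series to upgrade $\widehat\Z$-integrality of the truncation to $\Z$-integrality — but your version is self-contained given Theorem \ref{description} and does not need to invoke Lemma \ref{N33} or the generators of the short exact sequence explicitly. That is a mild simplification; the paper's route has the advantage that the same generators are reused elsewhere (e.g., in the description just before the lemma of how $\cQ^n$ is generated), so the two presentations are about equally economical in context.
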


\begin{proof}
We may assume that $n>1$. In view of (\ref{sesimp}), the group $\cQ^n$ modulo
\[
x\Z[[x]]+\Sum_{0<r<n}\Q\cdot\lg_{r}(x)
\]
is generated by power series
of the form $\tilde c\cdot \lg_r(x)$, where $r=1,\ldots, n-1$ and $\tilde c$ is the collection of integers
such that $c_i\equiv c\ (mod\ i)$ for all $i$ for an element $c\in\widehat\Z$ as in Lemma \ref{N33}.

Let $d$ be an integer congruent to $c$ modulo the least common multiple of the denominators of the
$x^i$-coefficients of $\lg_r(x)$ for all $i=1,\ldots, m-1$. Then the $x^m$-truncation $F$ of $(\tilde c-d)\cdot \lg_r(x)$
is contained in $\Z[x]_{\leq m-1}$ and $\tilde c\cdot \lg_r(x)$ is congruent to $F$ modulo $\Z\cdot\lg_{r}(x)+x^m\Q[[x]]$.
\end{proof}

\section{Operations}

Let $k$ be a field of characteristic $0$ and write $\smk$ for the category of smooth quasi-projective varieties over $k$. An \emph{oriented
cohomology theory} $A^*$ over $k$ is a functor from $\smk^{op}$ to the category of $\Z$-graded commutative rings equipped with a
push-forward structure and satisfying certain axioms (see \cite[Definition 2.1]{vishik19}). We write
\[
A^*(X)=\Coprod_{n\in\Z} A^n(X)
\]
for a variety $X$ in $\smk$ and let $A^*(k)$ denote the \emph{coefficient ring} $A^*(\Spec k)$.

Let $A^*$ be an oriented cohomology theory. There is a (unique) associated formal group law

\[
F_A(x,y)=\Sum_{i,j\geq 0}a^A_{i,j}x^iy^j=x+y+a_{1,1}\cdot xy+ \text{higher terms}\in A^*(k)[[x,y]]
\]
that computes the first Chern class of the tensor product of two line bundles $L$ and $L'$
(see, for example, \cite[p.3 and Section 3.9]{Panin03}, \cite[Section 2.7]{Panin04}, \cite[\S 1.1]{LM07} or \cite[\S 2.3]{vishik19}):
\[
c_1^A(L\tens L')=F_A(c_1^A(L),c_1^A(L')).
\]

\begin{example}
The \emph{Chow theory} $\CH^*$ takes a smooth variety $X$ to the Chow ring $\CH^*(X)$ of $X$.
We have $\CH^*(k)=\Z$ and $F_{\CH}(x,y)=x+y$ is the \emph{additive} group law.
\end{example}

\begin{example} (see \cite[Example 1.15]{LM07})
The \emph{graded $K$-theory} $K_{gr}^*$ takes $X$ to the Laurent polynomial ring $K_0(X)[t,t\inv]$
(graded by the powers of the \emph{Bott element} $t$
of degree $-1$) over the Grothendieck ring $K_0(X)$ of $X$.
We have $K_{gr}^*(k)=\Z[t,t\inv]$ and $F_{K_{gr}}(x,y)=x+y-txy$ is the \emph{multiplicative} group law.
\end{example}

\begin{example} (see \cite{Cai08} and \cite{DL2014})
The \emph{connective $K$-theory} takes $X$ to the ring $\CK^*(X)$ of $X$.
We have $\CK^*(k)=\Z[t]$ and $F_{CK}(x,y)=x+y-txy$.
\end{example}

All cohomology theories in these examples are of \emph{rational type} (see \cite[\S 4.1]{vishik19} and \cite{LM07}).

If $A^*$ is an oriented cohomology theory and $R$ a commutative ring, the functor
$A_R^*$ defined by $A_R^*(X)=A^*(X)\tens_{\Z} R$
is also an oriented cohomology theory with values in the category of graded $R$-algebras.

\begin{definition}
Let $A^*$ and $B^*$ be two oriented cohomology theories. An \emph{$R$-linear operation}
$G:A_R^*\to B_R^*$ is a morphism between functors $A_R^*$ and $B_R^*$ considered as contravariant functors from
$\smk$ to the category of $R$-modules (cf. \cite[Definition 3.3]{vishik19}). Note that $G$ may not respect the
gradings on $A_R^*$ and $B_R^*$.
\end{definition}

Let $n,m\in\Z$.  A morphism $G:A_R^n\to B_R^m$ between contravariant functors from
$\smk$ to the category of $R$-modules can be viewed as an $R$-linear operation via the obvious composition
$A_R^*\to\!\!\!\!\to A_R^n\to B_R^m\hookrightarrow B_R^*$.
All such operations form
an $R$-module $\Op_R^{n,m}(A^*,B^*)$. The composition of operations yields an $R$-linear pairing
\[
\Op_R^{n,m}(A^*,B^*)\tens_R \Op_R^{m,r}(B^*,C^*)\to \Op_R^{n,r}(A^*,C^*).
\]
In particular, $\Op_R^{n,n}(A^*):=\Op_R^{n,n}(A^*,A^*)$ has a structure of an $R$-algebra.

\begin{example}(see \cite{Cai08} and \cite{DL2014})
Multiplication by $t$ yields an operation $\CK_R^{n+1}\to\CK_R^{n}$ that is an isomorphism if $n< 0$.
There are graded $R$-linear operations
\[
\CK_R^*\to \CH_R^* \quad \text{and}\quad \CK_R^*\to (K^*_{gr})_R.
\]

The sequence
\[
\CK^{n+1}(X)\xra{t}\CK^{n}(X)\to\CH^n(X)\to 0
\]
is exact for every $n$ and $X$.

If $n\geq 0$ the image of the homomorphism $\CK^n(X)\to K_{gr}^n(X)= K_0(X)t^{-n}\simeq K_0(X)$ is generated by
the classes of coherent $\cO_X$-modules with codimension of support at least $n$. If $n\leq 0$ this map is
an isomorphism.
\end{example}

The following fundamental theorem was proved in \cite[Theorem 6.2]{vishik19}.

\begin{theorem}\label{vishik}
Let $A^*$ be a cohomology theory of rational type and $B^*$ be any oriented cohomology theory over $k$.
Let $R$ be a commutative ring.
 Then there is an $R$-isomorphism between the
set $\Op_R^{n,m}(A^*,B^*)$ of $R$-linear operations $G:A_R^n\to B_R^m$ and the set consisting of the following data $\{G_l, l\in\Z_{\geq 0}\}$:
\[
G_l\in \Hom_{R}\big(A^{n-l}(k)\tens R,B^*(k)[[x_1,\ldots, x_l]]_{(m)}\tens R\big)\quad\text{satisfying}
\]
\begin{enumerate}
\item $G_l(\alpha)$ is a symmetric power series for all $l$ and $\alpha\in A^{n-l}(k)\tens R$,
  \item $G_l(\alpha)$ is divisible by $x_1\cdot\ldots\cdot x_l$ for all $l$ and $\alpha$,
    \item $G_l(\alpha)(y+_B z,x_2,\ldots, x_l)= \Sum_{i,j} G_{i+j+l-1}(\alpha\cdot  a_{i,j}^A)(y^{\times i}, z^{\times j},x_2,\ldots,x_l)$, for $l>0$,
where $a_{i,j}^A$ are the coefficients of the formal group law of $A^*$ and the sum $y+_B z$ is taken with respect to the
formal group law of $B^*$ (here $t^{\times i}$ denotes $i$ copies of $t$).
 \end{enumerate}
\end{theorem}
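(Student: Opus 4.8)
The plan is to set up the correspondence in two directions — extracting the power-series data from a given operation by testing it on products of projective spaces, and conversely reconstructing an operation from such data using that $A^*$ is of rational type — and then checking the two constructions are mutually inverse. The hard direction, where the rational-type hypothesis is essential, is the reconstruction.

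\textbf{Extracting the data.} Given $G\in\Op_R^{n,m}(A^*,B^*)$, I would work over the ind-varieties $(\mathbb{P}^\infty)^{\times l}$ (via the finite approximations $(\mathbb{P}^N)^{\times l}$), using $A^*((\mathbb{P}^\infty)^{\times l})\tens R=(A^*(k)\tens R)[[x_1,\dots,x_l]]$, where $x_i=c_1^A(\cO(1))$ on the $i$-th factor. For $\alpha\in A^{n-l}(k)\tens R$ the class $\alpha\, x_1\cdots x_l$ is homogeneous of degree $n$, and I set $G_l(\alpha):=G(\alpha\, x_1\cdots x_l)\in B^*(k)[[x_1,\dots,x_l]]_{(m)}\tens R$; $R$-linearity of $G$ makes each $G_l$ $R$-linear. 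Condition (1) holds because a permutation of the factors fixes $\alpha\, x_1\cdots x_l$ and $G$ is natural. Condition (2) holds because restricting the $i$-th factor to a point sends $x_i\mapsto 0$ and annihilates $\alpha\, x_1\cdots x_l$, forcing $G_l(\alpha)|_{x_i=0}=0$, hence $x_1\cdots x_l\mid G_l(\alpha)$. For condition (3) I would pull $\alpha\, x_1\cdots x_l$ back along $\mu\times\id\colon\mathbb{P}^\infty\times\mathbb{P}^\infty\times(\mathbb{P}^\infty)^{\times(l-1)}\to(\mathbb{P}^\infty)^{\times l}$, where $\mu$ classifies $\cO(1)\boxtimes\cO(1)$ and hence carries the first Chern class to $F_A(y,z)=\Sum_{i,j}a_{i,j}^A\, y^iz^j$; since $G$ of $\alpha\, a_{i,j}^A\, y^iz^j\, x_2\cdots x_l$ is the pullback of $G_{i+j+l-1}(\alpha\, a_{i,j}^A)$ along the morphism repeating $y$ exactly $i$ times and $z$ exactly $j$ times, naturality and $R$-linearity of $G$ yield the asserted formula.

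\textbf{Reconstructing the operation.} Given data $\{G_l\}$ satisfying (1)--(3), I would invoke the structure of free / rational-type theories from \cite{vishik19}: for any smooth $X$, $A^*(X)\tens R$ is generated by pushforwards $f_*(\xi)$ along projective morphisms $f\colon Y\to X$ with $Y$ smooth (coming from resolution of singularities and from projective-bundle projections), where $\xi$ is a polynomial in Chern roots of vector bundles on $Y$, and all relations among such generators are induced from relations among morphisms of products of projective spaces. I would define $G$ on such a generator by substituting the relevant Chern roots into the appropriate $G_l$ and applying $f_*$, then check independence of the presentation: (1) gives invariance under reordering Chern roots, (2) gives invariance under adjoining a spurious root equal to $0$ (matching classes on spaces of different dimension), and (3) gives compatibility with the formal group law, i.e.\ with replacing a line bundle by a tensor product, which is the origin of the admissible relations. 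That these conditions are enough — that no relation peculiar to a general $X$ can obstruct well-definedness — is exactly the content of the rational-type hypothesis.

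\textbf{Mutual inverseness, and the main obstacle.} Starting from data, building $G$, and re-extracting returns the data tautologically, since $\alpha\, x_1\cdots x_l$ is already in generator form. Conversely, if one starts from $G$, forms $\{G_l\}$, and reconstructs $\wt{G}$, the two operations agree on every $\alpha\, x_1\cdots x_l$ over $(\mathbb{P}^\infty)^{\times l}$, hence on all $f_*$-pushforwards of such classes, which generate $A^*(X)\tens R$; so $G=\wt{G}$. I expect the main obstacle to be the well-definedness in the reconstruction step: one must know that every relation in $A^*(X)\tens R$ for an arbitrary smooth $X$ is implied by relations pulled back from products of projective spaces. This is where ``rational type'' does all the work and is the technical heart of \cite[Theorem 6.2]{vishik19}, resting on the structural theory of free theories (presentation of $A^*(X)$ by generators and relations, and the reduction of operation-computations to cellular test spaces).
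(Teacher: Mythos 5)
The paper does not actually prove Theorem \ref{vishik}: it is stated with the preface ``The following fundamental theorem was proved in \cite[Theorem 6.2]{vishik19},'' and the only new content the paper supplies is the Remark immediately following it, namely the observation that Vishik's theorem for $R=\Z$ implies the general-$R$ case because an additive operation $G:A_R^n\to B_R^m$ is $R$-linear if and only if it commutes with all the scalar operations $r:A_R^n\to A_R^n$, which in turn is equivalent to each $G_l$ being $R$-linear. Your proposal does not reproduce this reduction; instead you run the whole argument directly with $R$-coefficients, which is fine in principle but is not what the paper actually contributes on top of the citation.

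On the substance of your sketch: the extraction direction is correct and matches the paper's own description of the inverse correspondence, i.e.\ formula (\ref{c1eq}), $G_l(\alpha)\bigl(c_1^B(L_1),\ldots,c_1^B(L_l)\bigr)=G\bigl(\alpha\cdot c_1^A(L_1)\cdots c_1^A(L_l)\bigr)$, and your justifications of (1), (2), (3) via symmetry, restriction to a point, and pulling back along the Segre map are the standard ones. For the reconstruction direction you essentially punt to Vishik, which is what the paper does too, so there is no additional gap relative to the paper; but be aware that your gloss --- ``$A^*(X)\tens R$ is generated by pushforwards of Chern-root polynomials and all relations are induced from products of projective spaces'' --- is only a paraphrase of the spirit of ``rational type,'' not the mechanism of Vishik's proof, which proceeds by an inductive construction over cellular test spaces and the dimension filtration rather than by exhibiting a generators-and-relations presentation in the form you describe. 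So: the extraction half is solid, the reconstruction half correctly locates where the difficulty lies but doesn't prove it (neither does the paper), and the one piece of genuine argument the paper does include --- the $\Z$-to-$R$ reduction --- is absent from your write-up.
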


Here $B^*(k)[[x_1,\ldots, x_n]]_{(m)}$ is the subgroup in $B^*(k)[[x_1,\ldots, x_n]]$ consisting of all homogeneous degree $m$
power series (all the $x_i$'s have degree $1$).

The functions $G_l$ are determined by the operation $G$ as follows (see \cite[\S 5]{vishik19}). Write $L_i$ for the pull-back
of the canonical line bundle on $\P^\infty$ with respect to the $i$-th projection
$(\P^\infty)^{l}\to\P^\infty$. Then
\begin{equation}\label{c1eq}
G_l(\alpha)\(c_1^B(L_1),\ldots, c_1^B(L_l)\)=G\(\alpha\cdot c_1^A(L_1)\cdot\ldots\cdot c_1^A(L_l)\),
\end{equation}
where $c_1$ is the first Chern class.

\begin{remark}
Theorem \ref{vishik} was proved in \cite[Theorem 6.2]{vishik19} in the case $R=\Z$. The general case readily follows.
Indeed, multiplication by an element $r\in R$ yields operations $r:A_R^n\to A_R^n$ and $r:B_R^m\to B_R^m$. An additive operation
$G:A_R^n\to B_R^m$ is $R$-linear if and only if $G\circ r=r\circ G$ for all $r\in R$. The latter is equivalent to the equality
$G_l\circ r=r\circ G_l$ for all $l$, i.e., that all $G_l$ are $R$-linear.
\end{remark}

\begin{example}\label{ad}(see \cite[\S 6.3]{vishik19})
Let $A^*$ be a cohomology theory of rational type and $m\in\Z$. Consider the power series
$[m](x):=x+_A\ldots +_A x\in A^*(k)[[x]]$ ($m$ times). The \emph{Adams operation} $\Psi_m^A\in \Op^{*,*}_{R}$
is determined by $(G_l)_{l\geq 0}$, where $G_l$ is multiplication by the power series $[m](x_1)\cdot\ldots \cdot [m](x_l)$ ($l$ factors), in particular, $G_0$ is the identity.
The Adams operations satisfy the relations
\[
\Psi_k^A\circ \Psi_m^A =\Psi_{km}^A = \Psi_m^A\circ \Psi_k^A
\]
for all $k$ and $m$.
\end{example}

\subsection{Operations in connective $K$-theory}

We would like to determine the $R$-module $\Op_R^{n,m}$ of
all $R$-linear operations $G:\CK_R^n\to \CK_R^m$ for any pair of integers $n$ and $m$.
By Theorem \ref{vishik}, $G$ is given by a collection of power series $G_l(\alpha)\in R[t][[x_1,\ldots,x_n]]_{(m)}$, where
$\alpha\in \CK_R^{n-l}(k)$ and $l\geq 0$, satisfying conditions of the theorem. The group $\CK_R^{n-l}(k)$
is trivial if $l< n$ and $\CK_R^{n-l}(k)=R\cdot t^{l-n}$ otherwise. (Recall that $t$ has degree $-1$.)
In the first case $G_l(\alpha)=0$ and in the latter case the power series
$G_l(\alpha)$ are uniquely determined by $G_l(t^{l-n})$. We will simply write $G_l$ for $G_l(t^{l-n})$.

If $l\geq \max(1,n)$, condition $(3)$ in Theorem \ref{vishik} reads as follows (here $\bar z$ denotes $z_2,...,z_l$):
\[
G_l(x+y-txy,\bar z)=G_l(x,\bar z)+G_l(y,\bar z)-G_{l+1}(x,y,\bar z).
\]
In other words,
\begin{equation}\label{derive}
G_{l+1}=-\partial_t G_l,
\end{equation}
where the derivative $\partial_t$ is taken with respect to $F_{\CK}(x,y)=x+y-txy$. Thus, $G_{l+1}$ is uniquely determined by $G_l$.

If $n>0$, the operation $G$ yields the double-symmetric power series $G_n\in R[t][[x_1,\ldots,x_n]]_{(m)}$ that is divisible by
$x_1\cdot\ldots\cdot x_n$. Conversely, if $H \in R[t][[x_1,\ldots,x_n]]_{(m)}$ is a double-symmetric power series divisible by
$x_1\cdot\ldots\cdot x_n$, then setting $G_{n+i}:=(-1)^{i}\partial_t^i(H)$ for all $i\geq 0$, we get a sequence of power series that determines an $R$-linear operation $G$
(see Observation \ref{observe}).

If $n\leq 0$ the operation $G$ is determined by $G_0\in R[t]_{m}$ and power series
$G_1\in R[t][[x]]_{m}$ that is uniquely determined by $(G_1)|_{t=1}\in x^{\max{(1,m)}}R[[x]]$.
If $m> 0$ then $G_0=0$, otherwise $G_0\in R\cdot t^{-m}$ and
we can combine $G_0$ and $G_1$ together into the power series $H=(G_0-G_1)|_{t=1}\in R[[x]]$.

If $L\in R[t][[x_1,\ldots,x_n]]_{(m)}$, then $v(L|_{t=1})\geq m$. Conversely,
for every $J\in R[[x_1,\ldots,x_n]]$ with $v(J)\geq m$,
there is a unique homogeneous power series $L\in R[t][[x_1,\ldots,x_n]]$ of degree $m$ such that $L|_{t=1}=J$.
If $L$ is double-symmetric and divisible by $x_1\cdot\ldots\cdot x_n$, then so is $L|_{t=1}$
(with respect to the derivative $\partial$ given by the formal group law $x+y-xy$) and conversely.

We have proved the following statement.

\begin{proposition}\label{description2}
Let $R$ be a commutative ring and let $n$ and $m$ be two integers. An $R$-linear operation $G:\CK_R^n\to \CK_R^m$ is determined
by
\begin{enumerate}
  \item A power series $H\in x^{\max{(0,m)}}R[[x]]$ if $\underline{n\leq 0}$. In this case $G_0=H(0)\cdot t^{-m}$ and $G_1\in xR[t][[x]]_{(m)}$
  is a unique homogeneous power series such that $H=(G_0-G_1)|_{t=1}$ and $G_l=(-1)^{l-1}\partial_t^{l-1}(G_1)$ for $l>1$,
  \item A double-symmetric power series $J\in R[[x_1,\ldots, x_n]]$ divisible by $x_1\cdot\ldots\cdot x_n$ such that $v(J)\geq m$ if
$\underline{n>0}$.
  In this case $G_l=0$ for $l=0,\ldots, n-1$ and $G_n\in R[t][[x_1,\ldots,x_n]]_{(m)}$ is a unique homogeneous power series such that
  $G_n|_{t=1}=J$ and $G_l=(-1)^{l-n}\partial_t^{l-n}(G_n)$ for $l>n$.
\end{enumerate}
\end{proposition}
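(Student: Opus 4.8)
The plan is to read the statement off from Vishik's classification (Theorem \ref{vishik}) applied with $A^*=B^*=\CK^*$, together with the reductions carried out in the discussion above. By that theorem an $R$-linear operation $G\colon\CK_R^n\to\CK_R^m$ is the same thing as a family $(G_l)_{l\geq 0}$ with $G_l\in\Hom_R\big(\CK^{n-l}(k)\otimes R,\,R[t][[x_1,\dots,x_l]]_{(m)}\big)$ satisfying conditions $(1)$--$(3)$ there. First I would compute the source groups: since $\CK^*(k)=\Z[t]$ is concentrated in non-positive degrees, $\CK^{n-l}(k)=0$ for $l<n$ while $\CK^{n-l}(k)\otimes R=R\cdot t^{l-n}$ for $l\geq\max(0,n)$, so in that range $G_l$ is determined by the single series $G_l:=G_l(t^{l-n})$, and $G_l=0$ for $l<n$.

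Next I would specialize condition $(3)$ to $F_{\CK}(x,y)=x+y-txy$, whose only non-zero coefficients are $a_{1,0}=a_{0,1}=1$ and $a_{1,1}=-t$. For $l\geq\max(1,n)$ the sum in $(3)$ collapses to $G_l(x\!*\!y,\bar z)=G_l(x,\bar z)+G_l(y,\bar z)-G_{l+1}(x,y,\bar z)$, and, since $G_l$ is divisible by $x_1$, this becomes $G_{l+1}=-\partial_t G_l$, which is exactly (\ref{derive}). Hence the whole tower is determined by the bottom term $\Gamma:=G_{\max(1,n)}$, which by condition $(1)$ (applied for $l$ and for $l+1$) is a double-symmetric power series in $R[t][[x_1,\dots,x_{\max(1,n)}]]_{(m)}$ divisible by $x_1\cdots x_{\max(1,n)}$. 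Conversely, given such a $\Gamma$, setting $G_{\max(1,n)+i}:=(-1)^i\partial_t^i(\Gamma)$ for $i\geq 0$ one checks using Observation \ref{observe} that every term is symmetric, divisibility by the appropriate product of variables follows by induction, and condition $(3)$ is exactly the recursion just imposed; so an operation is obtained. For $n\leq 0$ there is one extra degree of freedom: condition $(3)$ imposes no relation between $G_0$ and $G_1$, so $G_0$ ranges freely over $\CK_R^m(k)$, which equals $R\cdot t^{-m}$ if $m\leq 0$ and $0$ if $m>0$, and $G_1$ ranges freely over $xR[t][[x]]_{(m)}$.

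It remains to trade homogeneity in $t$ for a condition on the valuation $v$, which I would do via the substitution $t=1$: the map $L\mapsto L|_{t=1}$ is a bijection between $R[t][[x_1,\dots,x_j]]_{(m)}$ and $\{J\in R[[x_1,\dots,x_j]]:v(J)\geq m\}$, because a monomial $x^\beta$ with $|\beta|\geq m$ has the unique homogeneous degree-$m$ lift $t^{|\beta|-m}x^\beta$. This substitution intertwines $\partial_t$ with the partial derivative $\partial$ attached to $x\!*\!y=x+y-xy$, hence sends double-symmetric series to double-symmetric series and preserves divisibility by $x_1\cdots x_j$. For $n>0$ it turns $\Gamma=G_n$ into the asserted double-symmetric $J=G_n|_{t=1}$ with $v(J)\geq m$, and reading the correspondence backwards recovers $G_n$ together with $G_l=(-1)^{l-n}\partial_t^{l-n}(G_n)$ for $l>n$. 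For $n\leq 0$, writing $G_0=ct^{-m}$ with $c\in R$ and setting $H:=(G_0-G_1)|_{t=1}$, one finds $H(0)=c$ (with no further constraint) when $m\leq 0$ and $H\in x^mR[[x]]$ when $m>0$; in both cases $H\in x^{\max(0,m)}R[[x]]$, the assignment $(G_0,G_1)\mapsto H$ is a bijection, and it yields $G_0=H(0)t^{-m}$, the stated characterization of $G_1$, and $G_l=(-1)^{l-1}\partial_t^{l-1}(G_1)$ for $l>1$.

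The step I expect to require the most care is the bookkeeping in the case $n\leq 0$: one must confirm that $G_0$ and $G_1$ really are independent, so that packaging them into a single series $H$ is legitimate and bijective, and must track the exact power of $x$ dividing $H$ as the sign of $m$ changes. Verifying that $t=1$ is compatible with $\partial_t$, so that double-symmetry survives the specialization, is routine but should not be skipped. Everything else is a formal consequence of Theorem \ref{vishik}, equation (\ref{derive}), and Observation \ref{observe}.
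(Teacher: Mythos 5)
Your proposal follows the paper's own derivation step for step: apply Theorem \ref{vishik}, observe that $\CK^{n-l}(k)$ vanishes for $l<n$ and is $R\cdot t^{l-n}$ otherwise, specialize condition (3) of the theorem to the multiplicative law to get $G_{l+1}=-\partial_t G_l$, use Observation \ref{observe} for the converse, and trade $t$-homogeneity for the valuation bound $v(J)\geq m$ via the bijection $L\mapsto L|_{t=1}$, with the special bookkeeping of $G_0$ and $G_1$ when $n\leq 0$. This is the same argument as in the text preceding Proposition \ref{description2}, and it is correct.
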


Let $R$ be a commutative ring that is torsion free
as abelian group.
Define an $R$-module homomorphism (see Definition \ref{qn2})
\[
\lambda_{n,m}:{\cQ}_R^{n,m}\to\Op_R^{n,m}
\]
as follows. If $n\leq 0$, $\lambda_{n,m}(H)$ for $H\in {\cQ}^{n,m}=x^{\max{(0,m)}}\cdot R[[x]]$
is the operation given by Proposition \ref{description2}(1).
If $n>0$, $\lambda_{n,m}(H)$ for $H\in {\cQ}^{n,m}$ is the operation given by the polynomial
$J=(-1)^{n}\partial^{n-1}(H)$ as in Proposition \ref{description2}(2).

The following theorem determines the $R$-module of operations $\Op_R^{n,m}$ in terms of the modules ${\cQ}_R^{n,m}$
of power series in one variable.

\begin{theorem}\label{mainone}
Let $R$ be a commutative ring that is torsion free
as abelian group and $K=R\tens \Q$.
The homomorphisms $\lambda_{n,m}$ yield an $R$-linear isomorphisms between $\Op_R^{n,m}$ and
the factor module of ${\cQ}_R^{n,m}$ by the $K$-subspace spanned by
$\lg_{i}(x)$, $i=1,\ldots, n-1$. In particular, $\Op_R^{n,m}\simeq x^{\max{(0,m)}}\cdot R[[x]]$ if $n\leq 0$ and
$\Op_R^{1,m}\simeq x^{\max{(1,m)}}\cdot R[[x]]$.
\end{theorem}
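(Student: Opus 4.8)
The plan is to combine Proposition~\ref{description2} with the integration results of Section~2. The statement for $n\leq 0$ is already contained in Proposition~\ref{description2}(1), so the real content is the case $n>0$. Here is the outline.

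\textbf{Step 1: $\lambda_{n,m}$ is well-defined.} For $H\in{\cQ}_R^{n,m}$ with $n>0$, by Definition~\ref{qn} we have $\partial^{n-1}(H)\in R[[x_1,\dots,x_n]]$, and this power series is divisible by $x_1\cdots x_n$ (since $H\in xK[[x]]$ forces $v(\partial^{n-1}H)\geq n$ by Observation~\ref{valuation}, and each variable appears). It is double-symmetric by Proposition~\ref{allequivalent}, being $\partial^{n-1}$ of a one-variable series. The condition $v(\partial^{n-1}H)\geq m$ from Definition~\ref{qn2} is exactly what is needed to feed $J=(-1)^n\partial^{n-1}(H)$ into Proposition~\ref{description2}(2) (lifting $J$ to a homogeneous $G_n\in R[t][[x_1,\dots,x_n]]_{(m)}$ with $G_n|_{t=1}=J$). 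So $\lambda_{n,m}$ lands in $\Op_R^{n,m}$.

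\textbf{Step 2: Surjectivity.} Given an operation $G\in\Op_R^{n,m}$, Proposition~\ref{description2}(2) produces a double-symmetric $J\in R[[x_1,\dots,x_n]]$, divisible by $x_1\cdots x_n$, with $v(J)\geq m$. By Proposition~\ref{allequivalent} (equivalence $(1)\Leftrightarrow(3)$), working over $K=R\tens\Q$, there is $L\in K[[x]]$ with $\partial^{n-1}(L)=(-1)^n J\in R[[x_1,\dots,x_n]]$; since $J$ is divisible by $x_1\cdots x_n$ we may take $L\in xK[[x]]$. Then $L\in{\cQ}_R^{n}$ by Definition~\ref{qn}, and $v(\partial^{n-1}L)=v(J)\geq m$ gives $L\in{\cQ}_R^{n,m}$. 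By construction $\lambda_{n,m}(L)=G$.

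\textbf{Step 3: Kernel.} Suppose $\lambda_{n,m}(H)=0$. Unwinding Proposition~\ref{description2}(2), this forces $J=(-1)^n\partial^{n-1}(H)=0$, i.e. $H\in\ker(\partial^{n-1}:K[[x]]\to K[[x_1,\dots,x_n]])$. By Proposition~\ref{kernel} this kernel is $\Sum_{0\leq r<n}K\cdot\lg_r(x)$; intersecting with $xK[[x]]$ (since $H\in xK[[x]]$, killing the $r=0$ term $\lg_0=1$) leaves $\Sum_{0<r<n}K\cdot\lg_r(x)$. Conversely each $\lg_r(x)$ with $0<r<n$ lies in ${\cQ}_R^{n,m}$ (it is in ${\cQ}_R^n$ by the remark after Definition~\ref{qn}, and $v(\partial^{n-1}\lg_r)=v(\lg_{r-(n-1)})$ or rather $\partial^{n-1}\lg_r$ vanishes for $r<n$, hence trivially has $v\geq m$) and maps to $0$. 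So $\ker\lambda_{n,m}=\Sum_{0<r<n}K\cdot\lg_r(x)$, giving the claimed isomorphism ${\cQ}_R^{n,m}/\big(\Sum_{0<r<n}K\cdot\lg_r(x)\big)\iso\Op_R^{n,m}$.

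\textbf{Step 4: The special cases.} For $n\leq 0$, the $K$-subspace is empty and Proposition~\ref{description2}(1) together with Definition~\ref{qn2} gives ${\cQ}_R^{n,m}=x^{\max(0,m)}R[[x]]\iso\Op_R^{n,m}$ directly. For $n=1$, the subspace is again empty and ${\cQ}_R^{1,m}=x^{\max(1,m)}R[[x]]$ by the remark after Definition~\ref{qn2}.

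The main obstacle is Step~2, and more precisely checking that the $L$ obtained over $K$ actually satisfies the \emph{integrality} constraint $\partial^{n-1}(L)\in R[[x_1,\dots,x_n]]$ so that $L\in{\cQ}_R^n$ — but this is immediate here because $L$ is constructed precisely so that $\partial^{n-1}(L)=\pm J$ with $J$ the integral power series coming from the operation $G$. So the genuine work is entirely bookkeeping: matching the data $(G_l)_{l\geq 0}$ of Theorem~\ref{vishik} as repackaged in Proposition~\ref{description2} with the one-variable series, and tracking the degree/divisibility conditions through $\partial^{n-1}$. The only subtlety worth spelling out is why $G_n|_{t=1}$ is double-symmetric with respect to the group law $x+y-xy$: this is the content of the paragraph preceding Proposition~\ref{description2}, using that setting $t=1$ is compatible with the derivative $\partial_t$ specializing to $\partial$.
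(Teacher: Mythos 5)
Your proposal is correct and takes essentially the same route as the paper: surjectivity by combining Proposition \ref{description2} with the integration result of Proposition \ref{allequivalent}, and the kernel computed via Proposition \ref{kernel}. The paper's proof is a two-line pointer to these propositions; your write-up simply supplies the bookkeeping (divisibility of $\partial^{n-1}H$ by $x_1\cdots x_n$, normalizing $L$ to lie in $xK[[x]]$, intersecting the kernel with $xK[[x]]$ to drop $\lg_0$) that the authors left implicit.
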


\begin{proof}
The surjectivity of $\lambda_{n,m}$ follows from Propositions \ref{allequivalent} and \ref{description2}.
The kernel of $\lambda_{n,m}$ is determined in Proposition \ref{kernel}.
\end{proof}

\begin{corollary}\label{corcor}
The map $\lambda_{n,m}$ yields an isomorphism (see Definition \ref{qn})
\[
\cQ_R^n\cap x^{\max{(0,n,m)}}\cdot K[[x]]\iso \Op_R^{n,m}.
\]
\end{corollary}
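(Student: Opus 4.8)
The plan is to reduce everything to Theorem~\ref{mainone}, which already presents $\Op_R^{n,m}$ as the quotient of $\cQ_R^{n,m}$ by the $K$-span $V:=\Sum_{0<i<n}K\cdot\lg_i(x)$, with $\lambda_{n,m}$ inducing the quotient map. Writing $M:=\cQ_R^n\cap x^{\max(0,n,m)}K[[x]]$, it then suffices to check that $M\subseteq\cQ_R^{n,m}$ (so that the restriction of $\lambda_{n,m}$ to $M$ is defined) and that the composite $M\hookrightarrow\cQ_R^{n,m}\twoheadrightarrow\cQ_R^{n,m}/V$ is bijective, i.e.\ that $M\cap V=0$ and $M+V=\cQ_R^{n,m}$. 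The trivial case $R=0$ is set aside.

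The case $n\leq 0$ is immediate: then $V=0$, $\cQ_R^n=R[[x]]$ and $\cQ_R^{n,m}=x^{\max(0,m)}R[[x]]$, and torsion-freeness of $R$ gives $R[[x]]\cap x^jK[[x]]=x^jR[[x]]$ for all $j$, so $M=\cQ_R^{n,m}$ and there is nothing to prove. For $n\geq 1$ one has $\max(0,n,m)=\max(n,m)$, and the two remaining assertions are valuation bookkeeping. The inclusion $M\subseteq\cQ_R^{n,m}$ follows from the fact that $v(\partial^{n-1}G)=v(G)$ whenever $G\in\cQ_R^n$ has $v(G)\geq n$, which is Observation~\ref{valuation} applied over the torsion-free $\Q$-algebra $K$: if $v(G)\geq\max(n,m)$ then $v(\partial^{n-1}G)=v(G)\geq m$. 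The vanishing $M\cap V=0$ holds because a nonzero element of $V$ has valuation equal to the least index $r<n$ for which its $\lg_r$-coefficient is nonzero (each $\lg_r$ has an invertible leading coefficient in degree $r$), hence valuation $<n$, whereas every element of $M$ has valuation $\geq n$.

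For $M+V=\cQ_R^{n,m}$, take $G\in\cQ_R^{n,m}\subseteq\cQ_R^n\subseteq xK[[x]]$. Since the $\lg_r$ for $1\leq r\leq n-1$ have invertible leading coefficients in the pairwise distinct degrees $1,\ldots,n-1$, I can subtract a suitable element $\Sum_{0<r<n}q_r\lg_r\in V$ to arrange that $G':=G-\Sum_{0<r<n}q_r\lg_r$ has $v(G')\geq n$. By Proposition~\ref{kernel} each $\lg_r$ lies in $\Ker(\partial^{n-1})$, hence in $\cQ_R^n$, so $G'\in\cQ_R^n$ and $\partial^{n-1}G'=\partial^{n-1}G$, which gives $v(\partial^{n-1}G')\geq m$. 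Now $v(G')\geq n$ lets me invoke Observation~\ref{valuation} again to conclude $v(G')=v(\partial^{n-1}G')\geq m$, so $v(G')\geq\max(n,m)$ and $G'\in M$; thus $G=G'+\Sum_{0<r<n}q_r\lg_r\in M+V$.

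I do not expect a genuine obstacle here: the only point requiring care is that Observation~\ref{valuation} is available only once the valuation is at least $n$, which is exactly why $G$ must first be normalized modulo $V$ before the comparison $v(G')=v(\partial^{n-1}G')$ is made; everything else is formal. As a consistency check, specializing the resulting isomorphism to $n\leq 0$ and to $n=1$ recovers the identifications $\Op_R^{n,m}\simeq x^{\max(0,m)}R[[x]]$ and $\Op_R^{1,m}\simeq x^{\max(1,m)}R[[x]]$ already recorded in Theorem~\ref{mainone}.
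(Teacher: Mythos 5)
Your proof is correct and follows essentially the same route as the paper: both reduce to Theorem \ref{mainone} and use Observation \ref{valuation} to translate the condition $v(\partial^{n-1}G)\geq m$ into $v(G)\geq m$ once $v(G)\geq n$. You merely spell out the splitting of the $K$-span of the $\lg_r$'s (via their triangular leading terms), which the paper leaves implicit in the phrase ``follows from the theorem.''
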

\begin{proof}
The case $m\leq n$ follows from the theorem. Otherwise, by Observation \ref{valuation}, $v(\partial^{n-1} x^i)=i$ for all $i\geq n$.
\end{proof}

Let $n,m\in\Z$ and $i,j$ non-negative integers. We define an $R$-linear homomorphism
\[
{\cQ}_R^{n,m}\to {\cQ}_R^{n+i,m-j}
\]
as follows. If $n\leq 0, m\leq 0$ and $n+i>0$ the map
\[
{\cQ}_R^{n,m}=R[[x]]\to xR[[x]]\hookrightarrow {\cQ}_R^{n+i,m-j}
\]
takes $H$ to $\partial^0(H)=H-H(0)$. Otherwise, ${\cQ}_R^{n,m}\subset {\cQ}_R^{n+i,m-j}$,
and the map we define is the inclusion.

Multiplication by $t^k$ yields an operation $\CK_R^{*+k}\to \CK_R^*$ and therefore, the homomorphisms
$\Op_R^{n,m}\to \Op_R^{n+i,m-j}$ for all $i,j\geq 0$.

\begin{proposition}\label{compat}
The diagram
\[
\xymatrix{
{\cQ}_R^{n,m}  \ar[d]_{\lambda_{n,m}} \ar[r] & {\cQ}_R^{n+i,m-j}  \ar[d]^{\lambda_{n+i,m-j}}  \\
\Op_R^{n,m} \ar[r]    &  \Op_R^{n+i,m-j},
}
\]
is commutative.
\end{proposition}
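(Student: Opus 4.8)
The plan is to trace both composites on a general element $H \in {\cQ}_R^{n,m}$ and compare the operations they produce, splitting into the cases dictated by the signs of $n$, $m$, $n+i$, $m-j$. The essential observation is that both vertical maps are built, via Proposition \ref{description2}, from the same recipe $G_l = (-1)^{l-\max(1,n)}\partial_t^{l-\max(1,n)}(\text{leading term})$, so the content of the proposition is that the top horizontal map (inclusion, or $\partial^0$ in the degenerate case $n\leq 0 < n+i$) is precisely what intertwines the two recipes. Since multiplication by $t^k$ on the level of operations corresponds, under the correspondence of Theorem \ref{vishik}, to precomposition with the structure maps of $\CK^*$, the bottom horizontal map is exactly the operation $\lambda_{n,m}(H) \circ (\text{mult by } t^k)$ where $k = i$; I would record this first, recovering it from the description of the $t$-multiplication operation in the Example before Theorem \ref{vishik}.

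First I would handle the generic case $n \geq 1$ (so automatically $n+i \geq 1$). Here ${\cQ}_R^{n,m}\subset {\cQ}_R^{n+i,m-j}$ and the top map is the inclusion, so I must show $\lambda_{n+i,m-j}(H) = \lambda_{n,m}(H)\circ t^{\,\cdot}$ as operations $\CK_R^{n+i}\to\CK_R^{m-j}$. By Proposition \ref{description2}(2), $\lambda_{n,m}(H)$ is carried by $G_l = (-1)^{l-n}\partial_t^{\,l-n}(G_n)$ with $G_n$ the homogenization of $(-1)^n\partial^{n-1}(H)$, while $\lambda_{n+i,m-j}(H)$ is carried by $G'_l = (-1)^{l-n-i}\partial_t^{\,l-n-i}(G'_{n+i})$ with $G'_{n+i}$ the homogenization of $(-1)^{n+i}\partial^{n+i-1}(H) = (-1)^i\partial^i\big((-1)^n\partial^{n-1}(H)\big)$. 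Using \eqref{derive} (i.e. $G_{l+1} = -\partial_t G_l$) and the compatibility of $\partial_t$ with the homogenization map $|_{t=1}$ noted just before Proposition \ref{description2}, one checks $G'_l = (-1)^{l-n}\partial_t^{\,l-n}(G_n)$ for $l \geq n+i$ — that is, the two families of $G_l$'s agree in the overlapping range. Then I invoke the defining formula \eqref{c1eq}, which expresses the operation on products of first Chern classes, together with the fact that multiplication by $t^i$ on $\CK^{n+i}$ of a product of $\P^\infty$-factors is the evident one, to conclude the two operations coincide on a collection of classes that detects operations (the universality of $(\P^\infty)^l$, which is the whole point of Theorem \ref{vishik}).

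Next I would dispatch the boundary cases. If $n \leq 0$ and $n+i \leq 0$ as well, both vertical maps are the ``$H \mapsto$ (operation from Proposition \ref{description2}(1))'' recipe, the top map is an inclusion of $x^{\max(0,m)}R[[x]]$ into $x^{\max(0,m-j)}R[[x]]$, and commutativity is immediate from the formula $G_0 = H(0)t^{-m}$, $G_l = (-1)^{l-1}\partial_t^{l-1}(G_1)$, once one notes multiplication by $t^i$ shifts $t^{-m}$ to $t^{-m+i}$ and leaves $G_1$ essentially untouched. The one case requiring genuine care is $n\leq 0 < n+i$: here the top map is $H \mapsto \partial^0(H) = H - H(0)$, reflecting that the degree-$0$ part of $\CK^0$ (the span of $1$) is killed when one passes to $\CK^{n+i}$ for $n+i>0$; I would verify this directly by comparing $G_1$ of $\lambda_{n,m}(H)$ (which satisfies $H = (G_0 - G_1)|_{t=1}$) with $G_{n+i}$ of $\lambda_{n+i,m-j}(\partial^0 H)$ (the homogenization of $(-1)^{n+i}\partial^{n+i-1}(H - H(0))$), again via \eqref{derive} and the $|_{t=1}$-compatibility, after observing $\partial^{n+i-1}$ annihilates the constant $H(0)$ so that $\partial^{n+i-1}(H) = \partial^{n+i-1}(\partial^0 H)$.

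The main obstacle I anticipate is purely bookkeeping: keeping the sign conventions $(-1)^{l-n}$ versus $(-1)^{l-n-i}$, the shift in the homogeneity degree caused by $t^k$ (degree $-1$), and the case $n \leq 0$ where one must correctly match $G_0$, $G_1$ of the source operation against $G_{n+i}$ of the target and track where the constant term $H(0)$ disappears. No step is deep — everything rests on \eqref{derive}, the $|_{t=1}$-compatibility, and Theorem \ref{vishik} — but the proof is essentially a careful diagram chase through Proposition \ref{description2}, so I would organize it as a short lemma ``multiplication by $t$ corresponds to $\partial^0$ on the degenerate part and to the identity inclusion otherwise'' and then let the general statement follow by iterating.
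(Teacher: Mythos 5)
Your proposal follows essentially the same route as the paper: it reduces the statement to comparing the families $\{G_l\}$ of Proposition~\ref{description2}, identifies the family of the composed operation via the evaluation formula \eqref{c1eq} on products of first Chern classes over $(\P^\infty)^l$, matches the families using \eqref{derive} and restriction at $t=1$, and treats separately the degenerate case $n\leq 0<n+i$ where the constant term $H(0)$ is killed by $\partial^0$. The paper merely organizes the same computation differently (reducing to the cases $i=0$ and $i=1,\ j=0$ instead of splitting by the signs of $n$ and $n+i$), so your approach is correct and equivalent in substance.
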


\begin{proof}
The case $i=0$ follows directly from the definition. It remains to consider the case $i=1$ and $j=0$.

Suppose first that $n>0$. Let $H\in {\cQ}_R^{n,m}\subset {\cQ}_R^{n+i,m-j}$ and $G=\lambda_{n,m}(H)\in \Op_R^{n,m}$.
In particular, $G_n|_{t=1}=(-1)^{n-1}\partial^{n-1}(H)$.
Denote by $G'$ the image of $G$ in $\Op_R^{n+1,m}$. Write $L_i$ for the pull-back
of the canonical line bundle on $\P^\infty$ with respect to the $i$-th projection
$(\P^\infty)^{n+1}\to\P^\infty$. The power series $G'_{n+1}$ is determined by the equality (see (\ref{c1eq}))
\begin{align*}
G'_{n+1}(c_1(L_1),\ldots,c_1(L_{n+1}))&=G'(c_1(L_1)\cdot\ldots\cdot c_1(L_{n+1}))\\
&=G(tc_1(L_1)\cdot\ldots\cdot c_1(L_{n+1}))\\
&=G_{n+1}(t)(c_1(L_1),\ldots, c_1(L_{n+1}))\\
&=G_{n+1}(c_1(L_1),\ldots, c_1(L_{n+1})),\\
\end{align*}
hence $G'_{n+1}=G_{n+1}$. It follows from (\ref{derive}) that
\[
G'_{n+1}|_{t=1}=G_{n+1}|_{t=1}=-(\partial_t G_n)|_{t=1}=-\partial(G_n|_{t=1})=-\partial((-1)^{n}\partial^{n-1}(H))=(-1)^{n+1}\partial^{n}(H),
\]
and therefore, $G'=\lambda_{n+1,m}(H)$.

If $n<0$ or if $n=0$  and $m>0$ we have ${\cQ}_R^{n,m}\subset {\cQ}_R^{n+1,m}$
and the statement follows immediately from the definitions. It remains to consider the case $n=0$ and $m\leq 0$.
Let $H\in {\cQ}_R^{0,m}=R[[x]]$ and $G=\lambda_{0,m}(H)\in \Op_R^{0,m}$. In particular, $H=(G_0-G_1)|_{t=1}$.
Denote by $G'$ the image of $G$ in $\Op_R^{1,m}$. A computation as above shows that $G'_1=G_1$. Hence
\[
G'_{1}|_{t=1}=G_{1}|_{t=1}=-(H-H(0))
\]
Therefore, $G'=\lambda_{1,m}(H-H(0))$ and $H-H(0)$ is the image of $H$ in ${\cQ}_R^{1,m}$.
\end{proof}

Corollary \ref{corcor} and Proposition \ref{compat} yield:

\begin{corollary}\label{corcor2}
If $m\leq n$ then the map $\Op_R^{n,n}\to \Op_R^{n,m}$ is an isomorphism.
\end{corollary}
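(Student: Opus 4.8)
The plan is to deduce this directly from the machinery already assembled, namely Corollary \ref{corcor} together with the commutativity of the diagram in Proposition \ref{compat}. First I would specialize Corollary \ref{corcor}: for $m\leq n$ the term $x^{\max(0,n,m)}$ is just $x^{\max(0,n)}$, which does not depend on $m$ in this range. Hence Corollary \ref{corcor} gives, for \emph{both} $\Op_R^{n,n}$ and $\Op_R^{n,m}$, an isomorphism with the \emph{same} $R$-module $\cQ_R^n\cap x^{\max(0,n)}\cdot K[[x]]$, realized in each case by the map $\lambda_{n,-}$. So the two groups are abstractly isomorphic; the only remaining point is to check that the natural comparison map $\Op_R^{n,n}\to\Op_R^{n,m}$ induced by multiplication by $t^{\,n-m}$ is the one that implements this isomorphism.

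Next I would invoke Proposition \ref{compat} with $i=0$ and $j=n-m\geq 0$. On the level of the $\cQ$-modules, the horizontal map ${\cQ}_R^{n,n}\to{\cQ}_R^{n,m}$ in that diagram is, by the definition preceding Proposition \ref{compat}, simply the inclusion (we are in the case $i=0$, so we never take $\partial^0$); and since $m\leq n$ forces ${\cQ}_R^{n,m}={\cQ}_R^{n,n}={\cQ}_R^n$ by the remark following Definition \ref{qn2}, this horizontal map is the identity on ${\cQ}_R^n$. The commutative square then reads $\lambda_{n,m}=\big(\Op_R^{n,n}\to\Op_R^{n,m}\big)\circ\lambda_{n,n}$, with $\lambda_{n,m}$ and $\lambda_{n,n}$ both surjective (Theorem \ref{mainone}) and with identical kernel, namely the $K$-span of $\lg_1,\dots,\lg_{n-1}$, which is independent of the second index. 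Since the left and bottom-then-right compositions agree and the two vertical maps have the same kernel, the induced map $\Op_R^{n,n}\to\Op_R^{n,m}$ is a bijection, hence an $R$-linear isomorphism.

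The one place that needs a word of care — and what I expect to be the only real (if minor) obstacle — is matching conventions at the boundary $n\leq 0$. For $n\leq 0$ the definitions are set up separately: ${\cQ}_R^{n,m}=x^{\max(0,m)}R[[x]]$ and the map $\Op_R^{n,m}\to\Op_R^{n+i,m-j}$ is multiplication by $t^k$, which on the $H$-side was defined to be the inclusion when $m\leq 0$ but the truncation $H\mapsto H-H(0)$ when one crosses from $n\leq 0$ to $n+i>0$; however the present statement only concerns $\Op_R^{n,n}\to\Op_R^{n,m}$ with $i=0$, so no crossing occurs and the map is the inclusion throughout, whence the argument above applies verbatim (and for $n\leq 0$ one can alternatively just quote the explicit identifications $\Op_R^{n,n}\simeq R[[x]]$ and $\Op_R^{n,m}\simeq x^{\max(0,m)}R[[x]]=R[[x]]$ from Theorem \ref{mainone}). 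Assembling these observations completes the proof.
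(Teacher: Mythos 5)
Your proof is correct and fleshes out exactly the argument the paper leaves implicit in the one-line statement ``Corollary \ref{corcor} and Proposition \ref{compat} yield''; you combine the same two ingredients in the same way, and the boundary check at $n\leq 0$ is handled appropriately.
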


In particular, there is a canonical ring homomorphism
\[
\Op_R^{n,n}\to \Op_R^{n+1,n}\iso \Op_R^{n+1,n+1}.
\]

\begin{example}
Note that the identification $\Op_R^{0,0}=R[[x]]$ is not a ring isomorphism. The corresponding ring structure
on $R[[x]]$ will be described in Section \ref{compos}.
The natural surjective homomorphism
\[
R[[x]]=\Op_R^{0,0}\to \Op_R^{1,1}=xR[[x]]
\]
takes a power series $G(x)$ to $G(x)-G(0)$. Its kernel is generated by $1$.
The complementary operation $G(x)\mapsto G(0)$ on $\CK^0=K_0$ is an idempotent that takes the class of a vector bundle $E$
to $\rank(E)\cdot 1$, where $1$ is the identity in $K_0$. In particular, we get a natural $R$-algebra isomorphism
$\Op_R^{0,0}\simeq R\times \Op_R^{1,1}$.
\end{example}

\subsection{Adams operations}\label{adams2}
Let $R$ be a torsion free ring. We define the composition
\[
\Ad_n:R[[x]]\to \cQ_R^n\xra{\lambda_{n,n}} \Op_R^{n,n},
\]
where the first map is the identity if $n\leq 0$ and it is the composition of the projection
$\partial^0:R[[x]]\to xR[[x]]$ and the inclusion of $xR[[x]]$ into $\cQ_R^n$.
The image of $\Ad_n$ is denoted $\Op^{n,n}_{R,cl}$ and called the submodule of \emph{classical operations}.

If $n\leq 0$, we have $\Op^{n,n}_{R,cl}=\Op_R^{n,n}=R[[x]]$.
If $n \geq 1$ it follows from Lemma \ref{divdiv} and Theorem \ref{mainone} that in the case $R$ has no nontrivial $\Z$-divisible elements
(for example. $R=\Z$ or $\widehat\Z$),
the restriction of $\Ad_n$ on $xR[[x]]$ is injective and therefore, $\Op^{n,n}_{R,cl}\simeq xR[[x]]$.

Let $m$ be an integer. In the notation of the Example \ref{ad}, $[m](x)=(1-(1-tx)^m)/t$.
In view of Proposition \ref{description2}, the Adams operations $\Psi_m\in \Op^{n,n}_{R,cl}$
are defined by
\begin{equation}\label{adamsoper}
\Psi_m=\Ad_n((1-x)^m).
\end{equation}
Since the power series $(1-x)^m$ generate $R[[x]]$ as topological $R$-module in the $x$-adic topology,
the group of classical operations $\Op^{n,n}_{R,cl}$
is topologically generated by the Adams operations.

By Proposition \ref{compat}, the operations $\Psi_k$ are compatible with the canonical homomorphisms
$\Op_{R}^{n,n}\to  \Op_{R}^{n+1,n+1}$.

For every $k\geq 0$ consider additive operations
$\Upsilon_k=\Sum_{i=0}^{k}(-1)^{i}\mybinom[0.8]{k}{i}\Psi_i$. Then $\Upsilon_k=\lambda_{n,n}(x^k)$ if $k\geq 0$.
Recall that $\Upsilon_0=0$ if $n\geq 1$.
It follows that the $R$-module
$\Op^{n,n}_{R,cl}$ consists of all linear combinations $\sum_{k\geq 0}\alpha_k\cdot\Upsilon_k$ with $\alpha_k\in R$
(cf. \cite[Theorem 6.8]{vishik19}). If $R$ has no nontrivial $\Z$-divisible elements,  the coefficients $\alpha_k$,
(where $k\geq 0$ if $n\leq 0$ and $k\geq 1$ if $n\geq 1$) are uniquely
determined by the operation.

\subsection{Operations over $\widehat\Z$}

In Section \ref{firstfirst} we determined the modules ${\cQ}_R^{n}$ over the ring $R=\widehat\Z$.
Theorems \ref{description} and \ref{mainone} yield:

\begin{theorem}\label{complete}
There are canonical isomorphisms
\[
\Op^{n,n}_{\widehat\Z}=\Op_{\widehat\Z,cl}^{n,n}\simeq
\left\{
  \begin{array}{ll}
   \widehat\Z[[x]], & \hbox{if $n\leq 0$;} \\
    x\widehat\Z[[x]], & \hbox{if $n\geq 1$}.
  \end{array}
\right.
\]
\end{theorem}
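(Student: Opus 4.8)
The plan is to deduce this directly from the structural results already in hand, so that the argument is essentially bookkeeping. First I would apply Theorem~\ref{mainone} with $m=n$: since $R=\widehat\Z$ is torsion free with $K=R\tens\Q=\widehat\Q$, the map $\lambda_{n,n}$ identifies $\Op^{n,n}_{\widehat\Z}$ with the quotient of ${\cQ}^{n,n}_{\widehat\Z}$ by the $\widehat\Q$-subspace spanned by $\lg_1(x),\dots,\lg_{n-1}(x)$. Next I would rewrite ${\cQ}^{n,n}_{\widehat\Z}$ more explicitly: by the remark following Definition~\ref{qn2} we have ${\cQ}^{n,n}_{\widehat\Z}={\cQ}^n_{\widehat\Z}$ whenever $n\geq 1$ (as $n\geq m$), while for $n\leq 0$ Definition~\ref{qn2} gives ${\cQ}^{n,n}_{\widehat\Z}=x^{\max(0,n)}\widehat\Z[[x]]=\widehat\Z[[x]]$.

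For $n\leq 0$ this already finishes the computation: the index range $i=1,\dots,n-1$ is empty, so the $\widehat\Q$-subspace removed by $\lambda_{n,n}$ is zero and $\Op^{n,n}_{\widehat\Z}\simeq\widehat\Z[[x]]$. For $n\geq 1$ I would feed in Theorem~\ref{description}, which gives the direct-sum decomposition ${\cQ}^n_{\widehat\Z}=\bigoplus_{0<r<n}\widehat\Q\cdot\lg_r(x)\oplus x\widehat\Z[[x]]$. The $\widehat\Q$-subspace killed by $\lambda_{n,n}$ is precisely the first summand, hence the quotient is canonically $x\widehat\Z[[x]]$; for $n=1$ the first summand is again empty and one lands on ${\cQ}^1_{\widehat\Z}=x\widehat\Z[[x]]$ directly. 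This proves the stated isomorphism $\Op^{n,n}_{\widehat\Z}\simeq x\widehat\Z[[x]]$ for $n\geq 1$.

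It remains to check $\Op^{n,n}_{\widehat\Z}=\Op^{n,n}_{\widehat\Z,cl}$. For $n\leq 0$ this is immediate since $\Op^{n,n}_{R,cl}=\Op^{n,n}_R=R[[x]]$ by construction of $\Ad_n$. For $n\geq 1$ I would observe that $\Ad_n$ is by definition $\lambda_{n,n}$ precomposed with the inclusion $x\widehat\Z[[x]]\hookrightarrow{\cQ}^n_{\widehat\Z}$; under the decomposition of Theorem~\ref{description} the composite
\[
x\widehat\Z[[x]]\hookrightarrow{\cQ}^n_{\widehat\Z}\twoheadrightarrow{\cQ}^n_{\widehat\Z}\big/\textstyle\bigoplus_{0<r<n}\widehat\Q\cdot\lg_r(x)=\Op^{n,n}_{\widehat\Z}
\]
is an isomorphism (surjective by the direct sum, injective since $x\widehat\Z[[x]]\cap\bigoplus_{0<r<n}\widehat\Q\cdot\lg_r(x)=0$ by Lemma~\ref{divdiv}, as $\widehat\Z$ has no nontrivial $\Z$-divisible elements). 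Hence its image $\Op^{n,n}_{\widehat\Z,cl}$ equals $\Op^{n,n}_{\widehat\Z}$; the isomorphism $\Op^{n,n}_{\widehat\Z,cl}\simeq x\widehat\Z[[x]]$ was in any case already recorded in Section~\ref{adams2}.

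Since all the substance lives in Theorems~\ref{description} and~\ref{mainone}, I do not expect a genuine obstacle; the only points needing care are the degenerate small-$n$ cases and the verification that the $\widehat\Q$-subspace removed by $\lambda_{n,n}$ coincides exactly with the $\bigoplus_{0<r<n}\widehat\Q\cdot\lg_r(x)$ summand of ${\cQ}^n_{\widehat\Z}$ — which it does because the $\lg_r$ have distinct leading terms $(-1)^r x^r/r!$ and are therefore $\widehat\Q$-linearly independent.
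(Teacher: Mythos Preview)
Your proposal is correct and follows exactly the paper's approach: the paper's own proof is the single sentence ``Theorems~\ref{description} and~\ref{mainone} yield,'' and you have simply spelled out the bookkeeping behind that citation. The additional care you take with the degenerate small-$n$ cases and the identification $\Op^{n,n}_{\widehat\Z}=\Op^{n,n}_{\widehat\Z,cl}$ is accurate and matches what the paper records (more tersely) in Section~\ref{adams2}.
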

In particular, the natural map $\Op_{\widehat\Z}^{n,n}\to \Op_{\widehat\Z}^{n+1,n+1}$ is an isomorphism
for all $n\geq 1$.

It follows from Theorem \ref{mainone} that for any two integers $n$ and $m$,
\[
\Op_{\widehat\Z}^{n,m}\simeq
\left\{
  \begin{array}{ll}
   x^{\max{(0,m)}}\cdot \widehat\Z[[x]], & \hbox{if $n\leq 0$;} \\
   \{G\in x\widehat\Z[[x]]\ |\ v(\partial^{n-1}(G))\geq m\}, & \hbox{if $n\geq 1$.}
  \end{array}
\right.
\]

\subsection{Operations over $\Z$}

Now we turn to the case $R=\Z$ and for simplicity write $\Op^{n,m}$ for $\Op_{\Z}^{n,m}$.

Corollary \ref{corcor} implies that
the natural homomorphism $\Op^{n,m}\to \Op^{n,m}_{\widehat\Z}$ is injective. In particular, we can identify
$\Op^{n,n}$ with a subgroup of $\Op_{\widehat\Z}^{n,n} =x \widehat\Z[[x]]$ for all $n\geq 1$, so we have a sequence of subgroups
\[
\Op^{1,1}\subset \Op^{2,2}\subset\ldots\subset\Op^{n,n}\subset\ldots\subset x \widehat\Z[[x]].
\]

Recall (Theorem \ref{mainone}) that
$\Op^{n,m}\simeq x^{\max{(0,m)}}\cdot \Z[[x]]$ if $n\leq 0$ and $\Op^{n,m}\simeq \Op^{n,n}$ if $m\leq n$
by Corollary \ref{corcor2}.

Let $m\geq n\geq 1$. By Theorem \ref{mainone},  we can identify $\Op^{n,m}$ with the factor group of $\cQ^{n,m}$
by the subgroup $\Sum_{r=1}^{n-1}\Q\cdot\lg_{r}(x)$. It follows that the map $\rho_n$ in (\ref{sesimp}) yields
a homomorphism
\[
\Op^{n,m}\to (\widehat\Q/\Q)^{n-1}=(\widehat\Z/\Z)^{n-1}.
\]
By the proof of Lemma \ref{approxim}, this map is surjective.
Its kernel is denoted $\Op^{n,m}_{cl}$ and called the subgroup of \emph{classical operations}.
In the case $n=m$ this group coincides with the group of classical operation defined earlier.
In view of
Corollary \ref{corcor}, $\Op^{n,m}_{cl}$
is identified with the group $\(\Coprod_{r=1}^{n-1}\Q\cdot\lg_{r}(x) +x\Z[[x]]\)\cap x^{m} \Q[[x]]$.

We view the group $x\Z[x]_{\leq m-1}$
of integral polynomials of degree at most $m-1$ as a lattice in the $\Q$-space $x\Q[[x]]/(x^m)$. Denote by $\cL^{n,m}$ the
intersection of $x\Z[x]_{\leq m-1}$ with the image in $x\Q[[x]]/(x^m)$ of the space $\Coprod_{r=1}^{n-1}\Q\cdot\lg_{r}(x)$.
Then $\cL^{n,m}$ is a
subgroup of $x\Z[x]_{\leq m-1}$ of rank $n-1$.

We get the following description of the group of classical operations:
\[
\Op^{n,m}_{cl}= \cL^{n,m}\oplus x^m \Z[[x]].
\]
If $m=n\geq 1$, the map of $\Q$-spaces is an isomorphism and $\cL^{n,n}=x\Z[x]_{\leq n-1}$. It follows that
\[
\Op^{n,n}_{cl}= x \Z[[x]].
\]
Recall that $\Op^{n,n}_{cl}= \Op^{n,n} = \Z[[x]]$ if $n\leq 0$ and $\Op^{n,m}=\Op^{n,n}$ if $m\leq n$.

We summarize our results in the following statement.

\begin{theorem}\label{integoper}
The natural homomorphism $\Op^{n,m}\to \Op^{n,m}_{\widehat\Z}$ is injective.
For any integers $m\geq n\geq 1$ there is an exact sequence
\[
0\to \Op^{n,m}_{cl}\to \Op^{n,m} \to (\widehat\Z/\Z)^{n-1} \to 0,
\]
where $\Op^{n,m}_{cl}= \cL^{n,m}\oplus x^m \Z[[x]]$. Moreover,
$\Op^{n,n}_{cl}=x \Z[[x]]$.
\end{theorem}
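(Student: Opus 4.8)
\emph{Approach.} This statement assembles the power-series descriptions obtained above: Theorem \ref{mainone} and Corollary \ref{corcor} express $\Op^{n,m}$, over $\Z$ and over $\widehat\Z$, as an explicit submodule or subquotient of $\cQ^n_R$, and Section \ref{firstfirst} computes the latter. For the injectivity of $\Op^{n,m}\to\Op^{n,m}_{\widehat\Z}$ I would apply Corollary \ref{corcor} over $R=\Z$ and over $R=\widehat\Z$: it identifies the source with $\cQ^n\cap x^{\max(0,n,m)}\Q[[x]]$ and the target with $\cQ^n_{\widehat\Z}\cap x^{\max(0,n,m)}\widehat\Q[[x]]$, and since $\cQ^n\subseteq\cQ^n_{\widehat\Z}$ already as submodules of the common ring $\widehat\Q[[x]]$ (the maps $\lambda_{n,m}$ being natural in $R$), the natural map is simply the inclusion of one submodule into another, hence injective. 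The cases $n\leq 0$ and $m\leq n$ are immediate from Theorem \ref{mainone} and Corollary \ref{corcor2}.

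\emph{Construction of the sequence.} Fix $m\geq n\geq 1$. By Corollary \ref{corcor} we identify $\Op^{n,m}$ with $\cQ^n\cap x^m\Q[[x]]$; equivalently, by Theorem \ref{mainone}, with $\cQ^{n,m}/\sum_{0<r<n}\Q\cdot\lg_r(x)$. The map $\rho_n$ of (\ref{sesimp}) sends $\lg_r(x)$ to the $r$-th standard basis vector of $\widehat\Q^{\,n-1}$, which lies in $\Q^{n-1}$; hence composing (the restriction of) $\rho_n$ with the projection $\widehat\Q^{\,n-1}\twoheadrightarrow(\widehat\Q/\Q)^{n-1}=(\widehat\Z/\Z)^{n-1}$ annihilates $\sum_{0<r<n}\Q\cdot\lg_r(x)$ and descends to a homomorphism $\Op^{n,m}\to(\widehat\Z/\Z)^{n-1}$. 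Using $\ker\rho_n=x\Z[[x]]$ from (\ref{sesimp}), its kernel is $\bigl(\coprod_{0<r<n}\Q\cdot\lg_r(x)+x\Z[[x]]\bigr)\cap x^m\Q[[x]]$, which is by definition $\Op^{n,m}_{cl}$, so exactness at $\Op^{n,m}$ is formal. For surjectivity onto $(\widehat\Z/\Z)^{n-1}$ I would feed $c_1,\dots,c_{n-1}\in\widehat\Z$ into Lemma \ref{N33}, obtaining integer sequences $\tilde c_r$ with $(c_r-\tilde c_r)\cdot\lg_s(x)\in\widehat\Z[[x]]$ for the relevant $s$, so that $\tilde c_r\cdot\lg_r(x)=c_r\cdot\lg_r(x)-(c_r-\tilde c_r)\cdot\lg_r(x)\in\cQ^n$ maps to $c_r$ under $\rho_n$.

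\emph{The main obstacle, and the shape of $\Op^{n,m}_{cl}$.} The point that requires care — and which I expect to be the only real difficulty — is that $\tilde c_r\cdot\lg_r(x)$ lies a priori only in $\cQ^n$, whereas a representative of an element of $\Op^{n,m}$ must lie in $\cQ^{n,m}$, i.e. satisfy $v(\partial^{n-1}(-))\geq m$; this is a condition on the partial derivative, not on the series itself. I would resolve it exactly as in the proof of Lemma \ref{approxim}: after subtracting a suitable integer multiple of $\lg_r(x)$, pass to the $x^m$-truncation $F\in\Z[x]_{\leq m-1}$, so that $\partial^{n-1}(F)$ coincides with $\partial^{n-1}$ of a tail in $x^m\Q[[x]]$ and hence has valuation $\geq m$ by Observation \ref{valuation} (using $m\geq n$); then $F\in\cQ^{n,m}$ and $F$ represents the same class in $(\widehat\Z/\Z)^{n-1}$. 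Such $F$'s are precisely the elements of the lattice $\cL^{n,m}$, so $\Op^{n,m}_{cl}$ is generated by $\cL^{n,m}$ together with $x^m\Z[[x]]$, and since $\cL^{n,m}\subset x\Z[x]_{\leq m-1}$ meets $x^m\Z[[x]]$ only in $0$ the sum is direct, giving $\Op^{n,m}_{cl}=\cL^{n,m}\oplus x^m\Z[[x]]$.

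\emph{The case $m=n$.} Finally, when $m=n$ the $\Q$-linear map $\coprod_{0<r<n}\Q\cdot\lg_r(x)\to x\Q[x]/(x^n)$ is an isomorphism, because the congruences $\lg_r(x)\equiv(-1)^rx^r/r!\pmod{x^{r+1}}$ make $\lg_1(x),\dots,\lg_{n-1}(x)$ unitriangular against $x,\dots,x^{n-1}$; hence $\cL^{n,n}=x\Z[x]_{\leq n-1}$, and therefore $\Op^{n,n}_{cl}=x\Z[x]_{\leq n-1}\oplus x^n\Z[[x]]=x\Z[[x]]$, as asserted.
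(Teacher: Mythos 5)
Your overall scaffolding matches the paper: identify $\Op^{n,m}$ via Theorem \ref{mainone} and Corollary \ref{corcor}, obtain the map to $(\widehat\Z/\Z)^{n-1}$ by composing $\rho_n$ with the quotient $\widehat\Q^{\,n-1}\to\widehat\Q^{\,n-1}/\Q^{n-1}$, identify the kernel with $\Op^{n,m}_{cl}$, and use Lemma \ref{N33} plus the argument of Lemma \ref{approxim} for surjectivity. The case $m=n$ at the end is also fine.

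The gap is in your ``main obstacle'' paragraph, where you choose the wrong representative. You take the $x^m$-truncation $F\in\Z[x]_{\leq m-1}$ of $(\tilde c_r-d)\cdot\lg_r(x)$ as the element of $\cQ^{n,m}$ hitting $c_r\in(\widehat\Z/\Z)^{n-1}$. This cannot work for two reasons. First, $F$ has integer coefficients, so $F\in x\Z[[x]]=\ker\rho_n$, and $F$ therefore represents the \emph{zero} class in $(\widehat\Z/\Z)^{n-1}$, not $c_r$. Second, the assertion that ``$\partial^{n-1}(F)$ coincides with $\partial^{n-1}$ of a tail'' is false: writing $(\tilde c_r-d)\cdot\lg_r=F+J$ with $J\in x^m\Q[[x]]$, one has $\partial^{n-1}(F)=\partial^{n-1}(\tilde c_r\cdot\lg_r)-\partial^{n-1}(J)$, and the first term, equal to $-\partial^{n-1}\bigl((c_r-\tilde c_r)\cdot\lg_r\bigr)$, generally has valuation exactly $n<m$ (already for $n=2$, $r=1$, $m=3$ the $x_1x_2$-coefficient is $c_1-c_2\in\Z$, typically nonzero). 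So $F\in\cQ^{n,m}$ fails as well, and consequently ``such $F$'s are precisely the elements of $\cL^{n,m}$'' is also false, since $F$ need not be a truncation of an element of $\coprod_{0<r<n}\Q\cdot\lg_r$.

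What actually works — and is what the paper's appeal to the proof of Lemma \ref{approxim} implicitly uses — is to keep the \emph{tail} $J:=(\tilde c_r-d)\cdot\lg_r-F\in x^m\Q[[x]]$ as the representative. Then $\partial^{n-1}(J)=\partial^{n-1}(\tilde c_r\cdot\lg_r)-\partial^{n-1}(F)\in\Z[[x_1,\ldots,x_n]]$, and since $v(J)\geq m\geq n$ Observation \ref{valuation} gives $v(\partial^{n-1}(J))\geq m$, so $J\in\cQ^{n,m}$; moreover $\rho_n(J)=\rho_n(\tilde c_r\cdot\lg_r)-\rho_n(d\lg_r)-\rho_n(F)=c_r e_r-d e_r-0\equiv c_r e_r\pmod{\Q^{n-1}}$. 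The decomposition $\Op^{n,m}_{cl}\cong\cL^{n,m}\oplus x^m\Z[[x]]$ then needs its own short argument (unrelated to the $F$'s from surjectivity): for $G=L+H\in\Op^{n,m}_{cl}$ with $L\in\coprod_{0<r<n}\Q\cdot\lg_r$ and $H\in x\Z[[x]]$, the condition $G\in x^m\Q[[x]]$ forces the $x^m$-truncation of $L$ to equal that of $-H$, hence to lie in $\cL^{n,m}$; this truncation and the remainder $G-(L-\text{trunc}(L))\in x^m\Z[[x]]$ give the two components of the claimed isomorphism.
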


\begin{remark}
Similar arguments yield the following formula for $m\geq n\geq 1$:
\[
\Op^{n,m}_{\widehat{\Z}}=\cL^{n,m}_{\widehat{\Z}}\oplus x^m \widehat\Z[[x]],
\]
where $\cL^{n,m}_{\widehat{\Z}}=\cL^{n,m}\tens\widehat\Z$.
\end{remark}

\subsection{Composition}\label{compos}

The $R$-module homomorphism $\Ad_n:R[[x]]\to \Op_R^{n,n}$ is not a ring homomorphism. In this section we
introduce a new product on $R[[x]]$ so that $\Ad_n$ becomes an $R$-algebra homomorphism.

Let $H,H'\in R[[x]]$, write $H'=\Sum_{i\geq 0} a_ix^i$ and define the \emph{composition} in $H$ and $H'$ by the formula
\[
H\circ H'=a_0\cdot H(0)+\Sum_{i\geq 1} (-1)^{i}a_i\cdot(\partial^{i-1}H)(x^{\times i}).
\]
The composition $\circ$ is distributive in $H$ and $H'$ with respect to addition. (Note that
the usual substitution of power series is only one-sided distributive.)
The polynomial $1-x$ is the identity for the composition: $(1-x)\circ H=H=H\circ (1-x)$ for all $H$.
We view $R[[x]]$ as an $R$-algebra with product given by the composition.



\begin{lemma}\label{compatibility}
The maps $\Ad_n:R[[x]]\to  \Op_R^{n,n}$ are $R$-algebra homomorphisms.
\end{lemma}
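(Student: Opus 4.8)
I want to show that for all $n$, the map $\Ad_n\colon R[[x]]\to\Op_R^{n,n}$ respects the product $\circ$ on the source and composition of operations on the target. Since $\Ad_n$ is $R$-linear and both products are $R$-bilinear, and since the power series $(1-x)^m$, $m\in\Z$, topologically generate $R[[x]]$, it suffices to check the multiplicativity on these generators. So the real content is the identity
\[
\Ad_n\bigl((1-x)^k\circ(1-x)^m\bigr)=\Ad_n\bigl((1-x)^k\bigr)\circ\Ad_n\bigl((1-x)^m\bigr)\quad\text{in }\Op_R^{n,n}.
\]
By definition \eqref{adamsoper}, the right-hand side is $\Psi_k\circ\Psi_m$, and by Example \ref{ad} this equals $\Psi_{km}=\Ad_n((1-x)^{km})$. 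Hence the lemma reduces to the purely formal identity
\[
(1-x)^k\circ(1-x)^m=(1-x)^{km}\quad\text{in }R[[x]]
\]
with respect to the composition product $\circ$. Everything else is bookkeeping about continuity of $\circ$ and of operation-composition in the $x$-adic topology, which is routine since $\partial$ raises valuation (Observation \ref{valuation}) and the Adams operations are compatible with the tower (Proposition \ref{compat}).

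\textbf{Carrying out the key identity.} Write $H'=(1-x)^m=\sum_{i\ge0}(-1)^i\binom{m}{i}x^i$, so $a_i=(-1)^i\binom{m}{i}$, and $H=(1-x)^k$. Then
\[
H\circ H'=\binom{m}{0}(1-x)^k\big|_{x=0}+\Sum_{i\ge1}\binom{m}{i}\,(\partial^{i-1}H)(x^{\times i}),
\]
where $x^{\times i}$ denotes the $*$-sum $x*\cdots*x$ ($i$ times) with $x*y=x+y-xy$, i.e. $x^{\times i}=1-(1-x)^i$. The point is the closed formula for iterated partial derivatives of a power of $1-x$: one computes directly that
\[
(\partial^{i-1}(1-x)^k)(x_1,\dots,x_i)=(-1)^{i-1}(1-x_1)^k\cdots(1-x_i)^k\cdot(\text{something})
\]
— more precisely, using that $\partial$ in the multiplicative group law becomes, after the substitution $u=1-x$, the operator $f(u)\mapsto f(u_1u_2)-f(u_1)-f(u_2)+f(1)$, iterating gives $(\partial^{i-1}(1-x)^k)$ evaluated at the diagonal $\star$-point $x^{\times i}$ equal to $\sum_{j}(-1)^{i-j}\binom{i}{j}(1-x)^{jk}$. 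Substituting and using the binomial theorem $\sum_{i\ge1}\binom{m}{i}\sum_j(-1)^{i-j}\binom{i}{j}(1-x)^{jk}$, one reorganizes the double sum (swap order, recognize $\sum_i\binom{m}{i}\binom{i}{j}(-1)^{i-j}=\binom{m}{j}$ when summed appropriately, or directly telescope) to obtain $\sum_{j\ge0}\binom{m}{j}\big((1-x)^{k}-1\big)^{j}=\big(1+((1-x)^k-1)\big)^m=(1-x)^{km}$, after reincorporating the $i=0$ constant term. This is the one genuine computation; I would present it cleanly by working throughout in the variable $u=1-x$, where $\circ$ on monomials $u\mapsto u^m$ is literally $u^k\circ u^m=u^{km}$ once one checks the definition of $\circ$ translates (via $\partial$ becoming the "difference at the product" operator) to ordinary substitution of $u^k$ into $u^m$.

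\textbf{Reduction and continuity details.} To go from generators to all of $R[[x]]$ I need that $\circ$ is separately continuous in the $x$-adic topology and likewise that composition $\Op_R^{n,n}\times\Op_R^{n,n}\to\Op_R^{n,n}$ is separately continuous under the identification with (a submodule of) $x R[[x]]$; both follow because $v(\partial^{i-1}H)\ge i$ grows with $i$ (Observation \ref{valuation}), so the defining series for $H\circ H'$ converges and truncating $H'$ modulo $x^N$ changes $H\circ H'$ only in degrees $\ge N$, uniformly in $H$ in each valuation range. The case $n\le0$ needs a separate (easier) word: there $\Op_R^{n,n}=R[[x]]$ and one must check the product $\circ$ is exactly the composition product of operations on $\CK^n=K_0$; this is again reduced to the generators $(1-x)^m$ via continuity, and there the extra constant term $a_0\cdot H(0)$ in the definition of $\circ$ is precisely what accounts for $\Psi_0\ne0$ on $\CK^0$.

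\textbf{Main obstacle.} The crux is the formal identity $(1-x)^k\circ(1-x)^m=(1-x)^{km}$ — equivalently, unwinding the definition of $\circ$ so that it becomes manifestly the composition law of Adams operations. Once one adopts the substitution $u=1-x$ and identifies $\circ$ with honest composition of the power-series automorphisms $u\mapsto u^m$ of the formal multiplicative group, the identity is immediate and everything else is soft. I expect the only real care to be needed in verifying that the sign-laden definition $H\circ H'=a_0H(0)+\sum_{i\ge1}(-1)^i a_i(\partial^{i-1}H)(x^{\times i})$ indeed transports, under $u=1-x$, to plain substitution — this is where the factor $(-1)^i$ and the $\star$-power $x^{\times i}=1-u^i$ conspire correctly, and it is exactly the content of Proposition \ref{aformula} specialized to $G=(1-x)^k$.
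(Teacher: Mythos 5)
Your proposal takes a genuinely different route from the paper's. The paper reduces to $n=0$ via Proposition~\ref{compat} and then verifies the identity $\Ad_0(H\circ H')=\Ad_0(H)\circ\Ad_0(H')$ by a direct computation of both sides on the first Chern class of the canonical line bundle on $\P^\infty$ and on a scalar $r\in\CK^0_R(k)$, which pins down $G_0$ and $G_1$ and hence the operation; no topology enters. You instead reduce to the topological generators $A_m=(1-x)^m$, using $\Psi_k\circ\Psi_m=\Psi_{km}$ from Example~\ref{ad}, and establish the formal identity $A_k\circ A_m=A_{km}$ by the $u=1-x$ substitution, where $\partial^{i-1}(u^k)$ becomes $\prod_j(u_j^k-1)$ and the binomial theorem gives $\sum_{i\geq 0}\binom{m}{i}((1-x)^k-1)^i=(1-x)^{km}$. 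That computation is essentially right, and this approach is conceptually appealing. However, a few points need attention. First, a notational slip: in the paper $x^{\times i}$ denotes the $i$-tuple $(x,\dots,x)$, not the $*$-power $1-(1-x)^i$, and $(\partial^{i-1}H)(x^{\times i})$ means evaluation of the $i$-variable series on the diagonal; your later text treats it correctly but the initial description would not even typecheck. Second, the parenthetical identity $\sum_i\binom{m}{i}\binom{i}{j}(-1)^{i-j}=\binom{m}{j}$ is false (that sum vanishes for $m>j$); luckily your main line of computation does not use it. Third, and most substantively: the ``bookkeeping about continuity'' is heavier than you suggest. You need that composition of operations in $\Op_R^{n,n}$ is continuous and that $\Ad_n$ is continuous for topologies making this work, and that the target is Hausdorff. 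The natural topology to use is $\tau_o$, and the crucial input is that $v(\partial^{n-1}H)\geq m$ forces $\Ad_n(H)$ to vanish on varieties of dimension $<m$ — this is exactly Proposition~\ref{operoper}, which the paper proves only after this lemma, and your proof would need to front-load it (or re-derive it). Your appeal to Observation~\ref{valuation} and Proposition~\ref{compat} is not by itself sufficient: those control the valuation of $\partial^{n-1}H$ but not yet the action of the corresponding operation on small-dimensional varieties. So the approach is sound, but the continuity step is a real lemma, not mere bookkeeping, and the reference to Proposition~\ref{aformula} as the content of the key identity is a misattribution — it provides the relationship between $\partial^n$ and Taylor coefficients, not the evaluation of $\partial^{i-1}(1-x)^k$ on the diagonal.
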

\begin{proof}
In view of Proposition \ref{compat} it suffices to consider the case $n=0$.
Let $H,H'\in R[[x]]$ and write $H'=\Sum_{i\geq 0} a_ix^i$. If $G_0,G_1,\ldots\in R[t][[x]]$
is the sequence of power series corresponding to $\Ad_0(H)$ (see Proposition \ref{description2}), then
$G_0=H(0)\in R$, $H=(G_0-G_1)|_{t=1}$ and $G_i=(-1)^{i-1}\partial_t^{i-1}(G_1)$ for $i>1$.
Note that $G_1(t,x)=-H(tx)+H(0)$.

Write $L$ for the canonical line bundle on $\P^\infty$. By (\ref{c1eq}) and (\ref{derive}),
\[
\Ad_0(H)(c_1(L)^i)=G_i(c_1(L)^{\times i})=(-1)^{i-1}(\partial_t^{i-1}G_1)(c_1(L)^{\times i})=(-1)^{i}(\partial^{i-1}H)(tc_1(L)^{\times i}).
\]
Therefore, we have
\begin{align*}
(\Ad_0(H)\circ \Ad_0(H'))(c_1(L))&=-\Ad_0(H)(\Sum_{i\geq 1} a_i c_1(L)^i) \\
&=-\Sum_{i\geq 1} a_i(\Ad_0(H))(c_1(L)^i) \\
& = \Sum_{i\geq 1} (-1)^{i-1}a_i\cdot(\partial^{i-1}H)(tc_1(L)^{\times i}).
\end{align*}
On the other hand, write $H\circ H'=(G''_0-G''_1)|_{t=1}$, where $G''_0=a_0\cdot H(0)$ and
\[
G''_1=\Sum_{i\geq 1} (-1)^{i-1}a_i\cdot(\partial^{i-1}H)(tx^{\times i}).
\]
It follows that
\[
\Ad_0(H\circ H')(c_1(L))=G''_1(c_1(L))=\Sum_{i\geq 1} (-1)^{i-1}a_i\cdot(\partial^{i-1}H)(tc_1(L)^{\times i})=
(\Ad_0(H)\circ \Ad_0(H'))(c_1(L)).
\]
If $r\in R=\CK^0_R(k)$, then
\[
\Ad_0(H\circ H')(r)=G''_0\cdot r=a_0\cdot H(0)\cdot r=\Ad_0(H)(a_0\cdot r)=(\Ad_0(H)\circ \Ad_0(H'))(r).
\]
Overall, $\Ad_0(H\circ H')=\Ad_0(H)\circ \Ad_0(H')$.
\end{proof}


The polynomials $A_m:=(1-x)^m$ satisfy $\Ad_n(A_m)=\Psi_m$ in $\Op_R^{n,n}$.
It follows from Lemma \ref{compatibility} and Example \ref{ad} that
\[
A_k\circ A_m =A_{km} =A_m\circ A_k
\]
for all $k$ and $m$.

\begin{proposition}\label{idempotents}
Let $R$ be a commutative ring and
$K$ a $\Q$-algebra. Then
\begin{enumerate}
  \item The composition $\circ$ in $R[[x]]$ is commutative.
  \item The power series $\lg_r(x)\in K[[x]]$, $r\geq 0$, are orthogonal idempotents that partition the identity,
  that is, $\lg_n(x)\circ\lg_m(x)=\delta_{n,m}\cdot\lg_n(x)$ and $1-x=\Sum_{r\geq 0}\lg_r(x)$.
\end{enumerate}
\end{proposition}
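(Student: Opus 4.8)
The plan is to deduce both statements from the single relation $A_k\circ A_m=A_{km}=A_m\circ A_k$ recorded just above the proposition, together with two soft facts about $\circ$: it is $R$-bilinear, and it is $x$-adically continuous in each variable. Continuity holds because $(\partial^{i-1}G)(x^{\times i})$ is divisible by $x_1\cdots x_i$, hence by $x^i$ as a power series in $x$; so if $\delta\in x^NR[[x]]$ then $H\circ\delta\in x^NR[[x]]$, and $\delta\circ H'\in x^NR[[x]]$ as well, using in addition that $v(\partial^{j-1}(x^i))=i$ for $i\ge j$ (Observation~\ref{valuation}), so that the diagonal restriction of $\partial^{j-1}(x^i)$ lies in $x^iR[[x]]\subset x^NR[[x]]$ when $i\ge N>j$.

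For part (1), I would use that the polynomials $A_m=(1-x)^m$, $m\ge 0$, form a topological basis of $R[[x]]$: for each $N$ they are a basis of $R[x]_{\le N}$ via an invertible triangular change of basis over $\Z$, so every element of $R[[x]]$ is an $x$-adic limit of finite $R$-linear combinations of the $A_m$. Since $\circ$ is continuous and bilinear, it suffices to check commutativity on such finite combinations, where it is immediate from $A_k\circ A_m=A_m\circ A_k$.

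For part (2), the identity $\sum_{r\ge 0}\lg_r(x)=1-x$ is just $\exp(\log(1-x))=1-x$, the sum converging $x$-adically because $\log(1-x)\in xK[[x]]$; the same computation gives $A_k=\sum_{r\ge0}k^r\lg_r(x)$ for all $k\in\Z$. The orthogonality is the crux. Passing to $K$ and the coordinate $y=\log(1-x)$, the multiplicative formal group law becomes additive and $\lg_r=y^r/r!$, so $\partial^{i-1}$ preserves total $y$-degree; hence $\partial^{i-1}(\lg_r)$ is homogeneous of degree $r$ and divisible by $y_1\cdots y_i$, and its diagonal restriction $(\partial^{i-1}\lg_r)(x^{\times i})$ is a scalar multiple of $y^r$, i.e. lies in $K\cdot\lg_r$. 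Feeding this into the defining formula for $\circ$ (the term $a_0H(0)$ being relevant only for $r=s=0$, which is settled by hand) gives $\lg_r\circ\lg_s=\mu_{rs}\lg_r$ for scalars $\mu_{rs}\in K$; moreover $\mu_{rs}=0$ when $s>r$, since then $\lg_r\circ\lg_s$ is a sum of terms each divisible by $x^i$ with $i\ge s>r$ and each lying in $K\cdot\lg_r$, a subspace containing no nonzero element of valuation exceeding $r$.

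Finally I would expand $A_k\circ A_m=A_{km}$ using $A_k=\sum_r k^r\lg_r$. By bilinearity and continuity, and because the vanishing of $\mu_{rs}$ for $s>r$ collapses each $\lg_r$-slot to a finite sum, this reads $\sum_{r\ge0}k^r\big(\sum_{s=0}^{r}\mu_{rs}m^s\big)\lg_r=\sum_{r\ge0}(km)^r\lg_r$. The $\lg_r$ have valuations $0,1,2,\dots$, hence form a topological basis, so comparing $\lg_r$-coefficients and putting $k=1$ gives $\sum_{s=0}^r\mu_{rs}m^s=m^r$ for every positive integer $m$; comparing these polynomials in $m$ yields $\mu_{rs}=\delta_{rs}$ for $s\le r$, which with the previous step gives $\lg_n\circ\lg_m=\delta_{n,m}\lg_n$. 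The only genuine difficulty here is the $x$-adic bookkeeping — in particular, one must establish $\lg_r\circ\lg_s\in K\cdot\lg_r$ and $\mu_{rs}=0$ for $s>r$ \emph{before} invoking the convergence of the double series and the uniqueness of coefficients in the basis $\{\lg_r\}$.
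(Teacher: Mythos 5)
Your proposal is correct. Part (1) follows essentially the same route as the paper: density of $R$-linear combinations of the Adams polynomials $A_m$ in the $x$-adic topology, bilinearity and separate continuity of $\circ$, and the commutation relations $A_k\circ A_m=A_{km}=A_m\circ A_k$.

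For part (2) you take a genuinely different route. The paper evaluates $(\partial^{n-1}\lg_n)(x_1,\ldots,x_n)=\prod_i\log(1-x_i)$ explicitly and reads off $\lg_n\circ x^m=0$ for $m>n$ and $\lg_n\circ x^n=(-1)^n\,n!\,\lg_n$; this single computation, together with commutativity from part (1) to handle $m<n$, yields orthogonal idempotence directly, and the identity $1-x=\sum_r\lg_r$ is then obtained by composing with $\lg_n$ to pin down the coefficients. You instead extract only the \emph{qualitative} information that $\partial^{i-1}$ preserves the $y$-grading (with $y=\log(1-x)$), so that $\lg_r\circ\lg_s=\mu_{rs}\lg_r$ for scalars $\mu_{rs}$ with $\mu_{rs}=0$ when $s>r$, and then solve for the $\mu_{rs}$ by expanding $A_k\circ A_m=A_{km}$ in the $\lg$-basis and inverting a Vandermonde system. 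Your route avoids the explicit derivative formula at the cost of more bookkeeping, and it is self-contained in the sense that it does not need to invoke part (1) in the middle of the argument; you also get $1-x=\sum_r\lg_r$ from $\exp(\log(1-x))=1-x$ rather than from the orthogonality relations. One small remark: in the continuity check for $\delta\circ H'$ you cite Observation~\ref{valuation}, which is stated only for torsion-free $R$, whereas part (1) is claimed for arbitrary commutative $R$. This is not a real gap, because all you need is the inequality $v\bigl(\partial^{j-1}(x^i)\bigr)\geq i$, which holds over any $R$: each term $G(x_I,\ldots)$ in the expansion of $\partial^{j-1}G$ is obtained by substituting series of valuation $\geq 1$ into $G$, so $\partial$ never lowers valuation. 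Replacing the citation by that observation removes the hypothesis.
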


\begin{proof}
$(1)$ It follows from the definition that the power series $x^n\circ G$ and $G\circ x^n$
are contained in $x^n R[[x]]$ for all $n$ and $G$. Let $H,G\in R[[x]]$. Fix an integer $n>0$ and write
$H=H_1+H_2$ and $G=G_1+G_2$, where $H_1$ and $G_1$ are linear combinations of the Adams polynomials
$A_i$ and $H_2,G_2\in x^nR[[x]]$. As $H_1$ and $G_1$ commute, the remark above yields
$H\circ G-G\circ H\in x^nR[[x]]$. Since this holds for all $n$, we have $H\circ G=G\circ H$.

\smallskip

$(2)$ The iterated derivative $\partial^i (\lg_n(x))$ is zero if $i\geq n$ and
\[
(\partial^{n-1} \lg_n)(x_1,\ldots, x_n)=\Prod_{i=1}^{n}\log(1-x_i).
\]
It follows that $\lg_n(x)\circ x^m=0$ if $m>n$ and
\[
\lg_n(x)\circ x^n=(-1)^{n}(\partial^{n-1} \lg_n)(x^{\times n})=(-1)^{n}(\log(1-x))^n=(-1)^{n}n!\lg_n(x).
\]
This calculation together with the first part of the proposition and the fact that the lowest term of $\lg_r(x)$ is $x^r/r!$ show that the power
series $\lg_r(x)$ are orthogonal idempotents.

Finally, $\sum_{n\geq 1}\lg_n(x)=e^{\lg_1(x)}=(1-x)$.
\end{proof}

Since $\lg_r(x)$ are orthogonal idempotents which form a topological basis of the power series ring, from continuity and distributivity of $\circ$ we obtain that our composition is associative.

Theorems \ref{complete} and \ref{integoper} together with Proposition \ref{idempotents} yield the following corollary.

\begin{corollary}
The rings $\Op^{n,n}_{\widehat\Z}$ and $\Op^{n,n}$ are commutative.
\end{corollary}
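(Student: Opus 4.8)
The plan is to derive commutativity of both rings from the single fact, already in hand, that the composition product $\circ$ on $R[[x]]$ is commutative (Proposition \ref{idempotents}(1)). \textbf{Step 1 (classical operations).} By Lemma \ref{compatibility}, $\Ad_n\colon R[[x]]\to \Op_R^{n,n}$ is a homomorphism of $R$-algebras when $R[[x]]$ carries the product $\circ$. Since $\circ$ is commutative, its homomorphic image $\Op_{R,cl}^{n,n}$ is a commutative ring for every torsion-free $R$ and every $n$. In particular, when $n\le 0$ one has $\Op_R^{n,n}=\Op_{R,cl}^{n,n}$, so both $\Op^{n,n}$ and $\Op^{n,n}_{\widehat\Z}$ are already seen to be commutative in that range, and it remains to treat $n\ge 1$.

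\textbf{Step 2 (the $\widehat\Z$ case).} Theorem \ref{complete} asserts exactly that $\Op_{\widehat\Z}^{n,n}=\Op_{\widehat\Z,cl}^{n,n}$, i.e. that $\Ad_n$ is surjective onto $\Op_{\widehat\Z}^{n,n}$. Hence $\Op_{\widehat\Z}^{n,n}$ is a homomorphic image of the commutative ring $(\widehat\Z[[x]],\circ)$, and is therefore commutative.

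\textbf{Step 3 (the $\Z$ case).} By Theorem \ref{integoper} the comparison map $\Op^{n,n}\to \Op_{\widehat\Z}^{n,n}$ is injective. I would then check that this map is a ring homomorphism, not merely additive: it is the map induced by the coefficient ring homomorphism $\Z\to\widehat\Z$, and the correspondence of Theorem \ref{vishik} between an operation $G$ and its data $\{G_l\}$, together with the recovery formula $(\ref{c1eq})$, is natural in the coefficient ring and carries composition of operations to the induced pairing on the data. Granting this, $\Op^{n,n}$ embeds as a subring of the commutative ring $\Op_{\widehat\Z}^{n,n}$ and is thus commutative.

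The only step that is not immediate from cited results is the multiplicativity of $\Op^{n,n}\to \Op_{\widehat\Z}^{n,n}$ in Step 3; this is the point at which I expect to spend a little work, unwinding that change of coefficients commutes with composition of operations at the level of the data $\{G_l\}$. Everything else is a formal consequence of Lemma \ref{compatibility}, Proposition \ref{idempotents}, and Theorems \ref{complete} and \ref{integoper}.
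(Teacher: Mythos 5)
Your proposal is correct and follows essentially the same route the paper takes: commutativity of $\circ$ on $R[[x]]$ (Proposition \ref{idempotents}(1)) plus $\Ad_n$ being an $R$-algebra map (Lemma \ref{compatibility}) gives commutativity of $\Op_{\widehat\Z}^{n,n}=\Op_{\widehat\Z,cl}^{n,n}$ by Theorem \ref{complete}, and the injection $\Op^{n,n}\hookrightarrow\Op_{\widehat\Z}^{n,n}$ of Theorem \ref{integoper} then handles the integral case. The one point you flag as needing work in Step 3 — that the comparison map is a ring homomorphism and not merely additive — is a fair thing to notice (the paper elides it), but it is immediate once one observes that the map is induced by extension of coefficients $\Z\to\widehat\Z$, which visibly commutes with composition of natural transformations, and that this agrees with the power-series identification because $\lambda_{n,n}$ and the $\circ$-product are defined by the same formulas over $\Z$ and over $\widehat\Z$.
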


Let $K$ be a $\Q$-algebra.
We view $K[[x]]$ as a ring with respect to addition and composition.
Let $G\in K[[x]]$ and write $G=\Sum_{i\geq 0}a_i\lg_i$ for (unique) $a_i\in K$. Denote as $K^{[n,\infty)}$ the ring of $K$-sequences, parametrized by integers $\geq n$ under point-wise operations.
It follows from Proposition \ref{idempotents} that the map
\begin{equation}\label{pass}
b:K[[x]]\to K^{[0,\infty)},
\end{equation}
taking $G$ to the sequence $(a_i)_{i\geq 0}$ is a ring isomorphism.
It takes $x^n K[[x]]$ onto $K^{[n,\infty)}$ for every $n$.

\begin{example}\label{adams}
The image of the polynomial $A_m(x)=(1-x)^m$ is equal to $(1,m,m^2,\ldots)$.
Indeed substituting $y=\log(1-x)$ in the equality $e^{my}=\Sum_{i\geq 0} \frac{m^iy^i}{i!}$
yields $A_m(x)=\Sum_{i\geq 0} m^i \lg_i(x)$.
\end{example}

\subsection{Topology}
In this section we introduce three topologies on $\widehat{\zz}[[x]]$.
\begin{proposition}\label{operoper}
\label{oper-filtr}
 Let $G\in{\Op}_{R}^{n,n}$ and $m\geq n$.
 The following conditions are equivalent:
 \begin{itemize}
  \item[$(1)$] $G\in\Im({\Op}_{R}^{n,m}  \to {\Op}_{R}^{n,n})$;
  \item[$(2)$] $G$ is zero on every smooth variety of dimension $<m$.
 \end{itemize}
\end{proposition}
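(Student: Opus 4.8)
The plan is to reduce conditions $(1)$ and $(2)$ to one numerical statement — the order of vanishing of the $n$-variable power series attached to $G$ — and then to link the two sides geometrically by evaluating $G$ on products of projective spaces. Throughout I write $J:=G_n|_{t=1}\in R[[x_1,\dots,x_n]]$ for the double-symmetric power series, divisible by $x_1\cdots x_n$, attached to $G$ as in Proposition \ref{description2}(2); recall $J$ determines $G$.

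The first step is to identify condition $(1)$ with $v(J)\geq m$. The homomorphism $\Op_R^{n,m}\to\Op_R^{n,n}$ of the statement is postcomposition with multiplication by $t^{m-n}\colon\CK_R^m\to\CK_R^n$, so, using the defining identity (\ref{c1eq}) together with Proposition \ref{description2}, $G\in\Op_R^{n,n}$ lies in its image if and only if $G_n=t^{m-n}\cdot\widetilde G_n$ for some $\widetilde G_n\in R[t][[x_1,\dots,x_n]]_{(m)}$ that is double-symmetric and divisible by $x_1\cdots x_n$. Specializing at $t=1$ turns this into the condition $v(J)\geq m$: if such a $\widetilde G_n$ exists then $J=\widetilde G_n|_{t=1}$ has valuation $\geq m$ because $\widetilde G_n$ is homogeneous of degree $m$; conversely, if $v(J)\geq m$ one takes $\widetilde G_n$ to be the unique homogeneous degree-$m$ power series with $\widetilde G_n|_{t=1}=J$, which is still double-symmetric and divisible by $x_1\cdots x_n$, and this produces a preimage in $\Op_R^{n,m}$. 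Granting this, $(1)\Rightarrow(2)$ is immediate: if $G$ factors through $\CK_R^m$ it vanishes on every smooth variety $X$ with $\dim X<m$, since $\CK^m(X)=0$ in that range ($\CK$ is a free theory and $\Omega^m(X)=0$ for $m>\dim X$), so $\CK_R^m(X)=0$.

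The heart of the matter is $(2)\Rightarrow(1)$, which I would prove by contraposition: assuming $v(J)=d<m$, I construct a smooth variety of dimension $d$ on which $G$ is nonzero. As $J$ is divisible by $x_1\cdots x_n$, its lowest-degree part is a nonzero degree-$d$ form all of whose monomials have positive exponents, so I can fix a multi-index $\alpha=(\alpha_1,\dots,\alpha_n)$ with $\alpha_i\geq1$, $|\alpha|=d$, and coefficient $c_\alpha\neq0$ in $J$. The candidate is $X:=\P^{\alpha_1}\times\cdots\times\P^{\alpha_n}$, of dimension $d<m$, with the class $z:=h_1\cdots h_n\in\CK_R^n(X)$, where $h_i:=c_1^{\CK}(L_i)$ and $L_i$ is the pullback of $\cO(1)$ along the $i$-th projection. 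By the projective bundle formula together with the K\"unneth formula for products of projective spaces, $\CK^*(X)$ is free over $\Z[t]$ on the monomials $h^\beta:=h_1^{\beta_1}\cdots h_n^{\beta_n}$ with $0\leq\beta_i\leq\alpha_i$; in particular each $h_i$ is nilpotent and $\CK^n(X)$ is free over $\Z$ on the elements $t^{|\beta|-n}h^\beta$ with $0\leq\beta_i\leq\alpha_i$ and $|\beta|\geq n$. Evaluating (\ref{c1eq}) pulled back to $X$, and using that $G_n$ is homogeneous of degree $n$, I get $G(z)=G_n(h_1,\dots,h_n)=\sum_\beta c_\beta\,t^{|\beta|-n}h^\beta$; this sum is finite because the $h_i$ are nilpotent, and after discarding the vanishing terms it is exactly the expansion of $G(z)$ in the above basis of $\CK_R^n(X)$. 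Since $c_\alpha\neq0$ and $1\leq\alpha_i\leq\alpha_i$, the coefficient of the basis element $t^{d-n}h^\alpha$ in $G(z)$ equals $c_\alpha\neq0$, so $G(z)\neq0$, i.e.\ $G$ does not vanish on $X$, contradicting $(2)$.

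The step I expect to be the main obstacle is this last evaluation: one must make sure the formal substitution of the power series $G_n$ into the classes $h_i$ is legitimate and lands in $\CK^n(X)$ — this rests on $G_n$ being \emph{homogeneous} of degree $n$ and on each $h_i$ being nilpotent — and that the result is already written in a $\Z$-basis, so that the coefficient $c_\alpha$ genuinely survives instead of being cancelled by other terms. A lesser point to pin down carefully is the dictionary used in the second step, namely that on the level of the generating power series $G_n$ the map $\Op_R^{n,m}\to\Op_R^{n,n}$ is just multiplication by $t^{m-n}$; this is a direct consequence of (\ref{c1eq}) and is implicit in Proposition \ref{compat}.
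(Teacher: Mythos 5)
Your argument is correct and follows essentially the same route as the paper: identify condition $(1)$ with $v(G_n|_{t=1})\geq m$ via Proposition \ref{description2}, then test $G$ on products of projective spaces using (\ref{c1eq}) and the projective bundle theorem, the only cosmetic difference being that you phrase the geometric step contrapositively (exhibiting one monomial that survives) where the paper kills all monomials of degree $<m$ directly. The only omission is the easy case $n\leq 0$, which your setup (which presupposes Proposition \ref{description2}(2), i.e.\ $n\geq 1$) silently excludes and which the paper dispatches in one sentence using Proposition \ref{description2}(1).
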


\begin{proof}
 $(1)\Rightarrow (2)$ Since $\CK^m_{R}(X)=0$, for any variety $X$ of dimension $<m$, the operation $G$ is zero on $X$.

 $(2)\Rightarrow (1)$ Let $n\geq 1$. By Proposition \ref{description2}(2), the operation $G$ is given by a double-symmetric
power series $H(x_1,...,x_n)\in R[[x_1,...,x_n]]_{(n)}$ such that $H=(G_n)|_{t=1}$.
We need to prove that $v(H)\geq m$. We will show that
any monomial  $\ov{x}^{\ov{r}}=x_1^{r_1}\cdot\ldots\cdot x_n^{r_n}$ of $H$ with  $\sum_i r_i<m$ is zero.

Consider  $X_{\ov{r}}:=\prod_i\pp^{r_i}$. This is a variety of dimension  $<m$.
Write $x_i$ for the first Chern class in $\CK^1_R(X_{\ov{r}})$
of the pull-back
of the canonical line bundle on $\P^{r_i}$ with respect to the $i$-th projection
$X_{\ov{r}}\to\P^{r_i}$.
By formula (\ref{c1eq}),
\[
0=G(x_1\cdot\ldots\cdot x_n)=G_n(x_1,\ldots,x_n)\in\CK^n_R(X_{\ov{r}}).
\]
By Projective Bundle Theorem,
\[
\CK^n_R(X_{\ov{r}})=R[[x_1,\ldots, x_n]]/(x_1^{r_1+1},\ldots, x_n^{r_n+1}).
\]
Therefore the monomial $\ov{x}^{\ov{r}}$ of $H$ is trivial.

The case $n\leq 0$ follows similarly (and easier) from Proposition \ref{description2}(1).
\end{proof}

\begin{corollary}
Let $d\geq 0$ be an integer and $G\in \Op^{n,n}$. Then there is a $\Z$-linear combination $G'\in \Op^{n,n}$
of the Adams operations $\Psi_k$ with $k=0,\dots, d$ such that $G$ and $G'$ agree on $\CK^n(X)$ for all smooth varieties
$X$ of dimension $\leq d$.
\end{corollary}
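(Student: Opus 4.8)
The plan is to transport the statement into the world of one-variable power series via Theorem~\ref{mainone}, where it reduces to the polynomial approximation of Lemma~\ref{approxim}. If $d<n$ there is nothing to prove: every smooth $X$ with $\dim X\leq d<n$ has $\CK^n(X)=0$, so $G$ agrees with $G'=0$ on all of them (for $n\leq 0$ this case never occurs, as $d\geq 0\geq n$). So assume $d\geq n$ and put $m=d+1$, so $m>n$. By Proposition~\ref{operoper} applied with this $m$, two operations in $\Op^{n,n}$ agree on $\CK^n(X)$ for all smooth $X$ with $\dim X\leq d$ precisely when their difference lies in $\Im\bigl(\Op^{n,m}\to\Op^{n,n}\bigr)$; so it suffices to produce a $\Z$-linear combination $G'$ of $\Psi_0,\dots,\Psi_d$ with $G-G'\in\Im\bigl(\Op^{n,m}\to\Op^{n,n}\bigr)$. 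Recall from Section~\ref{adams2} that $\lambda_{n,n}(x^k)=\Upsilon_k=\Sum_{i=0}^{k}(-1)^i\binom{k}{i}\Psi_i$ for $k\geq 0$; since the transition matrix $\bigl((-1)^i\binom{k}{i}\bigr)_{0\leq i,k\leq d}$ is triangular with $\pm 1$ on the diagonal, the $\Z$-span of $\Psi_0,\dots,\Psi_d$ in $\Op^{n,n}$ equals $\lambda_{n,n}\bigl(\Z[x]_{\leq d}\bigr)$.

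Next I would choose, using the surjectivity in Theorem~\ref{mainone}, a power series $\tilde G\in\cQ^n$ with $\lambda_{n,n}(\tilde G)=G$, and apply Lemma~\ref{approxim} with this $m$ to write
\[
\tilde G=p+\ell+s,\qquad p\in\Z[x]_{\leq d},\quad \ell\in\Sum_{0<r<n}\Q\cdot\lg_r(x),\quad s\in x^{m}\Q[[x]].
\]
Put $G':=\lambda_{n,n}(p)$; by the previous paragraph this is an integral combination of $\Psi_0,\dots,\Psi_d$. Since $\tilde G,p\in\cQ^n$ and, by the remark following Definition~\ref{qn}, $\ell\in\Sum_{0<r<n}\Q\cdot\lg_r(x)\subset\cQ^n$, the series $s=(\tilde G-p)-\ell$ again lies in $\cQ^n$; moreover $\ell$ belongs to the kernel of $\lambda_{n,n}$ by Theorem~\ref{mainone}, so $G-G'=\lambda_{n,n}(\tilde G-p)=\lambda_{n,n}(s)$. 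It remains to check $\lambda_{n,n}(s)\in\Im\bigl(\Op^{n,m}\to\Op^{n,n}\bigr)$. As $v(s)\geq m>n$, Observation~\ref{valuation} gives $v(\partial^{n-1}s)=v(s)\geq m$, so $s\in\cQ^{n,m}$; then by the commutative square of Proposition~\ref{compat}, $\lambda_{n,n}(s)$ is the image of $\lambda_{n,m}(s)\in\Op^{n,m}$ under $\Op^{n,m}\to\Op^{n,n}$, hence $G-G'\in\Im\bigl(\Op^{n,m}\to\Op^{n,n}\bigr)$, as required.

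The real content is Lemma~\ref{approxim}: once one knows that every element of $\cQ^n$ agrees, modulo the $\lg_r$'s and modulo $x^m\Q[[x]]$, with an integral polynomial of degree $<m$, the conclusion follows by formal manipulation of the maps $\lambda_{n,m}$ and the identifications established earlier in this section. I do not expect a genuine obstacle; the only points needing attention are the placement of the various inclusions and the degenerate range $n\leq 0$, where $\cQ^n=\Z[[x]]$ has no $\lg_r$-component and the above specialises to plain truncation of $\tilde G$.
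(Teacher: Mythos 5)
Your proof is correct and follows essentially the same approach as the paper: both reduce the statement to Lemma~\ref{approxim} applied with $m=d+1$, then translate back through $\lambda_{n,n}$, Theorem~\ref{mainone}, and Proposition~\ref{operoper}. You are somewhat more explicit about the bookkeeping (the lift $\tilde G$, the commuting square from Proposition~\ref{compat}, the identification of the $\Z$-span of $\Psi_0,\dots,\Psi_d$ with $\lambda_{n,n}(\Z[x]_{\leq d})$, and the degenerate range $d<n$), none of which changes the substance of the argument; one could omit the $d<n$ case split since the paper's argument already covers it uniformly via $\Op^{n,m}\iso\Op^{n,n}$ for $m\leq n$.
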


\begin{proof}
By Lemma \ref{approxim}, applied to $m=d+1$,
there is a polynomial $G'\in \Z[x]$ of degree at most $d$ such that $G-G'\in \Sum_{0<r<n}\Q\cdot\lg_{r}(x)+x^{d+1}\Q[[x]]$.
Let $X$ be a smooth variety of dimension $\leq d$.
As $v(\partial^{n-1} (G-G'))\geq d+1$, in view of Theorem \ref{mainone}, $G-G'\in\Im({\Op}^{n,m}  \to {\Op}^{n,n})$.
Therefore,
by Proposition \ref{operoper}, $G-G'$ is trivial on $X$. Finally,
$G'$ is a linear combination of the Adams polynomials $A_k$ with $k=0,\dots, d$.
\end{proof}


\begin{definition}
We introduce three topologies on $\widehat{\zz}[[x]]$:
 \label{def-top3}
 \begin{itemize}
  \item[$\bullet$] $\tau_s$ is generated by the neighborhoods of zero $U_m$ consisting of power series divisible by $x^m$, for some
  $m\geq 0$, i.e., $\tau_s$ is the $x$-adic topology.
  \item[$\bullet$] $\tau_w$ is generated by the neighborhoods of zero $U_m+ V_N$, where $V_N$ consists of all power series divisible by some $N\in\nn$.
  \item[$\bullet$] $\tau_o$ is generated by the neighborhoods of zero $W_m$ consisting of power series, where the respective operation acts trivially on varieties of dimension $<m$.
 \end{itemize}
\end{definition}

Recall, that a topology $\ffi$ is {\it coarser} than the topology $\psi$, denoted $\ffi\leq\psi$,
if any set open with respect to $\ffi$ is also open with respect to $\psi$.

\begin{proposition}
 $\tau_w\leq\tau_o\leq\tau_s$.
\end{proposition}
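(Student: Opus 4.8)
The plan is to establish the two inclusions separately, in each case by showing that every basic neighborhood of zero of the coarser topology contains a basic neighborhood of zero of the finer one. Recall the three bases: $\tau_s$ has the $U_m$ (power series divisible by $x^m$); $\tau_w$ has the $U_m + V_N$ (with $V_N$ the power series divisible by $N \in \N$); and $\tau_o$ has the $W_m$ (power series whose associated operation in $\Op^{n,n}_{\widehat\Z}$ vanishes on all varieties of dimension $<m$). Since all three are linear topologies (generated by subgroups), it suffices to compare the generating subgroups.

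For $\tau_o \leq \tau_s$: I would show $U_m \subseteq W_m$ for every $m$. If a power series $G$ is divisible by $x^m$, then under the identification of Theorem~\ref{mainone} (and Proposition~\ref{operoper}) the corresponding operation has $v(\partial^{n-1}G) \geq m$ — this is immediate from Observation~\ref{valuation}, since $v(\partial^{n-1}(x^i)) = i \geq m$ for $i \geq m$ (using $m \geq n$, which we may assume since for $m < n$ one has $W_m \supseteq W_n$ anyway, or argue directly). By Proposition~\ref{operoper}, such an operation lies in the image of $\Op^{n,m}_{\widehat\Z} \to \Op^{n,n}_{\widehat\Z}$, hence vanishes on all varieties of dimension $<m$, i.e. $G \in W_m$. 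Thus every $\tau_o$-basic open set is $\tau_s$-open.

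For $\tau_w \leq \tau_o$: I would show that each basic neighborhood $U_m + V_N$ contains some $W_{m'}$. The key observation is that $W_{m'}$, by Proposition~\ref{operoper} combined with Theorem~\ref{mainone}, consists exactly of those $G \in \Op^{n,n}_{\widehat\Z} = x\widehat\Z[[x]]$ with $v(\partial^{n-1} G) \geq m'$; by Corollary~\ref{corcor} and the $\widehat\Z$-case of Remark after Theorem~\ref{integoper} (or directly from $\cQ^{n,m'}_{\widehat\Z}$), for $m'$ large this forces $G \in x^{m'}\widehat\Z[[x]]$ modulo the finite-rank lattice piece — more cleanly, $W_{m'} \subseteq U_{m'}$ up to the span of the $\lg_r$; but since over $\widehat\Z$ the operations are \emph{all} classical (Theorem~\ref{complete}), in fact $W_{m'} = \{G \in x\widehat\Z[[x]] : v(\partial^{n-1}G) \geq m'\}$ and for $m' \geq m$ every such $G$ has all coefficients in degrees $< m$ divisible by "enough", so $G \in U_m + V_N$ once $m'$ is chosen large compared to $m$ and $N$. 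Concretely: an element of $W_{m'}$ has $x^i$-coefficient $a_i$ with $v(\partial^{n-1}G)\geq m'$; truncating below degree $m$ leaves a polynomial whose coefficients are divisible by $N$ provided $m'$ was taken past the point where the integrality/divisibility constraints of $\cQ^{n,m'}_{\widehat\Z}$ kick in. So $W_{m'} \subseteq U_m + V_N$.

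The main obstacle will be the $\tau_w \leq \tau_o$ inclusion: it requires a quantitative handle on how divisibility of the low-degree coefficients of $G$ is forced by the condition $v(\partial^{n-1}G) \geq m'$, i.e. on the structure of $\cQ^{n,m'}_{\widehat\Z}$ as $m' \to \infty$. The clean way is to invoke that $\Op^{n,n}_{\widehat\Z} = x\widehat\Z[[x]]$ with $W_{m'}$ identified via $\partial^{n-1}$-valuation, and then a direct computation with the formula of Lemma~\ref{first} (the $x^j y^s$-coefficients of $\partial G$) shows that $v(\partial^{n-1}G)\geq m'$ with $m'$ large forces $N \mid a_i$ for $i < m$. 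I expect this divisibility bookkeeping — choosing $m'$ as an explicit function of $m$ and $N$ — to be the one genuinely non-formal point; everything else is the standard "basic-open-contains-basic-open" routine for linear topologies.
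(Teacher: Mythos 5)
Your argument for $\tau_o\leq\tau_s$ is correct and matches the paper: from $v(G)\geq m$ one deduces $v(\partial^{n-1}G)\geq m$, places $G$ in $\Im(\Op_{\widehat\Z}^{n,m}\to\Op_{\widehat\Z}^{n,n})$ via Theorem~\ref{mainone}, and concludes by Proposition~\ref{operoper}. This half is fine.

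For $\tau_w\leq\tau_o$, however, there is a genuine gap, and you flag it yourself: the claim that $v(\partial^{n-1}G)\geq m'$ for $m'$ large forces $N\mid a_i$ for $i<m$ is exactly the content that needs proof, and you never supply it. Your sketch (``truncating below degree $m$ leaves a polynomial whose coefficients are divisible by $N$ provided $m'$ was taken past the point where the integrality constraints kick in'') is a restatement of the desired conclusion, not an argument. What is missing is a quantitative control on the lattice $\cL^{n,m'}_{\widehat\Z}$: one would need to show that, for fixed $m$ and $N$, all degree~$<m$ coefficients of elements of $\cL^{n,m'}_{\widehat\Z}$ become divisible by $N$ once $m'$ is large. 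This follows from the description of $\cL^{n,m'}$ as the set of integral truncations of rational multiples of $\lg_1,\ldots,\lg_{n-1}$ together with the fact that the denominators of $\lg_r$ grow without bound (so integrality up to degree $m'-1$ forces the multiplier to be highly divisible), but none of this is in your writeup. The paper avoids this bookkeeping entirely by a compactness argument: $\widehat\Z[[x]]$ is compact Hausdorff in $\tau_w$, $\partial^{n-1}$ is continuous and injective, hence a homeomorphism onto its image, so a sequence $G_k$ with $\partial^{n-1}(G_k)\to 0$ must itself converge to $0$; if no $W_k$ were contained in $U_{N,m}$ one could choose such a sequence lying outside $U_{N,m}$, a contradiction. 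That route is both shorter and cleaner than the explicit divisibility analysis you propose, and you should either adopt it or actually carry out the bookkeeping you describe --- as it stands the proposal only proves half the statement.
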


\begin{proof}
Since $v(G(x))\geq m$ implies $v(\partial^{n-1}G(x))\geq m$ and hence
$G\in\Im({\Op}_{\widehat{\zz}}^{n,m}  \to {\Op}_{\widehat{\zz}}^{n,n})$
by Theorem \ref{mainone}. Therefore, it follows from Proposition \ref{oper-filtr} that
 $\tau_o\leq\tau_s$.

 The topology $\tau_w$ is generated by the neighborhoods of zero $U_{N,m}=(N,x^m)\subset\widehat\Z[[x]]$,
and $\tau_o$ is generated by the neighborhoods of zero $W_k=\{G\in \widehat\Z[[x]]\ |\ v(\partial^{n-1}(G))\geq k\}$
by Proposition \ref{operoper}.
We need to show that for every $N$ and $m$ there is $k$ with $W_k\subset U_{N,m}$.

We have similar compact (Hausdorff) topology $\tau_w$ on $\widehat\Z[[x_1,\ldots, x_n]]$ so that the map $\partial^{n-1}$ is
continuous in $\tau_w$.
Note that the map $\partial^{n-1}:\widehat\Z[[x]]\to \widehat\Z[[x_1,\ldots, x_n]]$ is injective and the induced map from
$\widehat\Z[[x]]$ to the image of $\partial^{n-1}$
is a homeomorphism (since the image of every closed subset is closed as $\widehat\Z[[x]]$ is compact
and the target is Hausdorff). In particular,
if $G_k\in \widehat\Z[[x]]$ is a sequence such that the sequence $\partial^{n-1}(G_k)$ converges to $0$, then the sequence
$G_k$ converges to $0$ in $\widehat\Z[[x]]$.

Now we prove that for every $N$ and $m$ there is $k$ with $W_k\subset U_{N,m}$. Assume on the contrary that
for every $k$ we can find $G_k\in W_k$,
but $G_k\notin U_{N,m}$. Then $\partial^{n-1}(G_k)$ converges to $0$, but $G_k$ does not converge to $0$ in $\widehat\Z[[x]]$,
a contradiction.
\end{proof}

\begin{observation}
 \begin{itemize}
  \item[$1)$] For $n=1$, $\tau_o=\tau_s$;
  \item[$2)$] For $n>1$, $\tau_w\neq\tau_o\neq\tau_s$.
 \end{itemize}
\end{observation}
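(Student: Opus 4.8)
The plan is, having already the comparison $\tau_w\le\tau_o\le\tau_s$, to treat $n=1$ by a direct computation and to establish both strict inequalities for $n>1$ by exhibiting explicit witnesses among the operations. Throughout I identify $\Op_{\widehat\Z}^{n,n}$ with $x\widehat\Z[[x]]$ (Theorem \ref{complete}) and use the neighbourhood bases of $0$ recorded above: $U_m=x^m\widehat\Z[[x]]$ for $\tau_s$; $U_m+V_N$ with $V_N=N\widehat\Z[[x]]$ for $\tau_w$; and $W_k=\{\,G\mid v(\partial^{n-1}G)\ge k\,\}$ for $\tau_o$ (Proposition \ref{operoper}). Since these are nested families and $\tau_w\le\tau_o\le\tau_s$, it suffices, for $n>1$, to exhibit one $U_m$ containing no $W_k$ (which forces $\tau_o\ne\tau_s$) and one $W_k$ containing no $U_m+V_N$ (which forces $\tau_w\ne\tau_o$).

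\emph{The case $n=1$.} Here $\partial^{n-1}=\partial^0$ and $\partial^0G=G-G(0)=G$ for $G\in x\widehat\Z[[x]]$; hence $v(\partial^0G)=v(G)$, so $W_m=U_m$ for every $m$ and $\tau_o=\tau_s$.

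\emph{$\tau_w\ne\tau_o$ for $n>1$ (the easy half).} I claim $W_{n+1}$ is not $\tau_w$-open. By Observation \ref{valuation}, $v(\partial^{n-1}(x^n))=n$; since $\partial^{n-1}$ is $\widehat\Z$-linear and $\widehat\Z$ is torsion free, $v(\partial^{n-1}(Nx^n))=n<n+1$ for every nonzero integer $N$, i.e.\ $Nx^n\notin W_{n+1}$. As $Nx^n\in V_N\subseteq U_m+V_N$ for all $m$, no basic $\tau_w$-neighbourhood of $0$ is contained in $W_{n+1}$.

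\emph{$\tau_o\ne\tau_s$ for $n>1$ (the main point).} Here I need, for arbitrarily large $k$, an \emph{integral} power series $G$ with $v(\partial^{n-1}G)\ge k$ that is not divisible by $x^2$, i.e.\ an integral approximation to $\lg_1(x)$ with high-order error. For $j\ge n-1$ put $L_j=\operatorname{lcm}(1,\dots,j)$ and
\[
P_j:=-L_j\sum_{i=1}^{j}\frac{x^i}{i}=L_j\,\lg_1(x)+L_j\sum_{i>j}\frac{x^i}{i}\ \in\ x\Z[x]\subset x\widehat\Z[[x]].
\]
The $x$-coefficient of $P_j$ is $-L_j\ne 0$, so $P_j\notin U_2$; while $\lg_1(x)\in\Ker\partial$ gives $\partial^{n-1}(L_j\lg_1)=0$ (using $n\ge2$), and $L_j\sum_{i>j}x^i/i$ has valuation $j+1\ge n$, so Observation \ref{valuation} forces $v(\partial^{n-1}P_j)=j+1$, i.e.\ $P_j\in W_{j+1}$. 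Letting $j\to\infty$, no $W_k$ with $k\ge n$ is contained in $U_2$; and for $k\le n$ we have $W_k=x\widehat\Z[[x]]$ (because $\partial^{n-1}G$ is always divisible by $x_1\cdots x_n$, so $v(\partial^{n-1}G)\ge n$), again not inside $U_2$. Hence $U_2$ is not $\tau_o$-open, so $\tau_o\ne\tau_s$, and together with the previous paragraph $\tau_w\ne\tau_o\ne\tau_s$ for $n>1$. The only real obstacle is finding the $P_j$ and carrying out the valuation bookkeeping; the rest is formal manipulation of the neighbourhood bases.
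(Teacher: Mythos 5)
Your proof is correct and follows essentially the same approach as the paper's. For $\tau_o\ne\tau_s$ your witnesses $P_j=L_j\lg_1(x)+(\text{tail})$ are the same power series the paper exhibits (those with $a_1=ia_i$ for $0<i<m$), just described more explicitly; for $\tau_w\ne\tau_o$ the paper compares ranks of $W_m/U_m$ (rank $n-1$) and $(U_m+V_N)/U_m$ (rank $m-1$), whereas you produce the concrete witness $Nx^n\in V_N\setminus W_{n+1}$ — which is the elementary reason behind the rank gap — so the two arguments are interchangeable.
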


\begin{proof}
 1) This follows from Proposition \ref{oper-filtr}, since $n=1$.

 2) For $n>1$, $W_m$ contains, in particular, all power series
 $\sum_i a_ix^i\in\widehat{\zz}[[x]]$, where $a_1=ia_i$, for all $0<i<m$, which is not contained in any $U_l$, for $l>1$. Thus,
 $\tau_o\neq\tau_s$.

 For $m>n\geq 1$, $W_m/U_m$ is a free $\widehat{\zz}$-module of rank
 $(n-1)$, while $(U_m+ V_N)/U_m$ is a free $\widehat{\zz}$-module
 of rank $(m-1)$. Hence, $\tau_w\neq\tau_o$.
\end{proof}

We view $\Op^{n,n}$ and ${\Op}^{n,n}_{\widehat{\zz}}$ as the topological rings for the topologies $\tau_w$, $\tau_o$ and $\tau_s$ respectively
via the inclusions $\Op^{n,n}\hookrightarrow \Op_{\widehat\Z}^{n,n}\hookrightarrow \widehat\Z[[x]]$.

Note that the $x$-adic topology $\tau_s$ can be defined on $R[[x]]$ for every $R$.

Consider the restriction $b:R[[x]] \to K^{[0,\infty)}$ of the map (\ref{pass}).
We view $K^{[0,\infty)}$ as a topological ring with the basis of neighborhoods of zero
given by the ideals $K^{[n,\infty)}$ for all $n>0$, so that the map is continuous.

\begin{proposition}\label{intimage}
The image of the map $b:R[[x]] \to K^{[0,\infty)}$ is contained in $R^{[0,\infty)}$.
\end{proposition}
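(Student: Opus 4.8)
The plan is to prove this via the Adams polynomials $A_m = (1-x)^m$. The key input is Example \ref{adams}: the image $b(A_m) = (1, m, m^2, \ldots)$ has \emph{integer} coordinates. Moreover, for every $d \geq 0$ the polynomials $A_0, A_1, \ldots, A_d$ form an $R$-basis of the module of polynomials of degree $\leq d$ with coefficients in $R$, because $A_m$ has degree $m$ with leading coefficient $(-1)^m$, so the transition matrix to the monomial basis is triangular with diagonal entries $\pm 1$, hence invertible over $\Z$ and a fortiori over $R$. Thus every element of $R[x]$ is a finite $R$-linear combination of the $A_m$.

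Now take $G \in R[[x]]$ and write $b(G) = (a_i)_{i \geq 0}$ with $a_i \in K$; I want to show $a_i \in R$ for every $i$. Fix $i$ and choose any $n > i$. Let $P \in R[x]$ be the truncation of $G$ to degrees $< n$, and write $P = \sum_{m=0}^{n-1} r_m A_m$ with all $r_m \in R$ by the previous paragraph. Since $G - P \in x^n K[[x]]$ and $b$ carries $x^n K[[x]]$ onto $K^{[n,\infty)}$, the sequences $b(G)$ and $b(P) = \sum_{m=0}^{n-1} r_m\, b(A_m) = \sum_{m=0}^{n-1} r_m\,(1, m, m^2, \ldots)$ agree in their first $n$ coordinates. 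Comparing the $i$-th coordinate yields $a_i = \sum_{m=0}^{n-1} r_m\, m^i \in R$, since each $r_m \in R$ and each $m^i \in \Z \subseteq R$. As $i$ was arbitrary, $b(G) \in R^{[0,\infty)}$.

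I do not expect a genuine obstacle. The one point deserving care is why one routes through the Adams basis at all: solving directly for the $a_i$ from the coefficients of $G$ would invert the transition between the $\lg_i$ and the monomials $x^i$, which introduces the denominators $i!$ and so does not visibly keep things in $R$. Passing to the basis $\{A_m\}$, on which $b$ is manifestly integral, sidesteps this, and the only facts used beyond Example \ref{adams} are the unipotent-up-to-signs change of basis and the already-recorded property that $b$ sends $x^n K[[x]]$ onto $K^{[n,\infty)}$.
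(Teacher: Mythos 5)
Your proof is correct and is essentially the paper's argument unwound: the paper cites density of $\Z$-linear (hence $R$-linear) combinations of Adams polynomials in $\tau_s$ together with continuity of $b$ and closedness of $R^{[0,\infty)}$, whereas you make this explicit by truncating $G$ to degree $< n$, expanding the truncation over the Adams basis, and comparing the $i$-th coordinate. Same ingredients (integrality of $b(A_m)$, the Adams polynomials spanning degree $< n$, and $b(x^nK[[x]])\subset K^{[n,\infty)}$), just packaged without the topological language.
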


\begin{proof}
By Example \ref{adams}, the image of the Adams polynomial $A_m$ under the map (\ref{pass}) is contained in
$R^{[0,\infty)}$. But the set of all linear combinations of Adams polynomials is dense in $R[[x]]$
in the topology $\tau_s$.
The statement follows since $R^{[0,\infty)}$ is closed in $K^{[0,\infty)}$.
\end{proof}

Proposition \ref{intimage} identifies the ring $\Op_{\widehat\Z}^{n,n}\subset\widehat\Z[[x]]$ with a subring of $\widehat\Z^{[n,\infty)}$
and $\Op^{n,n}$ with a subring of $\Z^{[n,\infty)}$ if $n\geq 0$. Indeed, if $n\geq 1$ the kernel of the composition
\[
{\cQ}_{\widehat\Z}^{n}\xra{\lambda_{n,n}}\hspace{-3mm}\to\Op_{\widehat\Z}^{n,n}\xra{b} {\widehat\Z}^{[0,\infty)} \to {\widehat\Z}^{[n,\infty)}
\]
is generated by $\lg_r$ with $0<r<n$ and all these logarithms are contained in the kernel of $\lambda_{n,n}$.

\smallskip

The ring $\Op^{n,n}$ is not a domain: we have $(\Psi_{1}+\Psi_{-1})(\Psi_{1}-\Psi_{-1})=0$.
Let
\[
e_{\pm}=\operatornamewithlimits{\textstyle\frac{1}{2}}(\Psi_{1}\pm \Psi_{-1})\in \Op^{n,n}\operatornamewithlimits{\textstyle[\frac{1}{2}]},
\]
so $e_{+}$ and $e_{-}$ are orthogonal idempotents and $e_{+} + e_{-}=1$.
There is an embedding
\[
\Op^{n,n}\hookrightarrow \Op^{n,n}\operatornamewithlimits{\textstyle[\frac{1}{2}]} = \Op^{n,n}\operatornamewithlimits{\textstyle[\frac{1}{2}]}e_{+}\times \Op^{n,n}\operatornamewithlimits{\textstyle[\frac{1}{2}]}e_{-}.
\]

\begin{proposition}
If $n\geq 1$ the rings $\Op^{n,n}\big[\frac{1}{2}\big]e_{\pm}$ are domains.
\end{proposition}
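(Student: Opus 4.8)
The plan is to analyze the two factor rings $\Op^{n,n}[\tfrac12]e_+$ and $\Op^{n,n}[\tfrac12]e_-$ separately by first identifying them with concrete rings of power series (or sequences) and then exhibiting each as a subring of an honest integral domain. The natural approach is to use the isomorphism $b:K[[x]]\to K^{[0,\infty)}$ of \eqref{pass}, which turns composition into coordinate-wise multiplication; under this identification $\Psi_1=A_1$ has image $(1,1,1,\dots)$ and $\Psi_{-1}=A_{-1}$ has image $(1,-1,1,-1,\dots)$ by Example \ref{adams}. Hence $e_+$ corresponds to the sequence supported (with value $1$) on the even coordinates and $e_-$ to the sequence supported on the odd coordinates, so that $\Op^{n,n}[\tfrac12]e_+$ is identified with a subring of $\Q^{\{n,n+1,\dots\}_{\mathrm{even}}}$ (more precisely of $\Z[\tfrac12]^{[n,\infty)}$ restricted to even indices) and similarly $e_-$ to the odd ones.

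The first step is to record, using Proposition \ref{intimage} and the discussion following it, that $\Op^{n,n}[\tfrac12]$ embeds in $\Z[\tfrac12]^{[n,\infty)}$ via $b$, and that $e_\pm$ act as the projections onto the even/odd coordinates. Then I would describe the image of $\Op^{n,n}[\tfrac12]e_+$ inside $\prod_{i\ge n,\ i\ \mathrm{even}}\Z[\tfrac12]$ explicitly enough to see it is a \emph{domain}: the key point is that this image contains the ``diagonal-like'' element coming from $A_m$, namely the sequence $(m^i)_{i\ge n,\ \mathrm{even}}$ for every odd (hence invertible-modulo-$2$) $m$, and more generally that the closure of the span of such sequences in the $\tau_s$-topology is the whole ring. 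The cleanest way to prove the domain property is to show that each factor ring embeds into a \emph{local ring} or into a ring of the form $\prod_i \Z[\tfrac12]$ \emph{with a single accumulation point}, so that multiplication of two nonzero elements is nonzero because their leading (lowest-degree) nonzero coordinates multiply to something nonzero in $\Z[\tfrac12]$.

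Concretely, the argument I would carry out is: an element of $\Op^{n,n}[\tfrac12]e_+$, viewed back in $x\Q[[x]]$ (or $\Q[[x]]$ if $n\le 0$), is a power series $G$; the operation $b(G)e_+$ records the coordinates $a_i$ of $G=\sum a_i\lg_i$ for $i$ even. If $G\ne 0$ then some $a_i\ne 0$; let $i_0$ be the smallest such (even) index. For the product $G\circ G'$ the corresponding sequence is the coordinatewise product, whose $i_0+i_0'$-th coordinate need not be the lowest nonzero one in general — so instead I would use the multiplicativity of $b$ directly: $b(G\circ G')e_+ = (b(G)e_+)\cdot(b(G')e_+)$ coordinatewise, and a coordinatewise product of two sequences in $\Z[\tfrac12]$ that are each nonzero in \emph{some} coordinate is itself nonzero in the product coordinate \emph{provided the two have overlapping support}. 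This overlap is exactly what must be established: I need to show that a nonzero element of $\Op^{n,n}[\tfrac12]e_+$ cannot have support contained in a coset that avoids another one — equivalently that the support of a nonzero such element is cofinite among the even indices $\ge n$. This follows because $G$ is a genuine power series and its $\lg$-coordinates $a_i$, once nonzero, propagate: by Lemma \ref{forrmulla} applying $\Phi=(x-1)\tfrac{d}{dx}$ shifts $\lg_r\mapsto\lg_{r-1}$, and by Observation \ref{valuation} the valuation is preserved, so a nonzero $G\in\cQ^n_R$ has $v(G)<\infty$ and its image under $b$ has nonzero entry in every index beyond a certain point after suitable manipulation — I would make this precise by noting $b(G)$ for $G=x^{i_0}+\cdots$ has first nonzero coordinate at index $i_0$ and, because $\lg_{i_0}=(-1)^{i_0}x^{i_0}/i_0!+\cdots$, \emph{every} coordinate from $i_0$ on is generically nonzero; the even/odd projection then still has cofinite support.

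\textbf{The main obstacle} I anticipate is precisely this support/cofiniteness claim: showing that a nonzero element of $\Op^{n,n}[\tfrac12]e_\pm$ has support meeting every sufficiently large even (resp.\ odd) index, so that two nonzero elements cannot have disjoint supports. A power series with, say, only finitely many nonzero $\lg$-coordinates is a polynomial in $\lg_1,\dots,\lg_N$, hence (via $b$) a sequence that is \emph{eventually zero in no coordinate} unless it is zero — but one must rule out, e.g., an element whose $\lg$-expansion is $\lg_2(x)-2\lg_4(x)+\cdots$ contrived to have $a_i=0$ for all $i$ in some residue class. The resolution is that such a constraint forces infinitely many polynomial relations on the $a_i$ that, combined with the integrality/$\widehat\Z$-congruence conditions defining $\Op^{n,n}$ (Theorem \ref{integoper}), are only satisfiable by $0$; alternatively, one shows $b(\Op^{n,n}[\tfrac12]e_+)$ contains the ideal of \emph{finitely supported} sequences in $\Z[\tfrac12]^{[n,\infty)_{\mathrm{even}}}$ intersected with the ring, which is an integral domain because it embeds in $\prod\Z[\tfrac12]$ with the sequences having a single leading term. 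I would close by remarking that once $b(\Op^{n,n}[\tfrac12]e_+)$ is shown to contain no zero divisors — because the product of its nonzero elements, being a coordinatewise product with overlapping cofinite supports, is visibly nonzero — the same argument verbatim (replacing even by odd indices) handles $e_-$, completing the proof.
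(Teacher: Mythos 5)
You have correctly identified the right framework — embed $\Op^{n,n}[\tfrac12]$ in $\Z[\tfrac12]^{[1,\infty)}$ via $b$, so that composition becomes coordinatewise multiplication and $e_\pm$ project onto even/odd indices — and you have also correctly identified where the real difficulty lies: showing two nonzero elements of $\Op^{n,n}[\tfrac12]e_\pm$ cannot have disjoint supports. But you do not close that gap. Your proposed "resolution" (infinitely many polynomial relations, combined with the integrality conditions of Theorem \ref{integoper}, force the element to be $0$; or the image contains enough finitely-supported sequences) is not an argument — it is a restatement of what needs to be proven, and the cofinite-support claim you lean on is both stronger than what is needed and not established by the considerations you offer. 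Note also that your idea of comparing "leading (lowest-degree) nonzero coordinates" does not work coordinatewise: two sequences with different leading positions simply multiply to zero there, so one must genuinely rule out complementary supports.

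The actual ingredient is number-theoretic and quite specific, and it does not appear anywhere in your sketch. The paper first observes (via Fermat's little theorem applied to $b(\Psi_m)=(m,m^2,\dots)$) that for any $(a_1,a_2,\dots)\in\Im(b)$ and any prime $p$, one has $a_i\equiv a_j\ (\mathrm{mod}\ p)$ whenever $i\equiv j\ (\mathrm{mod}\ p-1)$. Given $G\cdot H=0$ with $b(G)=(a_i)$, $b(H)=(b_i)$ and some $a_i\neq 0$, pick an odd prime $p\nmid a_i$: the congruence forces $a_j\neq 0$, hence $b_j=0$, for all $j\equiv i\ (\mathrm{mod}\ p-1)$. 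That alone only kills $b_j$ in one residue class mod $p-1$; to get $b_j=0$ for every $j\equiv i\ (\mathrm{mod}\ 2)$ the paper invokes Dirichlet to produce infinitely many primes $q$ with $\gcd(q-1,p-1)=2$, solves $i+(p-1)k=j+(q-1)m$ to land in the already-killed class, and then uses the congruence lemma again (now for $b$, mod $q$) to conclude $b_j\equiv 0\ (\mathrm{mod}\ q)$ for infinitely many $q$, hence $b_j=0$. Without the Fermat-type congruence on $\Im(b)$ and the Dirichlet step, your proof does not go through; with them, the cofinite-support heuristic is unnecessary.
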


\begin{proof}
Recall that there is an injective ring homomorphism
\[
b: \Op^{n,n}\hookrightarrow \Z^{[1,\infty)}
\]
such that $b(\Psi_m)=(m,m^2,m^3,\ldots)$ for all $m$. In particular,
\[
b(e_{+})=(0,1,0,1,\ldots)\quad\text{and}\quad  b(e_{-})=(1,0,1,0,\ldots).
\]

\begin{lemma}\label{div}
Let $(a_1,a_2,\ldots)\in \Im(b)$. Then for any prime integer $p$, we have $a_i\equiv a_j$ modulo $p$
if $i\equiv j$ modulo $p-1$.
\end{lemma}

\begin{proof}
It suffices to prove the statement for $b(\Psi_m)$. We have $a_i-a_j=m^i-m^j=m^j(m^{i-j}-1)$. If $m$
is not divisible by $p$, then $m^{i-j}-1$ is divisible by $p$.
\end{proof}

Let $G\cdot H=0$ in $\Op^{n,n}$. Set $(a_1,a_2,\ldots)=b(G)$ and $(b_1,b_2,\ldots)=b(H)$. We have
$a_ib_i=0$ for all $i$. To prove the statement it suffices to show that if $a_i\neq 0$ for some $i$,
then $b_j=0$ for all $j\equiv i$ modulo $2$.

Choose an odd prime $p$ that does not divide $a_i$. By Lemma \ref{div}, $a_j$ is not divisible by $p$
for all $j$ such that $i\equiv j$ modulo $p-1$. In particular $a_j\neq 0$, hence $b_j=0$. Thus, we have proved that
$b_j=0$ for all $j\equiv i$ modulo $p-1$.

\begin{lemma}\label{div2}
There are infinitely many primes $q$ such that $\gcd(q-1,p-1)=2$.
\end{lemma}

\begin{proof}
Let $c$ be the odd part of $p-1$ (that is $(p-1)/c$ is a $2$-power). By Dirichlet, there are infinitely
many primes $q$ such that $q\equiv 3$ modulo $4$ and $q\equiv 2$ modulo $c$. Clearly, $\gcd(q-1,p-1)=2$
for such $q$.
\end{proof}

Let $j$ be such that $j\equiv i$ modulo $2$. We need to prove that $b_j=0$. Take any prime $q$ as in Lemma
\ref{div2}. There are positive integers $k$ and $m$ such that $t:=i+(p-1)k=j+(q-1)m$. We have proved that
$b_t=0$ since $t\equiv i$ modulo $p-1$. By Lemma \ref{div}, $0=b_t\equiv b_j$ modulo $q$, i.e., $b_j$
is divisible by $q$. We have proved that $b_j$ is divisible by infinitely many primes $q$,
hence $b_j=0$.
\end{proof}

\subsection{Operations in graded $K$-theory}\label{graded}
In this section we determine the $R$-module of all $R$-linear operations $G:K_{gr R}^n\to K_{gr R}^m$ for any pair of integers $n$ and $m$
denoted by $\Op^{n,m}_R(K^*_{gr})$.
Recall that $K_{gr R}^n=K_{gr R}^0\cdot t^{-n}=\CK^0_R\cdot t^{-n}$, hence by Theorem \ref{mainone},
we get:

\begin{corollary}
\[
\Op^{n,m}_R(K^*_{gr})=\Op^{0,0}_R(K^*_{gr})\cdot t^{m-n}=\Op^{0,0}_R(\CK^*)\cdot t^{m-n}=R[[x]]\cdot t^{m-n}.
\]
\end{corollary}

Recall that product operation in the ring $\Op^{n,m}_R(K^*_{gr})=R[[x]]$
is the composition $\circ$ (see Section \ref{compos}). Moreover, $R[[x]]$ is a (topological) bi-algebra over $R$ with
co-product defined by the rule $(1-x)^n\to (1-x)^n\tens (1-x)^n$ for all $n\geq 0$ that reads $\Psi\mapsto \Psi\tens\Psi$
in the language of operations.

Let us describe the dual bi-algebra $A$ (over $\Z$) of \emph{co-operations} as follows. Let $A$ be the subring
of the polynomial ring $\Q[s]$ consisting of all polynomials $f$ such that $f(a)\in\Z$ for all $a\in\Z$.
In particular, $\Z[s]\subset A$. The polynomials
\[
e_n:=\frac{1}{n!}\ (-s)(1-s)\ldots (n-1-s)=(-1)^n\mybinom[0.8]{s}{n}\in A
\]
for all $n\geq 0$ form a basis of $A$ as an abelian group. Consider a pairing
\[
A\otimes R[[x]]\to R,\quad\quad a\tens G\mapsto \langle a, G \rangle\in R,
\]
such that $\langle e_n, x^m \rangle=\delta_{n,m}$. This pairing identifies
$R[[x]]$ with the dual co-algebra for $A$ via the isomorphism
\[
\Hom_{\Z}(A,R)\iso R[[x]],
\]
taking a homomorphism $\alpha:A\to R$ to the power series $\sum_{n\geq 0} \alpha(e_n)x^n$.

\begin{lemma}\label{vish}
For every polynomial $f\in A$, we have $\langle f, (1-x)^m \rangle=f(m)$.
\end{lemma}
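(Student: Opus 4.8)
The plan is to reduce the identity to the basis $\{e_n\}_{n\geq 0}$ of $A$ together with a single binomial computation. Both sides of the asserted equality are additive in $f$: the left-hand side because the pairing $A\otimes R[[x]]\to R$ is $\Z$-bilinear, and the right-hand side because $f\mapsto f(m)$ is a $\Z$-linear functional on $A$. Since $\{e_n : n\geq 0\}$ is a $\Z$-basis of $A$, it therefore suffices to verify that $\langle e_n, (1-x)^m\rangle = e_n(m)$ for every $n\geq 0$.

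For the left-hand side, expand $(1-x)^m = \Sum_{k\geq 0}(-1)^k\binom{m}{k}x^k$ in $R[[x]]$, where $\binom{m}{k} = m(m-1)\cdots(m-k+1)/k!$; for $m\geq 0$ this is the ordinary binomial expansion, and for $m<0$ it is the power-series expansion of $(1-x)^m$, so the formula is valid for every $m\in\Z$ with this normalization. By the defining property $\langle e_n, x^k\rangle = \delta_{n,k}$ of the pairing, we obtain $\langle e_n,(1-x)^m\rangle = (-1)^n\binom{m}{n}$. For the right-hand side, substituting $s=m$ in $e_n = \frac{1}{n!}(-s)(1-s)\cdots(n-1-s)$ gives $e_n(m) = \frac{1}{n!}(-m)(1-m)\cdots(n-1-m) = (-1)^n\binom{m}{n}$. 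The two expressions coincide, and additivity in $f$ finishes the proof.

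Equivalently, one can phrase the argument through the isomorphism $\Hom_{\Z}(A,R)\iso R[[x]]$ recalled just above the statement: the power series $(1-x)^m$ corresponds to the homomorphism $A\to R$ sending $e_n$ to the $x^n$-coefficient $(-1)^n\binom{m}{n}$ of $(1-x)^m$, and this homomorphism is exactly evaluation at $m$ since it agrees with $\mathrm{ev}_m$ on the basis $\{e_n\}$; under this identification $\langle f,(1-x)^m\rangle$ is by construction the value of that homomorphism on $f$, namely $f(m)$. There is essentially no obstacle here; the only point requiring a little care is to use the same convention for $\binom{m}{k}$ on both sides, so that the power-series expansion of $(1-x)^m$ and the polynomial $e_n$ are compared with matching normalizations (in particular when $m$ is negative).
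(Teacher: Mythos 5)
Your argument is correct and follows the same route as the paper's: reduce by $\Z$-bilinearity to the basis $\{e_n\}$ and verify $\langle e_n,(1-x)^m\rangle=(-1)^n\binom{m}{n}=e_n(m)$. The extra remark on normalizing $\binom{m}{k}$ for negative $m$ and the reformulation via $\Hom_\Z(A,R)\iso R[[x]]$ are fine but do not change the substance.
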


\begin{proof}
We may assume that $f=e_n$ for some $n$. Then
\[
\langle f, (1-x)^m \rangle=\langle e_n, (1-x)^m \rangle=(-1)^n\mybinom[0.8]{m}{n}=e_n(m)=f(m).\qedhere
\]
\end{proof}
The lemma shows that a co-operation $f$ evaluated at the Adams operation $\Psi_m$ is equal to $f(m)$.

It follows from Lemma \ref{vish} that
\[
\langle s^n, (1-x)^{km} \rangle=(km)^n=k^n\cdot m^n=\langle s^n, (1-x)^{k} \rangle\cdot  \langle s^n, (1-x)^{m} \rangle.
\]
As the composition in $R[[x]]$ satisfies $(1-x)^{k}\circ (1-x)^{m}=(1-x)^{km}$, the composition in $R[[x]]$
is dual to the co-product of $A$ taking $s^n$ to $s^n\tens s^n$ in $A\tens A$.

The equality
\[
\langle s^{i+j}, (1-x)^{m} \rangle=m^{i+j}=m^i\cdot m^j=\langle s^{i}, (1-x)^{m} \rangle\cdot\langle s^{j}, (1-x)^{m} \rangle
\]
shows that the product in $A$ is dual to the co-product in $R[[x]]$. Thus, the bi-algebra $R[[x]]$ of operations is dual to the
bi-algebra $A$ of co-operations.

\begin{remark}
The polynomial ring $\Z[s]$ is a bi-algebra with respect to the co-product $s\to s\tens s$. The dual bi-algebra over $R$
is $R^{[0,\infty)}$. The dual of the embedding $\Z[s]\to A$ is the homomorphism $b:R[[x]]\to R^{[0,\infty)}$
defined in Proposition \ref{intimage} since by Lemma \ref{vish},
\[
\langle s^{n},(1-x)^{m} \rangle=m^n=\langle s^{n},b((1-x)^{m}) \rangle
\]
as $b((1-x)^{m})=(1,m,\ldots,m^n,\ldots)$.
\end{remark}

\section{Multiplicative operations}\label{multoperat}

\begin{definition}
 \label{def-mult-oper}
 A \emph{multiplicative operation} $G:A^*\row B^*$ is a morphism of functors from $\smk$ to the category of rings.
That is, the ring structure is respected.
(We don't assume that $G$ is a graded ring homomorphism.)
\end{definition}

As was noticed in topology and then in the algebro-geometric context in \cite[2.7.5]{Panin04} there is a
functor from the category of oriented cohomology theories and their multiplicative operations to the category of formal group laws. Let us briefly describe this functor.

If $A^*$ and $B^*$ are oriented cohomology theories over $k$, to any multiplicative operation $G:A^*\row B^*$ one can assign the
morphism $(\ffi_G,\gamma_G): (A^*(k),F_A)\row (B^*(k),F_B)$ of the respective formal group laws, where $\ffi_G:A^*(k)\row B^*(k)$ is the restriction of $G$ to $\spec(k)$ and $\gamma_G(x)\in xB^*(k)[[x]]$ is defined by the condition:
\[
G(c_1^A(O(1)))=\gamma_G(c_1^B(O(1)))\in B^*(\pp^{\infty})=B^*(k)[[x]].
\]

In the algebro-geometric context, the power series $\gamma_G(x)/x$ was introduced in this generality in
\cite[Definition 2.5.1]{Panin04} and \cite{Smirnov07}
in order to state and prove a Riemann-Roch type theorems
\cite[Theorem 2.5.3, 2.5.4]{Panin04} for a multiplicative operation $G$. This series is called the {\it inverse Todd genus} of $G$.

The following theorem permits to reduce the classification of multiplicative operations to algebra.

\begin{theorem} {\rm(\cite[Theorem 6.9]{vishik19})}
 \label{mult-op-mor-FGL}
 Let $A^*$ be a theory of rational type and $B^*$ any oriented cohomology theory. Then the assignment $G\mapsto (\ffi_G,\gamma_G)$ is a bijection between the set of multiplicative operations $G:A^*\row B^*$ and the set of morphisms of formal group laws.
\end{theorem}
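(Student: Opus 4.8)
The plan is to reduce the statement to the classification of additive operations in Theorem \ref{vishik}, using that a multiplicative operation is in particular additive; throughout we use the non-graded form of that classification, in which the data $G_l(\alpha)$ are allowed to be inhomogeneous power series. Recall that a \emph{morphism of formal group laws} $(A^*(k),F_A)\to(B^*(k),F_B)$ is a pair $(\varphi,\gamma)$ with $\varphi\colon A^*(k)\to B^*(k)$ a ring homomorphism and $\gamma\in xB^*(k)[[x]]$ satisfying $\gamma(F_B(u,v))=F_A^{\varphi}(\gamma(u),\gamma(v))$, where $F_A^{\varphi}$ is obtained from $F_A$ by applying $\varphi$ to its coefficients. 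First I would check that $G\mapsto(\ffi_G,\gamma_G)$ is well defined: $\ffi_G=G|_{\Spec k}$ is a ring homomorphism by hypothesis, $\gamma_G$ lies in $xB^*(k)[[x]]$ since $c_1^A$ vanishes on $\Spec k$ and $G$ is natural, and the defining identity for a morphism of formal group laws emerges by pulling back to $\P^\infty\times\P^\infty$, taking the tautological line bundles $L_1,L_2$, and applying the natural ring homomorphism $G$ to $c_1^A(L_1\otimes L_2)=F_A(c_1^A(L_1),c_1^A(L_2))$: the left side becomes $\gamma_G(F_B(x_1,x_2))$ and the right side $F_A^{\ffi_G}(\gamma_G(x_1),\gamma_G(x_2))$, with $x_i=c_1^B(L_i)$.

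Injectivity is then immediate from Theorem \ref{vishik}: $G$ is determined by its data $\{G_l\}$, and by the reconstruction formula \eqref{c1eq} together with multiplicativity,
\[
G_l(\alpha)\bigl(c_1^B(L_1),\dots,c_1^B(L_l)\bigr)=\ffi_G(\alpha)\cdot\prod_{i=1}^{l}\gamma_G\bigl(c_1^B(L_i)\bigr),
\]
so $\{G_l\}$, and hence $G$, is recovered from $(\ffi_G,\gamma_G)$.

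For surjectivity, given $(\varphi,\gamma)$ I would put $G_l(\alpha):=\varphi(\alpha)\cdot\gamma(x_1)\cdots\gamma(x_l)$ and verify conditions (1)--(3) of Theorem \ref{vishik}: (1) and (2) are immediate from $\gamma\in xB^*(k)[[x]]$, while (3) is obtained by multiplying the identity $\gamma(F_B(y,z))=F_A^{\varphi}(\gamma(y),\gamma(z))$ through by $\varphi(\alpha)\gamma(x_2)\cdots\gamma(x_l)$ (and, taking $l=1$, $\alpha=1$, it specializes back to that identity, so no information is lost). Theorem \ref{vishik} then produces an additive operation $G$ with $\ffi_G=\varphi$ and $\gamma_G=\gamma$, and it remains to show that $G$ is multiplicative. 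On $X=\prod_i\P^{n_i}$ this is a direct computation: using naturality along the diagonals $\P^\infty\to(\P^\infty)^r$ one checks that $G$ acts on $A^*(X)$ as the ring homomorphism sending coefficients via $\varphi$ and each Chern-class generator $c_1^A$ to $\gamma(c_1^B)$. The general case should then follow from the rational-type structure of $A^*$, which permits testing the identity $G(ab)=G(a)G(b)$ on products of projective bundles; alternatively, one reduces to $A^*=\Omega^*$ by change of coefficients and invokes the Levine--Morel correspondence between genera and morphisms of formal group laws.

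I expect this final step---upgrading the additive operation reconstructed from the multiplicative-looking data $G_l=\varphi(\alpha)\prod_i\gamma(x_i)$ to an honest morphism of functors to rings on all of $\smk$---to be the main obstacle. The remaining parts (the two compatibility checks for the forward map and the formal verification of conditions (1)--(3)) are routine; the substance lies in controlling $G(ab)=G(a)G(b)$ over an arbitrary smooth variety, which is where the full rational-type machinery of \cite{vishik19} is needed.
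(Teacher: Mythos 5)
The paper proves nothing here --- it cites \cite[Theorem 6.9]{vishik19} --- so there is no in-text argument to compare against; I can only assess your reconstruction on its own terms.

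Your architecture is the natural and correct one: read off $G_l(\alpha)=\ffi_G(\alpha)\prod_i\gamma_G(x_i)$ from a multiplicative $G$ via (\ref{c1eq}), observe that the cocycle condition (3) of Theorem \ref{vishik} for data of this product shape is exactly the morphism-of-formal-group-laws identity $\gamma(F_B(y,z))=F_A^{\ffi}(\gamma(y),\gamma(z))$, and reconstruct in reverse. The forward well-definedness check (pulling back $c_1^A(L_1\otimes L_2)=F_A(c_1^A(L_1),c_1^A(L_2))$ along $\pp^\infty\times\pp^\infty$ and applying $G$) and the verification of conditions (1)--(3) are both sound, as is the remark that one needs the non-graded version of Theorem \ref{vishik} because a multiplicative operation need not respect degrees. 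You are also right to flag the final step as the real content: Theorem \ref{vishik} only delivers an \emph{additive} operation, and multiplicativity must be established separately.

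Of the two routes you propose for that last step, only the first (testing $G(ab)=G(a)G(b)$ on products of projective bundles, using the rational-type structure and the splitting principle) is viable. The alternative --- reducing to $A^*=\Omega^*$ and invoking Levine--Morel --- does not apply. Levine--Morel universality classifies morphisms of \emph{oriented} theories, which by definition preserve the first Chern class and therefore have inverse Todd genus $\gamma=x$, i.e.\ they induce only the identity power series in the morphism of formal group laws. Multiplicative operations in the present sense allow an arbitrary $\gamma\in xB^*(k)[[x]]$, and this extra freedom is precisely what makes Vishik's theorem a genuine refinement of Levine--Morel rather than a formal consequence of it. So you should drop route (b) and commit to route (a); that is where the rational-type hypothesis on $A^*$ is actually used.
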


\begin{example}\label{twisted}
Let $R$ be either $\Z$, $\Z_p$ or $\widehat\Z$ and $b\in R$. The Adams operation $\Psi_b:\CK_R^*\to \CK_R^*$ is homogeneous
and multiplicative. The corresponding map $\ffi$ is the identity and $\gamma=\frac{1-(1-tx)^b}{t}$. If $c\in R^\times$, write
$\Psi^c_b$ for the homogeneous multiplicative \emph{twisted} Adams operation with $\ffi(t)=ct$ and $\gamma=\frac{1-(1-tx)^{bc}}{ct}$
(in particular, $\Psi^1_b=\Psi_b$). It follows from the equality $\Psi^c_b(tx)=\Psi^c_b(t)\Psi^c_b(x)=ct\cdot\gamma(x)=1-(1-tx)^{bc}$
that on $\op{CK}_R^n$ the operation $\Psi^c_b$ is equal to $c^{-n}\cdot\Psi_{bc}$. For any $c\in R$, let $\Psi^c_0$ be the
homogeneous multiplicative operation with $\ffi(t)=ct$ and $\gamma=0$. This operation is zero in positive degrees and is
equal to $c^n\cdot\rank$ on $\CK_R^{-n}=(K_0)_R$ for $n\geq 0$.

Write $\Theta$ for the multiplicative operation $\CK_R^*\to \CK_R^*$ which is identity on $\CK_R^0$, multiplication by $t^n:\CK_R^n\to \CK_R^0$
if $n\geq 0$ and the canonical isomorphism $\CK_R^n\to \CK_R^0$ (inverse to multiplication by $t^{-n}$) if $n\leq 0$. This operation
is not homogeneous and its image is $\CK_R^0$. Set $\widetilde\Psi^c_b:=\Theta\circ\Psi^c_b$. This is a multiplicative operation
with image in $\CK_R^0$. The corresponding function $\ffi(t)=c$ and $\gamma=\frac{1-(1-tx)^{bc}}{c}$.

Introduced operations satisfy the following relations
(use Theorem \ref{mult-op-mor-FGL}): $\Psi^{0}_{0}=\widetilde\Psi^{0}_{0}$ and
\begin{equation*}
 \Psi^{c}_{b}\circ\Psi^{e }_{d }=\Psi^{c e }_{b d };
 \hspace{5mm}
 \Psi^{c }_{b }\circ\wt{\Psi}^{e }_{d }=
 \wt{\Psi}^{e }_{c b d };
 \hspace{5mm}
 \wt{\Psi}^{c }_{b }\circ\Psi^{e }_{d }=
 \wt{\Psi}^{c e }_{b d };
 \hspace{5mm}
 \wt{\Psi}^{c }_{b }\circ\wt{\Psi}^{e }_{d }=
 \wt{\Psi}^{e }_{c b d }.
\end{equation*}
\end{example}

Over $\qq$ every formal group law is isomorphic to the additive
one. Hence, for every theory $C^*$, we have isomorphisms of formal group laws.
\begin{equation*}
\begin{split}
 &\xymatrix{
 (\id,\exp_C):(C^*\otimes_{\zz}\qq, F_C) \ar@/^0.7pc/[rr]& &
 (C^*\otimes_{\zz}\qq, F_{add}):(\id,\log_C) \ar@/^0.7pc/[ll]
 }
 \end{split}
\end{equation*}

Suppose that (in the context of Definition \ref{def-mult-oper}) the coefficient ring $B^*(k)$ of the target theory has no torsion.
Then the composition
$(\id, \exp_B)\circ (\ffi_G,\gamma_G)\circ (\id,\log_A)$
identifies the set of multiplicative operations $A^*\row B^*$
with a subset of morphisms of formal group laws
$(A^*\otimes_{\zz}\qq,F_{add})\row (B^*\otimes_{\zz}\qq, F_{add})$.
The latter morphism is defined by $(\psi,\gamma)$, where, in our case,
$\psi=\ffi\otimes_{\zz}\qq$, for some ring homomorphism
$\ffi=\ffi_G:A^*(k)\row B^*(k)$ and
$\gamma(x)=b\cdot x$, for some $b\in B^*(k)$.
In other words, $(\ffi_G,\gamma_G)=(\id,\log_B)\circ(\ffi_G,\gamma)\circ (\id,\exp_A)$. Then
$$
\gamma_G(x)=\ffi_G(\exp_A)(b\cdot \log_B(x)).
$$

\subsection{Multiplicative operations in $\CK$}

For $A^*=B^*=\op{CK}^*_{\widehat{\zz}}$ we have:
$A=B=\widehat{\zz}[t]$, $F_A=F_B=x+y-txy$ and
\[
\log_{\CK}(x)=\frac{\log(1-tx)}{t},\quad \exp_{\CK}(z)=\frac{1-e^{zt}}{t}.
\]

Note that a ring homomorphism $\varphi$ from $\widehat{\zz}[t]$ to a ring $T$ such that $\Bigcap_{n>0}nT=0$
is uniquely determined by $\varphi(t)$ in $T$ (such a choice is realised by a homomorphism, if $\widehat{\zz}$ can be mapped to $T$). Indeed, suppose that $\varphi$ and $\psi$ satisfy $\varphi(t)=\psi(t)$. For
any $f\in \widehat{\zz}[t]$ and $n>0$ write $f=g+nh$ for some $g\in {\zz}[t]$ and $h\in \widehat{\zz}[t]$.
Then $\varphi(g)=\psi(g)$ and hence $\varphi(f)-\psi(f)\in nT$. Since this holds for all $n>0$, we have $\varphi(f)-\psi(f)=0$
for all $f$.

Thus, the map $\ffi_G:\widehat{\zz}[t]\to \widehat{\zz}[t]$ is determined by
$\ffi_G(t)=c(t)\in\widehat{\zz}[t]$.
Let $b=b(t)\in\widehat{\zz}[t]$.
Note that any choice of $b(t)$ and $c(t)$ gives a morphism of rational formal group laws and so, a multiplicative operation
$G:\op{CK}^*_{\widehat{\zz}}\otimes_{\zz}\qq\row
\op{CK}^*_{\widehat{\zz}}\otimes_{\zz}\qq$
with
$$
\gamma_G(t,x)=\frac{1-(1-tx)^{\frac{b(t)c(t)}{t}}}{c(t)}=
\Sum_{n\geq 1}(-1)^{n-1}(tx)^n\frac{\binom{\frac{b(t)c(t)}{t}}{n}}{c(t)},
$$
which lifts to an operation
$\CK^*_{\widehat{\zz}}\row \CK^*_{\widehat{\zz}}$ if and only if the coefficients of our power series belong to $\widehat{\zz}$.
The coefficient at $x^n$ is
\begin{equation}\label{coeffi}
a_n=\displaystyle(-1)^{n-1}\frac{b(t)\prod_{k=1}^{n-1}(b(t)c(t)-kt)}{n!}.
\end{equation}
Denote as $b_p(t),c_p(t)$ the $\zz_p$-components of our
polynomials. If $\deg(b_p(t)c_p(t))>1$ for some $p$, the leading term of our $t$-polynomial will be clearly non-integral (for some $n$). Similarly, if
for some $p$, the constant term of $b_p(t)c_p(t)$ is non-zero,
then the smallest term of the $p$-component of our $t$-polynomial will be non-integral, for some $n$. Hence, the polynomial $b(t)c(t)$ is linear. Then, for a given prime $p$, either
$b_p(t)=b_p$ and $c_p(t)=c_pt$, or $b_p(t)=b_pt$ and
$c_p(t)=c_p$, for some $b_p,c_p\in\zz_p$. Then the $\zz_p$-
component of our coefficient is:
$$
(a_n)_p=(-1)^{n-1}t^m b_p\frac{\binom{b_pc_p-1}{n-1}}{n},\hspace{5mm}\text{where}\,\,
m=n-1,\,\,\text{or}\,\,m=n.
$$
If $b_p\neq 0$, then this will be integral for all $n$ if and only if $c_p\in\zz_p^{\times}$, while if $b_p=0$, then $c_p$ can
be an arbitrary element from $\zz_p$.
Let us denote the ($\zz_p$-components of) operations with $m=n-1$
as $\Psi^{c_p}_{b_p}$,
while the ones with $m=n$ as $\wt{\Psi}^{c_p}_{b_p}$ (see Example \ref{twisted}; we suppress
$p$ from notations). Here $\Psi^{c_p}_{b_p}$ respects the grading
on $\op{CK}^*_{\zz_p}$, while $\wt{\Psi}^{c_p}_{b_p}$ maps
$\op{CK}^*_{\zz_p}$ to $\op{CK}^0_{\zz_p}$. The pairs $(b_p,c_p)$
run over the set $(\zz_p\backslash 0)\times\zz_p^{\times}\cup\{0\}\times\zz_p$ and, in addition,
$\Psi^{0}_{0}=\wt{\Psi}^{0}_{0}$.

Thus, any multiplicative operation $G$ on
$\op{CK}^*_{\widehat{\zz}}$ splits into the
product $\times_p G_{(p)}$ of operations
on $\op{CK}^*_{\zz_p}$, where each $G_{(p)}$ is one of the
$\Psi^{c_p}_{b_p}$ or $\wt{\Psi}^{c_p}_{b_p}$. Let ${\cP}$
will be the set of prime numbers and $J\subset{\cP}$ be the
subset of those primes, for which $(b_p,c_p)\neq (0,0)$ and
$G_{(p)}$ is $\wt{\Psi}$.
Then the data $(J,b,c)$, where the $p$-components of
$b,c\in\widehat{\zz}$
are $b_p$ and $c_p$, determines our operation $G$.
Let us call it $\ppsi{J}{c}{b}$. Here $(J,b,c)$ runs over
all possible triples satisfying:
1) $b_p\neq 0$ $\Rightarrow$ $c_p\in\zz_p^{\times}$ and
2) $(b_p,c_p)=(0,0)$ $\Rightarrow$ $p\notin J$.

The operations $\ppsi{\emptyset}{1}{b}$ are
(non-twisted) Adams operations with $\ffi_G=\id$, which
naturally form a ring isomorphic to $\wzz$.
These operations commute with every other operation.
The operations $\ppsi{\emptyset}{c}{1}$ are invertible and
form a group 
isomorphic to $\wzz^{\times}$.
Below we will suppress $J=\emptyset$ from notations and will
denote the respective operations simply as $\Psi^c_b$.

The formulas in Example \ref{twisted} show that the monoid of multiplicative operations
is non-commutative.

\subsection{Multiplicative operations in $K_{gr}$ over $\Z$}

For $A^*=B^*=K^*_{gr}$ we have:
$A=B=\zz[t,t^{-1}]$, $F_A=F_B=x+y-txy$. Similar calculations as in the previous section
show that the coefficient $a_n$ in (\ref{coeffi}) will belong to $\zz[t,t^{-1}]$ for every $n$,
if and only if $b(t)c(t)$ is linear in $t$. Thus,
$c(t)=ct^l$, for $c=\pm 1$ and $l\in\zz$, and $b(t)=bt^{1-l}$,
for some $b\in\zz$.

Then the coefficient $a_n$ is
$$
(-1)^{n-1}t^{n-l}\frac{\binom{bc}{n}}{c}.
$$
Denote this operation as $\ppsi{l}{c}{b}$.
It scales the grading on $K^*_{gr}$ by the coefficient $l$.
So, only the operations $\ppsi{1}{c}{b}$ are homogeneous.

The case $c(t)=t$ and $b(t)=b$, that is, $\ppsi{1}{1}{b}$ corresponds to the Adams operation
$\Psi_b$ - see \cite[Sect. 6.3]{vishik19}. In this case $\ffi_G=id$.
The operation $\ppsi{-1}{1}{1}$ is an automorphism of order $2$
acting identically on $K^0_{gr}$ and mapping $t$ to $t^{-1}$.

We will omit $l$ and $c$ from the notation $\ppsi{l}{c}{b}$
when these will be equal to $1$.

\section{Stable operations}\label{stableoper}

The purpose of this section is to describe stable operations in $\CK$ and $K_{gr}$ with integral and $\widehat{\zz}$-coefficients. The spaces of such operations appear to have countable topological base
which we describe in Theorems \ref{poly3} and \ref{stable-op-Kgr}. We also describe stable multiplicative operations and show that these generate
additive ones only in the case of $\widehat{\zz}$-coefficients.

To be able to discuss stability of operations, we need the notion of a suspension. Following Voevodsky and Panin-Smirnov
\cite{Panin03,Smirnov07}
we can introduce the category of pairs $\smop$ whose objects are pairs
$(X,U)$, where $X\in\smk$ and $U$ is an open subvariety of $X$ -
see \cite[Def. 3.1]{vishik19}, with the smash product:
$$
(X,U)\wedge (Y,V):=(X\times Y,X\times V\cup U\times Y),
$$
and the natural functor $\smk\row\smop$ given by $X\mapsto (X,\emptyset)$.
Then suspension can be defined as:
$$
\Sigma_T(X,U):=(X,U)\wedge(\pp^1,\pp^1\backslash 0).
$$
Any theory $A^*$ extends from $\smk$ to $\smop$ by the rule:
$$
A^*((X,U)):=\op{Ker}(A^*(X)\row A^*(U)).
$$
Any additive operation $A^*\row B^*$ on $\smk$ extends
uniquely to an operation on $\smop$.

An element $\eps^A=c_1^A(O(1))\in A^*((\pp^1,\pp^1\backslash 0))$
defines an identification:
$$
\sigma_T^A:A^*((X,U))\stackrel{\cong}{\lrow} A^{*+1}(\Sigma_T(X,U)),
$$
given by $x\mapsto x\wedge\eps^A$.

\begin{definition}
 \label{susp-oper}
 For any additive operation $G:A^*\row B^*$ we define its \emph{desuspension}
 as the unique operation $\Sigma^{-1} G:A^*\row B^*$ such that
 $$
 G\circ\sigma_T^A=\sigma_T^B\circ\Sigma^{-1}G.
 $$
\end{definition}

\begin{definition}
\label{stable-operation}
A \emph{stable} additive operation $G:A^*\row B^*$ is the collection
$\{G^{(n)}|n\geq 0\}$ of operations $A^*\row B^*$ such that
$G^{(n)}=\Sigma^{-1}G^{(n+1)}$.
\end{definition}

\begin{proposition}
\label{mult-desusp}
 Suppose, $G:A^*\row B^*$ is a multiplicative operation with
 $\gamma_G(x)\equiv bx$ modulo $x^2$ for some $b\in B^*(k)$. Then
 $\Sigma^{-1}G=b\cdot G$.
\end{proposition}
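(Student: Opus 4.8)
The plan is to compute the desuspension directly from Definition \ref{susp-oper}, using the fact that a multiplicative operation commutes with exterior products and is determined on the relevant universal example by $\gamma_G$. First I would recall that for $(\pp^1,\pp^1\setminus 0)$ the class $\eps^A=c_1^A(\cO(1))$ freely generates $A^*((\pp^1,\pp^1\setminus 0))$ over $A^*(k)$, and that the suspension isomorphism $\sigma_T^A$ sends $x\in A^*((X,U))$ to $x\wedge\eps^A$ under the identification $A^{*+1}(\Sigma_T(X,U))\cong A^*((X,U))\otimes_{A^*(k)}A^*(k)\cdot\eps^A$. Since $G$ is multiplicative, it respects the smash/exterior product, so for $x\in A^*((X,U))$ we get
\[
G(\sigma_T^A(x))=G(x\wedge\eps^A)=G(x)\wedge G(\eps^A).
\]
Thus everything comes down to identifying $G(\eps^A)\in B^*((\pp^1,\pp^1\setminus 0))$.

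The key step is the computation $G(\eps^A)=b\cdot\eps^B$. By the definition of $\gamma_G$, applied to $\cO(1)$ on $\pp^\infty$, we have $G(c_1^A(\cO(1)))=\gamma_G(c_1^B(\cO(1)))$ in $B^*(\pp^\infty)$. Restricting along $\pp^1\hookrightarrow\pp^\infty$ (or better, working with the pair $(\pp^1,\pp^1\setminus 0)$, where $(c_1^B(\cO(1)))^2=0$ because it is the image of a class supported at a point), the power series $\gamma_G(x)=bx+(\text{higher order})$ collapses: all terms of order $\geq 2$ vanish on $(\pp^1,\pp^1\setminus 0)$. Hence $G(\eps^A)=\gamma_G(\eps^B)=b\cdot\eps^B$. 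Plugging this back in,
\[
G(\sigma_T^A(x))=G(x)\wedge(b\cdot\eps^B)=b\cdot\big(G(x)\wedge\eps^B\big)=b\cdot\sigma_T^B(G(x))=\sigma_T^B\big(b\cdot G(x)\big),
\]
using $B^*(k)$-linearity of $\sigma_T^B$ and of the exterior product. By the uniqueness clause in Definition \ref{susp-oper}, this forces $\Sigma^{-1}G=b\cdot G$.

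The main obstacle is bookkeeping the extension of $G$ and of the product structure to the category $\smop$: one must check that a multiplicative operation $A^*\to B^*$, when extended to pairs as in \cite[Def.\ 3.1]{vishik19}, still satisfies $G(x\wedge y)=G(x)\wedge G(y)$ for the relative classes, and that $\sigma_T^B$ is $B^*(k)$-linear in the relevant sense. Both are formal consequences of the definitions — the exterior product on the relative theory is induced from the absolute one, and $G$ is compatible with pullbacks and products — but they need to be invoked carefully. Once the identification $G(\eps^A)=b\cdot\eps^B$ is in place, the rest is immediate.
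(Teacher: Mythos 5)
Your argument is correct and is essentially the paper's own proof: both rest on multiplicativity giving $G(u\wedge\eps^A)=G(u)\wedge G(\eps^A)$, the observation that $\gamma_G(\eps^B)=b\cdot\eps^B$ because higher-order terms die on $(\pp^1,\pp^1\setminus 0)$, and the defining property of $\sigma_T^B$. The paper simply records this in one line; your extra remarks about extending $G$ and the product to $\smop$ are routine checks the paper leaves implicit.
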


\begin{proof}
 We have: $G(\sigma_T^A(u))=G(u\wedge\eps^A)=G(u)\wedge G(\eps^A)=
 G(u)\wedge (b\cdot\eps^B)=\sigma_T^B(b\cdot G(u))$.
\end{proof}

We call a multiplicative operation $G$ {\it stable} if the constant sequence
$(G,G,G,\ldots)$ is stable. By Proposition \ref{mult-desusp}, $G$ is
stable if and only if the linear coefficient of $\gamma_G$ is equal to 1 - cf. \cite[Proposition 3.8]{vishik19}.

For a commutative ring $R$ define the operator
\[
\Phi=\Phi_R:R[[x]]\to R[[x]],\quad \Phi(G)=(x-1)\diff{G}{x},
\]

\subsection{Stable operations in $\CK$ over $\widehat\Z$}

Recall that in the case $A^*=B^*=\CK^*_{\hat{\zz}}$, the group $\Op_{\widehat\Z}^{n,n}$ of additive operations for $n\leq 0$ and $n\geq 1$ can be identified with $\widehat{\zz}[[x]]$, respectively,
$x\widehat{\zz}[[x]]$.

\begin{proposition}\label{desusp}
 \label{desus-CK}
The desuspension operator $\Sigma^{-1}:\Op_{\widehat\Z}^{n,n}\to \Op_{\widehat\Z}^{n-1,n-1}$ is given by the rule
\[
\Sigma^{-1}(G)=
\left\{
  \begin{array}{ll}
   \Phi(G), & \hbox{if $n\leq 1$;} \\
   \partial^0(\Phi(G))=\Phi(G)-\Phi(G)(0), & \hbox{if $n>1$.}
  \end{array}
\right.
\]
\end{proposition}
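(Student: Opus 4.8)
The plan is to compute the desuspension map explicitly on the level of the power series $G_l$ that, by Theorem \ref{vishik} and Proposition \ref{description2}, determine an operation. Recall that an additive operation $G\in\Op_{\widehat\Z}^{n,n}$ is described by the sequence $\{G_l\}_{l\geq\max(0,n)}$ with $G_{l+1}=-\partial_t G_l$ (formula \eqref{derive}), and that the correspondence with $\widehat\Z[[x]]$ (resp. $x\widehat\Z[[x]]$) is via $\lambda_{n,n}$: for $n\leq 0$ the power series $H=(G_0-G_1)|_{t=1}$, and for $n\geq 1$ the power series $H$ with $(-1)^{n-1}\partial^{n-1}(H)=G_n|_{t=1}$. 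First I would recast the defining relation $G\circ\sigma_T^A=\sigma_T^B\circ\Sigma^{-1}G$ in terms of these data. The suspension isomorphism $\sigma_T^A$ is, up to the Chern-class dictionary of \eqref{c1eq}, multiplication by $\eps=c_1(\cO(1))$ on $(\pp^1,\pp^1\setminus 0)$; desuspending therefore shifts the indexing of the $G_l$'s by one, i.e. $(\Sigma^{-1}G)_l$ is obtained from $G_{l+1}$ by the ``divide by one variable and set it to $0$'' operation, which on symmetric power series is exactly the passage that recovers $G_l$ from $G_{l+1}=-\partial_t G_l$ upon integration. Concretely, I expect $(\Sigma^{-1}G)_l$ to coincide with $G_{l+1}$ read as a power series in $l$ (not $l+1$) variables after specialising the extra variable, so the whole computation reduces to tracking what this does to the one-variable representative $H$.

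The key step is then the following translation: applying $\sigma_T$ corresponds on the $H$-side to the operator $\Phi$. This is most transparent after the change of variable $y=\log(1-x)$ (as in the proofs of Proposition \ref{kernel} and Lemma \ref{logrder}), under which the multiplicative group law becomes additive, $\partial$ becomes the additive partial derivative, and $\Phi(G)=(x-1)\diff{G}{x}$ becomes $-\diff{G}{y}$. In these coordinates, desuspension lowers the $y_1$-degree by one — exactly the effect of $\Phi$ — which matches the formula $\Sigma^{-1}(G)=\Phi(G)$ in the range where no further correction is needed. I would verify this cleanly on the topological generators: on $A_m=(1-x)^m=e^{my}$ one has $\Phi(A_m)=-m e^{my}$... wait, more carefully $\Phi((1-x)^m)=(x-1)\cdot(-m)(1-x)^{m-1}=m(1-x)^m=mA_m$, and indeed the Adams operation $\Psi_m$, being multiplicative with $\gamma_G(x)=(1-(1-tx)^m)/t\equiv mx\pmod{x^2}$, desuspends by Proposition \ref{mult-desusp} to $m\cdot\Psi_m$; since the linear combinations of the $A_m$ are dense in the $x$-adic topology (Section \ref{adams2}) and both $\Sigma^{-1}$ and $\Phi$ are continuous, this pins down $\Sigma^{-1}=\Phi$ on the nose wherever the target module is $\widehat\Z[[x]]$, i.e. for $n\leq 1$.

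The one genuine subtlety — and the step I expect to be the main obstacle — is the correction term $-\Phi(G)(0)$ for $n>1$. The point is bookkeeping about which summand of $\Op_{\widehat\Z}^{n-1,n-1}$ the answer must land in: for $n-1\geq 1$ the identification is with $x\widehat\Z[[x]]$, i.e. power series with \emph{zero constant term}, whereas $\Phi(G)$ need not vanish at $0$ even when $G\in x\widehat\Z[[x]]$ (e.g. $\Phi(x)=x-1$). Equivalently, on the $G_l$-side the normalisation built into $\lambda_{n-1,n-1}$ forces the $(n-1)$-variable representative to be divisible by $x_1\cdots x_{n-1}$, and recovering a divisible representative from $G_n$ after desuspension introduces precisely the ``$\partial^0$''-type subtraction $H\mapsto H-H(0)$ that appears in the definition of $\partial^0$ in Section 2 and in Proposition \ref{compat}. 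So I would argue: (i) compute $\Sigma^{-1}$ on all of $\widehat\Z[[x]]$ getting $\Phi$; (ii) observe via the compatibility square of Proposition \ref{compat} (the map $\Op^{n,n}\to\Op^{n+1,n+1}$ is $\partial^0$ in the relevant range, hence so is its effect under desuspension when passing from the $\widehat\Z[[x]]$-model to the $x\widehat\Z[[x]]$-model); (iii) conclude that the composite landing in $\Op_{\widehat\Z}^{n-1,n-1}$ with $n-1\geq 1$ is $\partial^0\circ\Phi=\Phi(G)-\Phi(G)(0)$, while for $n-1\leq 0$ there is no constraint and the answer is plain $\Phi(G)$. Continuity plus density of Adams polynomials reduces all verifications to the case $G=A_m$, where everything is an elementary check.
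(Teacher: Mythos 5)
Your proposal follows essentially the same route as the paper: check the formula on Adams operations via Proposition~\ref{mult-desusp} and Example~\ref{twisted}, then extend by density of their linear combinations in $\tau_s$ together with a continuity argument; the bookkeeping you describe for the $\partial^0$-correction when $n>1$ is the same observation the paper encodes in noting that $\Psi_k$ is the power series $(1-x)^k-1$ (not $(1-x)^k$) on the $\Op^{n,n}_{\widehat\Z}=x\widehat\Z[[x]]$ side. One point that your write-up glosses over and that the paper handles carefully: $\Sigma^{-1}$ is not obviously continuous in the $x$-adic topology $\tau_s$. What one actually knows a priori is continuity in the operational topology $\tau_o$ (an operation trivial on varieties of dimension $<m$ desuspends to one trivial in dimension $<m-1$), while $\Phi$ is $\tau_s$-continuous; since $\tau_o\leq\tau_s$, both are continuous as maps $\tau_s\to\tau_o$, and one then needs that $\tau_o$ is Hausdorff (inherited from $\tau_w$) to conclude that the locus where $\Sigma^{-1}$ and $\Phi$ agree is $\tau_s$-closed, which together with $\tau_s$-density of the Adams span finishes the argument. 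Without this three-topology dance, "both maps are continuous" is not justified; with it, your proof is the paper's proof.
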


\begin{proof}
The Adams operation $\Psi_k$ is identified
with the power series $A_k(x)=(1-x)^k$ if $n\leq 0$ and with $(1-x)^k-1$ if $n>0$.
By Proposition \ref{mult-desusp},
$\Sigma^{-1}\Psi_k=k\Psi_k$, so the formula holds for $G=\Psi_k$.

The map $\Sigma^{-1}$ is continuous in $\tau_o$ and the map $\Phi$ is continuous in $\tau_s$.
Hence both maps are continuous as the maps $\tau_s \to \tau_o$. Since $\tau_o$ is Hausdorff (as $\tau_w$ is), it follows that the set of power series
where $\Sigma^{-1}$ and $\Phi$ coincide is closed in $\tau_s$.
But the set of linear combinations of Adams operations is everywhere dense in $\tau_s$.
\end{proof}

It follows from Proposition \ref{desusp} that the desuspension map $\Sigma^{-1}$ is injective
and yields a tower of injective maps in the other direction:
\begin{equation}\label{reverse1}
\widehat\Z[[x]]=\Op_{\widehat\Z}^{0,0}\xleftarrow{\Sigma^{-1}} \Op_{\widehat\Z}^{1,1}\xleftarrow{\Sigma^{-1}} \ldots\xleftarrow{\Sigma^{-1}} \Op_{\widehat\Z}^{n,n}\xleftarrow{\Sigma^{-1}}\ldots.
\end{equation}
Moreover, the group $\Op_{\widehat\Z}^{st}$ of homogeneous
degree $0$ stable operations $\CK_{\widehat\Z}^*\to \CK_{\widehat\Z}^*$ that is the limit of the sequence \ref{reverse1}
is naturally isomorphic to the group
\[
S:=\cap_{n}\Im(\Phi^n)= \cap_{n}\Im(({\Sigma^{-1}})^n)\subset \widehat\Z[[x]].
\]
Indeed, if $\{G^{(n)}|n\geq 0\}$ is a stable operation, then $G^{(0)}=\Phi^n(G^{(n)})$ for every $n$, hence $G^{(0)}\in S$.
Conversely, given $G\in S$,
write $G=\Phi^n(H^{(n)})$ for every $n$. Since $\Ker(\Phi^n)$ consists of constant power series only, the sequence
$G^{(n)}=\Phi(H^{(n+1)})$ is a stable operation.

\begin{lemma}\label{ifandonlyif}
Let $G\in x\widehat\Z[[x]]$ and $n\geq 1$. Then
\begin{enumerate}
  \item $\partial^n(G)$ has coefficients in $\Z$ if and only if $\partial^{n-1}(\Phi(G))$
has coefficients in $\Z$.
  \item $v(\partial^n(G))\geq m$ for some $m$ if and only if $v(\partial^{n-1}(\Phi(G)))\geq m-1$.
\end{enumerate}
\end{lemma}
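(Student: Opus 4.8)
The idea is to expand everything in the idempotent basis $\{\lg_i\}$ and pass to the additive (``logarithmic'') coordinate. Work over $\widehat\Q\supset\widehat\Z$. By Proposition \ref{idempotents} and the resulting bijection (\ref{pass}), write $G=\Sum_{i\geq 1}a_i\lg_i(x)$ uniquely with $a_i\in\widehat\Q$ (there is no $\lg_0$-term because $G(0)=0$). Since $\Phi(\lg_i)=\lg_{i-1}$ (Lemma \ref{logrder}) and $\Phi$ is continuous for the $x$-adic topology, we get $\Phi(G)=a_1+\Sum_{j\geq 1}a_{j+1}\lg_j$.

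Next I record the shape of $\partial^m\lg_r$. Substituting $y_i=\lg_1(x_i)=\log(1-x_i)$ turns the multiplicative group law into the additive one and sends $\lg_r(x)$ to $y^r/r!$; this substitution is given by invertible power series with rational coefficients (its inverse is $x_i=1-e^{y_i}$), so it preserves both the property of having coefficients in $\Q$ and the valuation $v$. In these coordinates $\partial^m\lg_r=0$ for $r\leq m$, while for $r\geq m+1$ the series $\partial^m\lg_r$ is a nonzero homogeneous polynomial of degree $r$ in $y_1,\dots,y_{m+1}$ with rational coefficients, at least one of which (the coefficient of $y_1^{r-m}y_2\cdots y_{m+1}$, namely $\pm 1/(r-m)!$) lies in $\Q^{\times}$. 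I would then deduce: for any $F=\Sum_{j}b_j\lg_j\in\widehat\Q[[x]]$ and any $m\geq 0$, since $\partial^mF=\Sum_{j\geq m+1}b_j\,\partial^m\lg_j$ decomposes, in the $y$-coordinates, into homogeneous pieces with the degree-$j$ piece equal to $b_j\,\partial^m\lg_j$, one has $\partial^mF\in\Q[[x_1,\dots,x_{m+1}]]$ if and only if $b_j\in\Q$ for all $j\geq m+1$, and moreover $v(\partial^mF)=\min\{j\geq m+1: b_j\neq 0\}$.

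Now apply this twice: with $(F,m)=(G,n)$, and with $(F,m)=(\Phi(G),n-1)$, where in the latter case $b_j=a_{j+1}$. This shows that ``$\partial^nG$ has rational coefficients'' and ``$\partial^{n-1}\Phi(G)$ has rational coefficients'' are the \emph{same} condition, namely $a_i\in\Q$ for all $i\geq n+1$, and that $v\(\partial^{n-1}\Phi(G)\)=v\(\partial^nG\)-1$. Part (2) follows at once. For part (1) it remains to note that $\Phi(G)\in\widehat\Z[[x]]$ (if $G=\Sum g_mx^m$ then the $x^m$-coefficient of $\Phi(G)$ is $mg_m-(m+1)g_{m+1}$) and that $\partial$ with respect to the multiplicative group law, being defined over $\widehat\Z$, preserves $\widehat\Z$-coefficients; hence both $\partial^nG$ and $\partial^{n-1}\Phi(G)$ always have coefficients in $\widehat\Z$, so for each of them ``coefficients in $\Z$'' is equivalent to ``coefficients in $\Q$'' because $\Z=\widehat\Z\cap\Q$. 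Combined with the equivalence just proved, this gives (1).

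The one genuinely delicate point is the interplay of the two coordinate systems: integrality/rationality of coefficients and the valuation $v$ are measured in the variables $x_i$, whereas the clean homogeneous decomposition of $\partial^m\lg_r$ exists only in the variables $y_i$; so the argument hinges on checking — as indicated above — that the substitution $x_i\leftrightarrow y_i$, given by $\Q$-coefficient invertible power series, respects both properties. Everything else reduces to the explicit multinomial expansion of $\Sum_{I}(-1)^{|I|}\(\Sum_{i\in I}y_i\)^{r}$.
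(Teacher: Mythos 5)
Your proof is correct, but it takes a genuinely different route from the paper's. The paper proves the forward implications of both (1) and (2) directly from Proposition \ref{aformula}, and the backward ones by an induction on $k$ with the auxiliary series $H_k=(x-1)^k\diff[k]{G}{x}$, using the recursion $H_{k+1}=\Phi(H_k)-kH_k$ together with the divisibility of the coefficients of $H_k$ by $k!$ in $\widehat\Z$, and then one more application of Proposition \ref{aformula}. You instead diagonalize: expanding $G$ in the topological basis $\{\lg_i\}$, on which $\Phi$ acts as a shift and $\partial^m$ kills $\lg_r$ for $r\leq m$ (Proposition \ref{kernel}), and passing to the additive coordinates $y_i=\log(1-x_i)$, where $\partial^m\lg_r$ is homogeneous of degree $r$ with the explicit unit coefficient $\pm 1/(r-m)!$, you show that the two conditions in (1) are both equivalent to $a_i\in\Q$ for all $i\geq n+1$, and that $v\(\partial^{n-1}\Phi(G)\)=v\(\partial^n G\)-1$, which gives (2); integrality over $\Z$ is then recovered from $\Z=\widehat\Z\cap\Q$ because $\partial$ and $\Phi$ preserve $\widehat\Z$-coefficients. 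Your argument is more structural and yields a little more than the lemma asks (the exact valuation shift, and an eigencoefficient criterion consonant with the map (\ref{pass}) and the later description of $S$), while the paper's induction is more elementary and avoids any bookkeeping about a change of variables. The only points you should spell out, both routine, are that $\partial^m$ is $x$-adically continuous (so it can be applied term by term to the $\lg$-expansion), and — as you already flag — that the substitution $y=\log(1-x)$, being given by $\Q$-coefficient power series with invertible linear part and $\Q$-coefficient inverse, indeed preserves both $\Q$-rationality of coefficients and the valuation $v$.
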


\begin{proof}
$(\Rightarrow)$ Follows from Proposition \ref{aformula} for both $(1)$ and $(2)$.

$(\Leftarrow)$ Simply write $H_{k}$ for $(x-1)^k\diff[k]{G}{x}$.
We claim that $\partial^{n-1}(H_{k})$ has coefficients in $\Z$ in case $(1)$
and $v(\partial^{n-1}(H_{k}))\geq m-k$ in case $(2)$ for every $k\geq 1$. We
prove the statements by induction on $k$.

\noindent $(k\Rightarrow k+1)$
We have $H_{k+1}=\Phi(H_{k})- kH_{k}$, hence
\[
\partial^{n-1}(H_{k+1})=\partial^{n-1}\big(\Phi(H_{k})\big)- k\partial^{n-1}(H_{k}).
\]
Then $k\partial^{n-1}(H_{k})$ has coefficients in $\Z$ in case $(1)$ and
$v(k\partial^{n-1}(H_{k}))\geq m-k$ in case $(2)$ by the induction hypothesis.
As the derivative $\partial^{n}(H_{k})$
has coefficients in $\Z$  in case $(1)$ and $v(\partial^{n}(H_{k}))\geq m-k$ in case $(2)$,
it follows from Proposition \ref{aformula}, applied to the power series $H_{k}$,
that $\partial^{n-1}(\Phi(H_{k}))$ also has coefficients in $\Z$ in case $(1)$
and $v\big(\partial^{n-1}(\Phi(H_{k}))\big)\geq m-k-1$ in case $(2)$.
It follows that $\partial^{n-1}(H_{k+1})$ has coefficients in $\Z$ in case $(1)$ and $v(\partial^{n-1}(H_{k+1}))\geq m-k-1$ in case $(2)$.
The claim is proved.

Note that all coefficients of $H_{k}$ are divisible by $k!$ in $\widehat \Z$. It follows that
the power series $\frac{1}{k!}\partial^{n-1}(H_{k})$ have coefficients in $\Z$ in case $(1)$.
By Proposition \ref{aformula}, $\partial^n(G)$ has coefficients in $\Z$ in case $(1)$ and $v(\partial^n(G))\geq m$ in case $(2)$.
\end{proof}

In particular, we can describe the integral operations
$\Op^{n,m}$ as follows.

\begin{proposition}
\label{PhiPnm}
Let $G\in x\widehat\Z[[x]]$ and $m\geq n\geq 1$. Then $G\in\Op^{n,m}$ if and only if
$\Phi^n(G)\in\Z[[x]]$ and $v(\Phi^n(G))\geq m-n$.
\end{proposition}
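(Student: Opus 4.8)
The plan is to unwind what the relation $G\in\Op^{n,m}$ means concretely for a power series $G\in x\widehat\Z[[x]]$, and then to pass to the new variable $y:=\lg_1(x)=\log(1-x)$. The substitution $x\leftrightarrow y$ is a continuous automorphism of $\widehat\Q[[x]]=\widehat\Q[[y]]$ which is defined over $\Q$; under it the operator $\Phi=(x-1)\tfrac{d}{dx}$ becomes $d/dy$, and $\lg_r(x)$ becomes $y^r/r!$.

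First I would record, via Theorem \ref{mainone}, Corollary \ref{corcor} and the injection $\Op^{n,m}\hookrightarrow\Op^{n,m}_{\widehat\Z}$ of Theorem \ref{integoper} (composed with $\Op^{n,m}_{\widehat\Z}\hookrightarrow x\widehat\Z[[x]]$ coming from Theorem \ref{description}), that a series $G\in x\widehat\Z[[x]]$ lies in $\Op^{n,m}$ if and only if there are $q_1,\dots,q_{n-1}\in\widehat\Q$ with $L:=G+\Sum_{0<r<n}q_r\lg_r(x)$ belonging to $\cQ^{n,m}_{\Z}$. Since $\partial^{n-1}\lg_r=0$ for $r<n$ by Proposition \ref{kernel}, unravelling Definitions \ref{qn} and \ref{qn2} turns this into the conjunction of: (i) the $q_r$ can be chosen so that $L$ has all coefficients in $\Q$; (ii) $\partial^{n-1}(G)$ has all coefficients in $\Z$; (iii) $v(\partial^{n-1}(G))\ge m$.

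Then I would treat (i) first. Writing $G=\Sum_{j\ge1}\tilde g_j\,y^j$ with $\tilde g_j\in\widehat\Q$, one has $\Phi^n(G)=\Sum_{k\ge0}\tilde g_{k+n}\,\tfrac{(k+n)!}{k!}\,y^k$, whereas the corrections $q_r\lg_r=q_r\,y^r/r!$ only affect the coefficients of $y^1,\dots,y^{n-1}$. Hence (i) can be arranged (for instance with $q_r=-r!\,\tilde g_r$) precisely when $\tilde g_j\in\Q$ for every $j\ge n$, that is, precisely when $\Phi^n(G)\in\Q[[x]]$; and since $\Phi$ preserves $\widehat\Z[[x]]$ and $\widehat\Z\cap\Q=\Z$, this is equivalent to $\Phi^n(G)\in\Z[[x]]$. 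Next, for (ii) and (iii), I would introduce the operator $D:=\partial^0\circ\Phi$ on $x\widehat\Z[[x]]$ --- which by Proposition \ref{desusp} is the desuspension $\Sigma^{-1}$ in degrees $>1$ --- use the identity $\partial^j(\Phi(H))=\partial^j(D(H))$ valid for all $j\ge0$ (because $\partial^j$ annihilates constants), and iterate Lemma \ref{ifandonlyif} exactly $n-1$ times to obtain
\[
\partial^{n-1}(G)\in\Z[[x_1,\dots,x_n]]\iff D^{n-1}(G)\in\Z[[x]],\qquad v(\partial^{n-1}(G))\ge m\iff v(D^{n-1}(G))\ge m-n+1.
\]
A short induction gives $D^{n-1}(G)=\Phi^{n-1}(G)$ up to an additive constant, so $\Phi^n(G)=\Phi(D^{n-1}(G))$. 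For any $H=\Sum_{i\ge1}b_ix^i\in x\widehat\Z[[x]]$, the expansion $\Phi(H)=-b_1+\Sum_{i\ge1}(ib_i-(i+1)b_{i+1})x^i$ together with $\widehat\Z\cap\Q=\Z$ forces (by induction on $i$) that $\Phi(H)\in\Z[[x]]$ implies all $b_i\in\Z$, and clearly $v(\Phi(H))=v(H)-1$ when $0\ne H\in x\widehat\Z[[x]]$. Applying this with $H=D^{n-1}(G)$ rewrites the right-hand sides above as $\Phi^n(G)\in\Z[[x]]$ and $v(\Phi^n(G))\ge m-n$. Collecting (i), (ii), (iii) then gives exactly the asserted criterion.

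The step I expect to be the main obstacle is the backward implication ``$\Phi^n(G)\in\Z[[x]]\Rightarrow\partial^{n-1}(G)\in\Z[[x_1,\dots,x_n]]$'', i.e.\ pushing integrality of $\Phi(H)$ back to $H$: differentiation in $y$ loses divisibility information, so this is not a formality. What saves it is that $H$ is already known to have coefficients in $\widehat\Z$, so any rational number that is forced into $\widehat\Z$ must be an ordinary integer; this is the sole point where one uses that the coefficient ring is $\widehat\Z$ and not merely a torsion-free ring, and it is the same mechanism behind Lemma \ref{ifandonlyif}.
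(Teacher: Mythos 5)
Your proof is correct and takes essentially the same route as the paper: reduce membership in $\Op^{n,m}$ to concrete conditions on $\partial^{n-1}(G)$ via Theorem \ref{mainone} and Corollary \ref{corcor}, iterate Lemma \ref{ifandonlyif} to replace $\partial^{n-1}$ by (what amounts to) $\partial^0\circ\Phi^{n-1}$, and close the gap between $\partial^0$ and one more application of $\Phi$ using $\Q\cap\widehat\Z=\Z$. The only deviation is that you verify your condition (i) separately via the change of variable $y=\log(1-x)$, which is a nice sanity check but is in fact redundant: once (ii) holds, the symmetric power series $\partial^{n-1}(G)$ lies in $\Z[[x_1,\dots,x_n]]$ and can be integrated over $\Q$ by Proposition \ref{allequivalent}, producing the required rational representative, so (i) is a consequence of (ii) — which is why the paper's proof does not treat it at all.
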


\begin{proof}
Theorem \ref{mainone} and iterated application of Lemma \ref{ifandonlyif} show that $G\in\Op^{n,m}$ if and only if
$\partial^0(\Phi^{n-1}(G))\in\Z[[x]]$ and $v(\partial^0(\Phi^{n-1}(G)))\geq m-n+1$. Thus, it suffices to prove the following
for a power series $H\in \widehat\Z[[x]]$ and integer $k\geq 0$:

\smallskip
\noindent 1. $\partial^0(H)\in\Z[[x]]$ $\Longleftrightarrow$ $\Phi(H)\in\Z[[x]]$,

\noindent 2. $v(\partial^0(H))\geq k+1$ $\Longleftrightarrow$ $v(\Phi(H))\geq k$.

If $\partial^0(H)\in\Z[[x]]$,
then clearly $\Phi(H)\in\Z[[x]]$. Conversely, if $\Phi(H)\in\Z[[x]]$, then
$\partial^0(H)\in \Q[[x]]\cap \widehat\Z[[x]]=\Z[[x]]$. The second statement follows from the obvious equality
$v(\partial^0(H))=v(\Phi(H))+1$.
\end{proof}

Let $m$ a positive integer.
It follows from Lemma \ref{ifandonlyif}(2)
that there is a tower of inclusions as in (\ref{reverse1}):

\begin{equation}\label{reverse}
x^m\widehat\Z[[x]]=\Op_{\widehat\Z}^{0,m}\hookleftarrow \Op_{\widehat\Z}^{1,m+1}\hookleftarrow \ldots\hookleftarrow \Op_{\widehat\Z}^{n,n+m}\hookleftarrow \ldots.
\end{equation}
and for every $n$ the intersection of $\Op_{\widehat\Z}^{n,n+m}$ and $\Op_{\widehat\Z}^{n+1,n+1}$ in $\Op_{\widehat\Z}^{n,n}$
coincides with $\Op_{\widehat\Z}^{n+1,n+m+1}$. Therefore,
we obtain:

\begin{proposition}
The group of homogeneous
degree $m$ stable operations $\CK_{\widehat\Z}^*\to \CK_{\widehat\Z}^{*+m}$ is naturally isomorphic to
the intersection $x^{\op{max}(0,m)}\widehat\Z[[x]]\cap S$.
\end{proposition}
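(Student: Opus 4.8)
The plan is to realise a homogeneous degree $m$ stable operation as a point of the inverse limit of the tower $(\ref{reverse})$ and to compute that limit. I treat the case $m\geq 1$, where $\max(0,m)=m$; for $m\leq 0$ the tower $(\ref{reverse})$ coincides with $(\ref{reverse1})$ (its groups all equal $\Op_{\widehat\Z}^{n,n}$ by Corollary \ref{corcor2}), so the statement is the already-established degree-$0$ case $S=\widehat\Z[[x]]\cap S$.

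First I would unwind the definitions. By Definition \ref{stable-operation}, a homogeneous degree $m$ stable operation $\CK_{\widehat\Z}^*\to\CK_{\widehat\Z}^{*+m}$ is exactly a sequence $(G^{(n)})_{n\geq 0}$ with $G^{(n)}\in\Op_{\widehat\Z}^{n,n+m}$ and $G^{(n)}=\Sigma^{-1}(G^{(n+1)})$, that is, a point of $\varprojlim_n\Op_{\widehat\Z}^{n,n+m}$ for the transition maps of $(\ref{reverse})$. Since each $\Sigma^{-1}$ in this tower is injective (as recorded after Proposition \ref{desusp}), sending $(G^{(n)})_n$ to its bottom term $G^{(0)}$ identifies this inverse limit with
\[
\Bigcap_{n\geq 0}\Theta_n(\Op_{\widehat\Z}^{n,n+m})\ \subseteq\ \Op_{\widehat\Z}^{0,m}=x^{m}\widehat\Z[[x]],
\]
where $\Theta_n\colon\Op_{\widehat\Z}^{n,n+m}\to\Op_{\widehat\Z}^{0,m}$ is the composite of the first $n$ transition maps. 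This $G^{(0)}$ will be the claimed natural isomorphism once the intersection is identified with $x^{\max(0,m)}\widehat\Z[[x]]\cap S$.

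The core step is to show $\Theta_n=\Phi^n$ and $\Theta_n(\Op_{\widehat\Z}^{n,n+m})=\Im(\Phi^n)\cap x^{m}\widehat\Z[[x]]$. For the first part: by Proposition \ref{desusp} the transition map at level $k$ is $\Phi$ if $k\leq 1$ and $\partial^0\circ\Phi$ if $k>1$; since $\Phi$ annihilates constants, $\Phi\circ\partial^0=\Phi$, hence $\Phi^j\circ\partial^0=\Phi^j$ for all $j\geq 1$, and the composite $\Phi\circ(\partial^0\circ\Phi)\circ\cdots\circ(\partial^0\circ\Phi)$ telescopes to $\Phi^n$. For the second part, I would use the description $\Op_{\widehat\Z}^{n,n+m}=\{G\in x\widehat\Z[[x]]\mid v(\partial^{n-1}G)\geq n+m\}$ from Theorem \ref{mainone}, iterate Lemma \ref{ifandonlyif}(2) together with the equality $v(\partial^0 H)=v(\Phi H)+1$ to obtain $v(\partial^{n-1}G)\geq n+m\iff v(\Phi^n G)\geq m$, and note that $\Phi^n$ sends $\widehat\Z[[x]]$ into itself with $\Phi^n(\widehat\Z[[x]])=\Phi^n(x\widehat\Z[[x]])=\Im(\Phi^n)$ (as $\Phi$ kills the summand $\widehat\Z\cdot 1$). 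These give $\Theta_n(\Op_{\widehat\Z}^{n,n+m})=\{F\in\Im(\Phi^n)\mid v(F)\geq m\}=\Im(\Phi^n)\cap x^{m}\widehat\Z[[x]]$. Intersecting over $n$ (and using $\Im(\Phi^0)=\widehat\Z[[x]]$) yields
\[
\Bigcap_{n\geq 0}\Theta_n(\Op_{\widehat\Z}^{n,n+m})=\Big(\Bigcap_{n}\Im(\Phi^n)\Big)\cap x^{m}\widehat\Z[[x]]=S\cap x^{\max(0,m)}\widehat\Z[[x]],
\]
which is the asserted group.

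The main obstacle will be the bookkeeping in the previous paragraph: verifying that the correction $\partial^0$ appearing in the desuspension formula at levels $k>1$ is genuinely absorbed by the next application of $\Phi$ (so that $\Theta_n=\Phi^n$ exactly, not merely up to constants), and carefully carrying out the iteration of Lemma \ref{ifandonlyif}(2) that converts the defining valuation condition of $\Op_{\widehat\Z}^{n,n+m}$ into $v(\Phi^n G)\geq m$. Once these are in place, the reduction of the inverse limit to an intersection of images (via injectivity of the $\Sigma^{-1}$) and the identification of $S$ with $\Bigcap_n\Im(\Phi^n)$ are already available from the preceding material.
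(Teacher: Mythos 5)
Your proof is correct and takes essentially the same route as the paper: identify the stable operation with a compatible system in the tower $(\ref{reverse})$, use injectivity of $\Sigma^{-1}$ to replace the inverse limit by an intersection of images in $\Op_{\widehat\Z}^{0,m}=x^m\widehat\Z[[x]]$, show that the $n$-fold transition composite telescopes to $\Phi^n$ (since $\Phi$ kills constants), and iterate Lemma \ref{ifandonlyif}(2) to translate the valuation condition defining $\Op_{\widehat\Z}^{n,n+m}$ into $v(\Phi^n G)\geq m$, giving $\Im(\Phi^n)\cap x^m\widehat\Z[[x]]$ at each level and hence $S\cap x^{\max(0,m)}\widehat\Z[[x]]$ in the limit. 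The paper phrases the same content via the compatibility statement $\Op_{\widehat\Z}^{n,n+m}\cap\Op_{\widehat\Z}^{n+1,n+1}=\Op_{\widehat\Z}^{n+1,n+m+1}$ inside $\Op_{\widehat\Z}^{n,n}$, but the underlying computation is the one you carried out.
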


The map $\Phi: \widehat\Z[[x]]\to \widehat\Z[[x]]$ is continuous in $\tau_w$ and the space
$\widehat\Z[[x]]$ is compact Hausdorff. Hence $\Im(\Phi^n)$ is closed in $\widehat\Z[[x]]$ for any $n$.
It follows that the set $S$ is also closed in $\widehat\Z[[x]]$ in the topology $\tau_w$ and hence in $\tau_o$ and $\tau_s$.

It follow from Proposition \ref{operoper} and Lemma \ref{ifandonlyif} that the topology on $\Op_{\widehat\Z}^{st}$ induced by $\tau_o$ is generated by the neighborhoods of zero $W_m$ consisting
of all collections $\{G^{(n)}|n\geq 0\}$ such that $G^{(n)}$ acts trivially on varieties of dimension $<n+m$.
We still denote this topology by $\tau_o$.

Let $A_r(x)=(1-x)^r\in\widehat\Z[[x]]$ for $r\in\widehat\Z$.
Note that $\Phi(A_r)=r\cdot A_r$.
In particular, if $r$ is invertible in $\widehat\Z$, then $A_r\in S$.

We can describe the set $S$ via divisibility conditions on the coefficients of the power series.

\begin{theorem}\label{desc}
The set $S=\cap_{r}\Im(\Phi^r)\subset \widehat \Z[[x]]$
consists of all power series $G=\Sum_{i\geq 0}a_ix^i$ satisfying the following property: for every prime $p$ and
every positive integers $n$ and $m$
such that $m$ is divisible by $p^n$, for every nonnegative $j<m$ divisible by $p$,
the sum $\Sum_{i=j}^{m-1}\binom{i}{j}a_i$ is divisible by $p^n$.
\end{theorem}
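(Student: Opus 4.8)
The plan is to prove the two inclusions separately. Throughout we may reduce to a single prime $p$: since $\widehat\Z=\prod_p\Z_p$, the operator $\Phi$ acts componentwise, and the asserted divisibilities are conditions on the $\Z_p$-components of the coefficients. For $G=\Sum_{i\ge 0}a_ix^i$ and $0\le j<m$ set $\sigma_G(j,m):=\Sum_{i=j}^{m-1}\binom{i}{j}a_i$. The computational heart of the argument is the identity: if $H=\Sum_i b_ix^i\in\widehat\Z[[x]]$ and $G=\Phi(H)=\Sum_i a_ix^i$, then for all $0\le j<m$
\[
\sigma_G(j,m)=j\cdot\sigma_H(j,m)-m\binom{m-1}{j}b_m .
\]
Indeed $a_0=-b_1$ and $a_i=ib_i-(i+1)b_{i+1}$ for $i\ge 1$, and the formula is a short telescoping (for $j=0$ it reads $\sigma_G(0,m)=-mb_m$).

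For the inclusion $S\subseteq T$, where $T$ denotes the set of power series satisfying the stated property, I fix $G\in S$ and $r\ge 1$ and write $G=\Phi^r(H)$ with $H\in\widehat\Z[[x]]$, which is possible by definition of $S$. Applying the identity along the chain $H,\Phi(H),\dots,\Phi^r(H)=G$ gives
\[
\sigma_G(j,m)=j^{r}\,\sigma_H(j,m)-m\binom{m-1}{j}\Sum_{k=0}^{r-1}j^{k}\,[x^m]\bigl(\Phi^{r-1-k}(H)\bigr),
\]
all terms of which lie in $\widehat\Z$. Hence $v_p\bigl(\sigma_G(j,m)\bigr)\ge\min\bigl(r\,v_p(j),\,v_p(m)\bigr)$; if $p\mid j$ this is $\ge\min(r,v_p(m))$, and for $j=0$ the first term vanishes and one gets $v_p(\sigma_G(0,m))\ge v_p(m)$ directly. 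Letting $r\to\infty$ yields $p^{v_p(m)}\mid\sigma_G(j,m)$, which is exactly the condition of the theorem (it gives $p^{n}\mid\sigma_G(j,m)$ for every $n\le v_p(m)$, i.e. for every $n$ with $p^n\mid m$).

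For the reverse inclusion $T\subseteq S$ I will use that $S$ is closed in the topology $\tau_w$ and that $\Phi(A_r)=rA_r$, so that $A_r=(1-x)^r\in S$ for every $r\in\widehat\Z^\times$; consequently $S$, being a $\widehat\Z$-submodule, contains the $\tau_w$-closure $\overline M$ of the $\widehat\Z$-submodule $M\subseteq\widehat\Z[[x]]$ generated by $\{A_r:r\in\widehat\Z^\times\}$. Since a $\tau_w$-neighbourhood basis of $0$ is given by the ideals $(N,x^m)$, it suffices to show that for every $N\in\N$ and $m\ge 1$ the image of a given $G\in T$ in the finite ring $(\Z/N)[x]/(x^m)$ lies in the $\Z/N$-span of the images of the $A_r$. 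As $A_r\bmod(N,x^m)=\Sum_{i<m}(-1)^i\binom{r}{i}x^i$ depends only on $r$ modulo $N(m-1)!$ and $\widehat\Z^\times$ surjects onto $(\Z/N(m-1)!)^\times$, this reduces to the purely arithmetic assertion that the $\Z/N$-span of the vectors $\bigl(\binom{r}{i}\bigr)_{0\le i<m}$, with $r$ running over units modulo $N(m-1)!$, is cut out in $(\Z/N)^m$ precisely by the congruences of the theorem truncated to degrees $<m$. The implication from membership in this span to the congruences uses the identity $\Sum_{i=j}^{m-1}\binom{i}{j}(-1)^i\binom{r}{i}=(-1)^{m-1}\binom{r}{j}\binom{r-j-1}{m-1-j}$ together with Kummer's theorem, which estimates $v_p\binom{r-j-1}{m-1-j}$ when $p\nmid r$, $p\mid j$ and $p^n\mid m$; the converse is a rank count, exploiting that the $A_r$ with $r=0,1,\dots,m-1$ already form a basis of $(\Z/N)[x]/(x^m)$ (the coefficient matrix being triangular with $\pm1$ on the diagonal), so that discarding the non-unit residues modulo $N(m-1)!$ imposes exactly the expected number of independent linear conditions.

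The part I expect to be the main obstacle is this last finite computation over $\Z/N$: one must show that the lattice spanned by the unit-indexed Adams polynomials coincides, prime by prime and degree by degree, with the lattice defined by the divisibility conditions, the delicate point being the exact bookkeeping of the $p$-adic valuation of the binomial coefficients $\binom{r-j-1}{m-1-j}$, so that neither more nor fewer congruences emerge than those listed. Everything else — the reduction to a single prime, the telescoping identity, and the passage $r\to\infty$ — is formal.
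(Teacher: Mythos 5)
Your forward inclusion ($S\subseteq T$) is correct, though by a different route. Your telescoping identity
\[
\sigma_G(j,m)=j\,\sigma_H(j,m)-m\mybinom[0.8]{m-1}{j}b_m,\qquad G=\Phi(H),
\]
checks out, and iterating it gives the bound $v_p(\sigma_G(j,m))\geq\min(r\,v_p(j),v_p(m))$, which does the job once $r\geq n$. The paper reaches the same conclusion by the change of variable $y=x-1$: the truncation $G'$ of $G$ to degree $<m$, rewritten in $y$, has $y^j$-coefficient exactly $\sigma_G(j,m)$, and $\Phi(y^i)=i\,y^i$, so in the $y$-basis $\Phi$ acts diagonally and the divisibility drops out with essentially no computation (one only needs to observe that the ideal $I=(p^n,x^m)$ is $\Phi$-stable when $p^n\mid m$). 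Your identity is precisely the shadow of this diagonalisation after one step, with the boundary term $-m\binom{m-1}{j}b_m$ accounting for truncation at $x^m=(y+1)^m$. Both arguments are valid; the paper's is more conceptual and, crucially, makes the reverse inclusion equally easy.

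Your reverse inclusion ($T\subseteq S$) has a genuine gap, and you flag it yourself. You reduce the problem to showing that the image of $G\in T$ in every finite quotient $(\Z/N)[x]/(x^m)$ lies in the span of the images of $A_r$, $r\in\widehat\Z^\times$, and then appeal to a ``rank count'' comparing the number of congruences with the codimension of that span, ``so that discarding the non-unit residues imposes exactly the expected number of independent linear conditions.'' That sentence is an assertion, not a proof: to complete it one must identify the lattice generated by the $A_r$'s degree by degree and prime by prime, which is essentially the content of the paper's Section 6.3 on the integers $d_n$ and Propositions \ref{poly}--\ref{poly2}. In the paper, the statement that $S$ is the closure of the $\widehat\Z$-span of $\{A_r:r\in\widehat\Z^\times\}$ is Proposition \ref{taus-zamykanie}, which is proved \emph{after} and partly \emph{via} Theorem \ref{desc}, so routing the proof of \ref{desc} through that result inverts the logical order and leaves the hard finite computation undone.

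The paper's reverse direction avoids all of this. Fix a prime $p$, $n$, and $m$ with $p^n\mid m$, and let $F$ be the $x^m$-truncation of $G$. Rewriting $F=\sum_{i<m}c_i y^i$ with $y=x-1$, the $c_i$ are exactly $\sigma_G(i,m)$, and your hypothesis says $p^n\mid c_i$ whenever $p\mid i$. Hence $F\equiv\sum_{p\nmid i}c_i y^i\pmod{p^n}$, and since each such $i$ is a $p$-adic unit one can set $F'=\sum_{p\nmid i}c_i i^{-r}y^i\in\Z_p[x]$, giving $\Phi^r(F')=\sum_{p\nmid i}c_i y^i\equiv G\pmod{(p^n,x^m)}$. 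As the ideals $(p^n,x^m)$ with $p^n\mid m$ are cofinal among $\tau_w$-neighbourhoods of $0$ and $\Im(\Phi^r)$ is $\tau_w$-closed, this already yields $G\in\Im(\Phi^r)$ for every $r$. No analysis of the Adams lattice is needed. To repair your proposal you would either have to carry out the finite-module computation you describe (which is doable but is a substantial separate project equivalent to Propositions \ref{poly} and \ref{poly2}), or switch to the paper's change of variables, which makes the invertibility of $\Phi$ on the relevant part of $G$ manifest.
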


\begin{proof}
Let $n$ be a positive integer, $G\in S$ and write $G=\Phi^n(H)$ for some $H\in \widehat\Z[[x]]$.
Consider the ideal $I=(p^n, x^m)\subset \widehat\Z[[x]]$, where $m$ is divisible by $p^n$. Note that $\Phi(I)\subset I$
since $p^n$ divides $m$.

Let $G'$ be the $x^{m}$-truncation of $G$ and
$H'$ the $x^{m}$-truncation of $H$. As $G-G'\in I$ and $H-H'\in I$, we have $G'-\Phi^n(H')\in I$. Since $G'$ and $\Phi^n(H')$ are polynomials of degree less than $m$,
we conclude that $G'$ and $\Phi^n(H')$ are congruent modulo $p^n$.

We write $G'$ and $H'$ as polynomials in $y=x-1$.
Since $\Phi^n(y^i)=i^n y^i$, the $y^i$-coefficients of $\Phi^n(H')$ are divisible by $p^n$
for all $i$ divisible by $p$. It follows that the same property holds for $G'$. As
\[
G'=\Sum_{i=0}^{m-1} a_i x^i=\Sum_{i=0}^{m-1} a_i (y+1)^i=\Sum_{i=0}^{m-1} a_i \Sum_{j=0}^{i}\mybinom[0.8]{i}{j}y^j=\Sum_{j=0}^{m-1} y^j  \Sum_{i=j}^{m-1}\mybinom[0.8]{i}{j}a_i,
\]
the divisibility condition holds.

Conversely, as $\widehat \Z=\prod\Z_p$, it suffices to prove the statement
over $\Z_p$. Let $G\in\Z_p[[x]]$ satisfy the divisibility condition in the theorem. Choose
$n$ and $m$ such that $m$ is divisible by $p^n$ and set $I=(p^n, x^m)\subset \Z_p[[x]]$ as above. Recall that
$\Phi(I)\subset I$. Let $F$ be the $x^{m}$-truncation of $G$. By assumption, we can write $F\equiv \sum b_i y^i$
modulo $p^n$, where the sum is taken over $i<m$ that are prime to $p$. In particular, $G\equiv \sum b_i y^i$ modulo $I$.

Choose $r>0$ and set $F'=\sum \frac{b_i}{i^r} y^i$. Then $\Phi^r(F')=\sum b_i y^i\equiv G$ modulo $I$, i.e.,
$G$ is in the image of $\Phi^r$ modulo $I$. As $\Im(\Phi^r)$ is closed in $\Z_p[[x]]$ in the topology $\tau_w$, we have $G\in \Im(\Phi^r)$
for all $r$, i.e., $G\in S$.
\end{proof}

\subsection{Stable operations in $\CK$ over $\Z$}

Now we turn to the study of stable operations over $\Z$.

\begin{proposition}
The pre-image of $\Op^{n,n}$ under $\Sigma^{-1}:\Op_{\widehat\Z}^{n+1,n+1}\to \Op_{\widehat\Z}^{n,n}$
is equal to $\Op^{n+1,n+1}$ for every $n\geq 0$.
\end{proposition}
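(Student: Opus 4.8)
The plan is to reduce the statement to the explicit description of the desuspension operator in Proposition~\ref{desusp} together with the characterisation of integral operations inside $\Op^{k,k}_{\widehat\Z}$ given by Proposition~\ref{PhiPnm}. Recall the identifications $\Op^{k,k}_{\widehat\Z}=\widehat\Z[[x]]$ for $k\leq 0$ and $\Op^{k,k}_{\widehat\Z}=x\widehat\Z[[x]]$ for $k\geq 1$, under which $\Op^{0,0}=\Z[[x]]$, while for $k\geq 1$ a power series $H\in x\widehat\Z[[x]]$ lies in the subgroup $\Op^{k,k}$ if and only if $\Phi^k(H)\in\Z[[x]]$ (Proposition~\ref{PhiPnm} with $m=n=k$). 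Thus the claim becomes: for $G\in\Op^{n+1,n+1}_{\widehat\Z}$, one has $\Sigma^{-1}(G)\in\Op^{n,n}$ if and only if $\Phi^{n+1}(G)\in\Z[[x]]$, the latter being exactly the condition $G\in\Op^{n+1,n+1}$.

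First I would handle $n=0$. By Proposition~\ref{desusp} (applied with the proposition's index equal to $1$), $\Sigma^{-1}(G)=\Phi(G)$ for $G\in\Op^{1,1}_{\widehat\Z}=x\widehat\Z[[x]]$, and $\Sigma^{-1}(G)$ lies in $\Op^{0,0}_{\widehat\Z}=\widehat\Z[[x]]$ automatically; it lies in $\Op^{0,0}=\Z[[x]]$ precisely when $\Phi(G)\in\Z[[x]]$, which by Proposition~\ref{PhiPnm} is exactly $G\in\Op^{1,1}$. For $n\geq 1$, Proposition~\ref{desusp} gives $\Sigma^{-1}(G)=\partial^0(\Phi(G))=\Phi(G)-\Phi(G)(0)$. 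The key point is that $\Phi$ annihilates constants, so $\Phi\circ\partial^0=\Phi$, and hence $\Phi^n\big(\partial^0(\Phi(G))\big)=\Phi^{n+1}(G)$. Since $\Sigma^{-1}(G)$ has zero constant term and $\widehat\Z$-coefficients, it lies in $x\widehat\Z[[x]]=\Op^{n,n}_{\widehat\Z}$, so by Proposition~\ref{PhiPnm} it lies in $\Op^{n,n}$ if and only if $\Phi^n(\Sigma^{-1}(G))=\Phi^{n+1}(G)\in\Z[[x]]$. On the other hand $G\in\Op^{n+1,n+1}_{\widehat\Z}=x\widehat\Z[[x]]$ lies in $\Op^{n+1,n+1}$ exactly when $\Phi^{n+1}(G)\in\Z[[x]]$, again by Proposition~\ref{PhiPnm}; comparing the two conditions finishes the proof.

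The argument is essentially bookkeeping once Propositions~\ref{desusp} and~\ref{PhiPnm} are invoked, so I do not expect a genuine obstacle: the only points requiring attention are the case split $n=0$ versus $n\geq 1$ in the desuspension formula and the elementary identity $\Phi\circ\partial^0=\Phi$ that lets the truncation $\partial^0$ be absorbed when one more $\Phi$ is applied.
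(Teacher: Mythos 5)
Your proposal is correct and follows essentially the same route as the paper: the paper's proof likewise observes that $\Sigma^{-1}=\partial^0\circ\Phi$ for $n\geq 1$ and $\Sigma^{-1}=\Phi$ for $n=0$, and then invokes Proposition~\ref{PhiPnm}. You simply spell out the bookkeeping (the identity $\Phi\circ\partial^0=\Phi$ and the case split) that the paper leaves implicit.
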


\begin{proof}
As $\Sigma^{-1}=\partial^0\circ\Phi$, for $n\geq 1$, and
$\Sigma^{-1}=\Phi$, for $n=0$, this follows immediately from
Proposition \ref{PhiPnm}.
\end{proof}

Thus, we have a tower
\[
\Z[[x]]=\Op^{0,0}\hookleftarrow \Op^{1,1}\hookleftarrow\ldots\hookleftarrow\Op^{n,n}\hookleftarrow\ldots,
\]
given by the desuspension and the group $\Op^{st}$ of stable homogeneous degree $0$ integral operations is identified with $S_0:=S\cap \Z[[x]]$, where
$S$ is described by Theorem \ref{desc}.
Applying Proposition \ref{PhiPnm} again we get:

\begin{proposition}
 The group of homogeneous
degree $m$ stable operations $\CK^*\to \CK^{*+m}$ is naturally isomorphic to
the intersection $x^{\op{max}(0,m)}\Z[[x]]\cap S_0$.
\end{proposition}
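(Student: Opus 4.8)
The proof will mirror, over $\Z$, the argument that established the corresponding statement for $\CK^*_{\widehat\Z}$, with Proposition \ref{PhiPnm} carrying the integrality information. For $m\leq 0$ one has $\Op^{n,n+m}=\Op^{n,n}$ for all $n$ by Corollary \ref{corcor2}, so the desuspension tower in degree $m$ is identified with the degree-zero tower (\ref{reverse1}), and since $x^{\max(0,m)}\Z[[x]]\cap S_0=\Z[[x]]\cap S_0=S_0$ the assertion is exactly the already-proved identification of homogeneous degree-zero stable integral operations with $S_0$. So I may assume $m>0$.

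First I would upgrade the preceding proposition (the pre-image of $\Op^{n,n}$ under $\Sigma^{-1}:\Op_{\widehat\Z}^{n+1,n+1}\to\Op_{\widehat\Z}^{n,n}$ equals $\Op^{n+1,n+1}$) to degree $m$: the pre-image of $\Op^{n,n+m}$ under $\Sigma^{-1}:\Op_{\widehat\Z}^{n+1,n+m+1}\to\Op_{\widehat\Z}^{n,n+m}$ is $\Op^{n+1,n+m+1}$ for every $n\geq 0$. Indeed, by Proposition \ref{PhiPnm} one has $\Op^{n,n+m}=\Op^{n,n}\cap\Op_{\widehat\Z}^{n,n+m}$ (the integrality condition $\Phi^n(G)\in\Z[[x]]$ and the valuation condition $v(\Phi^n(G))\geq m$ are imposed separately), and since $\Sigma^{-1}=\partial^0\circ\Phi$ removes only the added constant we have $\Phi^n\circ\Sigma^{-1}=\Phi^{n+1}$; hence $\Sigma^{-1}$ carries $\Op_{\widehat\Z}^{n+1,n+m+1}$ into $\Op_{\widehat\Z}^{n,n+m}$, and an element $G$ of $\Op_{\widehat\Z}^{n+1,n+m+1}$ lands in the integral submodule $\Op^{n,n+m}$ if and only if $\Phi^{n+1}(G)=\Phi^n(\Sigma^{-1}G)\in\Z[[x]]$, which by Proposition \ref{PhiPnm} is precisely the condition $G\in\Op^{n+1,n+m+1}$.

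It follows that desuspension restricts to a subtower of (\ref{reverse}),
\[
x^{\max(0,m)}\Z[[x]]=\Op^{0,m}\hookleftarrow\Op^{1,m+1}\hookleftarrow\ldots\hookleftarrow\Op^{n,n+m}\hookleftarrow\ldots ,
\]
and, exactly as over $\widehat\Z$, a homogeneous degree $m$ stable operation $\CK^*\to\CK^{*+m}$ is the same datum as a system $\{G^{(n)}\}_{n\geq 0}$ compatible under this tower; since each occurring $\Sigma^{-1}$ is injective, the system is determined by its bottom term $\Phi^n(G^{(n)})=G^{(0)}$, so the group of homogeneous degree $m$ stable operations is identified with $\cap_n\Phi^n(\Op^{n,n+m})\subseteq x^{\max(0,m)}\Z[[x]]$.

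It remains to check that $\cap_n\Phi^n(\Op^{n,n+m})=x^{\max(0,m)}\Z[[x]]\cap S_0$. For $\subseteq$, an element $G$ of the left side lies in $\Op^{0,m}=x^{\max(0,m)}\Z[[x]]$ and in $\cap_n\Im(\Phi^n)=S$ (because $\Op^{n,n+m}\subseteq\widehat\Z[[x]]$), hence in $S\cap\Z[[x]]=S_0$. For $\supseteq$, take $G\in x^{\max(0,m)}\Z[[x]]\cap S_0$; for each $n$ write $G=\Phi^n(H)$ with $H\in\widehat\Z[[x]]$, and since $\Phi^n(H)=G\in\Z[[x]]$ and $v(\Phi^n(H))=v(G)\geq m$, Proposition \ref{PhiPnm} gives $H\in\Op^{n,n+m}$, so $G\in\Phi^n(\Op^{n,n+m})$ for all $n$. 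This yields the isomorphism, and feeding in Theorem \ref{desc} gives the explicit divisibility description of $S_0$. The only step with real content is this $\supseteq$ inclusion — that a power series in $S_0$ can be desuspended through genuinely integral operations at every level — but it is immediate from Proposition \ref{PhiPnm}, so there is no essential obstacle: the statement is a corollary of that proposition together with the $\widehat\Z$-version proved just above, the remaining work being the routine bookkeeping with $\max(0,m)$ and the $n\leq 0$ versus $n\geq 1$ split in the formula for $\Sigma^{-1}$.
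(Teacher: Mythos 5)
Your proof is correct, and it follows the approach the paper has in mind: the paper's own justification is the single line ``Applying Proposition \ref{PhiPnm} again we get,'' and what you have written is exactly the intended fleshing-out of that citation — using Proposition \ref{PhiPnm} to identify $\Op^{n,n+m}$ as $\{G\in x\widehat\Z[[x]]:\Phi^n(G)\in\Z[[x]],\ v(\Phi^n(G))\geq m\}$, checking that $\Sigma^{-1}$ (through the identity $\Phi^n\circ\Sigma^{-1}=\Phi^{n+1}$) sends the integral subtower into itself with the expected preimages, and then computing $\cap_n\Phi^n(\Op^{n,n+m})=x^{\max(0,m)}\Z[[x]]\cap S_0$. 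Your preliminary reduction of $m\leq 0$ to the degree-zero case via Corollary \ref{corcor2} is also consistent with the paper's conventions.
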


We would like to determine the structure of $S_0$.

\begin{lemma}\label{vandermonde}
For every $n\geq 0$ there is a positive integer $d$ such that $dx^{n}\in S+x^{n+1}\widehat\Z[[x]]$.
\end{lemma}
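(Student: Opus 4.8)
The plan is to exhibit the required element as an explicit finite $\widehat\Z$-linear combination of the Adams series $A_r=(1-x)^r$, $r\in\widehat\Z^\times$, exploiting that $\Phi(A_r)=rA_r$, hence $A_r\in S$ for invertible $r$ (noted before Theorem \ref{desc}), together with the fact that $S=\cap_n\Im(\Phi^n)$ is a $\widehat\Z$-submodule of $\widehat\Z[[x]]$. Since $A_r=\sum_{i\ge 0}(-1)^i\binom{r}{i}x^i$ and, as a polynomial in $r$, $\binom{r}{i}=\tfrac1{i!}r^i+(\text{terms of lower degree in }r)$, for $G=\sum_{j=0}^{n}c_jA_{r_j}$ the coefficient of $x^i$ equals $(-1)^i\sum_j c_j\binom{r_j}{i}$. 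Therefore, if the $c_j\in\widehat\Z$ satisfy $\sum_{j}c_jr_j^{\,i}=d\,\delta_{i,n}$ for $i=0,\dots,n$ and some $d\in\widehat\Z$, then the coefficients of $x^0,\dots,x^{n-1}$ in $G$ vanish and that of $x^n$ is $(-1)^n\tfrac{d}{n!}$. As $A_r\in\widehat\Z[[x]]$ for $r\in\widehat\Z$, we get $G\in\widehat\Z[[x]]$, so $(-1)^nG\in S$ agrees with $\tfrac{d}{n!}x^n$ modulo $x^{n+1}\widehat\Z[[x]]$. Hence it suffices to solve the displayed system with $c_j\in\widehat\Z$ and $\tfrac{d}{n!}$ a positive integer.

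To do this, choose the nodes $r_0,\dots,r_n\in\widehat\Z^\times$ componentwise: for every prime $p>n+1$ take $(r_j)_p\equiv j+1\pmod p$, and for every prime $p\le n+1$ take $(r_j)_p$ ($j=0,\dots,n$) to be pairwise distinct units of $\Z_p^\times$, e.g.\ $1+jp$. These $r_j$ are distinct units of $\widehat\Z$, and the Vandermonde determinant $V=\prod_{0\le j<k\le n}(r_k-r_j)$ has $p$-component a unit of $\Z_p$ for $p>n+1$ (the differences are $\equiv k-j\not\equiv0$) and a nonzero element of $\Z_p$ for $p\le n+1$. Thus $v_p(V)=0$ for $p>n+1$ and $v_p(V)<\infty$ for $p\le n+1$, so $d:=n!\cdot\prod_{p\le n+1}p^{v_p(V)}$ is a positive integer with $V\mid d$ in $\widehat\Z$. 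Cramer's rule applied to the (invertible over $\widehat\Q$) Vandermonde system, together with $A\,\mathrm{adj}(A)=V\cdot I$ for $A=(r_j^{\,i})_{0\le i,j\le n}$, yields $c_j=(d/V)\,(\mathrm{adj}(A))_{j,n}$; the entries of $\mathrm{adj}(A)$ are integer polynomials in the $r_i$, hence lie in $\widehat\Z$, and $d/V\in\widehat\Z$, so $c_j\in\widehat\Z$ and $\sum_j c_j r_j^{\,i}=d\,\delta_{i,n}$. Then $G=\sum_j c_jA_{r_j}$ settles the lemma with the positive integer $d/n!$; the case $n=0$ is immediate from $G=A_1=1-x$.

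The remaining verifications are routine: the Stirling expansion of $\binom ri$ in powers of $r$, the identity $A\,\mathrm{adj}(A)=V I$, and the fact that $A_r\in\widehat\Z[[x]]$ for $r\in\widehat\Z$. The one real point is the behaviour of $V$ at the small primes $p\le n+1$: there $\Z_p^\times$ has only $p-1<n+1$ residues modulo $p$, so the differences $r_k-r_j$ cannot all be units, which is precisely why the conclusion produces a proper multiple $dx^n$ rather than $x^n$ itself — one simply absorbs the finitely many prime powers $p^{v_p(V)}$ with $p\le n+1$, along with $n!$, into the positive integer $d$.
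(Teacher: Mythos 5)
Your proof is correct and follows essentially the same strategy as the paper's: express $dx^n$ (mod $x^{n+1}$) as a $\widehat\Z$-linear combination of the Adams series $A_{r_j}$ with $r_j\in\widehat\Z^\times$, obtained by solving a Vandermonde system via Cramer's rule. The only difference is a matter of bookkeeping: the paper chooses the $r_j$ so that $r_i-r_j\in\Z$, which makes the binomial-coefficient Vandermonde determinant $\det[(-1)^i\binom{r_j}{i}]$ a nonzero integer at once, whereas you build the $r_j$ componentwise and clear denominators of the power Vandermonde $V=\prod(r_k-r_j)$ by tracking $p$-adic valuations and absorbing $n!$ — a more explicit route to the same integer $d$.
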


\begin{proof}
Choose distinct elements $r_0,\ldots, r_n\in \widehat \Z^\times$ such that $r_i-r_j\in\Z$ for all $i$ and $j$.
The $x^i$-coefficients with $i=0,1,\ldots, n$ of the power series $A_{r_j}(x)=(1-x)^{r_j}\in S$ form an $(n+1)\times (n+1)$
Van der Monde type matrix $\big[(-1)^i\binom{r_j}{i}\big]$. Its determinant $d$ is a nonzero integer since all $r_i-r_j$ are integers.
It follows that there is a $\widehat \Z$-linear combination of the $A_{r_j}$'s that is equal to $dx^{n}$ modulo $x^{n+1}$.
\end{proof}

Note that any ideal in $\widehat \Z$ that contains a non-zero integer is generated by a positive integer (the smallest
positive integer in the ideal).
It follows from Lemma \ref{vandermonde} that for every $n\geq 0$ there exists a unique positive integer $d_n$
such that the ideal of all $a\in\widehat\Z$ with the property $ax^{n}\in S+x^{n+1}\widehat\Z[[x]]$ is generated by $d_n$.
We will determine the integers $d_n$ below.

For every $n\geq 0$ choose a power series $G_n\in S$ such that $G_n\equiv d_n x^{n}$ modulo $x^{n+1}$.

\begin{lemma}\label{induct}
Let $G=\sum_{i\geq 0}a_i x^i\in S$ be such that $a_0,\ldots, a_{n-1}\in\Z$. Then there exist $b_i\in\widehat\Z$ for all $i\geq n$
such that $G-\sum_{i\geq n}b_i G_i\in S_0$.
\end{lemma}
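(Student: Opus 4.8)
The plan is to construct the coefficients $b_i$ one degree at a time, by induction on $i\ge n$, correcting $G$ coefficient by coefficient so that the partial differences stay in $S$ and become integral in ever higher degrees. Two facts make this possible. First, $S$ is a $\widehat\Z$-submodule of $\widehat\Z[[x]]$ (since $\Phi$ is $\widehat\Z$-linear, each $\Im(\Phi^m)$ is a submodule) and it is closed in $\tau_w$, hence in $\tau_s$. Second, $v(G_i)=i$ because $G_i\equiv d_ix^i\pmod{x^{i+1}}$ with $d_i$ a positive integer. Consequently, for any choice of $b_i\in\widehat\Z$ the series $\sum_{i\ge n}b_iG_i$ converges $x$-adically, and the element $G-\sum_{i\ge n}b_iG_i$ automatically lies in $S$; so the only thing to arrange is that this element lies in $\Z[[x]]$.

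First I would set $H^{(n-1)}:=G$, whose coefficients in degrees $<n$ are the integers $a_0,\dots,a_{n-1}$ by hypothesis. Inductively, suppose $b_n,\dots,b_{i-1}\in\widehat\Z$ have been chosen so that $H^{(i-1)}:=G-\sum_{n\le j<i}b_jG_j$ lies in $S$ and has integral coefficients in all degrees $<i$; note that subtracting any $b_jG_j$ with $j\ge n$ does not disturb the coefficients in degrees $<n$, since $v(G_j)=j\ge n$. Let $c_i\in\widehat\Z$ be the coefficient of $x^i$ in $H^{(i-1)}$. Because $d_i$ is a nonzero integer, the composite $\Z\to\widehat\Z\to\widehat\Z/d_i\widehat\Z\cong\Z/d_i\Z$ is surjective, so $\widehat\Z=\Z+d_i\widehat\Z$, and we may write $c_i=z_i+d_ib_i$ with $z_i\in\Z$ and $b_i\in\widehat\Z$. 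Since $G_i\equiv d_ix^i\pmod{x^{i+1}}$, subtracting $b_iG_i$ leaves all coefficients in degrees $<i$ unchanged and replaces the coefficient of $x^i$ by $c_i-d_ib_i=z_i\in\Z$. Thus $H^{(i)}:=H^{(i-1)}-b_iG_i\in S$ has integral coefficients in all degrees $\le i$, completing the induction step.

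Finally I would pass to the limit: for each fixed degree $k$ the coefficient of $x^k$ in $H^{(i)}$ is independent of $i$ once $i\ge k$ (again because $v(G_j)=j$), and it is an integer. Hence $G-\sum_{i\ge n}b_iG_i=\lim_i H^{(i)}$ (limit in $\tau_s$) has all coefficients integral, i.e.\ lies in $\Z[[x]]$; being a $\tau_s$-limit of elements of the closed set $S$, it also lies in $S$. Therefore $G-\sum_{i\ge n}b_iG_i\in S\cap\Z[[x]]=S_0$, as required.

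The only genuinely substantive input is the identity $\widehat\Z=\Z+d_i\widehat\Z$ for a positive integer $d_i$, which is exactly what guarantees that the $x^i$-coefficient can always be pushed into $\Z$ by a single multiple $b_iG_i$ of the "triangular" generator $G_i$; the remaining ingredients — $x$-adic convergence and the closedness of $S$ — are routine. I therefore do not expect a serious obstacle: the proof is a successive-approximation argument that exploits the fact that the leading coefficients $d_i$ of the $G_i$ generate ideals of $\widehat\Z$ meeting $\Z$ nontrivially.
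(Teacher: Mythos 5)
Your proof is correct and follows essentially the same route as the paper: inductively adjust the $x^i$-coefficient by a multiple $b_iG_i$, using that $d_i$ is a positive integer so $\widehat\Z=\Z+d_i\widehat\Z$, and conclude via $x$-adic convergence. You spell out the membership of the limit in $S$ (via closedness of $S$ and $v(G_i)=i$) more explicitly than the paper does, but the argument is the same.
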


\begin{proof}
Find an integer $a'_{n}$ such that $a_{n}-a'_{n}$ is divisible by $d_n$, thus, $a_n=a'_{n}+d_n b_n$ for some $b_n\in\widehat\Z$.
Then the $x^i$-coefficients of $G-b_nG_n$ are integer for $i=0,\ldots, n$. Continuing this procedure, we determine
all $b_i$ for $i\geq n$, so that all coefficients of $G-\sum_{i\geq n}b_i G_i$ are integers.
\end{proof}

\begin{theorem}\label{desrib}
For all $n\geq 0$ there are power series $F_n\in S_0$ such that $F_n\equiv d_n x^{n}$ modulo $x^{n+1}$. Moreover,
\begin{itemize}
\item[$(1)$]
The group $S_0$ consists of all infinite linear combinations $\Sum_{n\geq 0} a_n F_n$ with $a_n\in\Z$.
\item[$(2)$]
The group of homogeneous
degree $m$ stable operations $\CK^*\to \CK^{*+m}$ is naturally
isomorphic to the group
of all infinite linear combinations
$\Sum_{n\geq \op{max}(0,m)} a_n F_n$ with $a_n\in\Z$.
\end{itemize}
\end{theorem}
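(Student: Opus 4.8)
The plan is to produce the power series $F_n$ by a diagonal/successive-approximation argument, exactly in the spirit of Lemma~\ref{induct}, and then read off both statements. First I would observe that Lemma~\ref{vandermonde} (together with the remark on ideals of $\widehat\Z$ containing an integer) gives, for each $n$, the positive integer $d_n$ and a power series $G_n\in S$ with $G_n\equiv d_nx^n\pmod{x^{n+1}}$. To get $F_n\in S_0$ with the same leading behaviour, I would start from $G_n$ and kill its higher non-integral coefficients one at a time: if $G_n=\sum_{i\ge 0}a_ix^i$ with $a_i=0$ for $i<n$ and $a_n=d_n\in\Z$, apply Lemma~\ref{induct} to write $G_n-\sum_{i>n}b_iG_i\in S_0$ for suitable $b_i\in\widehat\Z$; call this element $F_n$. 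Since the correction $\sum_{i>n}b_iG_i$ involves only $G_i$ with $i>n$, we still have $F_n\equiv d_nx^n\pmod{x^{n+1}}$, and $F_n\in S_0$ by construction. (Here one uses that $S$ is closed in $\widehat\Z[[x]]$ in the $x$-adic topology, so the infinite sum $\sum_{i>n}b_iG_i$ lies in $S$, and $S_0=S\cap\Z[[x]]$.)

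For part $(1)$: first, every infinite $\Z$-linear combination $\sum_{n\ge0}a_nF_n$ converges in the $x$-adic topology (the $F_n$ have strictly increasing valuation), lies in $\Z[[x]]$ termwise, and lies in $S$ because $S$ is $x$-adically closed and each $F_n\in S$; hence it lies in $S_0$. Conversely, given $G=\sum_{i\ge0}a_ix^i\in S_0$, I would construct the coefficients $c_n\in\Z$ with $G=\sum c_nF_n$ inductively: having matched the coefficients of $G$ and $\sum_{n<N}c_nF_n$ in degrees $<N$, the degree-$N$ coefficient of the difference $G-\sum_{n<N}c_nF_n$ is some integer $e_N$ (it is integral since both $G$ and the $F_n$ lie in $\Z[[x]]$), and the difference still lies in $S_0$; by the definition of $d_N$ as the generator of the ideal $\{a\in\widehat\Z : ax^N\in S+x^{N+1}\widehat\Z[[x]]\}$, and since $e_N\in\Z$ is such that $e_Nx^N$ is the leading term of an element of $S$, we get $d_N\mid e_N$, so $c_N:=e_N/d_N\in\Z$ does the job. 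The uniqueness of the $c_n$ is clear from the same leading-term bookkeeping, so $S_0$ is the free $\Z$-module (topologically) on the $F_n$, as claimed.

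For part $(2)$: by the Proposition preceding this theorem, the group of homogeneous degree $m$ stable operations $\CK^*\to\CK^{*+m}$ is identified with $x^{\max(0,m)}\Z[[x]]\cap S_0$. Applying part $(1)$, an element $\sum_{n\ge0}a_nF_n\in S_0$ lies in $x^{\max(0,m)}\Z[[x]]$ if and only if $a_n=0$ for all $n<\max(0,m)$, since $F_n\equiv d_nx^n$ modulo $x^{n+1}$ with $d_n\ne0$, so the lowest-degree term of $\sum_{n\ge0}a_nF_n$ is $d_{n_0}a_{n_0}x^{n_0}$ where $n_0$ is the least index with $a_{n_0}\ne0$. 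Hence that intersection is exactly $\{\sum_{n\ge\max(0,m)}a_nF_n : a_n\in\Z\}$, which is the assertion.

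The main obstacle is the existence of the $F_n\in S_0$, i.e.\ controlling the successive-approximation correction so that it converges inside $S$ and does not disturb the leading term; this is precisely where one must invoke that $S$ (equivalently each $\Im(\Phi^r)$, by the compactness argument of Theorem~\ref{desc}) is $x$-adically closed, and where the minimality defining $d_n$ is used to guarantee that at each stage the needed correction coefficient is available. Everything else is the bookkeeping of leading terms over the free generators.
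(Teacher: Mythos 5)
Your proposal is correct and follows exactly the same route as the paper: existence of the $F_n$ by applying Lemma~\ref{induct} to the $G_n$ (killing the higher non-integral coefficients while leaving the leading $d_nx^n$ unchanged), and then parts $(1)$ and $(2)$ by leading-term bookkeeping against the free topological generators $F_n$, using that $S$ is $x$-adically closed and that $d_N\widehat\Z\cap\Z=d_N\Z$. The paper states $(1)$ and $(2)$ are "clear"; you have simply supplied the routine details.
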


\begin{proof}
 Fix an $n\geq 0$. The coefficient $d_n$ of $G_n$ is integer. Applying Lemma \ref{induct}, we find $b_i\in\widehat\Z$ for $i\geq n+1$
such that $F_n:=G_n-\sum_{i\geq n+1}b_i G_i\in S_0$. Statements (1) and (2) are clear.
\end{proof}

\subsection{The integers $d_n$}\label{integers}

Our next goal is to determine the integers $d_n$.
Let $n> 0$ be an integer. For an integer $r$ write $L_r$ for the $n$-tuple of binomial coefficients:
\[
\Big(\mybinom[0.8]{r}{0},\mybinom[0.8]{r}{1},\ldots, \mybinom[0.8]{r}{n-1}\Big)=(1,r,\ldots)\in\Z^{n}.
\]

For a $n$-sequence $\bar a=(a_1,\ldots, a_n)$ of positive integers let $d(\bar a)$ be
the determinant of the $n\times n$ matrix with columns $L_{a_1},L_{a_2},\ldots, L_{a_n}$. We have
\begin{equation}\label{formm}
d(\bar a)=\big(\Prod_{s>t} (a_s-a_t)\big)/\Prod_{k=1}^{n-1} k!\in\Z.
\end{equation}

Let $p$ be a prime integer. An $n$-sequence $\bar a$ is called \emph{$p$-prime} if all its terms are prime to $p$.
Let $\bar a_{\min}^{(n)}$ be the ``smallest" strictly increasing $p$-prime $n$-sequence
\[
(1,2,\ldots,p-1,p+1,\ldots).
\]

\begin{lemma}\label{divivi}
Let $\bar a$ be a $p$-prime $n$-sequence that differs from $\bar a_{\min}^{(n)}$ at one term only.
Then $d(\bar a_{\min}^{(n)})$ divides $d(\bar a)$ in the ring of $p$-adic integers $\Z_p$.
\end{lemma}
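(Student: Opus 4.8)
The plan is to reduce the statement to a comparison of $p$-adic valuations and then to an elementary counting fact about how the integers prime to $p$ distribute among residue classes modulo $p^k$.

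First I would dispose of the trivial case and set up notation. Since $d(\cdot)$ is an alternating function of the entries of the sequence, only the underlying multiset of entries matters up to sign, so I may write $\bar a_{\min}^{(n)}=(a_1,\dots,a_n)$ and take $\bar a$ to be obtained from it by replacing the entry $a_j$ by a positive integer $b$ prime to $p$. If $b$ equals one of the remaining $a_i$, then two columns of the defining matrix coincide, so $d(\bar a)=0$ and there is nothing to prove; so assume $b\notin\{a_i:i\neq j\}$. Now I invoke formula (\ref{formm}): the factor $\prod_{k=1}^{n-1}k!$ is the same for both sequences, and the products $\prod_{s>t}(\,\cdot\,)$ differ only in the pairs involving the index $j$. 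Hence, writing $v_p$ for the $p$-adic valuation on $\Z_p$ (both determinants being ordinary integers), one gets
\[
v_p\big(d(\bar a)\big)-v_p\big(d(\bar a_{\min}^{(n)})\big)=\sum_{i\neq j}v_p(b-a_i)-\sum_{i\neq j}v_p(a_j-a_i),
\]
and it suffices to show this difference is $\geq 0$.

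Next I would rewrite each sum over residue classes. For $k\geq 1$ set $c_k=\#\{i:a_i\equiv a_j\ (\mathrm{mod}\ p^k)\}$ and $c_k'=\#\{i:a_i\equiv b\ (\mathrm{mod}\ p^k)\}$, where $i$ ranges over all of $1,\dots,n$. Counting divisibilities, $\sum_{i\neq j}v_p(a_j-a_i)=\sum_{k\geq 1}(c_k-1)$ and $\sum_{i\neq j}v_p(b-a_i)=\sum_{k\geq 1}\big(c_k'-[\,a_j\equiv b\ (\mathrm{mod}\ p^k)\,]\big)$ (both sums finite). So it is enough to verify, for every $k$, the termwise inequality $c_k'-[\,a_j\equiv b\ (\mathrm{mod}\ p^k)\,]\geq c_k-1$. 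If $a_j\equiv b$ modulo $p^k$ the two residue classes agree, so $c_k'=c_k$ and the inequality is an equality; the real content is the case $a_j\not\equiv b$, where one needs $c_k'\geq c_k-1$.

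The crux is then the observation that \emph{among the first $n$ positive integers prime to $p$, the number lying in any fixed residue class modulo $p^k$ prime to $p$ equals $\lfloor n/(p^{k-1}(p-1))\rfloor$ or $\lceil n/(p^{k-1}(p-1))\rceil$}; this follows by partitioning $\{1,2,\dots\}$ into consecutive blocks of $p^k$ integers, each of which meets every coprime residue class modulo $p^k$ exactly once, the last (incomplete) block contributing at most one extra element to each class. Granting this, all the $c_k$ and $c_k'$ (the latter because $b$ is prime to $p$) lie in $\{q,q+1\}$ with $q=\lfloor n/(p^{k-1}(p-1))\rfloor$; hence $c_k'\geq q$ and $c_k\leq q+1$, so $c_k'\geq q\geq c_k-1$, as required. (When $q=0$, i.e. $p^k$ so large that the first $n$ coprime integers are pairwise incongruent modulo $p^k$, the inequality still holds: then $c_k=1$ since $a_j$ is among those integers, so $c_k-1=0\leq c_k'$.) Summing the termwise inequalities yields $v_p(d(\bar a))\geq v_p(d(\bar a_{\min}^{(n)}))$, which is the claim. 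The only genuinely non-formal ingredient is the balanced-distribution observation, and I do not expect it to be an obstacle — it is immediate from the block decomposition above; the rest is bookkeeping, the one point needing mild care being the indicator term $[\,a_j\equiv b\ (\mathrm{mod}\ p^k)\,]$ in the degenerate residue cases.
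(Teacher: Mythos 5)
Your proof is correct, but it takes a genuinely different route from the paper's. The paper starts from the same reduction via formula (\ref{formm}), namely
$d(\bar a)/d(\bar a_{\min}^{(n)})=\Prod(b-a')/\Prod(a-a')$, but then exploits the structure of $\bar a_{\min}^{(n)}$ in a different way: since $a$ and $b$ are prime to $p$, one may \emph{fill in} the omitted multiples of $p$ in each product without changing the generated ideal of $\Z_p$ (the inserted factors $a-pk$, $b-pk$ are $p$-adic units), and after including a factor of $b$ the two completed products become $a!(c-a)!$ and $(-1)^{c-a}a!(c-a)!\binom{b}{a}\binom{c-b}{c-a}$, where $c$ is the last entry of $\bar a_{\min}^{(n)}$. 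The ratio is then a product of two binomial coefficients, hence an integer, and the lemma follows with no explicit valuation computation. Your argument instead expands $v_p$ via the Legendre-type identity $v_p(m)=\sum_{k\ge 1}[\,p^k\mid m\,]$ and reduces everything to the equidistribution of the first $n$ positive integers prime to $p$ among the $p^{k-1}(p-1)$ coprime residue classes modulo $p^k$ (each class hit $\lfloor n/(p^{k-1}(p-1))\rfloor$ or one more time). That equidistribution fact is the combinatorial shadow of the same feature of $\bar a_{\min}^{(n)}$ that the paper uses, but you turn it into a termwise inequality $c_k'\ge c_k-1$ rather than into integrality of binomial coefficients. The trade-off: the paper's proof is shorter once one sees the completion trick, while yours is more elementary and transparent about \emph{where} in the $p$-adic filtration the divisibility arises; both are perfectly valid. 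One small stylistic point: you should also note at the outset that $b=a_j$ is trivial (then $d(\bar a)=d(\bar a_{\min}^{(n)})$), alongside the case $b\in\{a_i:i\neq j\}$ that you already dispose of, so that the degenerate indicator-term case is cleanly isolated.
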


\begin{proof}
Suppose $\bar a$ is obtained from $\bar a_{\min}^{(n)}$ by replacing a term $a$ by $b$.
It follows from (\ref{formm}) that
\[
d(\bar a)/d(\bar a_{\min}^{(n)})=\Prod(b-a')/\Prod(a-a'),
\]
where the products are taken over all terms $a'$ of $\bar a_{\min}$ but $a$.
Since $a$ is prime to $p$, the product $\Prod(a-a')$ generates the same ideal in $\Z_p$
as $a!(c-a)!$, where $c$ is the last term of $\bar a_{\min}$. Similarly, as $b$ is prime to $p$,
the product $\Prod(b-a')$ generates the same ideal in $\Z_p$ as
\[
b(b-1)\cdots (b-a+1)\cdot (b-a-1)\cdots (b-c+1)(b-c)
=(-1)^{c-a}a!(c-a)!\mybinom[0.8]{b}{a}\mybinom[0.8]{c-b}{c-a}.\qedhere
\]
\end{proof}

\begin{corollary}\label{inzp}
The integer $d(\bar a_{\min}^{(n)})$ divides $d(\bar a_{\min}^{(n+1)})$ in $\Z_p$.
\end{corollary}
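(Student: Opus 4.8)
The plan is to obtain the divisibility from a Laplace expansion of the $(n+1)\times(n+1)$ determinant $d(\bar a_{\min}^{(n+1)})$ along its last row, applying Lemma \ref{divivi} to the resulting minors. Write $\bar a_{\min}^{(n+1)}=(a_1,\dots,a_{n+1})$, so that $\bar a_{\min}^{(n)}=(a_1,\dots,a_n)$ and $a_{n+1}$ is the next positive integer prime to $p$. The matrix defining $d(\bar a_{\min}^{(n+1)})$ has $(i,j)$-entry $\binom{a_j}{i}$ with $0\leq i\leq n$ and $1\leq j\leq n+1$. Expanding along the row $i=n$ gives $d(\bar a_{\min}^{(n+1)})=\sum_{j=1}^{n+1}\pm\binom{a_j}{n}M_j$, where $M_j$ is the minor obtained by deleting that row and the $j$-th column; each $M_j$ is the determinant of the matrix with entries $\binom{a_l}{i}$ for $0\leq i\leq n-1$ and $l$ ranging over $\{1,\dots,n+1\}\setminus\{j\}$, i.e. $M_j=d(\bar a^{(j)})$ for the $n$-sequence $\bar a^{(j)}$ obtained by deleting $a_j$ from $(a_1,\dots,a_{n+1})$.

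Next I would treat the summands. For $j=n+1$ one has $\bar a^{(n+1)}=\bar a_{\min}^{(n)}$, so $M_{n+1}=d(\bar a_{\min}^{(n)})$. For $1\leq j\leq n$, a permutation of the columns turns $\bar a^{(j)}$ into $(a_1,\dots,a_{j-1},a_{n+1},a_{j+1},\dots,a_n)$, changing $M_j$ only by a sign; this sequence is $p$-prime (since $a_{n+1}$ is prime to $p$) and differs from $\bar a_{\min}^{(n)}$ in exactly its $j$-th term, so Lemma \ref{divivi} yields $d(\bar a_{\min}^{(n)})\mid M_j$ in $\Z_p$. As the $\binom{a_j}{n}$ are integers, $d(\bar a_{\min}^{(n)})$ divides every summand, hence divides $d(\bar a_{\min}^{(n+1)})$ in $\Z_p$, which is exactly the assertion.

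The only delicate point is the bookkeeping of signs together with the column reordering needed to present each relevant $n$-sequence as $\bar a_{\min}^{(n)}$ with a single entry changed; since $d$ is defined through a determinant, reordering columns only introduces a sign and is harmless. If one prefers to bypass the expansion entirely, formula (\ref{formm}) gives directly $d(\bar a_{\min}^{(n+1)})/d(\bar a_{\min}^{(n)})=\big(\prod_{t=1}^{n}(a_{n+1}-a_t)\big)/n!$, and the integers $a_{n+1}-a_t$ with $1\leq t\leq n$ are precisely the elements of $\{1,\dots,a_{n+1}-1\}$ not congruent to $a_{n+1}$ modulo $p$; since $a_{n+1}$ is prime to $p$ the omitted residue class is nonzero, so $v_p$ of the numerator equals $v_p((a_{n+1}-1)!)$, which is at least $v_p(n!)$ because $n\leq a_{n+1}-1$. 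Either route finishes the proof, and I would present the Laplace-expansion argument as the main one since it reuses Lemma \ref{divivi} verbatim.
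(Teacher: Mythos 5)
Your main argument is correct and is essentially the paper's own proof: the paper also expands $d(\bar a_{\min}^{(n+1)})$ by cofactors along the last row and invokes Lemma \ref{divivi} for the minors, with your column-permutation bookkeeping left implicit. Your alternative route via formula (\ref{formm}) is also valid and in effect computes $v_p(d_n)$ directly, anticipating the paper's later formula (\ref{padic}).
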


\begin{proof}
In the cofactor expansion (Laplace's formula) of the determinant $d(\bar a_{\min}^{(n+1)})$ along the last row
all minors are divisible by $d(\bar a_{\min}^{(n)})$ in view of Lemma \ref{divivi}.
\end{proof}

Write $M_n$ for the $\Z_p$-submodule of $(\Z_p)^n$
generated by the tuples $L_{a_1},L_{a_2},\ldots, L_{a_n}$, where $(a_1,a_2,\ldots,a_n)=\bar a_{\min}^{(n)}$.

\begin{lemma}\label{divivi2}
Let $b$ be an integer prime to $p$. Then the $n$-tuple $L_b$ is contained in $M_n$.
In others words, the $\Z_p$-submodule of $(\Z_p)^n$ generated by $L_{b}$ for all integers $b>0$ prime to $p$
coincides with $M_n$.
\end{lemma}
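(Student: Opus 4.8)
The plan is to deduce the containment from Cramer's rule applied to the tuples $L_{a_1},\dots,L_{a_n}$, using Lemma \ref{divivi} to control denominators. First, by formula (\ref{formm}) the determinant $d(\bar a_{\min}^{(n)})$ of the matrix with columns $L_{a_1},\dots,L_{a_n}$ (where $(a_1,\dots,a_n)=\bar a_{\min}^{(n)}$) is a nonzero integer, so these $n$ tuples form a $\Q_p$-basis of $(\Q_p)^n$. Hence for any integer $b$ prime to $p$ there are unique $c_1,\dots,c_n\in\Q_p$ with $L_b=\sum_{i=1}^n c_i L_{a_i}$, and the task reduces to showing each $c_i\in\Z_p$.

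Next I would identify the $c_i$ by Cramer's rule: $c_i=d(\bar a^{(i)})/d(\bar a_{\min}^{(n)})$, where $\bar a^{(i)}$ is the $n$-sequence obtained from $\bar a_{\min}^{(n)}$ by replacing its $i$-th entry $a_i$ by $b$. (If $b$ coincides with some other entry of $\bar a_{\min}^{(n)}$ the numerator is $0$ and $c_i=0$; if $b=a_i$ then $c_i=1$; these degenerate cases are harmless.) Since $b$ is prime to $p$, the sequence $\bar a^{(i)}$ is again $p$-prime and it differs from $\bar a_{\min}^{(n)}$ in a single term, so Lemma \ref{divivi} applies and yields $d(\bar a_{\min}^{(n)})\mid d(\bar a^{(i)})$ in $\Z_p$. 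Therefore every $c_i$ lies in $\Z_p$, i.e.\ $L_b\in M_n$.

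Finally, for the reformulation, let $N\subseteq(\Z_p)^n$ be the $\Z_p$-submodule generated by $L_b$ for all integers $b>0$ prime to $p$; since each $a_i$ is prime to $p$ we get $M_n\subseteq N$, and the previous paragraph gives $N\subseteq M_n$, so $N=M_n$. The only point requiring care is checking that the hypotheses of Lemma \ref{divivi} genuinely hold for each $\bar a^{(i)}$ — $p$-primality, which is immediate from $b$ being prime to $p$, and the single-term-difference condition, which holds by construction — together with the bookkeeping for the degenerate cases above; everything else is just Cramer's rule over $\Q_p$.
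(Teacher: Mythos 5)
Your argument is correct and is essentially identical to the paper's own proof: both apply Cramer's rule to express $L_b$ in the basis $L_{a_1},\dots,L_{a_n}$ and then invoke Lemma \ref{divivi} to conclude that the resulting coefficients $d(\bar a_{(i)})/d(\bar a_{\min}^{(n)})$ lie in $\Z_p$. Your extra remarks about degenerate cases and the reverse inclusion $M_n\subseteq N$ are fine but not strictly needed, since the paper treats the first containment as the substance and the set-equality as immediate.
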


\begin{proof}
By Cramer's rule, the solutions of the equation $L_b=x_1L_{a_1}+\ldots + x_n L_{a_n}$ are given by the formula
$x_i=d(\bar a_{(i)})/d(\bar a_{\min}^{(n)})$, where the sequence $\bar a_{(i)}$ is obtained from $\bar a_{\min}^{(n)}$
by replacing the $i$-th term with $b$. By Lemma \ref{divivi}, we have $x_i\in\Z_p$.
\end{proof}

The following statement is a generalization of Lemma \ref{divivi}.

\begin{corollary}\label{allall}
Let $\bar a$ be any $p$-prime $n$-sequence.
Then $d(\bar a_{\min}^{(n)})$ divides $d(\bar a)$ in $\Z_p$. \qed
\end{corollary}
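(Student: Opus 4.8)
The plan is to reduce the general case to Lemma \ref{divivi2} by a single change-of-basis argument on the defining matrices. Write $\bar a_{\min}^{(n)}=(a_1,\ldots,a_n)$ and $\bar a=(b_1,\ldots,b_n)$, and let $A$ be the $n\times n$ matrix over $\Z_p$ whose columns are $L_{a_1},\ldots,L_{a_n}$, so that $\det A=d(\bar a_{\min}^{(n)})$, and let $B$ be the matrix whose columns are $L_{b_1},\ldots,L_{b_n}$, so that $\det B=d(\bar a)$. Since each $b_j$ is prime to $p$, Lemma \ref{divivi2} tells us that every column $L_{b_j}$ lies in $M_n$, which by definition is precisely the $\Z_p$-span of the columns of $A$.

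Therefore, for each $j$ there is a vector $c_j\in(\Z_p)^n$ with $L_{b_j}=A\,c_j$; assembling the $c_j$ as the columns of a matrix $C\in M_n(\Z_p)$ gives $B=A\,C$. Taking determinants yields
\[
d(\bar a)=\det B=\det A\cdot\det C=d(\bar a_{\min}^{(n)})\cdot\det C,
\]
and $\det C\in\Z_p$ since $C$ has entries in $\Z_p$. Hence $d(\bar a_{\min}^{(n)})$ divides $d(\bar a)$ in $\Z_p$, which is the assertion.

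There is essentially no obstacle here: the only point to be careful about is that $M_n$ need not equal all of $(\Z_p)^n$ (the $L_{a_i}$ need not form a $\Z_p$-basis), but this is irrelevant — all we use is that the $L_{b_j}$ lie in the $\Z_p$-span of the $L_{a_i}$, which is exactly the content of Lemma \ref{divivi2}, itself deduced from Lemma \ref{divivi} via Cramer's rule. So the corollary follows immediately once those two lemmas are in place; the present statement is really just the coordinate-free repackaging of Lemma \ref{divivi}.
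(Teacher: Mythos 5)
Your proof is correct and is exactly the argument the paper leaves implicit (the corollary is stated with only a \qed after it, following immediately from Lemma \ref{divivi2}): since every column $L_{b_j}$ lies in the span $M_n$ of the columns of the minimal matrix, the matrix for $\bar a$ factors as the minimal matrix times a $\Z_p$-matrix, and taking determinants gives the divisibility.
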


Set
\[
d_n=d_n^{(p)}:=d(\bar a_{\min}^{(n+1)})/d(\bar a_{\min}^{(n)}).
\]
By Corollary \ref{inzp}, $d_n\in\Z_p$.

Write $n$ in the form $n=(p-1)k+i$, where $i=0,1\ldots, p-2$ and $k=\floor{\frac{n}{p-1}}$. Then
it follows from (\ref{formm}) that
\begin{equation}\label{padic}
d_n\Z_p=\frac{p^k\cdot k!}{n!}\Z_p,\quad\text{or, equivalently}\quad v_p(d_n)=k+v_p(k!)-v_p(n!),
\end{equation}
where $v_p$ is the $p$-adic discrete valuation.

Note that $v_p((n+k)!)=v_p((pk+i)!)=v_p((pk)!)=k+v_p(k!)$, hence $d_n\Z_p=\frac{(n+k)!}{n!}\Z_p$.
Observe that the function $n\mapsto v_p(d_n)$ is not monotonic.

\begin{proposition}\label{mama}
An $(n+1)$-tuple $(0,0,\ldots,0,d)$ is contained in $M_{n+1}$ if and only if $d$ is divisible by $d_n$ in $\Z_p$.
\end{proposition}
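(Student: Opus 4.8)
The plan is to reduce the statement to a Cramer's-rule computation for the $(n+1)\times(n+1)$ matrix whose columns are the generating tuples of $M_{n+1}$. First I would fix notation: write $\bar a_{\min}^{(n+1)}=(a_1<a_2<\dots<a_{n+1})$ for the first $n+1$ positive integers prime to $p$, so that $a_{n+1}$ is the largest term and $(a_1,\dots,a_n)=\bar a_{\min}^{(n)}$. Let $A$ be the $(n+1)\times(n+1)$ matrix over $\Z$ whose $j$-th column is the $(n+1)$-tuple $L_{a_j}=\bigl(\binom{a_j}{0},\dots,\binom{a_j}{n}\bigr)^{\mathrm{t}}$; by definition $M_{n+1}=A\cdot(\Z_p)^{n+1}$, and $\det A=d(\bar a_{\min}^{(n+1)})\neq 0$ by formula (\ref{formm}), so $A$ is invertible over $\Q_p$. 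Consequently, for $v=(0,\dots,0,d)^{\mathrm{t}}$ one has $v\in M_{n+1}$ if and only if the unique vector $x\in(\Q_p)^{n+1}$ with $Ax=v$ already lies in $(\Z_p)^{n+1}$.

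Next I would compute the entries of $x$ by Cramer's rule. Writing $A_i$ for $A$ with its $i$-th column replaced by $v$ and expanding $\det A_i$ along that column (which has $d$ in the last slot and zeros elsewhere), $x_i=\det A_i/\det A=\pm d\cdot D_i/d(\bar a_{\min}^{(n+1)})$, where $D_i$ is the minor of $A$ obtained by deleting the last row and the $i$-th column. Deleting the last row of $A$ leaves the matrix whose columns are the $n$-tuples $L_{a_j}=\bigl(\binom{a_j}{0},\dots,\binom{a_j}{n-1}\bigr)^{\mathrm{t}}$; deleting in addition the $i$-th column identifies $D_i=d(\bar a^{(i)})$, where $\bar a^{(i)}$ is the $p$-prime $n$-sequence obtained from $\bar a_{\min}^{(n+1)}$ by removing $a_i$ (signs and column reorderings are irrelevant for the divisibility questions below). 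The point is that $\bar a^{(n+1)}=(a_1,\dots,a_n)=\bar a_{\min}^{(n)}$, so $x_{n+1}=\pm d\cdot d(\bar a_{\min}^{(n)})/d(\bar a_{\min}^{(n+1)})=\pm d/d_n$, while for every $i\le n$ Corollary \ref{allall} gives $d(\bar a_{\min}^{(n)})\mid d(\bar a^{(i)})$ in $\Z_p$, hence $v_p(x_i)\ge v_p(x_{n+1})$.

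Finally I would conclude: $x\in(\Z_p)^{n+1}$ if and only if its coordinate of smallest $p$-adic valuation lies in $\Z_p$, and by the previous step this minimal valuation is achieved at $x_{n+1}=\pm d/d_n$. Thus $v\in M_{n+1}$ if and only if $d/d_n\in\Z_p$, i.e. $d_n\mid d$ in $\Z_p$, which is the assertion. The only nontrivial ingredient is the identification of the cofactor $D_i$ with $d(\bar a^{(i)})$ together with the extremality observation $\bar a^{(n+1)}=\bar a_{\min}^{(n)}$, which is what lets Corollary \ref{allall} pin down the minimum; everything else is the routine invertible-matrix/Cramer bookkeeping. (Alternatively, one can argue with lattice indices: the projection $\pi$ forgetting the last coordinate satisfies $\pi(M_{n+1})=M_n$ by Lemma \ref{divivi2}, and the exact sequence $0\to\Z_p/c_0\Z_p\to(\Z_p)^{n+1}/M_{n+1}\to(\Z_p)^n/M_n\to 0$ forces $v_p(c_0)=v_p(\det A)-v_p(d(\bar a_{\min}^{(n)}))=v_p(d_n)$, where $c_0\Z_p=\{c\in\Z_p:(0,\dots,0,c)\in M_{n+1}\}$.)
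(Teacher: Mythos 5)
Your argument is correct and follows essentially the same route as the paper: you apply Cramer's rule, identify the cofactors $D_i$ with $d(\bar a^{(i)})$, and use Corollary \ref{allall} together with the observation that the $i=n+1$ minor equals $d(\bar a_{\min}^{(n)})$ to see that the binding constraint is $d/d_n\in\Z_p$. The alternative lattice-index remark at the end is a nice addition but not present in the paper.
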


\begin{proof}
As in the proof of Lemma \ref{divivi2}, $(0,0,\ldots,0,d)\in M_{n+1}$ if and only if $d\cdot d(\bar a_{(i)})$
is divisible by $d(\bar a_{\min}^{(n+1)})$
in $\Z_p$ for all $i$, where the sequence $\bar a_{(i)}$ is obtained from $\bar a_{\min}^{(n)}$
by deleting the $i$-th term in $\bar a_{\min}^{(n+1)}$. We have $d(\bar a_{(i)})=d(\bar a_{\min}^{(n)})$ if $i=n+1$ and by Corollary \ref{allall}, all $d(\bar a_{(i)})$ are divisible by $d(\bar a_{\min}^{(n)})$, whence the result.
\end{proof}

For an integer $r$, let as before $A_r(x)=(1-x)^r$. Note that the $n$-tuple $L_r$ is the tuple
of coefficients (after appropriate change of signs) of the $x^n$-truncation of the polynomial $A_r$.
Denote by $N^{(p)}$ the $\Z_p$-submodule of $\Z_p[x]$ generated by $A_r$ for all integers $r>0$ prime to $p$.
We get an immediate corollary from
Lemma \ref{divivi2} and Proposition \ref{mama}:

\begin{proposition}\label{poly}
Let $d\in\Z_p$ and $n\geq 0$. Then $dx^n\in N^{(p)}+x^{n+1}\Z_p[x]$ if and only if $d$ is divisible by $d_n$.
Moreover, there is a $\Z_p$-linear combination $G_n$ of the Adams polynomials
$A_{a_1},A_{a_2},\ldots, A_{a_{n+1}}$, where $(a_1,a_2,\ldots,a_{n+1})=\bar a_{\min}^{({n+1})}$, such that
$G_n \equiv  d_n x^n\ (mod\ x^{n+1})$.\qed
\end{proposition}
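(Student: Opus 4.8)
The plan is to reduce the assertion to the finite‑dimensional facts about the modules $M_{n+1}$ established above, by passing to the truncated quotient $\Z_p[x]/x^{n+1}\Z_p[x]$. The coefficient map identifies this quotient with $(\Z_p)^{n+1}$, and under it the truncation of the Adams polynomial $A_r=(1-x)^r$ becomes the vector $\big((-1)^i\binom{r}{i}\big)_{i=0}^{n}=D\,L_r$, where $D=\diag(1,-1,\dots,(-1)^n)$ is a diagonal unit and $L_r=\big(\binom{r}{0},\dots,\binom{r}{n}\big)$ is the $(n+1)$‑entry analogue of the tuples used in Section \ref{integers}; the polynomial $dx^n$ becomes $d\,e$, where $e=(0,\dots,0,1)$, and $D e=(-1)^n e$. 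Since $x^{n+1}\Z_p[x]$ is exactly the kernel of the truncation map, $dx^n\in N^{(p)}+x^{n+1}\Z_p[x]$ holds if and only if $d\,e$ lies in the $\Z_p$‑span of the vectors $D L_r$ with $r>0$ prime to $p$; applying $D=D^{-1}$, this is equivalent to $(-1)^n d\,e$ lying in the $\Z_p$‑span of the $L_r$ with $r>0$ prime to $p$.

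First I would invoke Lemma \ref{divivi2}, with $n$ replaced by $n+1$, to identify that span with $M_{n+1}$. Hence $dx^n\in N^{(p)}+x^{n+1}\Z_p[x]$ if and only if the $(n+1)$‑tuple $(0,\dots,0,(-1)^n d)$ belongs to $M_{n+1}$, which by Proposition \ref{mama} holds precisely when $d_n$ divides $(-1)^n d$ in $\Z_p$; as $(-1)^n$ is a unit, this is exactly the criterion $d_n\mid d$.

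For the ``moreover'' part, Proposition \ref{mama} applied with $d=d_n$ gives $(0,\dots,0,d_n)\in M_{n+1}$. By the definition of $M_{n+1}$ there are $c_j\in\Z_p$ with $(0,\dots,0,d_n)=\sum_{j=1}^{n+1}c_j L_{a_j}$, where $(a_1,\dots,a_{n+1})=\bar a_{\min}^{(n+1)}$. Multiplying by $D$ and translating back through the coefficient identification yields $\sum_j c_j A_{a_j}\equiv(-1)^n d_n x^n\pmod{x^{n+1}}$, so $G_n:=(-1)^n\sum_j c_j A_{a_j}$ is the desired $\Z_p$‑linear combination of $A_{a_1},\dots,A_{a_{n+1}}$ with $G_n\equiv d_n x^n\pmod{x^{n+1}}$.

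I do not expect a genuine obstacle: Lemmas \ref{divivi}, \ref{divivi2} and Proposition \ref{mama} already carry all the $p$‑adic arithmetic, and the only thing that needs care is the bookkeeping — keeping the index shift from $n$ to $n+1$ straight, tracking the diagonal sign change relating $L_r$ to the coefficient vector of $A_r$, and noting that rescaling by the unit $(-1)^n$ changes nothing. Alternatively, the ``moreover'' statement can be produced directly by Cramer's rule from the system $\sum_j c_j L_{a_j}=(0,\dots,0,d_n)$, exactly as in the proof of Proposition \ref{mama}, with $c_i=\pm\,d(\bar a_{(i)})/d(\bar a_{\min}^{(n)})\in\Z_p$ by Corollary \ref{allall}.
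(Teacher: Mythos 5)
Your argument is correct and is precisely the chain of reductions the paper has in mind: the paper states Proposition \ref{poly} as an ``immediate corollary'' of Lemma \ref{divivi2} and Proposition \ref{mama} via the observation that the coefficient tuple of the $x^{n+1}$-truncation of $A_r$ is $L_r$ up to the diagonal sign change, and your proof simply makes that identification, the index shift from $n$ to $n+1$, and the invertibility of the sign matrix explicit. The Cramer's-rule alternative you mention for the ``moreover'' clause is also exactly the mechanism used inside Lemma \ref{divivi2} and Proposition \ref{mama}, so both variants of your bookkeeping are consistent with the paper's intended derivation.
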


\begin{proposition}\label{poly2}
The set $S_{\Z_p}=\cap_{r}\Im(\Phi_{\Z_p}^r)$ contains a power series $\equiv  d x^n\ (mod\ x^{n+1})$ if and only if
$d$ is divisible by $d_n$ in $\Z_p$.
\end{proposition}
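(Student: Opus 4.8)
The plan is to bridge the finite, purely combinatorial statements of Propositions~\ref{poly} and \ref{mama} (which live in the free module $\Z_p^{n+1}$) with the power–series object $S_{\Z_p}$, exploiting that the ideal $(p^r,x^m)\subset\Z_p[[x]]$ is stable under $\Phi$ whenever $p^r\mid m$. For the ``if'' direction I would first record that $N^{(p)}\subset S_{\Z_p}$: since $\Phi(A_j)=j\,A_j$ and $j\in\Z_p^{\times}$ for $p\nmid j$, we have $A_j=\Phi^s(j^{-s}A_j)\in\Im(\Phi_{\Z_p}^s)$ for every $s$, and $S_{\Z_p}=\cap_s\Im(\Phi_{\Z_p}^s)$ is a $\Z_p$-submodule. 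Proposition~\ref{poly} then provides a $\Z_p$-combination $G_n$ of the polynomials $A_{a_i}$, $(a_i)=\bar a_{\min}^{(n+1)}$ (all prime to $p$), with $G_n\equiv d_n x^n\ (\modd\ x^{n+1})$; if $d=d_n e$ with $e\in\Z_p$, then $e\,G_n\in S_{\Z_p}$ is $\equiv d x^n\ (\modd\ x^{n+1})$.

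The substance is the ``only if'' direction. Suppose $G=\Sum_{i\ge 0}a_i x^i\in S_{\Z_p}$ with $a_0=\dots=a_{n-1}=0$ and $a_n=d$, and fix $r\ge 1$; choose $m>n$ divisible by $p^r$ and set $I=(p^r,x^m)$. As in the proof of Theorem~\ref{desc}, $\Phi(I)\subset I$. Writing $G=\Phi^r(H)$ and discarding the summand of $H$ of valuation $\ge m$ (which lies in $I$), we may replace $H$ by a polynomial $H'$ of degree $<m$ with $G\equiv\Phi^r(H')\ (\modd\ I)$. In the variable $y=x-1$ the operator $\Phi$ is diagonal, $\Phi(y^j)=j\,y^j$, so $\Phi^r(H')=\Sum_{j<m}j^r b_j y^j$, and the terms with $p\mid j$ are divisible by $p^r$. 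Hence $G\equiv\Sum_{0\le j<m,\,p\nmid j}c_j y^j\ (\modd\ I)$ for suitable $c_j\in\Z_p$, and a fortiori modulo $(p^r,x^{n+1})$.

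Now I would compare the vectors of $x^0,\dots,x^n$-coefficients of the two sides. On the left this vector is $(0,\dots,0,d)$. The degree $\le n$ part of $y^j=(-1)^j(1-x)^j=(-1)^j A_j$ has coefficient vector $(-1)^j\Delta\,L_j$, where $\Delta=\diag(1,-1,\dots,(-1)^n)\in\GL_{n+1}(\Z_p)$, and by Lemma~\ref{divivi2} the tuples $L_j$ with $p\nmid j$ lie in $M_{n+1}$. Therefore $(0,\dots,0,d)\in\Delta M_{n+1}+p^r\Z_p^{n+1}=\Delta\bigl(M_{n+1}+p^r\Z_p^{n+1}\bigr)$, and applying $\Delta^{-1}$ (which preserves $p^r\Z_p^{n+1}$ and $M_{n+1}$ is a module) gives $(0,\dots,0,d)\in M_{n+1}+p^r\Z_p^{n+1}$. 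Letting $r$ vary and using that $M_{n+1}$, being finitely generated over $\Z_p$, is closed in the $p$-adic topology of $\Z_p^{n+1}$, we conclude $(0,\dots,0,d)\in M_{n+1}$, whence $d_n\mid d$ by Proposition~\ref{mama}.

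I expect the main obstacle to be exactly this reduction: showing that an arbitrary element of $S_{\Z_p}$ is, modulo $(p^r,x^{n+1})$, a $\Z_p$-combination of the Adams polynomials $A_j$ with $p\nmid j$ — this is where the $\Phi$-stability of $(p^r,x^m)$ and the eigenbasis $\{y^j\}$ of $\Phi$ must be combined — and then carefully passing from these congruences (valid for all $r$) to honest membership in $M_{n+1}$ via its closedness, keeping track of the harmless unit $\Delta$ relating the truncations of $y^j$ and of $(1-x)^j$.
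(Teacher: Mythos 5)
Your argument is correct and matches the paper's strategy: both pass to the $\Phi$-stable ideal $(p^r,x^m)$ with $p^r\mid m$, expand a $\Phi^r$-preimage in the eigenbasis of $\Phi$ (you use $y^j=(x-1)^j$, the paper uses $A_j=(1-x)^j$, differing only by the harmless sign $\Delta$), observe that the $p\mid j$ terms die modulo $p^r$, truncate to degree $n$, and land in Proposition \ref{mama} (resp.\ \ref{poly}). The only difference is the last absorption step — the paper picks $k$ once with $d(\bar a_{\min}^{(n+1)})\mid p^k$ so the error $p^k\Z_p^{n+1}$ lies already in $M_{n+1}$, whereas you let $r\to\infty$ and invoke closedness of $M_{n+1}$; both are fine.
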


\begin{proof}
Suppose that $G\in S_{\Z_p}$ and $G\equiv dx^n$ modulo $x^{n+1}$. Choose integers $k>0$
such that $p^k$ is divisible by $d(\bar a_{\min}^{(n+1)})$
and $m>n$ divisible by $p^k$ and consider the ideal $I=(p^k, x^m)\subset \Z_p$. We have $G=\Phi^k(G')$ for some $G'\in \Z_p[[x]]$ and write
\[
G'=\Sum_{i=0}^{m-1}b_i A_{i} \quad\text{modulo}\quad x^{m}\Z_p[[x]]
\]
for some $b_i\in\Z_p$. Applying $\Phi^k$ and taking into account the equality $\Phi^k(A_{i})=i^k A_{i}$, we get
$G=\Phi^k(G')\in N^{(p)}+I$. Taking the $x^{n+1}$-truncations, we see that
\[
dx^n\in N^{(p)} +x^{n+1}\Z_p[x]+p^k\Z_p[x].
\]
As $p^k$ is divisible by $d(\bar a_{\min}^{(n+1)})$, we conclude that $p^k\Z_p[x]\subset N^{(p)} +x^{n+1}\Z_p[x]$, hence
$dx^n\in N^{(p)} +x^{n+1}\Z_p[x]$. By Proposition \ref{poly}, $d$ is divisible by $d_n$.
\end{proof}


Now we turn to the ring $\widehat\Z$. The integers $d_n$ defined before Lemma \ref{induct} are the products of
primary parts of $d_n=d_n^{(p)}$ determined as above for every prime $p$. In view of (\ref{padic}) we have
\[
v_p(d_n)=\floor{\frac{n}{p-1}}+v_p(\floor{\frac{n}{p-1}}!)-v_p(n!)
\]
for every prime $p$. For example,
$d_0 =1$, $d_1 =2$, $d_2 =2^2\cdot 3$, $d_3 =2^3$, $d_4 =2^4\cdot 3\cdot 5$, $d_5 =2^5\cdot 3$,
$d_6 =2^6\cdot 3^2\cdot 7$, $d_7 =2^7\cdot 3^2$.

Propositions \ref{poly} and \ref{poly2} yield:

\begin{theorem}\label{poly3}
Let $n\geq 0$ be an integer. Then
\begin{enumerate}
  \item There is a $\widehat\Z$-linear combination $G_n$ of the Adams polynomials
$A_{a_1},A_{a_2},\ldots, A_{a_{n+1}}$ for some $a_1,a_2,\ldots, a_{n+1}\in \widehat\Z^\times$ such that $G_n\equiv d_nx^n$ modulo $x^{n+1}$.
  \item The set $S=\cap_{r}\Im(\Phi_{\widehat\Z}^r)$ contains a power series $\equiv  d x^n\ (mod\ x^{n+1})$
if and only if $d$ is divisible by $d_n$ in $\widehat\Z$. It consists of all (infinite) linear combinations of $G_n$.
\end{enumerate}
\end{theorem}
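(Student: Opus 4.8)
The plan is to reduce the statement over $\widehat\Z$ to the corresponding statements over each $\Z_p$, which were already settled in Propositions \ref{poly} and \ref{poly2}. Since $\widehat\Z=\prod_p\Z_p$ and $\widehat\Z[[x]]=\prod_p\Z_p[[x]]$, and the operator $\Phi_{\widehat\Z}$ acts componentwise as $\Phi_{\Z_p}$ on each factor, we have $S=\cap_r\Im(\Phi_{\widehat\Z}^r)=\prod_p S_{\Z_p}$, where $S_{\Z_p}=\cap_r\Im(\Phi_{\Z_p}^r)$. Likewise, the submodule $N^{(p)}\subset\Z_p[x]$ generated by the $A_r$ with $r$ prime to $p$ sits inside the analogous global picture; and divisibility of $d\in\widehat\Z$ by the integer $d_n$ is equivalent to divisibility of each $p$-component $d^{(p)}$ by $d_n^{(p)}$, since $d_n=\prod_p d_n^{(p)}$ is the product of the primary parts.

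For part (2): a power series $G\in\widehat\Z[[x]]$ lies in $S$ if and only if each component $G^{(p)}\in\Z_p[[x]]$ lies in $S_{\Z_p}$. By Proposition \ref{poly2}, $S_{\Z_p}$ contains a power series $\equiv d^{(p)}x^n$ modulo $x^{n+1}$ if and only if $d^{(p)}$ is divisible by $d_n^{(p)}$ in $\Z_p$. Assembling these over all $p$, $S$ contains a power series $\equiv dx^n$ modulo $x^{n+1}$ if and only if $d^{(p)}$ is divisible by $d_n^{(p)}$ for every $p$, which is precisely the condition that $d$ is divisible by $d_n$ in $\widehat\Z$. (The only mild subtlety is that the power series witnessing the congruence can be chosen coherently across primes: for all but finitely many $p$ one has $v_p(d_n)=0$, so $d_n^{(p)}$ is a unit and one may simply take $A_1\equiv 1-x$ scaled appropriately; for the finitely many remaining primes one uses the explicit $\Z_p$-witness from Proposition \ref{poly2}. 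Taking the product of these local choices gives a global element of $S$.)

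For part (1): by Proposition \ref{poly}, for each prime $p$ there is a $\Z_p$-linear combination $G_n^{(p)}$ of the Adams polynomials $A_{a_1^{(p)}},\dots,A_{a_{n+1}^{(p)}}$ with $(a_1^{(p)},\dots,a_{n+1}^{(p)})=\bar a_{\min}^{(n+1)}$ (the $p$-prime sequence $(1,2,\dots,p-1,p+1,\dots)$) such that $G_n^{(p)}\equiv d_n^{(p)}x^n$ modulo $x^{n+1}$. For all but finitely many $p$ we have $v_p(d_n)=0$ and may take $G_n^{(p)}=A_1$. Now choose $a_1,\dots,a_{n+1}\in\widehat\Z^\times$ whose $p$-components agree with $a_i^{(p)}$ for the finitely many bad primes and are $\equiv 1$ at the good ones; such elements exist by the Chinese Remainder Theorem applied to the finitely many relevant prime powers (here one needs the entries to be distinct units of $\widehat\Z$, which is arranged since the constraints only fix each $a_i$ modulo a finite number, leaving infinitely many congruence classes available). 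Then the $\widehat\Z$-linear combination $G_n$ of $A_{a_1},\dots,A_{a_{n+1}}$ with coefficients whose $p$-components are those of $G_n^{(p)}$ has $p$-component $\equiv d_n^{(p)}x^n$ modulo $x^{n+1}$ for every $p$, hence $G_n\equiv d_nx^n$ modulo $x^{n+1}$, and $\Phi(A_{a_i})=a_iA_{a_i}$ with $a_i$ invertible shows $A_{a_i}\in S$, so $G_n\in S$.

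The routine points are all book-kept in the earlier propositions; the only thing requiring a moment's care — and the closest thing to an obstacle — is the coherent assembly across primes, i.e. checking that the finitely many local witnesses and the "trivial" witness $A_1$ at good primes can be packaged into a single element of $\widehat\Z[[x]]$ that is simultaneously an honest $\widehat\Z$-linear combination of Adams polynomials $A_r$ with $r\in\widehat\Z^\times$. This is handled by the Chinese Remainder Theorem as indicated, noting that for fixed $n$ only primes $p$ with $p-1\leq n$ (equivalently $\lfloor n/(p-1)\rfloor>0$) can contribute to $v_p(d_n)$, so the bad set is finite.
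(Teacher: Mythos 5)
Your overall strategy — reducing the statement over $\widehat\Z$ to the already-proved $\Z_p$-statements (Propositions \ref{poly} and \ref{poly2}) via the decomposition $\widehat\Z=\prod_p\Z_p$ — is exactly what the paper does, and it is the right approach. The product decomposition means an element of $\widehat\Z$ (or of $\widehat\Z[[x]]$) is nothing more than an independent choice of $p$-component for every $p$, with no compatibility constraint; in particular, no Chinese Remainder Theorem is needed to assemble the local data, and invoking it is a red herring.

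However, there is a genuine error in your treatment of the ``good'' primes (those with $v_p(d_n)=0$). You claim that at such primes one may take $G_n^{(p)}=A_1$ and correspondingly set $(a_i)_p\equiv 1$. This is false as soon as $n\geq 1$: the required congruence is $G_n^{(p)}\equiv d_n^{(p)}x^n\pmod{x^{n+1}}$, but $A_1=1-x$ has constant term $1$ and $x^n$-coefficient $0$ for $n\geq 2$, so $A_1\not\equiv d_n^{(p)}x^n\pmod{x^{n+1}}$ (already $A_1\not\equiv 2x\pmod{x^2}$ at $p>2$ when $n=1$, where $d_1^{(p)}=2$ is a unit). The fact that $d_n^{(p)}$ is a unit does not make the construction at $p$ trivial: one still needs the full linear combination of $A_1,\dots,A_{n+1}$ from Proposition \ref{poly}. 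Moreover, setting $(a_i)_p\equiv 1$ for all $i$ at good primes destroys the Vandermonde structure that Proposition \ref{poly} relies on. The fix is simply to drop the good/bad distinction entirely: for every prime $p$ set $(a_i)_p$ equal to the $i$-th term of $\bar a_{\min}^{(n+1)}$ with respect to $p$ (this is $i$ itself once $p>n+1$), apply Proposition \ref{poly} at each $p$ to get coefficients $c_i^{(p)}\in\Z_p$, and let $c_i\in\widehat\Z$ be the element with these $p$-components. Then $G_n=\sum c_iA_{a_i}$ has the required congruence at every $p$, hence globally, and since each $a_i\in\widehat\Z^\times$ we get $A_{a_i}\in S$ and hence $G_n\in S$. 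Part (2) then follows cleanly: the ``if'' direction is $e\cdot G_n$ for $d=e\cdot d_n$, and the ``only if'' direction is the componentwise application of Proposition \ref{poly2} as you wrote.
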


\begin{remark}
It follows from Proposition \ref{poly} that $a_1,a_2,\ldots, a_{n+1}\in \widehat\Z^\times$ can be chosen so that
for every prime $p$, we have $((a_1)_p,(a_2)_p,\ldots,(a_{n+1})_p)=\bar a_{\min}^{({n+1})}$ with respect to $p$.
In particular, $a_1=1$.
\end{remark}

\begin{proposition}
 \label{taus-zamykanie}
 The set $S=\cap_r\op{Im}(\Phi^r)$ is the closure in the topology
 $\tau_s$, and hence, in the topologies $\tau_o$ and $\tau_w$ of the set of all
 (finite) $\widehat{\zz}$-linear combinations of the power series
 $A_r$ for $r\in\widehat{\zz}^{\times}$.
\end{proposition}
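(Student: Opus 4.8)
The plan is to show $S=\overline{T}$, where $T\subset\widehat{\zz}[[x]]$ denotes the set of all finite $\widehat{\zz}$-linear combinations of the $A_r$ with $r\in\widehat{\zz}^{\times}$ and $\overline{T}$ is its $\tau_s$-closure. Since the paper has already established that $\tau_w\leq\tau_o\leq\tau_s$ and that $S$ is closed in $\tau_w$ (hence in $\tau_o$ and $\tau_s$), the closures of $T$ in the three topologies satisfy $\overline{T}^{\tau_s}\subseteq\overline{T}^{\tau_o}\subseteq\overline{T}^{\tau_w}\subseteq S$; so it suffices to prove $S\subseteq\overline{T}^{\tau_s}$, after which all three closures collapse to $S$ and the proposition follows.

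The inclusion $T\subseteq S$, hence $\overline{T}\subseteq S$ because $S$ is closed, is immediate: $\Phi$ is $\widehat{\zz}$-linear, so every $\Im(\Phi^r)$, and therefore $S$, is a $\widehat{\zz}$-submodule; and from $\Phi(A_r)=r\cdot A_r$ with $r$ a unit one gets $A_r=\Phi^n(r^{-n}A_r)$ for every $n$, so $A_r\in\bigcap_n\Im(\Phi^n)=S$.

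For the reverse inclusion I would fix $G=\Sum_{i\geq 0}a_ix^i\in S$ and produce, for each $N\geq 0$, an element of $T$ congruent to $G$ modulo $x^N$. The tools are exactly Theorem \ref{poly3}: by part (1) there are power series $G_n\in T$ with $G_n\equiv d_nx^n\pmod{x^{n+1}}$, and by part (2) any power series in $S$ whose lowest nonzero term has degree $n$ has $x^n$-coefficient divisible by $d_n$ in $\widehat{\zz}$. One then eliminates coefficients one at a time: put $G^{(0)}=G$ and, given $G^{(n)}\in S$ with $v(G^{(n)})\geq n$ and $x^n$-coefficient $c_n$, use part (2) to write $c_n=b_nd_n$ with $b_n\in\widehat{\zz}$ and set $G^{(n+1)}=G^{(n)}-b_nG_n\in S$, which has valuation $\geq n+1$. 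Then $G-\Sum_{i=0}^{N-1}b_iG_i=G^{(N)}$ is divisible by $x^N$, so $G\equiv\Sum_{i=0}^{N-1}b_iG_i\pmod{x^N}$ with $\Sum_{i=0}^{N-1}b_iG_i\in T$, as required.

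The substance of this proposition is entirely absorbed into Theorem \ref{poly3} and the earlier closedness of $S$; what remains is bookkeeping. The one spot deserving a word of care is the step $c_n=b_nd_n$ with $b_n\in\widehat{\zz}$: this is precisely where the sharp form of Theorem \ref{poly3}(2), identifying which leading coefficients actually occur in $S$, is needed, rather than merely the existence of the $G_n$. I do not anticipate any genuine obstacle beyond this.
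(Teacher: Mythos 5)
Your proof is correct and follows essentially the same route as the paper's: both use Theorem \ref{poly3} to run the same induction on valuation, subtracting at each step the appropriate $\widehat{\Z}$-multiple of a power series $G_n\in T$ with leading term $d_nx^n$, and both conclude by noting that $S$ is $\tau_w$-closed so that the chain $T_s\subset T_o\subset T_w\subset S$ collapses. The only small addition you make is spelling out $T\subseteq S$ via $A_r=\Phi^n(r^{-n}A_r)$, which the paper relegates to a remark immediately preceding the proposition.
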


\begin{proof}
 Denote as $T_s,T_w,T_o$ the closures of the mentioned set of linear combinations in our three topologies.
As $S$ is closed in $\tau_w$, we have $T_s\subset T_o \subset T_w \subset S$.

 Let $G\in x^k\widehat{\zz}[[x]]\cap S$. Then by Theorem \ref{poly3},
 $G\equiv dx^k\,\,(mod\,x^{k+1})$, where $d=d_k\cdot c$, for
 some $c\in\widehat{\zz}$. We know that there exists a
 $\widehat{\zz}$-linear combination $G_k$ of the power series
 $A_{a_1}, A_{a_2},\ldots,A_{a_k}$ (with invertible $a_i$'s)
 such that
 $G_k\equiv d_kx^k\,\,(mod\,x^{k+1})$. Hence,
 $G-c\cdot G_k\in x^{k+1}\widehat{\zz}[[x]]\cap S$. Applying this inductively,
 we obtain that, for any $G\in S$ and any
 positive integer $m$, there exists a finite
 $\widehat{\zz}$-linear combination $H$ of invertible $A_r$'s, such that $G-H\in x^m\widehat{\zz}[[x]]\cap S$.
 Therefore, $T_s=S$ and hence $T_s=T_o=T_w=S$.
\end{proof}

\subsection{Stable operations in $K_{gr}$}
In section \ref{graded} we defined the bi-algebra $A$ of co-operations in $K_{gr}$ with a canonical element $s\in A$.
Recall that for a commutative ring $R$, the bi-algebra of operations $\Op^{n,n}_R(K_{gr})=\Op^{0,0}_R(\CK)=R[[x]]$
is dual to $A$. The same proof as in Proposition \ref{desusp} shows that the desuspension operator
\[
\Sigma^{-1}:R[[x]]=\Op_{R}^{n,n}(K_{gr})\to\Op_{R}^{n-1,n-1}(K_{gr})=R[[x]]
\]
coincides with $\Phi$. It follows that
\[
\Op_{R}^{st}(K_{gr})=\lim(R[[x]] \xleftarrow{\Phi} R[[x]] \xleftarrow{\Phi} R[[x]] \xleftarrow{\Phi} \ldots).
\]

\begin{lemma}
The desuspension operator $\Phi$ is dual to the multiplication by $s$ in $A$.
\end{lemma}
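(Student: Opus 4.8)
The plan is to make ``dual'' explicit and then reduce to a one-line computation on Adams polynomials. First note that since $A\subset\Q[s]$ is commutative and integer-valued, $f\in A$ forces $sf\in A$, because $(sf)(a)=a\,f(a)\in\Z$ for every $a\in\Z$; hence multiplication by $s$ is a genuine operator on $A$. Under the identification $R[[x]]\cong\Hom_\Z(A,R)$ furnished by the pairing $\langle\ ,\ \rangle$, the operator on $R[[x]]$ adjoint to multiplication by $s$ is the one sending $G$ to the unique series $G^\sharp$ with $\langle f,G^\sharp\rangle=\langle sf,G\rangle$ for all $f\in A$. So the lemma is exactly the identity
\[
\langle f,\Phi(G)\rangle=\langle sf,G\rangle,\qquad f\in A,\ G\in R[[x]].
\]
Both sides are additive in $G$ and, for fixed $f$ of degree $\leq d$, depend only on the coefficients of $G$ of degree $\leq d+1$ (since $\langle f,-\rangle$ annihilates $x^{d+1}R[[x]]$ and $\Phi$ maps $x^{d+2}R[[x]]$ into $x^{d+1}R[[x]]$). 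Therefore it is enough to verify the identity on a family of series spanning a dense submodule, and I will use $G=A_m=(1-x)^m$, $m\in\Z$; density is clear from $x^n=\sum_{j=0}^n(-1)^j\binom{n}{j}A_j$.

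On $G=A_m$ the verification is immediate from Lemma \ref{vish}. Recall the relation $\Phi(A_m)=m\,A_m$ noted earlier. Then
\[
\langle f,\Phi(A_m)\rangle=m\,\langle f,A_m\rangle=m\,f(m)=(sf)(m)=\langle sf,A_m\rangle,
\]
using Lemma \ref{vish} for the second equality, the tautology $(sf)(m)=m\,f(m)$ for the third, and Lemma \ref{vish} applied to $sf\in A$ for the fourth. This establishes the identity on the spanning family, hence everywhere by the reduction above, which is the assertion.

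As a cross-check one may instead compute directly on $G=x^m$: from $\Phi(x^m)=m x^m-m x^{m-1}$ one reads off $\langle f,\Phi(x^m)\rangle=m(c_m-c_{m-1})$ for $f=\sum_n c_n e_n$, while the binomial identity $s\binom{s}{n}=n\binom{s}{n}+(n+1)\binom{s}{n+1}$ gives $s\,e_n=n\,e_n-(n+1)\,e_{n+1}$ in $A$, so the $e_m$-coefficient of $sf$ is again $m(c_m-c_{m-1})=\langle sf,x^m\rangle$. I do not expect any genuine obstacle: the content is the elementary relation $\Phi(A_m)=m\,A_m$ together with Lemma \ref{vish}, and the only point needing a moment's care is the (routine) justification that testing against the $A_m$ suffices, which is handled by the finiteness observation above rather than by any topological subtlety.
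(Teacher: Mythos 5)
Your proof is correct and follows essentially the same route as the paper: both hinge on $\Phi(A_m)=mA_m$ together with Lemma \ref{vish}, checking the adjointness identity $\langle f,\Phi(A_m)\rangle=\langle sf,A_m\rangle$ on the spanning family $\{A_m\}$ paired against $\{e_n\}$. The paper leaves the reduction to this spanning family implicit, whereas you spell out (correctly) that both sides of the pairing identity for fixed $f$ of bounded degree depend only on a finite truncation of $G$, which together with $x^n=\sum_{j\le n}(-1)^j\binom{n}{j}A_j$ makes the reduction rigorous; your direct cross-check on $G=x^m$ via $se_n=ne_n-(n+1)e_{n+1}$ is a nice sanity check but not part of the paper's argument.
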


\begin{proof}
As $\Phi((1-x)^m)=m(1-x)^m$, in view of Lemma \ref{vish} we have
\[
\langle e_n,\Phi((1-x)^m) \rangle=\langle e_n,m(1-x)^m \rangle=m\cdot e_n(m)=\langle se_n,(1-x)^m \rangle.\qedhere
\]
\end{proof}

The localization $A[\frac{1}{s}]$ can be identified with $\Colim(A \xra{s} A\xra{s}  \ldots)$. Therefore,
\[
\Op_{R}^{st}(K_{gr})\simeq \Hom(A\Big[\frac{1}{s}\Big], R),
\]
i.e., the bi-algebra $\Op_{R}^{st}(K_{gr})$ of stable operations is dual to $A[\frac{1}{s}]$.

The bi-algebra $A[\frac{1}{s}]$ coincides with the algebra of degree $0$ stable operations $K_0(K)$ in topology
(see \cite[Proposition 3]{Johnson84} and \cite{AHS71}). Moreover, $A[\frac{1}{s}]$ is a free abelian group
of countable rank \cite[Theorem 2.2]{AC77} and can be described as the set of all Laurent polynomials
$f\in\Q[s,s^{-1}]$ such that $f(\frac{a}{b})\in\Z[\frac{1}{ab}]$ for all integers $a$ and $b\neq 0$.

It follows that the bi-algebra $A[\frac{1}{s}]$ admits an antipode $s\mapsto s^{-1}$ that makes $A[\frac{1}{s}]$ a Hopf algebra.
It follows that $\Op_{R}^{st}(K_{gr})$ is a (topological) Hopf algebra.

\begin{remark}
We have a diagram of homomorphisms of bi-algebras and its dual:
\[
\xymatrix{
\Z[s]  \ar[d] \ar[r] & A  \ar[d]  & & & R^{[0,\infty)}    & R[[x]] \ar[l]_{b} \\
\Z[s,s^{-1}] \ar[r]    &  A[\frac{1}{s}]     & & &  R^{\Z} \ar[u]     &  \Op_{R}^{st}(K_{gr}) \ar[u]\ar[l]
}
\]
The bottom maps are homomorphisms of Hopf algebras. The antipode of $R^{\Z}$ takes a sequence $r_i$ to $r_{-i}$.
\end{remark}

The group of degree $0$ stable operations $\Op_{\widehat\Z}^{st}(K_{gr})$ coincides with $\Op_{\widehat\Z}^{st}(\CK)=S$ whose structure
was described in Theorem \ref{poly3}. Our nearest goal is to
determine the structure of $\Op_{\Z}^{st}(K_{gr})$.
We remark that this group is different from $\Op_{\Z}^{st}(\CK)=S\cap\Z[[x]]$.

Let $R$ be one of the following rings: $\Z$, $\Z_p$ or  $\widehat\Z$. Recall that we have an injective homomorphism
$b_R:R[[x]]\to R^{[0,\infty)}$ taking $(1-x)^m$ to the sequence $(1,m,m^2,\ldots)$. The operation $\Phi$ on $R[[x]]$
corresponds to the shift operation $\Pi$ on $R^{[0,\infty)}$ defined by $\Pi(a)_i=a_{i+1}$.

An \emph{$n$-interval} of a sequence $a$ in $R^{[0,\infty)}$ or $R^{\Z}$ is the $n$-tuple $(a_i, a_{i+1},\ldots, a_{i+n-1})$
for some $i$. We say that this interval \emph{starts at $i$}.

For every $n\geq 1$, let $M_n$ be the $R$-submodule of $R^n$ generated by the $n$-tuples
$\bar r:=(1,r,r^2,\ldots, r^{n-1})$
for all integers $r>0$. Note that $M_n$ is of finite index in $R^n$.

\begin{lemma}\label{seqq}
A sequence $a\in R^{[0,\infty)}$ belongs to the image of $b_R$ if and only if
for every $n>0$, the $n$-interval of $a$ starting at $0$ is contained in $M_n$.
\end{lemma}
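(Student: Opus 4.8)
The plan is to show that $b_R$ is compatible with truncation of sequences and that, after cutting off to the first $n$ coordinates, it becomes an isomorphism onto $M_n$; the lemma will then follow by passing to the inverse limit over $n$. Write $\pi_n\colon R^{[0,\infty)}\to R^n$ for the projection onto the first $n$ coordinates, and for an integer $r$ put $\bar r=(1,r,r^2,\ldots,r^{n-1})\in R^n$ as in the statement. Since $b_R\big((1-x)^m\big)=(1,m,m^2,\ldots)$, the composite $\pi_n\circ b_R\colon R[[x]]\to R^n$ sends $(1-x)^m$ to $\bar m$. First I would check that $\ker(\pi_n\circ b_R)=x^nR[[x]]$: by Proposition \ref{intimage} the map $b_R$ is the restriction of the isomorphism $b$ of (\ref{pass}), which carries $x^nK[[x]]$ onto $K^{[n,\infty)}$, so $\pi_n(b_R(G))=0$ exactly when $G\in x^nK[[x]]\cap R[[x]]=x^nR[[x]]$ (the last equality holding since $R$ is torsion free). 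Hence $\pi_n\circ b_R$ factors through the quotient $R[[x]]/x^nR[[x]]=R[x]/(x^n)$, giving an $R$-linear map $\beta_n\colon R[x]/(x^n)\to R^n$ with $\beta_n\big([(1-x)^m]\big)=\bar m$.

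Next I would show that $\beta_n$ is injective with image exactly $M_n$. The classes $[(1-x)^m]$, $0\le m<n$, form an $R$-basis of $R[x]/(x^n)$, and in this basis the matrix of $\beta_n$ has columns $\bar 0,\bar 1,\ldots,\overline{n-1}$; this Vandermonde matrix has determinant $\prod_{k=1}^{n-1}k!$, a nonzero integer and hence a non-zero-divisor in the torsion-free ring $R$, so $\beta_n$ is injective. Thus $\Im(\beta_n)$ is the $R$-span of $\bar 0,\bar 1,\ldots,\overline{n-1}$. Every generator $\bar r$ of $M_n$ (with $r>0$) lies in $\Im(\beta_n)$: for $r<n$ it is one of the basis images, and for $r\ge n$ it equals $\beta_n\big([(1-x)^r]\big)$ since $(1-x)^r\in R[[x]]$ and $\pi_n\circ b_R$ factors through $\beta_n$. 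Therefore $M_n\subseteq\Im(\beta_n)$. For the reverse inclusion it suffices to see $\bar 0\in M_n$, and this follows from the identity $1\equiv\sum_{j=1}^{n}(-1)^{j+1}\binom{n}{j}(1-x)^j\pmod{x^n}$ (equivalently $1-x^n=1-(1-y)^n$ for $y=1-x$): applying $\beta_n$ gives $\bar 0=\sum_{j=1}^{n}(-1)^{j+1}\binom{n}{j}\bar j\in M_n$, since all $\bar j$ with $1\le j\le n$ are generators of $M_n$. Together with $\bar 1,\ldots,\overline{n-1}\in M_n$ this yields $\Im(\beta_n)\subseteq M_n$, hence $\Im(\beta_n)=M_n$.

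Finally I would assemble the two implications. If $a=b_R(G)$ for some $G\in R[[x]]$, then $\pi_n(a)=\beta_n\big([G\bmod x^n]\big)\in M_n$ for every $n$, which is the forward direction. Conversely, suppose $\pi_n(a)\in M_n$ for all $n$. Since $\beta_n$ is an isomorphism onto $M_n$, for each $n$ there is a unique $G_n\in R[x]/(x^n)$ with $\beta_n(G_n)=\pi_n(a)$. By the evident compatibility of the $\beta_n$ with truncation, $\beta_n(G_{n+1}\bmod x^n)$ equals the first $n$ coordinates of $\beta_{n+1}(G_{n+1})=\pi_{n+1}(a)$, namely $\pi_n(a)=\beta_n(G_n)$; by injectivity of $\beta_n$, $G_{n+1}\equiv G_n\pmod{x^n}$. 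Hence the $G_n$ assemble into an element $G\in\varprojlim_n R[x]/(x^n)=R[[x]]$, and $\pi_n(b_R(G))=\beta_n(G_n)=\pi_n(a)$ for all $n$ forces $b_R(G)=a$, so $a\in\Im(b_R)$. The only step here that is not purely formal is the identification of the truncated image with $M_n$ in the second paragraph, and within it the verification that $\bar 0\in M_n$; this, however, reduces to the elementary Vandermonde/binomial computation indicated above.
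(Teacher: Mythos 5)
Your argument is correct, and it takes a genuinely different route from the paper. The paper's proof handles the converse topologically: the hypothesis exhibits $a$ as a limit of finite $R$-linear combinations of the sequences $b_R(A_r)$, so $a$ lies in the closure of $\Im(b_R)$; for $R=\Z_p$ or $\widehat\Z$ one then invokes compactness of $R[[x]]$ in $\tau_w$ and Hausdorffness of $R^{[0,\infty)}$ to conclude that $\Im(b_R)$ is closed, and the case $R=\Z$ is deduced afterwards from the $\widehat\Z$ case via $\widehat\Z[[x]]\cap\Q[[x]]=\Z[[x]]$. You instead argue purely algebraically: you show $\ker(\pi_n\circ b_R)=x^nR[[x]]$, identify the induced map $\beta_n:R[x]/(x^n)\to R^n$ as injective with image exactly $M_n$ (the inclusion $\Im(\beta_n)\subseteq M_n$ resting on the Vandermonde determinant $\prod_{k=1}^{n-1}k!$ and the binomial identity giving $\bar 0\in M_n$, a point the paper leaves inside ``the implication $\Rightarrow$ is clear''), and then assemble the compatible preimages through the inverse limit $R[[x]]=\lim_n R[x]/(x^n)$. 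What the paper's route buys is brevity, since the compactness of $\widehat\Z[[x]]$ in $\tau_w$ is already in play elsewhere in that section; what your route buys is uniformity and generality: no case distinction between $\Z$, $\Z_p$ and $\widehat\Z$, no topology beyond formal completeness of $R[[x]]$, validity for any torsion-free coefficient ring, and the extra information that truncation identifies $R[x]/(x^n)$ isomorphically with $M_n$, which also makes the forward implication fully explicit. All the individual steps you flag as computational (the Vandermonde determinant, the identity $1\equiv\sum_{j=1}^{n}(-1)^{j+1}\binom{n}{j}(1-x)^j \pmod{x^n}$, the compatibility of the $\beta_n$ with truncation) check out.
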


\begin{proof}
The implication $\Rightarrow$ is clear. For the converse note that by assumption $a$ is contained in
the closure of $\Im(b_R)$. On the other hand, if $R=\Z_p$ or $\widehat \Z$, the space
$R[[x]]$ is compact in $\tau_w$ and $R^{[0,\infty)}$ is Hausdorff, hence $\Im(b_R)$ is closed, i.e., $a\in \Im(b_R)$.
If $R=\Z$, it follows from
the case $R=\widehat \Z$ that $a=b_{\widehat \Z}(G)$ for some $G\in \widehat \Z[[x]]$. Since at the same time $G \in  \Q[[x]]$,
we have $G\in \Z[[x]]$.
\end{proof}


Let $T_R\subset R^{\Z}$ be the $R$-submodule of all sequences $a\in R^{\Z}$ such that every $n$-interval of $a$
is contained in $M_n$ for all $n\geq 1$. If $a\in T_R$, by Lemma \ref{seqq}, for every $n\geq 0$ there is $G_n\in R[[x]]$ such that
$b_R(G_n)=(a_{-n}, a_{-n+1},\ldots)$. Since $\Phi(G_{n+1})=G_n$, the sequence $(G_n)_{n\geq 0}$ determines an element
in $\Op_{R}^{st}(K_{gr})$. This construction establishes an isomorphism $\Op_{R}^{st}(K_{gr})\simeq T_R$.
Note that $T_{\widehat\Z}=\Op_{\widehat\Z}^{st}(\CK)=S=\cap_r\op{Im}(\Phi^r)$.

\smallskip

For every $n\geq 1$, let $N_n$ be the $R$-submodule of $R^n$ generated by the $n$-tuples
$\bar r$ for all $r\in R^\times$. Then $N_n$ is of finite index in $R^n$ if $R=\Z_p$ or  $\widehat\Z$.

Note that every $n$-tuple $\bar r$ with $r\in R^\times$ extends to the sequence $a$ with $a_i=r^i$ that is contained
in $T_R$.

\begin{lemma}\label{subb}
$N_n\subset M_n$ for all $n\geq 1$.
\end{lemma}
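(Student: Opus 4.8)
The plan is to reduce the statement to an elementary congruence, using only the fact (noted just above) that $M_n$ is of finite index in $R^n$. Since $N_n$ is generated by the tuples $\bar{r}=(1,r,r^2,\dots,r^{n-1})$ with $r\in R^\times$, it is enough to prove that $\bar{r}\in M_n$ for every $r\in R^\times$; I would actually prove the (formally stronger) claim that $\bar{r}\in M_n$ for \emph{every} $r\in R$, which makes the three cases $R=\Z,\Z_p,\widehat\Z$ easier to treat at once.

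First I would let $N$ be the exponent of the finite abelian group $R^n/M_n$, so that $N\cdot R^n\subseteq M_n$. The key observation is then that $r\mapsto \bar{r}\bmod M_n$ depends only on $r$ modulo $NR$: indeed $r^j-r'^j=(r-r')(r^{j-1}+r^{j-2}r'+\dots+r'^{j-1})$, so $r\equiv r'\pmod{NR}$ forces $\bar{r}-\bar{r'}\in NR^n\subseteq M_n$ for all $j\ge 0$.

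Next I would note that every class in $R/NR$ is hit by a positive integer: for $R=\Z$ this is obvious; for $R=\Z_p$ one has $R/NR=\Z/p^{v_p(N)}\Z$ and $\Z_{>0}$ surjects onto it; and for $R=\widehat\Z$ one has $R/NR=\Z/N\Z$ and again $\Z_{>0}$ surjects (or one invokes the Chinese Remainder Theorem). Hence, given $r\in R$, choose a positive integer $r'$ with $r'\equiv r\pmod{NR}$; then $\bar{r'}\in M_n$ by the very definition of $M_n$, and $\bar{r}\equiv\bar{r'}\pmod{M_n}$ by the previous step, so $\bar{r}\in M_n$. This finishes the proof, and in particular yields $N_n\subseteq M_n$.

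The argument is essentially formal, so I do not anticipate a real obstacle; the only point requiring a little care is to make sure the finiteness of $R^n/M_n$ holds uniformly in the three cases — which it does, since $M_n$ already contains $d\cdot R^n$ for the nonzero integer $d=\prod_{k=1}^{n-1}k!$, the Vandermonde determinant of the tuples $\bar{1},\bar{2},\dots,\bar{n}$.
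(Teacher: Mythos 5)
Your proof is correct and is essentially the paper's argument: both rest on the finite index of $M_n$ in $R^n$ and then approximate $r$ by a positive integer modulo the index-killing modulus, so that $\bar r-\bar r'$ lands in $M_n$. The only cosmetic differences are that the paper first reduces to $R=\Z_p$ and takes the modulus to be a power $p^m$, whereas you argue uniformly over the three rings (with the exponent $N$ of $R^n/M_n$, justified by the Vandermonde determinant) and prove the slightly stronger claim that $\bar r\in M_n$ for every $r\in R$.
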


\begin{proof}
It suffices to consider the case $R=\Z_p$.
Choose an integer $m>0$ such that $p^m\cdot \Z_p^n\subset M_n$. Let $r\in\Z_p^\times$. Find an integer $r'>0$ congruent to $r$ modulo $p^m$.
Then the tuple $\bar r=(1,r,r^2,\ldots, r^{n-1})$ is congruent to $\bar r'$ modulo $p^m$. Hence
$\bar r=\bar r'+(\bar r-\bar r')\in M_n+p^m \Z_p^n\subset M_n$.
\end{proof}

It follows from Lemma \ref{subb} that every element in $N_n$ is an $n$-interval of a sequence in $T_R$.

\begin{proposition}
If $R=\Z_p$ or $\widehat \Z$, the $R$-module $T_R$ consists of all sequences $a\in R^{\Z}$
such that every $n$-interval of $a$ is contained in $N_n$ for all $n\geq 1$.
\end{proposition}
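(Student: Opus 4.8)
The plan is to prove the two inclusions separately, the nontrivial one by transporting a sequence $a\in T_R$ back into $R[[x]]$ and exploiting that $S_R:=\cap_r\Im(\Phi^r)$ is, topologically, built only from Adams polynomials of \emph{invertible} index. The inclusion of the ``$N_n$-side'' into $T_R$ is immediate: $N_n\subseteq M_n$ for every $n$ by Lemma \ref{subb}, so any sequence all of whose $n$-intervals lie in $N_n$ certainly lies in $T_R$. For the reverse inclusion I would start with $a\in T_R$; by Lemma \ref{seqq} and the discussion following it there are $G_m\in R[[x]]$ with $b_R(G_m)=(a_{-m},a_{-m+1},\dots)$ for all $m\ge 0$ and with $\Phi(G_{m+1})=G_m$, and since $G_m=\Phi^k(G_{m+k})$ for every $k$ each $G_m$ lies in $S_R$ (here $S_R$ denotes $S=T_{\widehat\Z}$ when $R=\widehat\Z$ and $S_{\Z_p}$ when $R=\Z_p$). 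I would also record the formal fact $\Phi(S_R)\subseteq S_R$.

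The next step reduces the target to a single assertion: \emph{if $G\in S_R$, then the $n$-tuple formed by the first $n$ entries of $b_R(G)$ lies in $N_n$}. Granting this, an arbitrary $n$-interval of $a$ starts at some $j\in\Z$; writing $j=-m+t$ with $t\ge 0$ and using that the shift $\Pi$ on $R^{[0,\infty)}$ corresponds to $\Phi$ on $R[[x]]$, one has $\Pi^t b_R(G_m)=b_R(\Phi^t G_m)$ with $\Phi^t G_m\in S_R$, so the interval in question is the initial $n$-interval of $b_R(\Phi^t G_m)$ and hence lies in $N_n$.

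To prove the reduced statement I would invoke Proposition \ref{taus-zamykanie} — in the form valid over $R=\Z_p$ as well, which is proved verbatim, with Propositions \ref{poly} and \ref{poly2} replacing Theorem \ref{poly3} — which says that $S_R$ is the $\tau_s$-closure of the set of finite $R$-linear combinations of the polynomials $A_r=(1-x)^r$ with $r\in R^\times$. So, given $G\in S_R$, there is such a combination $H=\sum_r c_r A_r$ with $G-H\in x^n R[[x]]$; since $b$ takes $x^n K[[x]]$ onto $K^{[n,\infty)}$ and has image in $R^{[0,\infty)}$ by Proposition \ref{intimage}, the restriction $b_R$ sends $x^n R[[x]]$ into $R^{[n,\infty)}$, so $b_R(G)$ and $b_R(H)$ share their first $n$ entries. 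By Example \ref{adams}, $b_R(A_r)=(1,r,r^2,\dots)$, hence the first $n$ entries of $b_R(H)$ form $\sum_r c_r(1,r,\dots,r^{n-1})=\sum_r c_r\bar r\in N_n$, and the same is therefore true for $b_R(G)$.

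The point worth flagging is that this is genuinely a global statement, not a pointwise one: in general $N_n\subsetneq M_n$ (for instance, for $p=2$, $n=2$ one has $M_2=\Z_2^2$ whereas $N_2$ is the index-two submodule of pairs of equal parity), so a single $n$-interval lying in $M_n$ need not lie in $N_n$. What rules this out is precisely that membership of $a$ in $T_R$ forces $a$ to come from an element of $S_R$, which by Proposition \ref{taus-zamykanie} is assembled only from $A_r$ with $r$ invertible. Thus the real content is concentrated in that proposition (and its $\Z_p$-analogue), which we may cite; the remainder is the bookkeeping above, whose only delicate point is the identification of shifts of sequences with powers of $\Phi$ together with the $\Phi$-stability of $S_R$.
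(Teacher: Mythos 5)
Your proof is correct, but it takes a genuinely different route from the paper's. The paper argues directly with sequences: reducing to $R=\Z_p$, it enlarges the given $n$-interval $v$ of $a$ to the $(n+m)$-interval $w$ obtained by going $m$ steps to the left, writes $w=\sum_r t_r\bar r$ with $r$ running over positive integers (this is just the definition of $M_{n+m}$, using $a\in T_R$), and observes that after shifting back $v=\sum_r t_r r^m(1,r,\ldots,r^{n-1})$; the terms with $p\mid r$ are divisible by $p^m$, the terms with $r$ prime to $p$ lie in $N_n$, so $v\in N_n+p^mM_n$, and choosing $m$ with $p^mM_n\subset N_n$ (finite index) finishes. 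You instead pull $a$ back through $b_R$ into $S_R=\cap_r\Im(\Phi^r)$ and reduce everything to the claim that the initial $n$-interval of $b_R(G)$ lies in $N_n$ for every $G\in S_R$, which you deduce from the closure description in Proposition \ref{taus-zamykanie} (and its $\Z_p$-analogue, which, as you note, is proved verbatim with Propositions \ref{poly} and \ref{poly2} in place of Theorem \ref{poly3}). There is no circularity, since Proposition \ref{taus-zamykanie} precedes this statement, and your bookkeeping with $\Pi\circ b_R=b_R\circ\Phi$, the $\Phi$-stability of $S_R$, and $b_R(x^nR[[x]])\subset R^{[n,\infty)}$ is sound. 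What your route buys is conceptual clarity: the proposition becomes the sequence-side shadow of the fact that $S_R$ is topologically spanned by the $A_r$ with $r$ invertible. What it costs is reliance on the $d_n$-machinery of Section \ref{integers} behind that closure result, whereas the paper's argument needs only the finite-index observation and an elementary divisibility computation. One small point you should make explicit: Example \ref{adams} gives $b(A_m)=(1,m,m^2,\ldots)$ only for integer exponents, while you use $b_R(A_r)=(1,r,r^2,\ldots)$ for possibly non-integral $r\in R^\times$; this follows at once from $\Phi(A_r)=rA_r$, the identity $b_R\circ\Phi=\Pi\circ b_R$, and $A_r(0)=1$, but it deserves a line.
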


\begin{proof}
We may assume that $R=\Z_p$.
Let $a\in T_R$. In view of Lemma \ref{subb} it suffices to show that every $n$-interval $v$ of $a$ starting at $i$ is contained in $N_n$ for all $n\geq 1$.
Take an integer $m>0$ and consider the $(n+m)$-interval $w$ of $\Pi^{-m}(a)$ starting at $i$, so that $v$ is the part of $w$ on the right.
Write $w$ as a (finite) linear combination $\sum t_r \bar r$ over positive integers $r$,
where $t_r\in\Z_p$ and $\bar r=(1,r,r^2,\ldots, r^{n+m-1})\in M_{{n+m-1}}$.
Applying $\Pi^{m}$ to $\Pi^{-m}(a)$ we see that $v=\sum t_r r^m \hat r$, where $\hat r=(1,r,r^2,\ldots, r^{n-1})\in M_{{n-1}}$.
As $r^m$ is divisible by $p^m$ if $r$ is divisible by $p$,
it follows from the definition of $N_n$ that $v\in N_n+p^mM_n$. Since $N_n$ is of finite index in $M_n$, we can choose $m$
such that $p^mM_n\subset N_n$, hence $a\in N_n$.
\end{proof}

Denote by $\theta:R^{\Z}\to R^{\Z}$ the reflection operation taking a sequence $a$ to the sequence $\theta(a)_i=a_{-i}$.

\begin{corollary}
The module $T_R$ is invariant under $\theta$.
\end{corollary}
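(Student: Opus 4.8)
The plan is to run everything through the description of $T_R$ in the preceding proposition: for $R=\Z_p$ or $\widehat\Z$, a sequence $a\in R^{\Z}$ lies in $T_R$ exactly when each of its $n$-intervals lies in $N_n$, for all $n\geq 1$. So it suffices to show that the coordinate‑reversal involution $\rho_n\colon R^n\to R^n$, $\rho_n(v_0,\dots,v_{n-1})=(v_{n-1},\dots,v_0)$, carries $N_n$ onto itself, and then to match reversal of intervals with the action of $\theta$.

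First I would compute $\rho_n$ on a generator $\bar r=(1,r,\dots,r^{n-1})$ of $N_n$ with $r\in R^\times$: one has $\rho_n(\bar r)=(r^{n-1},r^{n-2},\dots,1)=r^{n-1}\cdot(1,r^{-1},\dots,r^{-(n-1)})=r^{n-1}\cdot\overline{r^{-1}}$, and since $r^{-1}\in R^\times$ the tuple $\overline{r^{-1}}$ is again one of the generators of $N_n$; hence $\rho_n(\bar r)\in N_n$. As $\rho_n$ is $R$-linear and these $\bar r$ generate $N_n$, this gives $\rho_n(N_n)\subseteq N_n$, and $\rho_n^2=\mathrm{id}$ forces equality. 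Next I would observe that, for $a\in R^{\Z}$, the $n$-interval of $\theta(a)$ starting at $i$ is $(a_{-i},a_{-i-1},\dots,a_{-i-n+1})$, which is precisely $\rho_n$ applied to the $n$-interval of $a$ starting at $-i-n+1$. Thus if $a\in T_R$, every $n$-interval of $\theta(a)$ lies in $\rho_n(N_n)=N_n$, so $\theta(a)\in T_R$; since $\theta$ is an involution, $\theta(T_R)=T_R$.

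The step needing care, and the one I expect to be the real obstacle, is that one must genuinely use the $N_n$-description and not the defining $M_n$-description: $M_n$ is \emph{not} invariant under $\rho_n$ in general (for instance $\overline 2\in M_3$ but $\rho_3(\overline 2)=(4,2,1)\notin M_3$, since solving $(4,2,1)=\sum c_r\bar r$ forces a non-integral coefficient), precisely because the generators of $M_n$ involve non-invertible $r$. For $R=\Z$, where $T_\Z$ is defined via $M_n$, I would therefore deduce the statement from the cases $R=\widehat\Z$ via the identity $T_\Z=T_{\widehat\Z}\cap\Z^{\Z}$: the inclusion $\subseteq$ is immediate from $M_n^{\Z}\subseteq M_n^{\widehat\Z}$, and $\supseteq$ follows from $M_n^{\widehat\Z}\cap\Z^n=M_n^{\Z}$ (both $M_n^{\widehat\Z}=M_n^{\Z}\otimes_{\Z}\widehat\Z$ and the finite group $\Z^n/M_n^{\Z}$ are unaffected by $-\otimes_{\Z}\widehat\Z$). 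Since $\theta$ obviously preserves $\Z^{\Z}$ and, by the above, preserves $T_{\widehat\Z}$, it preserves the intersection. Alternatively, and more structurally, $\theta$ is the dual of the antipode $s\mapsto s^{-1}$ of the Hopf algebra $A[\frac1s]$ under the identification $\Op_R^{st}(K_{gr})=\Hom(A[\frac1s],R)\cong T_R$, and is therefore automatically an automorphism of $T_R$.
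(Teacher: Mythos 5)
Your proof is correct and takes essentially the same route as the paper: for $R=\Z_p$ or $\widehat\Z$ it invokes the $N_n$-characterization of $T_R$ from the preceding proposition together with the identity $(r^{n-1},\ldots,r,1)=r^{n-1}\overline{r^{-1}}\in N_n$ for $r\in R^\times$, and deduces the case $R=\Z$ from $T_\Z=T_{\widehat\Z}\cap\Z^{\Z}$. The additional remarks you include (that $M_n$ itself need not be reversal-invariant, the justification of $T_\Z=T_{\widehat\Z}\cap\Z^{\Z}$, and the alternative reading of $\theta$ as the dual of the antipode of $A[\frac{1}{s}]$) are correct but go beyond what the paper records.
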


\begin{proof}
In the case $R=\Z_p$ or $\widehat \Z$ it suffice to notice that if $r\in\R^\times$, the symmetric $n$-tuple
$(r^{n-1}, r^{n-2},\ldots, r,1)=r^{n-1}(1,r^{-1},(r^{-1})^2,\ldots, (r^{-1})^{n-1})$ is contained in $N_n$.
If $R=\Z$ the statement follows from the equality $T_{\Z}=T_{\widehat \Z}\cap \Z^{\Z}$.
\end{proof}

Now let $R=\widehat\Z$ and $n\geq 0$.
The ideal of all $t\in\widehat\Z$ such that $(0,\ldots,0,t)\in N_n$ is generated by a
(unique) positive integer $\tilde d_n=n!\cdot d_n$, where the integers $d_n$ were introduced in Section \ref{integers}.
We know that
\[
v_p(\tilde d_n)=v_p((n+k_p)!)
\]
for all primes $p$, where $k_p=\floor{\frac{n}{p-1}}$. By Theorem \ref{desrib},
there are power series $F_n\in S_0=S\cap\Z[[x]]$ such that $F_n\equiv d_n x^{n}$ modulo $x^{n+1}$.

Let $f^{(n)}\in T_{\widehat\Z}$ be the image of $F_n$ under the map
$S=\Op_{\widehat\Z}^{st}(K_{gr})\to {\widehat\Z}^{\Z}$. Thus,
$(0,\ldots,0,\tilde d_n)$ is the $n$-interval of $f^{(n)}$ starting at $0$.
For example, we can choose:
\[
f^{(0)}=(\ldots, 1,1,1,1,\ldots),
\]
\[
f^{(1)}=(\ldots, 0,2,0,2,\ldots).
\]
As in the proof of Theorem \ref{desrib}, modifying $f^{(n)}$ by adding
multiples of the shifts of $f^{(m)}$ for $m>n$ and their reflections
we can obtain $f^{(n)}\in \Z^{\Z}$ for all $n$.

\begin{theorem}
\label{stable-op-Kgr}
Every sequence $a\in T_{\Z}\simeq \Op_{\Z}^{st}(K_{gr})$ can be written in the form
\[
a=\sum_{i=0}^{\infty}\Big[b_{2i}\Pi^{-i}\theta(f^{(2i)})+b_{2i+1}\Pi^{i}(f^{(2i+1)})\Big]
\]
for unique $b_0,b_1,\ldots\in\Z$.
\end{theorem}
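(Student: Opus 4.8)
The plan is to recast the family in the statement as a single sequence $\{w_m\}_{m\ge 0}$ with
\[
w_0=f^{(0)},\qquad w_{2i-1}=\Pi^{i-1}(f^{(2i-1)}),\qquad w_{2i}=\Pi^{-i}\theta(f^{(2i)})\quad(i\ge 1)
\]
(this is legitimate because $\Pi^0\theta(f^{(0)})=f^{(0)}$, and the reindexing preserves the indices of the $b_m$), and to prove that $\{w_m\}$ is a triangular topological basis of $T_{\Z}$. Each $w_m$ lies in $T_{\Z}$ because $f^{(m)}\in T_{\Z}$ and $T_{\Z}$ is stable under $\theta$ and under the shifts $\Pi^{\pm 1}$. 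The coefficients $b_m$ will then be produced by a Gaussian-elimination procedure.

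The first thing I would check is the \emph{leading-term structure}. Using only that $f^{(n)}_k=0$ for $0\le k<n$ and $f^{(n)}_n=(-1)^n\tilde d_n\ne 0$, a direct computation of the shift and the reflection shows: setting $p_0=0$, $p_{2i-1}=i$, $p_{2i}=-i$, each $w_m$ vanishes at \emph{exactly} the positions $p_0,\dots,p_{m-1}$ and has $(w_m)_{p_m}=(-1)^m\tilde d_m$. Note that $m\mapsto p_m$ is a bijection onto $\Z$, listing it as $0,1,-1,2,-2,\dots$, and that $\{p_0,\dots,p_{m-1}\}$ is always an interval of $\Z$ having $p_m$ as one of its two neighbouring integers. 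This is routine, and is the only place where the actual construction of the $f^{(n)}$ is used.

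Granting this, uniqueness is immediate. For any choice of $b_m\in\Z$ the sum $\sum_m b_m w_m$ converges coordinatewise in $\Z^{\Z}$, because $(w_m)_{p_k}=0$ whenever $m>k$, so each coordinate receives only finitely many nonzero contributions; and the sum lies in $T_{\Z}$ since $T_{\Z}$ is closed in $\Z^{\Z}$. If $\sum_m b_m w_m=0$, evaluate at $p_0$: every $w_m$ with $m\ge 1$ vanishes there, so $b_0\tilde d_0=0$ and hence $b_0=0$ by torsion-freeness of $\Z$; inductively, evaluating $\sum_{\ell\ge m}b_\ell w_\ell=0$ at $p_m$ gives $b_m\tilde d_m=0$, so $b_m=0$.

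For existence I would peel off the $w_m$ one at a time. Put $a^{(0)}=a$; assuming $a^{(m)}\in T_{\Z}$ vanishes at $p_0,\dots,p_{m-1}$, set $b_m=(-1)^m a^{(m)}_{p_m}/\tilde d_m$ and $a^{(m+1)}=a^{(m)}-b_m w_m$. Then $a^{(m+1)}\in T_{\Z}$ still vanishes at $p_0,\dots,p_{m-1}$ (since $w_m$ does) and now at $p_m$ as well; as $\{p_0,\dots,p_{M-1}\}$ exhausts $\Z$, the sequences $a^{(M)}=a-\sum_{m<M}b_m w_m$ tend to $0$ coordinatewise, so $a=\sum_m b_m w_m$. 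The one genuinely substantial point — which I expect to be the \textbf{main obstacle} — is that $b_m$ is an integer, i.e.\ $\tilde d_m\mid a^{(m)}_{p_m}$. Here one uses $a^{(m)}\in T_{\Z}\subseteq T_{\widehat\Z}$, so every interval of $a^{(m)}$ lies in the module $N_n$ generated by the tuples $\bar r$, $r\in\widehat\Z^{\times}$. The block $\{p_0,\dots,p_{m-1}\}\cup\{p_m\}$ is an $(m+1)$-interval of $a^{(m)}$ all of whose entries vanish except the one at $p_m$; it thus equals $(0,\dots,0,a^{(m)}_{p_m})$ when $m$ is odd ($p_m$ is its right endpoint) or $(a^{(m)}_{p_m},0,\dots,0)$ when $m$ is even, and in either case lies in $N_{m+1}$. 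By the computation of the corresponding ideal, $\{t:(0,\dots,0,t)\in N_{m+1}\}=\tilde d_m\widehat\Z$ (realized already by the $(m+1)$-interval of $f^{(m)}$ starting at $0$), together with the reversal-invariance of $N_{m+1}$ over $\widehat\Z$ to handle the even case, one concludes $\tilde d_m\mid a^{(m)}_{p_m}$, which completes the argument.
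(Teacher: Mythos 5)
Your argument is correct and is essentially the paper's own one-sentence proof written out in full: you determine the coefficients inductively by matching the entries at the positions $0,1,-1,2,-2,\dots$ (equivalently, the paper's growing intervals starting at $-m$), with integrality of $b_m$ coming from the fact that the relevant $(m+1)$-interval of $a^{(m)}$ has the form $(0,\dots,0,a^{(m)}_{p_m})$ or its reversal, lies in $N_{m+1}$, and the corner ideal of $N_{m+1}$ is $\tilde d_m\widehat\Z$ (using reversal-invariance of $N_{m+1}$ for even $m$), which is exactly the point the paper leaves implicit. One harmless overstatement: $w_m$ need not vanish \emph{exactly} at $p_0,\dots,p_{m-1}$ (e.g.\ $f^{(1)}$ vanishes at infinitely many positions), but your argument only uses vanishing at those positions together with the value $(-1)^m\tilde d_m$ at $p_m$, so nothing is affected.
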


\begin{proof}
We determine the integers $b_0,b_1,\ldots$ inductively so that for every $m\geq 0$ the sum $\sum_{i=0}^{m}$ of the terms in
the right hand side and the sequence $a$ have the same $2m+2$-intervals starting at $-m$.
\end{proof}

\begin{remark}
Observe that
$\{\Pi^{-i}\theta (f^{(2i)}),\,\Pi^i (f^{(2i+1)})\,|
\,i\in\zz_{\geq 0}\}$ is also a topological basis of
${\Op}^{st}_{\widehat{\zz}}(K_{gr})$. Note that,
at the same time,
$\{f^{(j)}\,|\,j\in\zz_{\geq 0}\}$ form a topological
basis for ${\Op}^{st}_{\zz}(\op{CK})$ and
${\Op}^{st}_{\widehat{\zz}}(\op{CK})$.
This shows the relation between operations in $\CK$ and those in $K_{gr}$. In particular, that there are substantially more operations in the former theory.
\end{remark}

\subsection{Stable multiplicative operations}

We first consider stable multiplicative operations on $\CK_{\wzz}^*$. From Proposition 5.3 we obtain:

\begin{proposition}
 \label{stable-mult-S}
 Stable multiplicative operations
 $\op{CK}_{\wzz}^*\row\op{CK}_{\wzz}^*$ are exactly operations
 $\Psi^c_1$, for $c\in\wzz^{\times}$.
 These are invertible and form a group isomorphic to
 $\wzz^{\times}$.
 Similarly, stable multiplicative operations on $\op{CK}^*$ form
 a group isomorphic to $\zz^{\times}$.
\end{proposition}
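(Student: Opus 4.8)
The plan is to read the result off the classification of multiplicative operations on $\CK^*_{\widehat\Z}$ from Section \ref{multoperat}, combined with the criterion that follows from Proposition \ref{mult-desusp}: a multiplicative operation $G$ is stable precisely when the coefficient of $x$ in $\gamma_G(x)$ equals $1$. So the whole proof reduces to computing this linear coefficient for each operation $\ppsi{J}{c}{b}$.

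By formula (\ref{coeffi}) with $n=1$, the coefficient of $x$ in $\gamma_G$ is exactly $b(t)$. From the classification, for every prime $p$ the $p$-component $b_p(t)$ is either the constant $b_p$ (when $G_{(p)}=\Psi^{c_p}_{b_p}$, i.e.\ $p\notin J$) or $b_pt$ (when $G_{(p)}=\wt{\Psi}^{c_p}_{b_p}$, i.e.\ $p\in J$). Imposing stability means $b(t)=1$ in $\widehat\Z[t]=\prod_p\Z_p[t]$, that is, $b_p(t)=1$ in $\Z_p[t]$ for every $p$. Since the equation $b_pt=1$ has no solution in $\Z_p[t]$, the case $p\in J$ is impossible; hence $J=\emptyset$ and $b_p=1$ for all $p$, i.e.\ $b=1$. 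As $b=1\neq 0$, the integrality constraint that $b_p\neq 0$ implies $c_p\in\Z_p^\times$ forces $c\in\widehat\Z^\times$. Conversely, each $\Psi^c_1$ with $c\in\widehat\Z^\times$ has $\gamma$-linear coefficient $1$, hence is stable. Finally, the composition rule $\Psi^c_1\circ\Psi^e_1=\Psi^{ce}_1$ of Example \ref{twisted} identifies this set of operations with $(\widehat\Z^\times,\cdot)$; in particular each is invertible (inverse $\Psi^{c^{-1}}_1$, unit $\Psi^1_1=\Psi_1=\id$), which gives the first two assertions.

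For the last assertion I would run the identical argument over $\CK^*(k)=\Z[t]$, the only new input being the classification of multiplicative operations on $\CK^*$ itself. Repeating the analysis of the coefficients $a_n$ as in the $K_{gr}$ case, but now over $\Z[t]$, which contains no negative powers of $t$, leaves only the operations $\Psi^c_b$ (from $c(t)=ct$, $b(t)=b$) and $\wt{\Psi}^c_b$ (from $c(t)=c$, $b(t)=bt$), with $c\in\Z$, and with $c\in\Z^\times=\{\pm1\}$ whenever $b\neq 0$. Stability again eliminates the $\wt{\Psi}$ family and forces $b=1$, hence $c=\pm1$; so the stable multiplicative operations on $\CK^*$ are exactly $\Psi^{\pm1}_1$, forming a group isomorphic to $\Z^\times$ by the same composition rule.

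The argument carries no real difficulty, since the classification and the stability criterion are already in place; it is a short bookkeeping with (\ref{coeffi}). The one point requiring attention is the integral case, where one must first note that the $K_{gr}$-computation specializes to $\CK^*$ only after discarding the operations that involve negative powers of $t$.
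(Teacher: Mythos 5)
Your proposal is correct and follows exactly the paper's (largely implicit) argument: combine the classification of multiplicative operations $\ppsi{J}{c}{b}$ from Section \ref{multoperat} with the stability criterion after Proposition \ref{mult-desusp} (linear coefficient of $\gamma_G$ equal to $1$), note that the linear coefficient is $b(t)$ so that the $\wt{\Psi}$-family and all $b\neq 1$ are excluded, and conclude via the composition rules of Example \ref{twisted}; the integral case is handled by the same specialization you describe. No gaps.
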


Restricted to $\op{CK}_{\wzz}^0$, the operation $\Psi^c_1$
is given by $G_0=1$ (as it is multiplicative and so, maps $1$ to
$1$), while $G(tx)=G(t)G(x)=ct\cdot\gamma_G(x)=1-(1-tx)^c$ and
so, our operation corresponds to the power series
$A_c=(1-x)^c$. In other words, on $\op{CK}_{\wzz}^0$, operation
$\Psi^c_1$ coincides with the Adams operation $\Psi_c$.
Then on $\op{CK}_{\wzz}^n$ it is equal to $c^{-n}\cdot\Psi_c$.

Proposition \ref{taus-zamykanie} gives:

\begin{corollary}
 \label{stab-add-stab-mult}
 The set of homogeneous stable additive operations on
 $\op{CK}^*_{\wzz}$ is the closure in the topology $\tau_o$ of
 the set of (finite)
 $\wzz$-linear combinations of stable multiplicative operations.
\end{corollary}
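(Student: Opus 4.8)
The plan is to derive the corollary from Proposition \ref{stable-mult-S} and Proposition \ref{taus-zamykanie}, via the identification of the group $\Op_{\widehat\Z}^{st}$ of homogeneous degree-$0$ stable additive operations with $S=\cap_n\Im(\Phi^n)\subset\widehat\Z[[x]]=\Op_{\widehat\Z}^{0,0}$ established in Section \ref{stableoper}. By Proposition \ref{stable-mult-S} the stable multiplicative operations on $\CK^*_{\widehat\Z}$ are exactly the operations $\Psi^c_1$ with $c\in\widehat\Z^\times$, and these are homogeneous of degree $0$; hence every finite $\widehat\Z$-linear combination of them again lies in $\Op_{\widehat\Z}^{st}$, and it suffices to work inside $\Op_{\widehat\Z}^{st}\cong S$ (which, as recorded in Section \ref{stableoper}, is closed in $\widehat\Z[[x]]$).

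First I would record the image of $\Psi^c_1$ under this identification. Being multiplicative, $\Psi^c_1$ sends $1$ to $1$, and from $\Psi^c_1(tx)=\Psi^c_1(t)\Psi^c_1(x)=ct\cdot\gamma_{\Psi^c_1}(x)=1-(1-tx)^c$ one reads off that on $\CK^0_{\widehat\Z}$ the operation $\Psi^c_1$ coincides with the Adams operation $\Psi_c$, that is, with the power series $A_c=(1-x)^c\in\widehat\Z[[x]]$ (as already noted after Proposition \ref{stable-mult-S}); since $\Phi(A_c)=c\cdot A_c$ with $c$ invertible, indeed $A_c\in S$. Thus, under $\Op_{\widehat\Z}^{st}\cong S$, the set of finite $\widehat\Z$-linear combinations of stable multiplicative operations corresponds precisely to the set of finite $\widehat\Z$-linear combinations of the power series $A_r$ with $r\in\widehat\Z^\times$.

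It then remains to pass to closures. By Proposition \ref{taus-zamykanie}, the closure of the set of finite $\widehat\Z$-linear combinations of the $A_r$, $r\in\widehat\Z^\times$, in the $x$-adic topology $\tau_s$ — and therefore also in $\tau_o$ — equals $S$. Transporting this back along $S\cong\Op_{\widehat\Z}^{st}$ yields the assertion of the corollary; here one uses that the topology $\tau_o$ on $\Op_{\widehat\Z}^{st}$ (defined in Section \ref{stableoper} by triviality on varieties of bounded dimension) is the subspace topology induced from $\tau_o$ on $\widehat\Z[[x]]$, which follows from Proposition \ref{operoper} and Lemma \ref{ifandonlyif}.

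The whole argument is bookkeeping, so there is no substantial obstacle. The only point requiring a line of care is the compatibility of the two descriptions of $\tau_o$ just invoked; but since Proposition \ref{taus-zamykanie} shows that the closures of the span of the $A_r$ in all three topologies $\tau_w\leq\tau_o\leq\tau_s$ coincide with $S$, even a coarse comparison of these topologies suffices and the conclusion is not sensitive to this point.
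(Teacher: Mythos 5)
Your proposal is correct and follows essentially the same route as the paper: the paper's proof is precisely the identification of the degree-$0$ stable additive operations with $S$, the observation that the stable multiplicative operations $\Psi^c_1$ ($c\in\widehat\Z^\times$) correspond under it to the Adams series $A_c=(1-x)^c$, and then a direct appeal to Proposition \ref{taus-zamykanie}. Your extra remark on the compatibility of $\tau_o$ on $\Op^{st}_{\widehat\Z}$ with $\tau_o$ on $\widehat\Z[[x]]$ is exactly the point the paper records just before Theorem \ref{desc}, so nothing is missing.
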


\begin{remark}
Note that the respective statement for $\zz$-coefficients is not true, as there are only two stable multiplicative operations on
$\op{CK}^*$, namely, $\Psi^1_1$ and $\Psi^{-1}_1$, while the
group of stable additive operations there has infinite (uncountable) rank.
\end{remark}

Now we consider stable multiplicative operations on $K_{gr}^*$ over $\Z$.

\begin{proposition}
 \label{stable-mult-S0}
 Stable multiplicative operations
 $K_{gr}^*\row K_{gr}^*$ are exactly operations
 $\Psi^c_1$, for $c=\pm 1$.
 These are invertible and form a group isomorphic to $\zz^{\times}\cong\zz/2\zz$.
\end{proposition}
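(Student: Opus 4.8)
The plan is to combine the classification of multiplicative operations $K^*_{gr}\to K^*_{gr}$ obtained in Section \ref{multoperat} with the stability criterion of Proposition \ref{mult-desusp}, following verbatim the pattern of the proof of Proposition \ref{stable-mult-S}. Recall from that classification that every multiplicative operation $G\colon K^*_{gr}\to K^*_{gr}$ has the form $\ppsi{l}{c}{b}$ for some $c=\pm 1$, some $l\in\Z$ and some $b\in\Z$, with $c(t)=ct^l$, $b(t)=bt^{1-l}$, and that the associated power series is $\gamma_G(x)=\Sum_{n\geq 1}a_n x^n$ with $a_n=(-1)^{n-1}t^{n-l}\binom{bc}{n}/c$. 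In particular its linear coefficient is $a_1=b\,t^{\,1-l}\in\Z[t,t^{-1}]=K^*_{gr}(k)$.

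First I would invoke the criterion recorded right after Proposition \ref{mult-desusp}: a multiplicative operation $G$ is stable if and only if the linear coefficient of $\gamma_G$ equals $1$. Applied to $G=\ppsi{l}{c}{b}$, this says that $G$ is stable precisely when $b\,t^{\,1-l}=1$ holds in $\Z[t,t^{-1}]$. Since $\Z[t,t^{-1}]$ is a $\Z$-graded domain and $b\,t^{\,1-l}$ is homogeneous of degree $1-l$ while $1$ is homogeneous of degree $0$, this forces $1-l=0$ and $b=1$; that is, $l=1$ and $b=1$, while $c=\pm 1$ is unconstrained. Hence the stable multiplicative operations are exactly $\ppsi{1}{c}{1}=\Psi^c_1$ with $c=\pm 1$. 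Conversely, each $\Psi^{\pm 1}_1$ is homogeneous ($l=1$) and has $\gamma$-linear coefficient $a_1=b\,t^{1-l}=1$, so it is indeed stable.

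It remains to record the group structure. The operation $\Psi^1_1=\Psi_1$ is the identity, and $\Psi^{-1}_1\neq\id$ since its associated ring map $\ffi$ sends $t$ to $-t$. By the composition formula $\Psi^{c}_{b}\circ\Psi^{e}_{d}=\Psi^{ce}_{bd}$ of Example \ref{twisted} we get $\Psi^{-1}_1\circ\Psi^{-1}_1=\Psi^{1}_1=\id$, so $\Psi^{-1}_1$ is invertible of order $2$ and $\{\Psi^{\pm 1}_1\}$ is a group under composition isomorphic to $\Z^\times\cong\Z/2\Z$.

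The argument is mechanical once the classification of multiplicative operations on $K_{gr}$ is in hand; the only point requiring any care — and the nearest thing to an obstacle — is the passage from the abstract stability criterion to the explicit equation $b\,t^{1-l}=1$, i.e.\ making sure that the linear coefficient of $\gamma_G$ is exactly the value $a_1=b\,t^{1-l}$ read off from the formula for $a_n$ at $n=1$, so that no nontrivial power of $t$ and no integer scaling can survive in a stable operation.
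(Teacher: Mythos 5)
Your proof is correct and follows the same approach as the paper: reading off the linear coefficient $a_1=b\,t^{1-l}$ of $\gamma_G$ from the classification in Section 5.2 and applying the stability criterion (linear coefficient $=1$) from Proposition \ref{mult-desusp}, forcing $l=1$, $b=1$ and leaving $c=\pm1$ free. You spell out the degree argument in $\Z[t,t^{-1}]$ and verify the $\Z/2\Z$ group structure explicitly, but these are just elaborations of what the paper states.
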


\begin{proof}
The linear coefficient of $\gamma_G$ for the operation $\ppsi{l}{c}{b}$ is $t^{1-l}b$ -
see 5.2. This will be equal to $1$ exactly when
 $l=1$ and $b=1$.
\end{proof}

As above, the operation $\Psi^c_1$ corresponds
to the power series $A_c=(1-x)^c$. On $K^n_{gr}$ it coincides
with $c^{-n}\cdot\Psi^1_c$.


\end{document}